\documentclass[amstex,12pt,reqno]{amsart}
\usepackage{amsmath, amssymb, amsthm}
\usepackage{mathrsfs}

\usepackage{mathtools}
\mathtoolsset{showonlyrefs=true}
\usepackage{color}

\setlength{\topmargin}{0cm}
\setlength{\oddsidemargin}{0cm}
\setlength{\evensidemargin}{0cm}
\setlength{\textwidth}{16cm}
\setlength{\footskip}{30pt}

\theoremstyle{plain}
\newtheorem{theorem}{Theorem}[section]
\newtheorem{proposition}[theorem]{Proposition}
\newtheorem{lemma}[theorem]{Lemma}
\newtheorem{corollary}[theorem]{Corollary}

\theoremstyle{definition}
\newtheorem{remark}{Remark}[section]
\newtheorem{definition}{Definition}[section]

\title[Simultaneous uniformization of chord-arc curves]
{Simultaneous uniformization of chord-arc curves \\and BMO Teich\-m\"ul\-ler space}
\author[K. Matsuzaki]{Katsuhiko Matsuzaki}
\address{Department of Mathematics, School of Education, Waseda University \endgraf
Shinjuku, Tokyo 169-8050, Japan}
\email{matsuzak@waseda.jp}

\subjclass[2020]{Primary 30C62, 30H35, 30E20; Secondary 42A50, 42A45, 26A46}

\keywords{BMO Teich\-m\"ul\-ler space, simultaneous uniformization, chord-arc curve, quasiconformal, pre-Schwarzian derivative
Bers embedding, BMOA, strongly quasisymmetric, Carleson measure, $A_\infty$-weight,
composition operator, conformal welding, Szeg\"o projection, Hilbert transform,
Cauchy integral, VMO}

\thanks{Research supported by supported by 
Japan Society for the Promotion of Science (KAKENHI 23K25775 and 23K17656).}

\begin{document}

\maketitle

\begin{abstract}
This article surveys and develops the use of simultaneous uniformization
for the study of chord-arc curves in the BMO Teich\-m\"ul\-ler space.
The method of simultaneous uniformization provides a unified
complex-analytic framework in which chord-arc curves are parametrized by
their BMO embeddings and the logarithm of derivatives of these embeddings
form a biholomorphic image in the Banach space of BMO functions. 
We review this correspondence and its consequences, such as the relation
to reparametrizations by strongly quasisymmetric homeomorphisms, in a rather
self-contained manner in order to highlight the coherence of the
approach. 

The main new contribution of this exposition concerns the Cauchy
transform of BMO functions on a chord-arc curve. We show that the Cauchy
transform is expressed through the derivative of the biholomorphic map
arising from simultaneous uniformization, and consequently depends
holomorphically on the variation of the chord-arc curve. This result
connects classical singular integral operators with the complex
structure of Teich\-m\"ul\-ler spaces and illustrates the effectiveness
of the method. We also outline the parallel theory in the VMO
Teich\-m\"ul\-ler space, which exhibits further structural properties.
\end{abstract}

\section{Introduction}\label{section1}
The Teichm\"uller theory was originally developed to parametrize
deformations of complex structures on surfaces. It has played a central
role in the study of the geometry of hyperbolic 2- and 3-manifolds.
Since such deformations are realized by quasiconformal mappings,
Teichm\"uller theory naturally incorporates these mappings as
fundamental tools. The Teichm\"uller space itself carries both complex
and metric structures, and the interaction between the geometry of the
phase space and that of the parameter space is a distinctive feature of
the theory. Extending this viewpoint, Teichm\"uller theory can also be
applied to parameter spaces of geometric objects arising in the study of
function spaces.

The universal Teich\-m\"ul\-ler space is a space that encompasses all Teich\-m\"ul\-ler spaces formulated by quasiconformal mappings. It can be viewed as the space of all normalized quasisymmetric homeomorphisms on the real line. Depending on the regularity of these mappings, the subspaces contained within the universal Teich\-m\"ul\-ler space split into two directions. The Teich\-m\"ul\-ler space of a compact hyperbolic surface is typically the space of all totally singular quasisymmetric homeomorphisms, while many Teich\-m\"ul\-ler spaces associated with function spaces consist of (locally) absolutely continuous quasisymmetric homeomorphisms.

These subspaces $T_X$ of the universal Teich\-m\"ul\-ler space $T$ can be defined by various means, such as by quasisymmetric homeomorphisms, by Beltrami coefficients as the complex dilatations of quasiconformal mappings, by the (pre-)Schwarzian derivatives of quasiconformally extendable conformal mappings on the half-plane $\mathbb H$, or by quasicircles which are the images of the real line $\mathbb R$ under quasiconformal self-homeomorphisms of the whole plane $\mathbb{C}$. The first step in studying the structure of a given Teich\-m\"ul\-ler space $T_X$ is to establish the correspondence between these different representations. Furthermore, if $T_X$ is defined by a family of absolutely continuous quasisymmetric homeomorphisms $h$, the logarithm of their derivatives $\log h'$ form a function space, which can be used to provide $T_X$ with more interesting analytic structures. We call generically such $T_X$ absolutely continuous Teich\-m\"ul\-ler spaces.

To investigate a family of quasicircles, we introduce a method called simultaneous uniformization by Bers, which has been used in the theory of deformation of quasi-Fuchsian groups and associated hyperbolic 3-manifolds. 
Especially, complex analytic aspects of Thurston's theory are built upon this foundation. 
We apply this method for the analysis of families of curves $\Gamma$. This allows us to coordinate them in the direct product 
$T_X^+ \times T_X^-$ of their corresponding Teich\-m\"ul\-ler spaces $T_X$. Furthermore, 
we can prove the existence of a biholomorphic homeomorphism of $T_X^+ \times T_X^-$ onto a domain 
in the function space to which $\log \gamma'$ of a quasisymmetric embedding $\gamma: \mathbb{R} \to \Gamma \subset \mathbb{C}$ belongs.


Specifically, we focus on the BMO Teich\-m\"ul\-ler space $T_B$ in this
paper, which is an active and rapidly developing area of research with
deep connections to real analysis. The space $T_B$ also provides a
natural setting for many absolutely continuous Teich\-m\"ul\-ler spaces
$T_X$, since in numerous cases these spaces embed into $T_B$. Within
this framework, we pay particular attention to the subregion defined by
rectifiable quasicircles known as chord-arc curves, and study structural
problems for this family of curves from the viewpoint of
Teich\-m\"ul\-ler theory. Chord-arc curves represent a comparatively weak
regularity class among non-fractal curves, and they have been the focus
of extensive investigations in real analysis concerning associated
function spaces.

A locally rectifiable Jordan curve passing through infinity is called a chord-arc curve if the length of any ``arc" along the curve between any two points is uniformly bounded by a constant multiple of the length of the line segment ``chord" connecting the two points. If we replace the length of the ``arc" with its diameter, it characterizes a quasicircle. A quasicircle $\Gamma$ passing through infinity is also characterized as the image of $\mathbb{R}$ under a quasiconformal self-homeomorphism of $\mathbb{C}$. 
The set of all such quasicircles, up to affine translations, can be identified with the universal Teich\-m\"ul\-ler space $T$. Similarly, chord-arc curves are the images of $\mathbb{R}$ under bi-Lipschitz self-homeomorphisms of $\mathbb{C}$.

A quasisymmetric homeomorphism $h: \mathbb{R} \to \mathbb{R}$ is the extension of a quasiconformal self-homeomorphism of $\mathbb{H}$. All the normalized quasisymmetric self-homeomorphisms form a group denoted by $\rm QS$ and it can be identified with the universal Teich\-m\"ul\-ler space $T$. If $h$ is locally absolutely continuous and its derivative $h'$ belongs to the class of Muckenhoupt's $A_\infty$-weights, then $h$ is called strongly quasisymmetric. The set of all normalized strongly quasisymmetric self-homeomorphisms is a subgroup of 
$\rm QS$ denoted by $\rm SQS$.

BMO functions are an important class of functions that appear and play an essential role in many problems of real analysis. Let ${\rm BMO}(\mathbb{R})$ be the space of all complex-valued BMO functions on $\mathbb{R}$, and let ${\rm BMO}^*(\mathbb{R})$ be the subset consisting of $\phi \in {\rm BMO}(\mathbb{R})$ for which $|e^\phi|$ is an $A_\infty$-weight. This forms a convex open subset of the complex Banach space ${\rm BMO}(\mathbb{R})$. For any $h \in {\rm SQS}$, it can be understood that $\log h'$ is in ${\rm Re}\,{\rm BMO}^*(\mathbb{R})$, the subset consisting of real-valued functions. In fact, the correspondence ${\rm SQS} \to {\rm Re}\,{\rm BMO}^*(\mathbb{R})$ is bijective.

The BMO Teich\-m\"ul\-ler space $T_B$ in $T$,
introduced by Astala and Zinsmeister \cite{AZ}, possesses several characterizations as described above. As a space of quasisymmetric homeomorphisms, it coincides with ${\rm SQS}$. The corresponding space of Beltrami coefficients on the half-plane $\mathbb{H}$ is denoted by $M_B(\mathbb{H})$, and $T_B$ is defined by its quotient under the Teich\-m\"ul\-ler equivalence. More explicitly, an element in $M_B(\mathbb{H})$ is defined by the Carleson measure condition for a Beltrami coefficient.

We can apply the method of simultaneous uniformization to chord-arc curves. For any $\mu^+ \in M_B(\mathbb{H}^+)$ and $\mu^- \in M_B(\mathbb{H}^-)$, where $\mathbb{H}^\pm$ are the upper and lower half-planes, we consider the normalized quasiconformal self-homeomorphism $G(\mu^+,\mu^-)$ having the prescribed complex dilatations on $\mathbb{H}^\pm$. A BMO embedding $\gamma:\mathbb{R} \to \mathbb{C}$ is given by $\gamma=G(\mu^+,\mu^-)|_{\mathbb{R}}$ and it is well-defined by a pair of Teich\-m\"ul\-ler classes $([\mu^+],[\mu^-])$. The space of all BMO embeddings is identified with the product $T_B^+ \times T_B^-$ of the BMO Teich\-m\"ul\-ler spaces. We call these pairs the Bers coordinates. Among the Bers coordinates of BMO embeddings $\gamma:\mathbb{R} \to \mathbb{C}$, the subset whose image $\Gamma=\gamma(\mathbb{R})$ is a chord-arc curve is defined to be $\widetilde{T}_C \subset T_B^+ \times T_B^-$. Any $\gamma=\gamma([\mu^+],[\mu^-])$ for $([\mu^+],[\mu^-]) \in \widetilde{T}_C$ is locally absolutely continuous, and $\log \gamma'$ belongs to ${\rm BMO}^*(\mathbb{R})$.

The following theorem is a fundamental assertion in studying chord-arc curves from the viewpoint of Teich\-m\"ul\-ler spaces. This has been proved in Wei and Matsuzaki (abbreviated as WM hereafter) \cite{WM-0}, and in this paper, we review its proof with a view towards studying the Cauchy transform on a chord-arc curve. Additionally, along the way of its proof, we include several different arguments from the existing ones.

\begin{theorem}\label{fundamental}
$\widetilde{T}_C$ is an open subset of $T_B^+ \times T_B^-$, and the map $\Lambda([\mu^+],[\mu^-])=\log \gamma'$ defined for $\gamma=\gamma([\mu^+],[\mu^-])$ and $([\mu^+],[\mu^-]) \in \widetilde{T}_C$ is a biholomorphic homeomorphism onto an open subset of ${\rm BMO}^*(\mathbb{R})$ containing ${\rm Re}\,{\rm BMO}^*(\mathbb{R})$.
\end{theorem}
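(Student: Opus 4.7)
The plan is to reduce $\Lambda$ to a composition of the two one-sided Bers pre-Schwarzian embeddings of $T_B^\pm$ together with the correspondence ${\rm SQS}\leftrightarrow{\rm Re}\,{\rm BMO}^*(\mathbb{R})$ recalled in the introduction. For $([\mu^+],[\mu^-])\in T_B^+\times T_B^-$ I would factor the simultaneous uniformizer as $G(\mu^+,\mu^-)=\Phi^+\circ\tilde F^+$, where $\tilde F^+$ is a normalized quasiconformal self-map of $\mathbb{C}$ carrying the dilatation $\mu^+$ on $\mathbb{H}^+$ and a BMO dilatation on $\mathbb{H}^-$ chosen so that $\tilde F^+(\mathbb{R})=\mathbb{R}$, while $\Phi^+$ is the conformal Riemann map from $\mathbb{H}^+$ onto the upper complementary domain $\Omega^+$ of $\Gamma=\gamma(\mathbb{R})$. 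The chain rule on $\mathbb{R}$ then yields
\begin{equation}
\log\gamma'=\log(\Phi^+)'\circ h+\log h',\qquad h=\tilde F^+|_{\mathbb{R}},
\end{equation}
with a symmetric identity from the lower side.

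Both ingredients have known complex-analytic structure: the pre-Schwarzian map $[\mu^+]\mapsto\log(\Phi^+)'|_{\mathbb{R}}$ is a biholomorphism of $T_B^+$ onto an open subset of ${\rm BMO}(\mathbb{R})$ (Astala--Zinsmeister, \cite{WM-0}), while $h\mapsto\log h'$ identifies ${\rm SQS}$ with ${\rm Re}\,{\rm BMO}^*(\mathbb{R})$ and depends holomorphically on $([\mu^+],[\mu^-])$ via the measurable Riemann mapping theorem with parameter. Since composition by $h\in{\rm SQS}$ preserves ${\rm BMO}(\mathbb{R})$ (invariance under $A_\infty$ pullback), each summand is a BMO function depending holomorphically on the Bers coordinates, which gives holomorphy of $\Lambda$. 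Injectivity is immediate: $\log\gamma'$ together with the normalization of $G$ recovers $\gamma$, and then the two Teichm\"uller classes. To prove that $\widetilde T_C$ is open I will invoke the classical characterization of chord-arc curves passing through $\infty$, namely that $\Gamma$ is chord-arc iff both $\log(\Phi^\pm)'$ lie in ${\rm BMO}^*(\mathbb{R})$, an open condition pulled back through the Bers biholomorphisms by the openness of ${\rm BMO}^*$ in ${\rm BMO}$. For the range, given $\phi\in{\rm Re}\,{\rm BMO}^*(\mathbb{R})$ I take the associated $h\in{\rm SQS}$ with $\log h'=\phi$ and any BMO quasiconformal extension providing a pair $([\mu^+],[\mu^-])$; then $\Gamma=\mathbb{R}$ is trivially chord-arc and $\log\gamma'=\phi$, giving ${\rm Re}\,{\rm BMO}^*(\mathbb{R})\subset\Lambda(\widetilde T_C)$.

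The principal obstacle will be upgrading the holomorphic injection $\Lambda$ to a biholomorphism onto its open image. This amounts to showing that $d\Lambda$ is invertible at every point and that the image is open; equivalently, one must disentangle the chain-rule identity above to recover $([\mu^+],[\mu^-])$ analytically from $\log\gamma'$. I expect the most delicate step to be controlling the composition operator $\psi\mapsto\psi\circ h$ on ${\rm BMO}(\mathbb{R})$, which is bounded but not isometric, and then gluing the two one-sided Bers pieces into a single Banach-manifold isomorphism via the implicit function theorem on $T_B^+\times T_B^-$.
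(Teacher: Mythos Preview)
Your chain-rule decomposition $\log\gamma'=\log(\Phi^+)'\circ h+\log h'$ is essentially the identity $\Lambda\circ R_{[\mu^+]}=Q_{h}\circ\Lambda$ that the paper uses, so the overall architecture is right. But the holomorphy argument as written cannot work: $\log h'$ is real-valued, so it cannot depend holomorphically on $[\mu^+]$ unless it is constant, and likewise the map $[\mu^+]\mapsto[\nu]=[\mu^-]\ast[\overline{\mu^+}]^{-1}$ governing $\Phi^+$ involves complex conjugation and is only anti-holomorphic in $[\mu^+]$. The paper circumvents this by checking separate holomorphy via Hartogs: fixing $[\mu^+]$ freezes $h$ (so $C_h$ is a fixed bounded operator and $\log h'$ is a constant translation), and then $[\mu^-]\mapsto\log(\Phi^+)'|_{\mathbb R}=E^+\circ\beta^-\circ r_{[\overline{\mu^+}]}^{-1}([\mu^-])$ is genuinely holomorphic. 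Your outline needs this repair.

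The more serious gap is the biholomorphism step, which you correctly flag as the obstacle but do not address. Knowing $d\Lambda$ is invertible at the origin (where it is the identity on $\mathrm{BMOA}(\mathbb H^-)\oplus\mathrm{BMOA}(\mathbb H^+)$) is not enough, because the right translations $R_{[\nu]}$ do not act transitively on $\widetilde T_C$: you can only move within an orbit, and the origin's orbit is the symmetric axis. The paper's proof translates an arbitrary point to one with $\Lambda$-value in $i\,\mathrm{Re\,BMO}(\mathbb R)$ (arc-length parametrization of the chord-arc curve) and then invokes a nontrivial external input, namely Semmes' quasiconformal extension of arc-length parametrized chord-arc embeddings, to produce a local holomorphic right inverse of $\Lambda$ there (Lemma~\ref{new}). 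This is the step that makes $d\Lambda$ surjective everywhere; without it, the implicit function theorem has no hypothesis to bite on, and ``gluing the two one-sided Bers pieces'' does not by itself give you control over the off-diagonal coupling in $d\Lambda$.
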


The importance of this result lies in the fact that the holomorphic dependence of BMO functions can be transformed into a complex-analytic structure of the Teich\-m\"ul\-ler space. 
Chord-arc curves have been studied from the perspective of harmonic analysis as plane curves, but research from Teich\-m\"ul\-ler space theory through such simultaneous uniformization has not been done so far. Hereinafter, we will see that such a perspective can provide concise proofs for some discussions related to chord-arc curves and clarify them.

In the above theorem, we also see that the derivative $d_{([\mu^+],[\mu^-])}\Lambda$ of the biholomorphic homeomorphism 
$\Lambda$ at $([\mu^+],[\mu^-]) \in \widetilde{T}_C$ induces an isomorphism between the tangent spaces. 
Hence, according to the direct sum decomposition of the tangent space associated with $T_B^+ \times T_B^-$, 
$d_{([\mu^+],[\mu^-])}\Lambda$ induces the topological direct sum decomposition of ${\rm BMO}(\mathbb{R})$ 
and the bounded projections to these factors. We denote these projections in ${\rm BMO}(\mathbb{R})$ by 
$P_{([\mu^+],[\mu^-])}^+$ and $P_{([\mu^+],[\mu^-])}^-$. It holds $P_{([\mu^+],[\mu^-])}^+\phi+P_{([\mu^+],[\mu^-])}^-\phi=\phi$
for every $\phi \in {\rm BMO}(\mathbb{R})$. Considering these properties of the derivative of $\Gamma$ is a new
observation in this paper.

Let $\Gamma$ be a chord-arc curve, and $\Omega^+$ and $\Omega^-$ the complementary domains of $\mathbb{C}$ divided by $\Gamma$. The Cauchy integrals of a BMO function $\psi$ on $\Gamma$ are defined by
$$
({P}^\pm_\Gamma \psi)(\zeta)=\frac{-1}{2\pi i} \int_{\Gamma} 
\left(\frac{\psi(z)}{\zeta-z}-\frac{\psi(z)}{\zeta_0^\pm-z}\right)dz
\qquad (\zeta \in \Omega^\pm)
$$
for some fixed $\zeta_0^\pm \in \Omega^\pm$, where the line integrals over $\Gamma$ 
are taken in the positive directions with respect to $\Omega^\pm$. 
These are holomorphic functions on $\Omega^\pm$, and 
have non-tangential limits almost everywhere on $\Gamma$. These boundary functions on $\Gamma$ 
are called the Cauchy projections of $\psi$ on $\Gamma$, denoted 
by the same notations ${P}^\pm_\Gamma \psi$. The Plemelj formula implies that ${P}^+_\Gamma \psi+{P}^-_\Gamma \psi=\psi$.

We compare the Cauchy projections ${P}^\pm_\Gamma$ with the conjugates of the projections $P_{([\mu^+],[\mu^-])}^\pm$ 
under $\gamma$.
Applied to a BMO function $\psi$ on $\Gamma$, they induce the holomorphic functions on $\Omega^\pm$ having the same jump $\psi$
(or the same sum $\psi$ depending on the choice of directions of the line integral) across $\Gamma$.
In these circumstances, general arguments deduce that they coincide with each other. Thus, the boundedness on
the BMO function space and the holomorphic dependence on $\gamma$ possessed by $P_{([\mu^+],[\mu^-])}^\pm$ 
can be imported into the properties of
the Cauchy projections ${P}^\pm_\Gamma$.

\begin{theorem}\label{intro2}
Let $\gamma=\gamma([\mu^+],[\mu^-])$ be a BMO embedding for $([\mu^+],[\mu^-]) \in \widetilde{T}_C$ 
with a chord-arc curve $\Gamma=\gamma(\mathbb{R})$ as its image. 
Then, the Cauchy projections ${P}^\pm_\Gamma$ are the conjugates by $\gamma$ of the projections $P_{([\mu^+],[\mu^-])}^\pm$ 
associated with the topological direct sum decomposition of ${\rm BMO}(\mathbb{R})$. Moreover, $P_{([\mu^+],[\mu^-])}^\pm$ 
depend holomorphically on $([\mu^+],[\mu^-]) \in \widetilde{T}_C$ as bounded linear operators acting on ${\rm BMO}(\mathbb{R})$.
\end{theorem}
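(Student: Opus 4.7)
\textbf{Proof plan for Theorem~\ref{intro2}.}
My approach has two parts: first, identifying the projections $P^\pm_{([\mu^+],[\mu^-])}$ with the Cauchy projections $P^\pm_\Gamma$ (after transport by $\gamma$) via a uniqueness argument, and second, extracting holomorphic dependence from the biholomorphicity of $\Lambda$ in Theorem~\ref{fundamental}.

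Fix $p=([\mu^+],[\mu^-])\in\widetilde T_C$ and $\phi\in{\rm BMO}(\mathbb R)$, and consider the decomposition $\phi = P^+_p\phi + P^-_p\phi$. I would first show that the transported summands $(P^\pm_p\phi)\circ\gamma^{-1}$ are BMO boundary values on $\Gamma$ of holomorphic functions on the two complementary domains of $\Gamma$. By definition, $P^+_p\phi$ lies in the image under $d\Lambda_p$ of the tangent space to the first factor, so it has the form $\partial_t\log\gamma_t'|_{t=0}$ for the holomorphic family $\gamma_t = G(\mu^++t\nu,\mu^-)|_{\mathbb R}$, where $\nu\in M_B(\mathbb H^+)$ is suitably chosen. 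Writing $\gamma_t = \Phi_t\circ\gamma$ with $\Phi_t = G(\mu^++t\nu,\mu^-)\circ G(\mu^+,\mu^-)^{-1}$, a direct chain-rule computation on complex dilatations shows that $\mu_{\Phi_t}\equiv 0$ on $G(\mathbb H^-)=\Omega^-$, because both quasiconformal maps carry the same Beltrami coefficient $\mu^-$ on $\mathbb H^-$. Hence $\Phi_t$ is conformal on $\Omega^-$, the function $\log\Phi_t'$ is holomorphic there, and differentiating the identity $\log\gamma_t' = \log\Phi_t'\circ\gamma + \log\gamma'$ at $t=0$ exhibits $(P^+_p\phi)\circ\gamma^{-1}$ as the BMO boundary value on $\Gamma$ of a function holomorphic on $\Omega^-$. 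The analogous statement for $P^-_p\phi$ and the opposite side follows by exchange.

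This presents $\phi\circ\gamma^{-1}$ as a sum of two BMO functions on $\Gamma$, each a boundary value of a holomorphic function on one of $\Omega^\pm$, normalized by vanishing at $\zeta_0^\pm$. Such a decomposition is unique, because the difference between two such representations extends to an entire function vanishing at both base points, hence is identically zero. The Cauchy projections $P^\pm_\Gamma$ produce exactly this canonical decomposition by the Plemelj formula, so $(P^\pm_p\phi)\circ\gamma^{-1} = P^\pm_\Gamma(\phi\circ\gamma^{-1})$ once the $\pm$ labels are matched to the paper's convention, establishing the first assertion.

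For the holomorphic dependence, the biholomorphy of $\Lambda$ from Theorem~\ref{fundamental} implies that $p\mapsto d\Lambda_p$ is a holomorphic map from $\widetilde T_C$ into the open set of bounded linear isomorphisms $T_p(T_B^+\times T_B^-)\to{\rm BMO}(\mathbb R)$, where I trivialize the source locally as a fixed Banach space. Since inversion is holomorphic on the open set of invertible bounded operators, and since the canonical projections $Q^\pm$ associated with the product structure $T_B^+\times T_B^-$ are constant in the trivialization, the formula
$$
P^\pm_p = d\Lambda_p\circ Q^\pm\circ (d\Lambda_p)^{-1}
$$
realizes $P^\pm_p$ as a holomorphic family of bounded operators on ${\rm BMO}(\mathbb R)$. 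The main obstacle I anticipate is in the first step: rigorously verifying that the variational derivative $\partial_t(\log\Phi_t')|_{t=0}$ has BMO boundary values on $\Gamma$ agreeing with $(P^+_p\phi)\circ\gamma^{-1}$ requires combining the BMO regularity of the Bers derivative from Theorem~\ref{fundamental} with the chord-arc structure of $\Gamma$, in order to control the singular integrals that encode the boundary behavior.
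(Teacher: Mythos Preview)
Your plan for the holomorphic dependence (Part~2) is exactly the paper's argument: Lemma~\ref{conjugate} gives $P^\pm_{([\mu^+],[\mu^-])} = d\Lambda \circ J^\pm \circ (d\Lambda)^{-1}$, and biholomorphy of $\Lambda$ finishes it.

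For Part~1, your overall scheme matches the paper's Theorem~\ref{Cauchy}: both decompositions of $\psi$ assemble into a piecewise-holomorphic function on $\mathbb C\setminus\Gamma$, and the difference is forced to be trivial. But two points deserve attention. First, your variational route to showing $(P^\pm_p\phi)\circ\gamma^{-1}\in\mathrm{BMOA}(\Omega^\mp)$ is more laborious than necessary and creates the very obstacle you flag at the end. The paper instead uses the translation identity $\Lambda\circ R_{[\mu^+]} = Q_{h^+}\circ\Lambda$ to prove directly (Lemma~\ref{factor}) that $d\Lambda(\mathscr T_{[\mu^\pm]}T_B^\pm) = C_{h^\mp}(\mathrm{BMOA}(\mathbb H^\mp))$; since $\gamma = F^\pm\circ h^\pm$, push-forward by $\gamma$ lands this in $\mathrm{BMOA}(\Omega^\mp)$ with no boundary-regularity issue to resolve.

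Second, and more seriously, your uniqueness step has a gap. The assertion that the difference ``extends to an entire function vanishing at both base points, hence is identically zero'' is not a valid argument: an entire function vanishing at two points is certainly not forced to be zero. What is needed is (i) a justification that the difference, a priori only holomorphic on $\Omega^+\cup\Omega^-$ with matching BMO boundary values on a chord-arc curve, extends holomorphically across $\Gamma$, and (ii) a growth estimate so that Liouville applies. The paper handles (i) by computing $\bar\partial$ distributionally on $\mathbb C$: both $\Psi_1$ and $\Psi_2$ satisfy $-2i\bar\partial\Psi_j = \psi\,dz_\Gamma$ (via Green's formula for $\Psi_1$ and the Pompeiu formula for $\Psi_2$), so $\bar\partial(\Psi_1-\Psi_2)=0$ and the Weyl lemma gives entirety. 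For (ii), the paper verifies $\Psi_j(z)=o(|z|)$ as $z\to\infty$, whence $\Psi_1-\Psi_2$ is constant. Your proposal needs both of these ingredients made explicit.
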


The Cauchy transform of a BMO function on a chord-arc curve $\Gamma$ is defined by the singular integral
$$
({\mathcal H}_\Gamma \psi)(\xi)={\rm p.v.} \frac{1}{\pi} \int_{\Gamma} 
\left(\frac{\psi(z)}{\xi-z}-\frac{\psi(z)}{\zeta_0^\pm-z}\right)dz
\quad(\xi \in \Gamma).
$$
By the Plemelj formula, ${\mathcal H}_\Gamma$ can be represented by the Cauchy projections ${P}^\pm_\Gamma$. The Cauchy transform of BMO functions on a chord-arc curve is an important subject in real analysis. 
This originates in Calder\'on's work.
Simultaneous uniformization makes it possible to investigate it in the framework of complex-analytic Teich\-m\"ul\-ler space theory.
The following corollary to Theorem \ref{intro2} highlights a new claim of this paper, 
demonstrating how the method of simultaneous uniformization yields the holomorphic dependence of the Cauchy transform.

\begin{corollary}
The conjugate of the Cauchy transform ${\mathcal H}_\Gamma$ under $\gamma$ represented by
$$
{\mathcal H}_{([\mu^+],[\mu^-])}=-i( P^+_{([\mu^+],[\mu^-])} -P^-_{([\mu^+],[\mu^-])})
$$
depend holomorphically on $([\mu^+],[\mu^-]) \in \widetilde{T}_C$
as the bounded linear operators on ${\rm BMO}(\mathbb R)$. 
\end{corollary}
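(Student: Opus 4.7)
The plan is to reduce the corollary to Theorem~\ref{intro2} via the classical Plemelj--Sokhotski jump formulas. The strategy has two ingredients: first, to express the Cauchy transform $\mathcal{H}_\Gamma$ on $\Gamma$ as a fixed complex-linear combination of the Cauchy projections $P^\pm_\Gamma$; second, to transfer this relation via conjugation by the BMO embedding $\gamma$ to a statement about operators on $\mathrm{BMO}(\mathbb{R})$, and then read off holomorphic dependence from the second assertion of Theorem~\ref{intro2}.

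For the first step I rely on the Plemelj boundary identities for the holomorphic Cauchy integrals $P^\pm_\Gamma \psi$ on $\Omega^\pm$. The paper has already recorded the ``sum'' identity $P^+_\Gamma \psi+P^-_\Gamma \psi=\psi$, which encodes the jump of the Cauchy integral across $\Gamma$. The complementary ``difference'' identity expresses $P^+_\Gamma\psi-P^-_\Gamma\psi$ as a principal-value singular integral around $\Gamma$. A careful comparison of the coefficient $-1/(2\pi i)$ appearing in the definition of $P^\pm_\Gamma$ with the coefficient $1/\pi$ appearing in the definition of $\mathcal{H}_\Gamma$, combined with the observation that the orientation of $\Gamma$ used for $P^-_\Gamma$ is opposite to the one used for $P^+_\Gamma$, yields the identity
\[
\mathcal{H}_\Gamma \psi = -i\bigl(P^+_\Gamma \psi - P^-_\Gamma \psi\bigr)
\]
as an equality of bounded operators on BMO functions on $\Gamma$. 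Conjugating this identity by $\gamma$ and invoking Theorem~\ref{intro2}, which identifies the conjugate of $P^\pm_\Gamma$ by $\gamma$ with the projection $P^\pm_{([\mu^+],[\mu^-])}$ on $\mathrm{BMO}(\mathbb{R})$, produces the desired representation
$\mathcal{H}_{([\mu^+],[\mu^-])}=-i\bigl(P^+_{([\mu^+],[\mu^-])}-P^-_{([\mu^+],[\mu^-])}\bigr)$.

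Holomorphic dependence is then immediate: Theorem~\ref{intro2} asserts that each $P^\pm_{([\mu^+],[\mu^-])}$ is a holomorphic map from $\widetilde{T}_C$ into the Banach algebra of bounded linear operators on $\mathrm{BMO}(\mathbb{R})$, and any finite $\mathbb{C}$-linear combination with constant coefficients of holomorphic maps into a Banach space is again holomorphic. Thus $\mathcal{H}_{([\mu^+],[\mu^-])}$ depends holomorphically on $([\mu^+],[\mu^-])\in\widetilde{T}_C$, as claimed. The main obstacle in the proof is really bookkeeping---careful tracking of signs and orientation conventions to arrive at the precise coefficient $-i$ in the Plemelj identity---since the entire substantive analytic content (the identification of conjugates and the holomorphicity of the projections) is already packaged into Theorem~\ref{intro2}, and everything beyond that is purely algebraic.
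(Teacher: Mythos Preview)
Your proposal is correct and follows essentially the same approach as the paper: the paper derives the identity $\mathcal{H}_{([\mu^+],[\mu^-])}=-i(P^+_{([\mu^+],[\mu^-])}-P^-_{([\mu^+],[\mu^-])})$ from the Plemelj formula (Proposition~\ref{Plemelj}) combined with Theorem~\ref{intro2} (stated in the body as Theorem~\ref{Cauchy}), and then reads off holomorphic dependence from the holomorphicity of the projections. Your bookkeeping of the sign and orientation conventions matches the paper's, and your observation that the analytic content is entirely contained in Theorem~\ref{intro2} is exactly the point.
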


This result should be useful for considering and simplifying several problems on function spaces on chord-arc curves.
We demonstrate an application.

Finally, we touch on the theory of the little subspace of $T_B$, the VMO Teich\-m\"ul\-ler space $T_V$. This corresponds to the closed subspace ${\rm VMO}(\mathbb{R})$ of ${\rm BMO}(\mathbb{R})$, which consists of BMO functions whose norm equalities satisfy a canonical vanishing property. Results for various little subspaces defined by their vanishing conditions are often stated in a way that mirrors those for the original Teich\-m\"ul\-ler spaces. However, in the case of $T_V$, it possesses preferable properties that $T_B$ does not, such as a topological group structure and a global section for the Teich\-m\"ul\-ler projection. Moreover, subtle differences arise when considering $T_V$ defined on $\mathbb{S}$ versus on $\mathbb{R}$. Emphasizing these points, we provide a concise exposition of this theory and present new observations as well.

In this article we restrict our attention to the BMO Teich\-m\"ul\-ler
space and chord-arc curves. However, the method of simultaneous
uniformization also applies effectively to another important class,
namely the integrable Teich\-m\"ul\-ler space and the associated
Weil--Petersson curves. Parallel arguments in that setting lead to new
results, which are presented in a separate paper \cite{Ma}. Taken
together, these developments indicate that the framework discussed here
provides a broader perspective on absolutely continuous
Teich\-m\"ul\-ler spaces, with the present exposition focusing on the
BMO case while outlining its connections to related theories.

\medskip
\noindent
{\bf Acknowledgements.}
The author is grateful to the referees for their careful reading of the manuscript and for their many valuable comments, which have helped to improve both the clarity and the presentation of this work. In particular, the present version of the proof of Proposition \ref{convex2} follows a simplification suggested by one of the referees.

\section{BMO Teich\-m\"ul\-ler space and BMOA}\label{section2}

Let $M(\mathbb H)$ denote the open unit ball of the Banach space $L^{\infty}(\mathbb H)$
of all essentially bounded measurable functions on the half-plane $\mathbb H$. 
An element in $M(\mathbb H)$ is called a
{\it Belt\-rami coefficient}.
The {\it universal Teich\-m\"ul\-ler space} $T$ is the set of all Teich\-m\"ul\-ler equivalence classes $[\mu]$ of
Beltrami coefficients $\mu$ in $M(\mathbb H)$. Here, $\mu_1$ and $\mu_2$ in $M(\mathbb H)$ are equivalent if
$h(\mu_1)=h(\mu_2)$ on $\mathbb R$, where $h(\mu)=H(\mu)|_{\mathbb R}$ 
is the boundary extension of the quasiconformal self-homeomorphism $H=H(\mu)$
of $\mathbb H$, called a {\it quasisymmetric homeomorphism}, such that its complex dilatation $\bar \partial H/\partial H$ is 
$\mu \in M(\mathbb H)$ and $h(\mu)$ satisfies the normalization condition
keeping the points $0$, $1$ and $\infty$ fixed.

We denote the quotient projection by
$\pi:M(\mathbb H) \to T$, which is called the {\it Teich\-m\"ul\-ler projection}.
Thus, we can identify $T$ with the set $\rm QS$ of all normalized quasisymmetric homeomorphisms $h(\mu)$ for $\mu \in M(\mathbb H)$.
The topology of $T$ is defined as the quotient topology induced from $M(\mathbb H)$ by $\pi$.
The universal Teich\-m\"ul\-ler space possesses the group structure under the identification
$T \cong \rm QS$.
The composition $h(\mu) \circ h(\nu)$ in $\rm QS$ is denoted by $[\mu]\ast [\nu]$ in $T$ and
the inverse $h(\mu)^{-1}$ is denoted by $[\mu]^{-1}$.
For every $[\nu] \in T$, the right translation $r_{[\nu]}:T \to T$ on the group $T$
is defined by $[\mu] \mapsto [\mu] \ast [\nu]$.

Let $F^\mu$ denote the normalized ($0$, $1$ and $\infty$ are fixed) 
quasiconformal self-homeomorphism of $\mathbb C$ whose complex dilatation
is $\mu \in M(\mathbb H^+)$ on the upper half-plane $\mathbb H^+$ and $0$ on the lower half-plane $\mathbb H^-$.
For $\mu_1$ and $\mu_2$ in $M(\mathbb H^+)$,
we see that $\pi(\mu_1)=\pi(\mu_2)$ if and only if $F^{\mu_1}|_{\mathbb H^-}=F^{\mu_2}|_{\mathbb H^-}$. 

We define the following spaces of holomorphic functions $\Psi$ and $\Phi$ on $\mathbb{H}$ as follows:
\begin{align*}
A(\mathbb{H}) &= \{\Psi \mid \Vert \Psi \Vert_A = \sup_{z \in \mathbb{H}} |{\rm Im}\,z|^2|\Psi(z)| < \infty \}; \\
B(\mathbb{H}) &= \{\Phi \mid \Vert \Phi \Vert_B = \sup_{z \in \mathbb{H}} |{\rm Im}\,z||\Phi'(z)| < \infty \}.
\end{align*}
Here, $A(\mathbb{H})$ is a complex Banach space with the hyperbolic $L^\infty$-norm $\Vert \cdot \Vert_A$,
and $B(\mathbb{H})$ is the {\it Bloch space} with the semi-norm $\Vert \cdot \Vert_B$. 
By ignoring the difference in constant functions, 
we regard $B(\mathbb{H})$ as a complex Banach space with the norm $\Vert \cdot \Vert_B$.

The {\it pre-Schwarzian derivative map} $L:M(\mathbb H^+) \to B(\mathbb H^-)$
is defined by the
correspondence $\mu \mapsto \log (F^\mu|_{\mathbb H^-})'$ and
the Schwarzian derivative map $S:M(\mathbb H^+) \to A(\mathbb H^-)$ is defined by
$S(\mu)=L(\mu)''-(L(\mu)')^2/2$, where $B(\mathbb{H})$ and $A(\mathbb{H})$ serve appropriate spaces as
the targets of $L$ and $S$, respectively.
Let $D(\Phi)=\Phi''-(\Phi')^2/2$ for $\Phi \in B(\mathbb H^-)$, which satisfies
$S=D \circ L$.
Then, $D$ restricted to $L(M(\mathbb H^+))$ is a holomorphic bijection onto $S(M(\mathbb H^+))$.

It is proved that $S$ is 
a holomorphic split submersion onto the bounded contractible domain
$S(M(\mathbb H^+))$ in $A(\mathbb H^-)$. Since $S=D \circ L$, we see that 
$D:L(M(\mathbb H^+)) \to S(M(\mathbb H^+))$ is a biholomorphic homeomorphism and
$L$ is also a holomorphic split submersion onto the bounded contractible domain 
$L(M(\mathbb H^+))$ in $B(\mathbb H^-)$.
Moreover,
these maps induce
well-defined injections $\alpha:T \to A(\mathbb H^-)$ such that
$\alpha \circ \pi=S$ and $\beta:T \to B(\mathbb H^-)$ such that
$\beta \circ \pi=L$. We call $\alpha$ the Bers embedding and $\beta$ the {\it pre-Bers embedding}.
By the facts that $S$ and $L$ are split submersions, we see that $\alpha$ and $\beta$ are homeomorphisms
onto the bounded contractible domains $\alpha(T)=S(M(\mathbb H^+))$ in $A(\mathbb H^-)$ and
$\beta(T)=L(M(\mathbb H^+))$ in $B(\mathbb H^-)$, respectively.

We can refer to a textbook of Lehto \cite{Le} for most of the aforementioned facts on the universal Teichm\"uller space.
Concerning the pre-Schwarzian derivative map, see Sugawa \cite{Su}.

Our subject turns to the BMO Teich\-m\"ul\-ler space $T_B$ introduced by Astala and Zinsmeister \cite{AZ}, which lies in $T$.
In general, 
we say that a measure $\lambda$ on $\mathbb{H}$ is a {\it Carleson measure} if
$$
\Vert \lambda \Vert_c = \sup_{I \subset \mathbb{R}} \frac{\lambda(I \times (0,|I|))}{|I|} < \infty,
$$
where the supremum is taken over all bounded intervals $I$ in $\mathbb{R}$. 
For $\mu \in L^\infty(\mathbb{H})$, we consider an absolutely continuous measure
$\lambda_\mu = |\mu(z)|^2 dxdy/y$ and let
$\Vert \mu \Vert_c = \Vert \lambda_\mu \Vert_c^{1/2}$.
Then, we provide a stronger norm $\Vert \mu \Vert_{\infty} + \Vert \mu \Vert_{c}$ for $\mu$.
Let $L_B(\mathbb H)$ denote the Banach space consisting of all elements $\mu \in L^{\infty}(\mathbb H)$ with 
$\Vert \mu \Vert_{\infty}+\Vert \mu \Vert_{c}<\infty$, namely, $\lambda_\mu$ is a Carleson measure on $\mathbb H$.
Moreover, we define the corresponding space of Beltrami coefficients
as $M_B(\mathbb H) =  M(\mathbb H) \cap L_B(\mathbb H)$.

\begin{definition}
The {\it BMO Teich\-m\"ul\-ler space}
$T_B \subset T$ is defined as $\pi(M_B(\mathbb H))$, equipped with the quotient topology from $M_B(\mathbb H)$ by $\pi$.
\end{definition}

We introduce the space of holomorphic functions
$$
A_B(\mathbb H)=\{\Psi \in A(\mathbb H) \mid \Vert \lambda^{(2)}_\Psi \Vert_c^{1/2}<\infty\},
$$
where $\lambda^{(2)}_\Psi=|\Psi(z)|^2y^3dxdy$ is a Carleson measure on $\mathbb H$. This is a complex Banach space with
norm $\Vert \Psi \Vert_{A_B}=\Vert \lambda^{(2)}_\Psi \Vert_c^{1/2}$. Similarly,
$$
{\rm BMOA}(\mathbb H)=\{\Phi \in B(\mathbb H) \mid \Vert \lambda^{(1)}_\Phi \Vert_c^{1/2}<\infty\},
$$
where $\lambda^{(1)}_\Phi=|\Phi'(z)|^2ydxdy$ is a Carleson measure on $\mathbb H$. 
This space modulo constants is a complex Banach space with
norm $\Vert \Phi \Vert_{\rm BMOA}=\Vert \lambda^{(1)}_\Phi \Vert_c^{1/2}$. 
 
The following result is proved by Shen and Wei \cite[Theorem 5.1]{SWei} extending \cite[Theorem 1]{AZ}. 

\begin{proposition}\label{S-holo}
The Schwarzian derivative map $S$ is a holomorphic map on $M_B(\mathbb H^+)$ into $A_B(\mathbb H^-)$.
Moreover, for each point $\Psi$ in 
$S(M_B(\mathbb H^+))$,
there exists a neighborhood $V_\Psi$ of $\Psi$ in
$A_B(\mathbb H^-)$ and a holomorphic map $\sigma:V_\Psi \to M_B(\mathbb H^+)$ such that
$S \circ \sigma$ is the identity on $V_\Psi$.
\end{proposition}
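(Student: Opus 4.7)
The plan is to verify the two assertions separately: holomorphy of $S$ as a map into $A_B(\mathbb H^-)$, and the existence of holomorphic local right inverses. The first is an upgrade of the classical holomorphy of $S:M(\mathbb H^+)\to A(\mathbb H^-)$ from the sup-norm setting to the finer Banach setting governed by the Carleson-measure condition. The second is a local section construction, first at the origin by the Ahlfors--Weill formula and then at a general point by a transport-of-basepoint argument.

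For the holomorphy, I would take as input the Astala--Zinsmeister statement that $S(M_B(\mathbb H^+))\subset A_B(\mathbb H^-)$, together with the already-cited holomorphy of $S$ into $A(\mathbb H^-)$. The standard criterion that a map between complex Banach spaces is holomorphic if and only if it is locally bounded and G\^ateaux holomorphic reduces the problem to checking that (i) the classical G\^ateaux derivative of $S$ at any $\mu_0\in M_B(\mathbb H^+)$, computed via the explicit integral representation involving the Cauchy kernel, maps an $L_B(\mathbb H^+)$-direction to $A_B(\mathbb H^-)$, and (ii) $S$ is bounded on a neighborhood of $\mu_0$ in the $A_B$-norm. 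Both reduce to Carleson-measure estimates for singular integrals whose kernels are analogous to those appearing in the original Astala--Zinsmeister argument, and the upgrade to holomorphy is then automatic.

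For the section at the origin, I would use the Ahlfors--Weill formula
$$\sigma_0(\Psi)(z)=-2({\rm Im}\,z)^2\,\Psi(\bar z),\qquad z\in\mathbb H^+,$$
which is complex linear in $\Psi$ and hence holomorphic, and satisfies $S\circ\sigma_0=\mathrm{id}$ on the set $\{\Psi:\Vert\Psi\Vert_A<1/2\}$ by the classical theorem. The key new verification is that $\sigma_0(\Psi)\in L_B(\mathbb H^+)$ whenever $\Psi\in A_B(\mathbb H^-)$; this follows directly from the identity
$$|\sigma_0(\Psi)(z)|^2\,\frac{dx\,dy}{y}=4\,({\rm Im}\,z)^3|\Psi(\bar z)|^2\,dx\,dy,$$
which is the Carleson measure $\lambda^{(2)}_\Psi$ on $\mathbb H^-$ pulled back by the reflection $z\mapsto\bar z$. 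Restricting $\sigma_0$ to a small $A_B$-ball around $0$ then yields the desired local section at the basepoint.

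To obtain a section near a general $\Psi_0=S(\mu_0)$, I would transport $\sigma_0$ using the Schwarzian composition law $S(F\circ F^{\mu_0})=(S(F)\circ F^{\mu_0})(F^{\mu_0}{}')^2+S(\mu_0)$. This lets us write any $\Psi$ near $\Psi_0$ as the Schwarzian of a composition $F\circ F^{\mu_0}$ where the new Schwarzian lives near $0$ on the image domain $F^{\mu_0}(\mathbb H^-)$, apply $\sigma_0$ there, and then pull back to a Beltrami coefficient on $\mathbb H^+$ via composition with $F^{\mu_0}$. The main obstacle is to carry out this transport rigorously in the $A_B$/$L_B$ topology rather than merely in sup norm: one must verify that the affine change of variables $\Psi\mapsto(\Psi-\Psi_0)\circ(F^{\mu_0})^{-1}\cdot((F^{\mu_0})^{-1}{}')^{-2}$ and the pullback of Beltrami coefficients by $F^{\mu_0}$ preserve the Carleson-measure classes and depend biholomorphically on the data. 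This uses the quasiconformal invariance of $A_\infty$-type conditions under the chord-arc change of coordinates implemented by $F^{\mu_0}$, which in turn rests on $\mu_0\in M_B(\mathbb H^+)$; once established, composing the transport maps with $\sigma_0$ yields the required holomorphic local inverse $\sigma$.
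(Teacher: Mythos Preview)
The paper does not prove this proposition; it attributes the result to Shen and Wei \cite[Theorem 5.1]{SWei}, extending Astala and Zinsmeister \cite[Theorem 1]{AZ}. Your outline matches the standard route taken in those references: holomorphy via local boundedness plus G\^ateaux holomorphy, the Ahlfors--Weill section at the origin (your identity linking $\lambda_{\sigma_0(\Psi)}$ to $\lambda^{(2)}_\Psi$ is exactly the right observation), and a basepoint transport for the general case.

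There is, however, a genuine gap in your step 3. You justify the Carleson-measure invariance of the transport by appealing to ``the chord-arc change of coordinates implemented by $F^{\mu_0}$,'' but $\mu_0\in M_B(\mathbb H^+)$ does \emph{not} imply that $F^{\mu_0}(\mathbb R)$ is chord-arc. The paper is explicit about this: $T_C$ is a proper open subset of $T_B$, and there exist $\mu\in M_B(\mathbb H^+)$ for which $F^\mu(\mathbb R)$ is not even locally rectifiable (Astala--Zinsmeister, Bishop, Semmes). So any argument that presupposes chord-arc geometry for the boundary curve fails at a generic basepoint.

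The correct mechanism is different. What $\mu_0\in M_B(\mathbb H^+)$ does guarantee is that $F^{\mu_0}$ is bi-Lipschitz with respect to the hyperbolic metrics of $\mathbb H^\pm$ and their images (compare Theorem~\ref{FKP} and the Bishop--Jones characterization in \cite{BJ}), and this is what makes pull-back and push-forward by $F^{\mu_0}$ bounded isomorphisms between the relevant Carleson classes $A_B$, $L_B$ on $\mathbb H^\pm$ and their analogues on $\Omega^\pm=F^{\mu_0}(\mathbb H^\pm)$. The Ahlfors--Weill reflection on the quasidisk $\Omega^-$ is then carried out in the hyperbolic metric of $\Omega^-$ (where it remains valid for small Schwarzians), and the hyperbolic bi-Lipschitz control transfers the Carleson estimates back to $\mathbb H^+$. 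Once you replace the chord-arc appeal with this hyperbolic bi-Lipschitz argument, your transport step goes through and the proof is complete.
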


This fact also leads to a claim on the pre-Schwarzian derivative map $L$ on $M_B(\mathbb H^+)$, and
we have that $L(M_B(\mathbb H^+)) \subset \mathrm{BMOA}(\mathbb H^-)$ and $L:M_B(\mathbb H^+) \to \mathrm{BMOA}(\mathbb H^-)$ is holomorphic.
Moreover,
we also see that $D$ restricted to $L(M_B(\mathbb H^+))$ is a
holomorphic bijection onto $S(M_B(\mathbb H^+))$. 
Hence, likewise to the case of
the universal Teich\-m\"ul\-ler space, the pre-Schwarzian derivative map $L$
satisfies the corresponding property on $M_B(\mathbb H^+)$ to that for $S$ stated in Proposition \ref{S-holo}.
These arguments are given in \cite[Section 6]{SWei}.

\begin{proposition}\label{L-holo}
The pre-Schwarzian derivative map $L$ is a holomorphic map on $M_B(\mathbb H^+)$ into $\mathrm{BMOA}(\mathbb H^-)$,
and at each point in the image $L(M_B(\mathbb H^+))$, there exists a local holomorphic right inverse of $L$.
\end{proposition}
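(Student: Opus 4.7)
The plan is to transfer the assertions of Proposition \ref{S-holo} from $S$ to $L$ by analyzing the algebraic map $D(\Phi)=\Phi''-(\Phi')^2/2$ as a map between the Banach spaces $\mathrm{BMOA}(\mathbb{H}^-)$ and $A_B(\mathbb{H}^-)$, and exploiting the factorization $S=D\circ L$. The strategy proceeds in three stages: show $D$ is holomorphic between these spaces; produce a holomorphic local right inverse of $D$ valued in $\mathrm{BMOA}(\mathbb{H}^-)$ at each point of $S(M_B(\mathbb{H}^+))$; and finally assemble these with Proposition \ref{S-holo} to obtain both the holomorphy of $L$ and the required local right inverses.

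First I would check that $D$ sends $\mathrm{BMOA}(\mathbb{H}^-)$ continuously into $A_B(\mathbb{H}^-)$. The linear term $\Phi\mapsto\Phi''$ is bounded by the Littlewood--Paley equivalence of the Carleson conditions on $|\Phi'(z)|^2 y\,dxdy$ and $|\Phi''(z)|^2 y^3\,dxdy$, while for the quadratic term the pointwise inequality $|\Phi'(z)|^4 y^3 \le \Vert\Phi\Vert_B^2\,|\Phi'(z)|^2 y$ yields $\Vert(\Phi')^2\Vert_{A_B}\lesssim \Vert\Phi\Vert_B\Vert\Phi\Vert_{\mathrm{BMOA}}$. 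As a sum of a bounded linear and a bounded quadratic map between Banach spaces, $D$ is holomorphic.

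Next, fix $\mu_0\in M_B(\mathbb{H}^+)$, and set $\Phi_0=L(\mu_0)\in B(\mathbb{H}^-)$, $\Psi_0=S(\mu_0)\in A_B(\mathbb{H}^-)$. Let $\sigma:V_{\Psi_0}\to M_B(\mathbb{H}^+)$ be the local right inverse of $S$ from Proposition \ref{S-holo}. The composition $R=L\circ\sigma$ satisfies $D\circ R = S\circ\sigma = \mathrm{id}_{V_{\Psi_0}}$, so $R$ is a holomorphic right inverse of $D$ on $V_{\Psi_0}$ taking values a priori only in $B(\mathbb{H}^-)$. I would then invert $D$ directly via the substitution $\Phi=-2\log f$, which converts $D(\Phi)=\Psi$ into the second order linear ODE $f''+(\Psi/2)f=0$; solving with normalized initial data at a fixed base point produces a holomorphic map $\Psi\mapsto\Phi$, and for $\Psi\in V_{\Psi_0}$ the resulting $\Phi$ can be shown to lie in $\mathrm{BMOA}(\mathbb{H}^-)$. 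By ODE uniqueness this alternate right inverse coincides with $R$, showing that $R$ is valued in $\mathrm{BMOA}(\mathbb{H}^-)$ and, in particular, $\Phi_0=R(\Psi_0)\in\mathrm{BMOA}(\mathbb{H}^-)$. Holomorphy of $L$ then follows from the factorization $L=R\circ S$ near $\mu_0$. For the local right inverse of $L$ at $\Phi_0$, choose a BMOA-neighborhood $U$ of $\Phi_0$ with $D(U)\subset V_{\Psi_0}$ and set $\tau=\sigma\circ D:U\to M_B(\mathbb{H}^+)$; then $L\circ\tau = R\circ D$, which equals $\mathrm{id}_U$ by the injectivity of $D$ near $\Phi_0$, again a consequence of ODE uniqueness.

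The main obstacle will be the BMOA estimate for the ODE solution: the immediate Bloch bound only places $\Phi=-2\log f$ in $B(\mathbb{H}^-)$, and to upgrade to $\mathrm{BMOA}(\mathbb{H}^-)$ one must produce Carleson-measure control of $\Phi'=-2f'/f$. This is where proximity of $\Psi$ to $\Psi_0=S(\mu_0)\in S(M_B(\mathbb{H}^+))$ is essential: the solution $f_0$ corresponding to $\Psi_0$ is bounded away from zero in a quantitative way on each Carleson window, and this quantitative behavior persists under small $A_B$-perturbations of $\Psi$, yielding the required Carleson bound on $|\Phi'(z)|^2 y\,dxdy$.
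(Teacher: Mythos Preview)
Your approach is essentially the same as the paper's: both transfer the properties of $S$ to $L$ through the factorization $S=D\circ L$, using that $D$ restricted to $L(M_B(\mathbb H^+))$ is a holomorphic bijection onto $S(M_B(\mathbb H^+))$; the paper simply asserts this and cites \cite[Section~6]{SWei}, whereas you supply the ODE mechanism and the BMOA estimate explicitly. One small imprecision: the local injectivity of $D$ near $\Phi_0$ is not literally ``ODE uniqueness'' (the Schwarzian has a M\"obius ambiguity), but rather the fact that the nontrivial M\"obius translates of $F^{\mu_0}$ fail to have $\log$-derivative in $\mathrm{BMOA}(\mathbb H^-)$, which makes $dD_{\Phi_0}$ injective and hence, combined with your right inverse $R$, a Banach isomorphism.
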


Propositions \ref{S-holo} and \ref{L-holo} also imply that $S(M_B(\mathbb H^+))$ and $L(M_B(\mathbb H^+))$ are open subsets 
in $A_B(\mathbb H^-)$ and ${\rm BMOA}(\mathbb H^-)$ respectively, and 
$D:L(M_B(\mathbb H^+)) \to S(M_B(\mathbb H^+))$ is a biholomorphic homeomorphism.

Under these properties of $S$ and $L$ on $M_B(\mathbb H^+)$,
the Bers embedding $\alpha$ and the pre-Bers embedding $\beta$ of the BMO Teich\-m\"ul\-ler space $T_B$
can be established in the same way as in the case of $T$.
These maps induce complex Banach structures to $T_B$ which are biholomorphically equivalent, and
hence $\alpha$ and $\beta$ are biholomorphic homeomorphisms.

\begin{theorem}\label{Bers}
$(1)$ The Bers embedding $\alpha:T_B \to S(M_B(\mathbb H^+)) \subset A_B(\mathbb H^-)$ is
a homeomorphism onto the image. $(2)$ The pre-Bers embedding $\beta:T_B \to L(M_B(\mathbb H^+)) \subset {\rm BMOA}(\mathbb H^-)$ is
a homeomorphism onto the image.
\end{theorem}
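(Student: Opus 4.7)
The plan is to mirror the proof of the analogous statement for the universal Teich\-m\"ul\-ler space $T$ sketched earlier in the excerpt, substituting the Banach space $L_B(\mathbb{H}^+)$ for $L^\infty(\mathbb{H}^+)$ and using the refined holomorphicity from Propositions \ref{S-holo} and \ref{L-holo} in place of their classical counterparts. I treat (1) in detail; (2) is obtained by repeating the argument with $L$ and $\mathrm{BMOA}(\mathbb H^-)$ replacing $S$ and $A_B(\mathbb H^-)$, together with the biholomorphism $D:L(M_B(\mathbb H^+))\to S(M_B(\mathbb H^+))$ to identify the two complex structures.

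First, the map $\alpha$ restricted to $T_B \subset T$ remains well-defined and injective, because these properties were already established on all of $T$ via the characterization $\pi(\mu_1)=\pi(\mu_2) \iff F^{\mu_1}|_{\mathbb H^-}=F^{\mu_2}|_{\mathbb H^-}$ and the fact that $S(\mu)$ depends only on $F^{\mu}|_{\mathbb H^-}$. Proposition \ref{S-holo} guarantees $\alpha(T_B)=S(M_B(\mathbb H^+)) \subset A_B(\mathbb H^-)$, and the existence of local holomorphic right inverses in that proposition also shows this image is open.

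Continuity of $\alpha$ is immediate from the universal property of the quotient topology: since $T_B$ carries the quotient topology induced by $\pi:M_B(\mathbb H^+) \to T_B$, the factorization $\alpha \circ \pi = S$ together with the holomorphicity (hence continuity) of $S:M_B(\mathbb H^+) \to A_B(\mathbb H^-)$ asserted in Proposition \ref{S-holo} yields the continuity of $\alpha$. For continuity of $\alpha^{-1}$ on $\alpha(T_B)$, the crucial ingredient is the local holomorphic right inverse. Given $\Psi_0 \in \alpha(T_B)$, Proposition \ref{S-holo} supplies a neighborhood $V_{\Psi_0} \subset A_B(\mathbb H^-)$ and a holomorphic map $\sigma:V_{\Psi_0} \to M_B(\mathbb H^+)$ with $S\circ\sigma=\mathrm{id}$. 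The composition $\pi\circ\sigma:V_{\Psi_0}\to T_B$ is then continuous and satisfies $\alpha\circ(\pi\circ\sigma)=S\circ\sigma=\mathrm{id}$, so $\pi\circ\sigma$ coincides with $\alpha^{-1}$ on $V_{\Psi_0}$. This local continuity of $\alpha^{-1}$ at each point of $\alpha(T_B)$ completes the proof that $\alpha$ is a homeomorphism onto its open image.

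I do not expect a genuine obstacle here: all the analytic substance has been outsourced to Propositions \ref{S-holo} and \ref{L-holo}, which provide holomorphicity together with the split-submersion property on $M_B(\mathbb H^+)$. The only care needed is to invoke the quotient topology correctly and to note that injectivity on $T_B$ is inherited from injectivity on the ambient $T$. The subsequent assertion that $\alpha$ and $\beta$ are biholomorphic (not merely homeomorphic) with respect to the induced complex Banach structures on $T_B$ is then a formal consequence of the holomorphicity of $S$, $L$, their local right inverses, and the biholomorphism $D$ relating their images.
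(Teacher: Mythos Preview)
Your proof is correct and follows exactly the approach the paper indicates: the paper does not give an explicit proof of this theorem but simply states that it ``can be established in the same way as in the case of $T$'' using the split-submersion properties of $S$ and $L$ from Propositions~\ref{S-holo} and~\ref{L-holo}, which is precisely what you have carried out in detail.
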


Next, we focus on the relationship between $\rm BMOA$ and $\rm BMO$. 
A locally integrable complex-valued function $\phi$ on $\mathbb R$ is of {\it bounded mean oscillation} (BMO) if
$$
\Vert \phi \Vert_{\mathrm{BMO}}=\sup_{I \subset \mathbb R}\frac{1}{|I|} \int_I |\phi(x)-\phi_I| dx <\infty,
$$
where the supremum is taken over all bounded intervals $I$ on $\mathbb R$ and 
$\phi_I$ denotes the integral mean of $\phi$
over $I$. The set of all complex-valued BMO functions on $\mathbb R$ is denoted by ${\rm BMO}(\mathbb R)$.
This is regarded as a Banach space with norm $\Vert \cdot \Vert_{\mathrm{BMO}}$
by ignoring the difference in complex constant functions.

The {\it John--Nirenberg inequality} for BMO functions (see Garnett \cite[VI.2]{Ga}) 
asserts that
there exist two universal positive constants $C_0$ and $C_{JN}$ such that for any complex-valued BMO function $\phi$, 
any bounded interval $I$ of $\mathbb{R}$, and any $\lambda > 0$, it holds that
\begin{equation}\label{JN}
\frac{1}{|I|} |\{t \in I: |\phi(t) - \phi_I| \geq \lambda \}| \leq 
C_0 \exp\left(\frac{-C_{JN}\lambda}{\Vert \phi \Vert_{\mathrm{BMO}}} \right).
\end{equation}

Concerning the boundary extension of $\Phi \in {\rm BMOA}(\mathbb H)$ to $\mathbb R$, 
we note that $\Phi$ has non-tangential limits 
almost everywhere on $\mathbb R$ and the Poisson integral of this boundary function reproduces $\Phi$.
This links the BMO properties of $\Phi$ on $\mathbb H$ and on $\mathbb R$.
The following theorem is well known, which can be seen from Zhu \cite[Theorems 9.17 and 9.19]{Zhu}.

\begin{theorem}\label{131} 
Let $E(\Phi)$ be the boundary extension of $\Phi \in {\rm BMOA}(\mathbb H)$ defined by
the non-tangential limits on $\mathbb R$.
Then, $E(\Phi) \in {\rm BMO}(\mathbb R)$, and
the trace operator $E:{\rm BMOA}(\mathbb H) \to {\rm BMO}(\mathbb R)$ is 
a Banach isomorphism onto the image.
\end{theorem}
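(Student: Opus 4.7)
The plan is to establish Theorem \ref{131} as the classical Fefferman--Stein / Garsia norm equivalence between BMOA on the half-plane and BMO on the real line, by proving two matching inequalities between the BMOA norm $\Vert \Phi \Vert_{\rm BMOA} = \Vert \lambda^{(1)}_\Phi \Vert_c^{1/2}$ and the BMO norm $\Vert E(\Phi) \Vert_{\rm BMO}$. First I would verify that non-tangential limits of $\Phi \in {\rm BMOA}(\mathbb H)$ exist almost everywhere on $\mathbb R$: this is a standard consequence of the Carleson condition, e.g., via finiteness of the Lusin area function on almost every non-tangential cone. Poisson reproduction then identifies $\Phi$ with the analytic Poisson extension of $E(\Phi)$, which in particular yields injectivity of the trace operator $E$.

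For the bound $\Vert E(\Phi) \Vert_{\rm BMO} \lesssim \Vert \Phi \Vert_{\rm BMOA}$, I would fix a bounded interval $I \subset \mathbb R$ with center $x_I$ and set $z_I = x_I + i|I|$. A Green's identity applied to $\Phi - \Phi(z_I)$ on a tent region $T^*(I)$ slightly larger than $I \times (0, |I|]$, together with an averaging over vertical slices, produces the Littlewood--Paley type estimate
\begin{equation*}
\int_I |E(\Phi)(x) - \Phi(z_I)|^2 \, dx \lesssim \iint_{T^*(I)} |\Phi'(z)|^2 \, y \, dx \, dy \leq |I| \cdot \Vert \Phi \Vert_{\rm BMOA}^2.
\end{equation*}
Dividing by $|I|$ and applying the Cauchy--Schwarz inequality, after absorbing the difference $|\Phi(z_I) - (E(\Phi))_I|$ into the same bound, yields $\Vert E(\Phi) \Vert_{\rm BMO} \lesssim \Vert \Phi \Vert_{\rm BMOA}$.

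For the reverse inequality, given $\phi \in {\rm BMO}(\mathbb R)$ whose analytic Poisson extension is $\Phi$, I would verify that $\lambda^{(1)}_\Phi = |\Phi'(z)|^2 y \, dx \, dy$ is a Carleson measure with $\Vert \lambda^{(1)}_\Phi \Vert_c^{1/2} \lesssim \Vert \phi \Vert_{\rm BMO}$. Fix $I$ and decompose $\phi = \phi_{2I} + (\phi - \phi_{2I}) \chi_{2I} + (\phi - \phi_{2I}) \chi_{\mathbb R \setminus 2I}$. The constant part contributes nothing to $\Phi'$. The local part is handled by the Littlewood--Paley identity combined with the $L^2$ control $\int_{2I} |\phi - \phi_{2I}|^2 \, dx \lesssim |I| \, \Vert \phi \Vert_{\rm BMO}^2$, which itself follows from (\ref{JN}). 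For the remote part, the pointwise Poisson-kernel decay together with the logarithmic growth $|\phi_{2^{k+1} I} - \phi_{2I}| \lesssim k \Vert \phi \Vert_{\rm BMO}$ yields a geometrically convergent dyadic series bound for $\iint_{T(I)} |\Phi'|^2 y \, dx \, dy$.

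Putting the two inequalities together shows that $E$ is a bounded linear bijection onto its image with bounded inverse, hence a Banach isomorphism onto the image. The main obstacle lies in the upper bound: passing from a Carleson measure condition on $|\Phi'|^2 y \, dx \, dy$ to BMO behavior on the boundary is the substantive content of the Garsia characterization, and requires careful management of the tent region $T^*(I)$ and identification of $\phi_I$ with $\Phi(z_I)$ up to a controlled error.
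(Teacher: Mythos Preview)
Your proof sketch is essentially correct and follows the standard Fefferman--Stein route to the equivalence between the Carleson-measure BMOA norm and the boundary BMO norm. The paper, however, does not prove Theorem~\ref{131} at all: it simply records the result as well known and cites Zhu \cite[Theorems 9.17 and 9.19]{Zhu}. So there is nothing to compare at the level of argument; you have supplied a genuine proof outline where the paper defers to the literature.

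A few minor comments on your sketch. The existence of non-tangential limits is perhaps more cleanly obtained by noting that ${\rm BMOA}(\mathbb H) \subset B(\mathbb H)$ (the Bloch space) and that the Carleson condition forces membership in $H^2$ on each bounded sub-half-plane, rather than invoking the Lusin area function directly. In the upper bound, your handling of the constant $\Phi(z_I)$ versus the average $(E(\Phi))_I$ is the right move, but the Green-identity step on the tent $T^*(I)$ needs a limiting argument (truncate away from the boundary) since $\Phi$ is not a priori smooth up to $\mathbb R$; this is routine but worth flagging. In the reverse inequality, your decomposition is standard and correct; one should also note that the ``analytic Poisson extension'' here means the Szeg\H{o}-type projection $P^+\phi$ (or equivalently the Poisson extension of $\phi$ when $\phi$ is already a boundary value of a holomorphic function), since the full Poisson extension of a real-valued BMO function is harmonic but not holomorphic. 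None of these points is a gap; they are the usual care required in writing out the classical proof.
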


By considering the trace operators $E^+$ and $E^-$
for the half-planes $\mathbb H^+$ and $\mathbb H^-$,
we obtain the closed subspaces 
$E^+(\mathrm{BMOA}(\mathbb H^+))$ and $E^-(\mathrm{BMOA}(\mathbb H^-))$ 
in $\mathrm{BMO}(\mathbb R)$. Functions in
$E^+(\mathrm{BMOA}(\mathbb H^+))$ and $E^-(\mathrm{BMOA}(\mathbb H^-))$ correspond by
complex conjugation.
By the identification
under the Banach isomorphism $E^\pm:\mathrm{BMOA}(\mathbb H^\pm) \to \mathrm{BMO}(\mathbb R)$,
we may regard $\mathrm{BMOA}(\mathbb H^\pm)$ as
closed subspaces of $\mathrm{BMO}(\mathbb R)$. 

Conversely, the projection from $\mathrm{BMO}(\mathbb{R})$ to  
$\mathrm{BMOA}(\mathbb H) \cong E(\mathrm{BMOA}(\mathbb H))$ associated with $E$ is specifically provided
by using the following map.

\begin{definition}
For $\phi \in \mathrm{BMO}(\mathbb R)$,
we define the singular integral
$$
\mathcal{H}(\phi)(x) = \text{p.v.}\frac{1}{\pi}\int_{-\infty}^{\infty} \phi(t) \left(\frac{1}{x-t}+\frac{t}{1+t^2} \right)dt
$$
to be a linear operator on $\mathrm{BMO}(\mathbb{R})$ called the {\it Hilbert transform}. 
\end{definition}

It is well known that $\mathcal H$ gives a Banach automorphism of ${\rm BMO}(\mathbb R)$ 
satisfying ${\mathcal H} \circ {\mathcal H}=-I$
(see \cite[Chapter VI]{Ga}). 
Let
$P^\pm = \frac{1}{2}(I \pm i\mathcal{H})$, which we call the {\it Riesz projections}.
We can apply the Riesz projections $P^\pm$ to $\mathrm{BMO}(\mathbb R)$
as bounded linear operators. We note that $P^++P^-=I$ and $P^+ \circ P^-=P^- \circ P^+=O$
by the definition of $P^\pm$ and the property ${\mathcal H} \circ {\mathcal H}=-I$.
Moreover, 
the images of $P^\pm$ coincide with $E^\pm(\mathrm{BMOA}(\mathbb H^\pm))$, which are the closed subspaces of 
$\mathrm{BMO}(\mathbb R)$ consisting of all elements that extend to holomorphic functions on $\mathbb H^\pm$ by
the Poisson integral. 

\begin{theorem}\label{decomposition}
The Riesz projections $P^\pm$ in $\mathrm{BMO}(\mathbb R)$
are bounded linear projections onto the closed subspaces $E^\pm(\mathrm{BMOA}(\mathbb H^\pm))$.
They yield the topological direct sum decomposition
$$
\mathrm{BMO}(\mathbb R)=E^+(\mathrm{BMOA}(\mathbb H^+)) \oplus E^-(\mathrm{BMOA}(\mathbb H^-)).
$$
\end{theorem}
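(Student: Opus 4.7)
My plan is to exploit the algebraic identity $\mathcal{H} \circ \mathcal{H} = -I$ together with the boundedness of $\mathcal{H}$ on $\mathrm{BMO}(\mathbb{R})$, both stated just above. From these inputs, the argument splits into two tasks: (a) verify that $P^\pm$ are bounded complementary projections, and (b) identify their ranges with $E^\pm(\mathrm{BMOA}(\mathbb{H}^\pm))$. Step (a) is almost purely algebraic, while (b) is where the analytic content sits, and the hardest part will be reconciling the kernel correction $t/(1+t^2)$ in the definition of $\mathcal{H}$ with the boundary behaviour of the harmonic-conjugate construction in $\mathbb{H}^\pm$.

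For step (a), boundedness of $P^\pm = \frac{1}{2}(I \pm i\mathcal{H})$ is immediate from boundedness of $\mathcal{H}$. Using $\mathcal{H}^2 = -I$ one computes
\begin{equation*}
4(P^\pm)^2 = (I \pm i\mathcal{H})^2 = I \pm 2 i \mathcal{H} - \mathcal{H}^2 = 2(I \pm i \mathcal{H}) = 4 P^\pm,
\end{equation*}
so each $P^\pm$ is idempotent, and similarly $4 P^+ P^- = (I + i \mathcal{H})(I - i \mathcal{H}) = I + \mathcal{H}^2 = 0$, with the same for $P^- P^+$. The identity $P^+ + P^- = I$ holds by definition. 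All equalities are understood modulo constants, in accordance with the quotient convention by which $\mathrm{BMO}(\mathbb{R})$ is a Banach space. Hence $\mathrm{BMO}(\mathbb{R}) = \mathrm{Im}(P^+) \oplus \mathrm{Im}(P^-)$ as a topological direct sum, with the two projections continuous.

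For step (b), I would argue as follows. Given $\phi \in \mathrm{BMO}(\mathbb{R})$, its Poisson extension $u$ to $\mathbb{H}^+$ is harmonic, and a classical computation shows that its harmonic conjugate $v$, normalized by $v(i)=0$, has boundary values equal to $\mathcal{H}\phi$ up to an additive constant (the correction $t/(1+t^2)$ in the kernel of $\mathcal{H}$ being precisely what enforces this convergence and absorbs the ambiguity). Thus $F = u + iv$ is holomorphic in $\mathbb{H}^+$ with boundary values $\phi + i \mathcal{H}\phi = 2 P^+ \phi$; since $\phi \in \mathrm{BMO}(\mathbb{R})$, Theorem \ref{131} together with the Carleson characterization of $\mathrm{BMOA}(\mathbb{H}^+)$ places $F$ in $\mathrm{BMOA}(\mathbb{H}^+)$, giving $P^+\phi \in E^+(\mathrm{BMOA}(\mathbb{H}^+))$. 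Conversely, if $\psi \in E^+(\mathrm{BMOA}(\mathbb{H}^+))$ is the boundary function of some $F = u + i v \in \mathrm{BMOA}(\mathbb{H}^+)$, then the Cauchy--Riemann relations together with the above identification of $\mathcal{H}$ as boundary harmonic conjugation force $\mathcal{H}\psi = -i\psi$ modulo constants, hence $P^+\psi = \psi$. This proves $\mathrm{Im}(P^+) = E^+(\mathrm{BMOA}(\mathbb{H}^+))$, and the analogous assertion for $P^-$ follows by complex conjugation exchanging $\mathbb{H}^+$ and $\mathbb{H}^-$ and reversing the sign of $\mathcal{H}$. Combined with step (a), this yields the claimed decomposition.
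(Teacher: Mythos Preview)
Your proposal is correct and follows essentially the same line as the paper. The paper does not give a formal proof of this theorem; it treats the result as standard, noting just before the statement that $P^+ + P^- = I$ and $P^+ \circ P^- = P^- \circ P^+ = O$ follow from $\mathcal{H}\circ\mathcal{H} = -I$, and that the images of $P^\pm$ coincide with $E^\pm(\mathrm{BMOA}(\mathbb{H}^\pm))$, characterized as those BMO functions whose Poisson integrals are holomorphic on $\mathbb{H}^\pm$. Your step~(a) reproduces the algebraic part verbatim, and your step~(b) supplies the analytic justification the paper leaves implicit, via the identification of $\mathcal{H}$ with boundary harmonic conjugation; this is exactly the mechanism underlying the paper's one-line description and the subsequent remark on the Szeg\"o projection.
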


Holomorphic functions of the upper and the lower half-planes $\mathbb H^\pm$ defined by
the Cauchy integrals of $\phi \in \mathrm{BMO}(\mathbb R)$,
$$
\frac{-1}{2\pi i}\int_{\mathbb R} \phi(t) \left(\frac{1}{z-t}+\frac{t}{1+t^2} \right) dt
\quad (z \in \mathbb{H}^\pm),
$$
are called the {\it Szeg\"o projections} of $\phi$. Here, the integration over $\mathbb R$ is taken in
the increasing direction $\int_{-\infty}^\infty$ when $z \in \mathbb H^+$ and in the decreasing direction $\int_{\infty}^{-\infty}$
when $z \in \mathbb H^-$.

We see that the Szeg\"o projections
give the bounded linear maps $\mathrm{BMO}(\mathbb R) \to \mathrm{BMOA}(\mathbb H^\pm)$ whose composition
with the trace operators
$E^\pm:\mathrm{BMOA}(\mathbb H^\pm) \to \mathrm{BMO}(\mathbb R)$ coincide with the Riesz projections $P^\pm$
(by a special case of the Plemelj formula).
In the sequel, we do not distinguish them, denote both of them by $P^\pm$, and call the Szeg\"o projections.
Moreover, we regard $\mathrm{BMOA}(\mathbb H^\pm)$
as the subspaces of $\mathrm{BMO}(\mathbb R)$
by omitting $E^\pm$ and represent the topological direct sum decomposition of $\mathrm{BMO}(\mathbb R)$ 
in Theorem \ref{decomposition} by 
\begin{equation}\label{omitE}
\mathrm{BMO}(\mathbb R)=\mathrm{BMOA}(\mathbb H^+) \oplus \mathrm{BMOA}(\mathbb H^-).
\end{equation}

\section{Strongly quasisymmetric homeomorphisms}\label{section3}

The universal Teich\-m\"ul\-ler space $T$ is identified with the
set $\rm QS$ of all normalized quasisymmetric homeomorphisms.
A quasisymmetric homeomorphism $h$ can be characterized by the doubling property for
the pull-back of the Lebesgue measure on $\mathbb R$ by $h$.
The BMO Teich\-m\"ul\-ler space $T_B$ is identified with the subset of
quasisymmetric homeomorphisms $h(\mu)=H(\mu)|_{\mathbb R}$ for $\mu \in M_B(\mathbb H)$.
We consider intrinsic characterization of these quasisymmetric homeomorphisms.

\begin{definition}
A quasisymmetric homeomorphism $h:\mathbb R \to \mathbb R$ is called {\it strongly quasisymmetric} if
there are positive constants $K$ and $\alpha$ such that
\begin{equation}\label{alphaK}
\frac{|h(E)|}{|h(I)|}\leq K\left(\frac{|E|}{|I|}\right)^{\alpha}
\end{equation}
for any bounded interval $I \subset \mathbb{R}$ and 
for any measurable subset $E \subset I$.
\end{definition}

We denote the set of all normalized strongly quasisymmetric homeomorphisms by ${\rm SQS}$.
We see that ${\rm SQS}$ is preserved under the composition by \eqref{alphaK}, and
that $h \in {\rm SQS}$ if and only if $h^{-1} \in {\rm SQS}$ by Coifman and Fefferman \cite[Lemma 5]{CF}.
As $\rm QS$ is a group under the composition,
${\rm SQS}$ is a subgroup of $\rm QS$.
We also see that $h \in \rm SQS$ is locally absolutely continuous, and hence
it can be represented as $h(x)=\int_0^x h'(t)dt$. 

\begin{theorem}\label{SQS}
Let $h(\mu)$ be a normalized quasisymmetric homeomorphism of $\mathbb R$. Then, $[\mu]$ belongs to $T_B$ if and only if
$h(\mu)$ is strongly quasisymmetric.
\end{theorem}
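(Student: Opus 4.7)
The plan is to reformulate the strong quasisymmetry inequality \eqref{alphaK} as an $A_\infty$-weight condition on $h'$, and then to bridge this boundary property with the Carleson measure condition on the dilatation in the half-plane. By the classical theory of Muckenhoupt weights (reverse H\"older inequality and John--Nirenberg), condition \eqref{alphaK} is equivalent to $h$ being locally absolutely continuous with $h' \in A_\infty(\mathbb R)$, which is in turn equivalent to $\log h' \in {\rm Re}\,{\rm BMO}^*(\mathbb R)$. Thus the theorem reduces to showing that $[\mu] \in T_B$ if and only if $h(\mu)$ is absolutely continuous with $\log h(\mu)' \in {\rm Re}\,{\rm BMO}^*(\mathbb R)$.

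For the forward direction, given $\mu \in M_B(\mathbb H^+)$, I would combine $F^\mu$ (with dilatation $\mu$ on $\mathbb H^+$ and $0$ on $\mathbb H^-$) with the normalized quasiconformal self-map $F_\mu$ of $\mathbb C$ whose dilatation on $\mathbb H^-$ is the symmetric reflection of $\mu$, so that $F_\mu$ preserves $\mathbb R$ and $F_\mu|_{\mathbb H^+}$ represents $H(\mu)$. The composition $G := F^\mu \circ F_\mu^{-1}$ is then conformal on $\mathbb H^+$, and the welding identity $g = G \circ h(\mu)$ holds on $\mathbb R$ with $g := F^\mu|_{\mathbb H^-}$. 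Differentiating yields $\log h(\mu)'(x) = \log g'(x) - \log G'(h(\mu)(x))$. By Proposition \ref{L-holo}, both $\log g' = L(\mu)$ and $\log G'$ lie in ${\rm BMOA}(\mathbb H^\mp)$, using the invariance of $M_B$ under symmetric reflection across $\mathbb R$, and Theorem \ref{131} transports this to a BMO identity on $\mathbb R$. The Carleson hypothesis then feeds into the $A_\infty$-property of $h(\mu)'$ through the standard BMO-to-$A_\infty$ correspondence.

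For the backward direction, given $h \in {\rm SQS}$, I would construct a quasiconformal extension whose Beltrami coefficient lies in $M_B(\mathbb H)$. One natural route is via the Beurling--Ahlfors extension: its dilatation is given explicitly by difference quotients of $h$, and the $A_\infty$-property of $h'$ translates, through Coifman--Fefferman-type estimates, into the Carleson bound on $|\mu|^2\, dxdy/y$. A more conceptual alternative uses conformal welding: decompose $\log h' \in {\rm BMO}(\mathbb R)$ via \eqref{omitE} as $\Phi^+ - \Phi^-$ with $\Phi^\pm \in {\rm BMOA}(\mathbb H^\pm)$, produce conformal maps $g^\pm$ on $\mathbb H^\pm$ realizing these pre-Schwarzian derivatives via the local right inverse supplied by Proposition \ref{L-holo}, and weld them along $\mathbb R$ to obtain the required extension.

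The main obstacle is the backward direction, specifically producing the Carleson estimate on the dilatation purely from the $A_\infty$-information on $h'$. In the Beurling--Ahlfors approach, this requires sharp pointwise estimates tied to reverse H\"older constants of $h'$; in the welding approach, it demands a careful inversion of the pre-Schwarzian map and a gluing lemma relating the BMOA norms of the two welded pieces to the Carleson norm of the welded Beltrami coefficient. The forward direction is essentially a consequence of Propositions \ref{S-holo}, \ref{L-holo} and Theorem \ref{131} once the absolute continuity of $h(\mu)$ is verified, which itself follows from the local integrability of $\log h(\mu)'$ established through the BMOA trace.
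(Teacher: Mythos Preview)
Your backward direction via the Beurling--Ahlfors extension is essentially the Fefferman--Kenig--Pipher argument (Theorem \ref{FKP} in the paper), and it works. The paper's own alternative proof of this direction (Theorem \ref{converse}) is different: it interpolates along the path $h_s(x)=\int_0^x e^{s\phi}\,dt$, uses uniform $A_\infty$ bounds to cut $[0,1]$ into finitely many steps of small BMO increment, and then invokes the local holomorphic inverse of $\Lambda$ near the origin (Proposition \ref{derivative0}) on each step. Your welding alternative, decomposing $\log h'$ as $\Phi^+-\Phi^-$ and inverting $L$, is not a complete argument: the local right inverse of $L$ in Proposition \ref{L-holo} only produces a Beltrami coefficient with prescribed pre-Schwarzian on one side, and there is no gluing lemma in the paper that assembles the two pieces into a single $\mu \in M_B(\mathbb H)$ with $h(\mu)=h$.

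The genuine gap is in your forward direction. From the welding identity you extract $\log h(\mu)' = \log g' - \log G' \circ h(\mu)$ with both $\log g'$ and $\log G'$ in BMOA, and then assert that ``the Carleson hypothesis feeds into the $A_\infty$-property through the standard BMO-to-$A_\infty$ correspondence.'' There is no such correspondence: membership of $\log h'$ in $\mathrm{BMO}(\mathbb R)$ does not imply $h' \in A_\infty$ (the paper gives the counterexample $\phi(x)=\log(1/|x|)$ just before Proposition \ref{convex2}). Moreover, the term $\log G' \circ h(\mu)$ is only known to be BMO once you know $h(\mu)$ preserves BMO under composition, i.e.\ once $h(\mu)\in\mathrm{SQS}$ (Theorem \ref{pullback}), which is exactly what you are trying to prove. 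The paper avoids both problems: in Lemma \ref{smalldil} it assumes $F^\mu(\mathbb R)$ is chord-arc and invokes the Lavrentiev theorem (Theorem \ref{Lavrentiev}) to get $|f_1'|,|f_2'|\in A_\infty$ \emph{directly}, then writes $h=\tilde f_2^{-1}\circ\tilde f_1$ as a composition of two SQS maps. For general $\mu\in M_B$ (where $F^\mu(\mathbb R)$ need not be chord-arc) it decomposes $[\mu]$ into finitely many factors in $T_C$ via Proposition \ref{Fenn} and applies the chord-arc case to each factor. Your outline is missing both the Lavrentiev step and this decomposition.
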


The ``only-if'' part of this theorem follows from Fefferman, Kenig and Pipher \cite[Theorem 2.3]{FKP} and 
the ``if'' part follows from \cite[Theorem 4.2]{FKP}. 
Later, we will give a different proof for Theorem \ref{SQS} in Theorems \ref{main1} and \ref{converse}. 
Yet other proofs through other equivalent conditions are summarized in Shen and Wei \cite[Theorem A]{SWei}.

Here, we show a way to extend a strongly quasisymmetric homeomorphism of $\mathbb R$
to a quasiconformal self-homeomorphism of $\mathbb H$  
whose complex dilatation induces a Carleson measure. This is introduced by \cite{FKP} in the course of their arguments
involving the above results.
There is a detailed exposition in WM \cite[Theorem 3.4]{WM-2}. 

Let $\phi(x)=\frac{1}{\sqrt \pi}e^{-x^2}$ and $\psi(x)=\phi'(x)=-2x \phi(x)$.
We extend a strongly quasisymmetric homeomorphism $h:\mathbb R \to \mathbb R$
to $\mathbb H$ by setting a real-analytic diffeomorphism $H: \mathbb{H} \to \mathbb{C}$ by 
\begin{equation*}\label{F}
\begin{split}
&H(x, y) = U(x, y) + iV(x, y);\\
U(x,y)&=(h \ast \phi_y)(x),\ V(x,y)=(h \ast \psi_y)(x),
\end{split}
\end{equation*}
where $\varphi_y(x)=y^{-1} \varphi(y^{-1}x)$ for $x \in \mathbb R$ and $y>0$, and $\ast$ is the convolution.
We call this extension the variant of the {\it Beurling--Ahlfors extension}
by the heat kernel. The original extension given in \cite{BA} uses the kernels
$\phi(x)=\frac{1}{2}1_{[-1,1]}(x)$ and $\psi(x)=\frac{1}{2}1_{[-1,0]}(x)-\frac{1}{2}1_{[0,1]}(x)$.

The quasiconformal extension theorem can be summarized as follows.
The latter statement is in \cite[Proposition 3.2]{WM-3}.

\begin{theorem}\label{FKP}
For $h \in{\rm SQS}$, 
the map $H$ given by
the variant of the Beurling--Ahlfors extension by the heat kernel is a quasiconformal real-analytic
self-diffeomorphism of $\mathbb H$ whose complex dilatation belongs to $M_B(\mathbb H)$.
Moreover, $H$ is bi-Lipschitz with respect to the hyperbolic metric. 
\end{theorem}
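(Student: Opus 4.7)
The plan is to establish the four properties---real-analyticity, diffeomorphism onto $\mathbb{H}$, Carleson complex dilatation, and hyperbolic bi-Lipschitz---in succession, with the Carleson estimate being the essential input from the strongly quasisymmetric hypothesis.

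First I would record the basic pointwise identities that drive the entire argument. Since $\phi(x)=\pi^{-1/2}e^{-x^2}$ is entire in $x$ and $C^\infty$ in $y$, the convolutions $U=h*\phi_y$ and $V=h*\psi_y$ are real-analytic on $\mathbb{H}$. The key observation $\psi_y=y(\phi_y)'$, combined with integration by parts (permissible because $h\in{\rm SQS}$ is locally absolutely continuous of at most linear growth while $\phi_y$ decays super-exponentially), yields
\[
V(x,y)=y\,(h'*\phi_y)(x) \qquad\text{and}\qquad U_x(x,y)=(h'*\phi_y)(x)=V(x,y)/y.
\]
Thus $V>0$ on $\mathbb{H}$, so $H$ sends $\mathbb{H}$ into itself; the remaining partial derivatives $U_y, V_x, V_y$ are likewise given by explicit convolutions of $h$ and $h'$ with Gaussian-type kernels, and a direct calculation shows $J_H>0$. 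Global injectivity onto $\mathbb{H}$ then follows from $H|_\mathbb{R}=h$ being a homeomorphism and standard degree arguments.

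Next I would compute the Beltrami coefficient $\mu_H=\bar\partial H/\partial H$ from these formulas. The bound $\Vert \mu_H\Vert_\infty <1$ reduces to a quantitative comparison of $h'*\phi_y$ at points separated by bounded hyperbolic distance, which is a direct consequence of the $A_\infty$ (hence doubling) property of $h'$ supplied by the strong quasisymmetry inequality \eqref{alphaK}.

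The substantive step is the Carleson bound for $|\mu_H(z)|^2\,dxdy/y$. Since $h'\in A_\infty$, we have $b:=\log h'\in{\rm BMO}(\mathbb R)$. The strategy is to show pointwise that $|\mu_H(x,y)|$ is dominated by $y\,|\nabla\log(h'*\phi_y)(x)|$ plus lower-order terms absorbed by \eqref{JN} and the $A_\infty$ constants. One then invokes the classical Carleson-measure characterization of BMO (see Garnett \cite{Ga}, Ch.~VI): for smooth approximations of the identity with sufficient decay, including the Gaussian $\phi_y$, the quantity $|\nabla(b*\phi_y)|^2\,y\,dxdy$ is a Carleson measure on $\mathbb{H}$ with norm controlled by $\Vert b\Vert_{\rm BMO}^2$. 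The $A_\infty$ condition is used once more to rewrite $\nabla(h'*\phi_y)/(h'*\phi_y)$ as a $\nabla b$-type expression, closing the estimate. Finally, the hyperbolic bi-Lipschitz property follows by combining quasiconformality with the Harnack-type comparison $(h'*\phi_{y_1})(x_1)\asymp (h'*\phi_{y_2})(x_2)$ for points $(x_j,y_j)$ at bounded hyperbolic distance---yet another $A_\infty$ consequence---so that the conformal distortion $V(x,y)/y$ is bounded above and below along hyperbolic geodesics. I expect the Carleson step to be the main obstacle: one must carefully extract the BMO-gradient structure from the explicit formula for $\mu_H$ and verify that all error terms are indeed governed by $\Vert\log h'\Vert_{\rm BMO}$ and the $A_\infty$ constants of $h'$.
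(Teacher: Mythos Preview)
The paper does not actually prove this theorem: after introducing the heat-kernel extension it simply states the result and refers the reader to \cite{FKP} for the Carleson-dilatation statement, to \cite[Theorem 3.4]{WM-2} for a detailed exposition, and to \cite[Proposition 3.2]{WM-3} for the hyperbolic bi-Lipschitz assertion. So there is no ``paper's own proof'' to compare against.

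That said, your outline is the right one and tracks the arguments in those references. The identities $\psi_y=y(\phi_y)'$ and $V=y(h'\ast\phi_y)=yU_x$ are exactly the starting point in \cite{FKP} and \cite{WM-2}; quasiconformality and the hyperbolic bi-Lipschitz bound both reduce, as you indicate, to Harnack-type comparisons of the Gaussian averages $h'\ast\phi_y$, which follow from the $A_\infty$ property of $h'$. Your identification of the Carleson step as the crux is accurate: in \cite{FKP} and \cite{WM-2} one shows that $|\mu_H(x,y)|$ is controlled by $y|\nabla(h'\ast\phi_y)(x)|/(h'\ast\phi_y)(x)$, and the passage from this quotient to a genuine BMO-gradient Carleson measure of $b=\log h'$ is handled there via the $A_\infty$/reverse-H\"older machinery rather than a direct pointwise identity---so the phrase ``rewrite $\nabla(h'\ast\phi_y)/(h'\ast\phi_y)$ as a $\nabla b$-type expression'' is where you would need the most care, and the cited references supply the necessary inequalities.
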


We note that in the case where the BMO norm of $\log h'$ is sufficiently small for $h \in \rm SQS$, 
Semmes \cite[Proposition 4.2]{Se} used a modified Beurling--Ahlfors extension $H$
by compactly supported smooth kernels $\phi$ and $\psi$ to prove the same properties as in Theorem \ref{FKP}.
By dividing the weight $h'$ into small pieces and composing the resulting maps, the assumption on the small BMO norm can be
removed to obtain a quasiconformal extension of the same properties.

A locally integrable non-negative measurable function $\omega \geq 0$ on $\mathbb R$ 
is called a weight. We say that $\omega$ is an {\it $A_\infty$-weight} if it satisfies 
the reverse Jensen inequality, namely,
there exists a constant $C_\infty \geq 1$ such that
\begin{equation}\label{iff}
\frac{1}{|I|} \int_I \omega(x) dx \leq C_\infty \exp \left(\frac{1}{|I|} \int_I \log \omega(x) dx \right) 
\end{equation}
for every bounded interval $I \subset \mathbb R$. 
On the contrary, $\omega$ is defined to be an $A_\infty$-weight if 
$h(x)=\int_0^x \omega(t)dt$ is a strongly quasisymmetric homeomorphism of $\mathbb R$ by
Coifman and Fefferman \cite[Theorem III]{CF}.
In other words, $h'$ is an $A_\infty$-weight if $h \in \rm SQS$.
It is known that these definitions are equivalent 
(see Hru\v{s}\v{c}ev \cite{Hr}).
Moreover, the constants $K$ and $\alpha$ for the strong quasisymmetry in \eqref{alphaK} can be estimated in terms of $C_\infty$.
Concerning the relationship with $A_p$-weight $(p>1)$ of Muckenhoupt \cite{M}, 
see Garc\'ia-Cuerve and Rubio de Francia \cite[Section IV.2]{GR} and Garnett \cite[Section VI.6]{Ga}. In particular, $\omega$ is an $A_\infty$-weight if and only if
it is an $A_p$-weight for all sufficiently large $p$.


We see that if $\omega$ is an $A_\infty$-weight on $\mathbb R$, then
$\log \omega$ belongs to ${\rm Re}\,\mathrm{BMO}(\mathbb R)$,
which is the real subspace of ${\rm BMO}(\mathbb R)$ consisting of all
real-valued BMO functions. In particular, 
$\log h' \in {\rm Re}\,\mathrm{BMO}(\mathbb R)$ for $h \in \rm SQS$.
Conversely, we know the following fact
(see \cite[p.409]{GR} and \cite[Lemma VI.6.5]{Ga}).

\begin{proposition}\label{C_0}
Suppose that a weight $\omega \geq 0$ satisfies $\log \omega \in {\rm Re}\,\mathrm{BMO}(\mathbb R)$.
If the BMO norm $\Vert \log \omega \Vert_{\mathrm{BMO}}$ 
is less than the constant $C_{JN}$ of \eqref{JN}, then $\omega$ is an $A_\infty$-weight. 
\end{proposition}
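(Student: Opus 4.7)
The plan is to reduce the reverse Jensen inequality \eqref{iff} directly to the John--Nirenberg inequality \eqref{JN}, exploiting the fact that the threshold $C_{JN}$ in the exponent of John--Nirenberg is precisely what guarantees integrability of $e^{|\phi - \phi_I|}$ in the small-BMO regime.

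Set $\phi = \log \omega$ and fix a bounded interval $I \subset \mathbb R$. Dividing both sides of \eqref{iff} by $e^{\phi_I}$, the desired estimate becomes
$$
\frac{1}{|I|} \int_I e^{\phi(x) - \phi_I} \, dx \leq C_\infty,
$$
with $C_\infty$ independent of $I$. First I would rewrite the left-hand side via the layer-cake formula, changing variables $t = e^s$:
$$
\frac{1}{|I|} \int_I e^{\phi(x) - \phi_I}\,dx = \int_{-\infty}^{\infty} \frac{|\{x \in I : \phi(x) - \phi_I > s\}|}{|I|}\, e^s\, ds.
$$
The integral over $s < 0$ is trivially bounded by $\int_{-\infty}^{0} e^s\, ds = 1$ using the a priori bound $|\{\cdots\}| \leq |I|$.

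Next I would apply \eqref{JN} on the range $s \geq 0$: the set $\{x \in I : \phi(x) - \phi_I > s\}$ is contained in $\{x \in I : |\phi(x) - \phi_I| \geq s\}$, which has relative measure at most $C_0 \exp\!\bigl(-C_{JN} s / \Vert \phi \Vert_{\mathrm{BMO}}\bigr)$. Inserting this bound yields
$$
\int_0^{\infty} \frac{|\{x \in I : \phi(x) - \phi_I > s\}|}{|I|}\, e^s\, ds \leq C_0 \int_0^{\infty} \exp\!\left( \Bigl(1 - \frac{C_{JN}}{\Vert \phi \Vert_{\mathrm{BMO}}}\Bigr) s \right) ds.
$$

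The exponential integral converges precisely under the hypothesis $\Vert \phi \Vert_{\mathrm{BMO}} < C_{JN}$, and equals $\Vert \phi \Vert_{\mathrm{BMO}} / (C_{JN} - \Vert \phi \Vert_{\mathrm{BMO}})$. Collecting the two pieces, we obtain the uniform bound
$$
\frac{1}{|I|} \int_I e^{\phi(x) - \phi_I}\,dx \leq 1 + \frac{C_0\, \Vert \phi \Vert_{\mathrm{BMO}}}{C_{JN} - \Vert \phi \Vert_{\mathrm{BMO}}},
$$
which one takes as $C_\infty$, verifying the reverse Jensen inequality \eqref{iff} and hence that $\omega$ is an $A_\infty$-weight. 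There is no real obstacle here beyond correctly aligning the threshold: the proposition is essentially a one-line corollary of John--Nirenberg, with the smallness hypothesis $\Vert \log \omega \Vert_{\mathrm{BMO}} < C_{JN}$ being exactly what is needed for the exponential tail produced by \eqref{JN} to dominate the factor $e^s$ coming from the layer-cake change of variables.
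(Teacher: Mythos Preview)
Your proof is correct. The paper does not give its own proof of this proposition---it simply cites \cite[p.409]{GR} and \cite[Lemma VI.6.5]{Ga}---and your argument via the layer-cake formula combined with the John--Nirenberg inequality is exactly the standard proof found in those references.
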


There is an example of $\phi \in {\rm Re}\,\mathrm{BMO}(\mathbb R)$ such that $e^\phi$ is not an $A_\infty$-weight:
$\phi(x)=\log( 1/|x|)$.
Let $\mathrm{BMO}^*(\mathbb R)$ denote the proper subset of $\mathrm{BMO}(\mathbb R)$
consisting of all BMO functions $\phi$ such that $e^{{\rm Re}\,\phi}=|e^\phi|$ is an $A_\infty$-weight.
Moreover, let
${\rm Re}\,\mathrm{BMO}^*(\mathbb R)={\rm Re}\,\mathrm{BMO}(\mathbb R) \cap \mathrm{BMO}^*(\mathbb R)$.
We have the following claim.

\begin{proposition}\label{convex2}
${\rm Re}\,\mathrm{BMO}^*(\mathbb R)$ is a convex open subset of the real Banach subspace ${\rm Re}\,\mathrm{BMO}(\mathbb R)$.
Hence, so is
$\mathrm{BMO}^*(\mathbb R)$ of the complex Banach space $\mathrm{BMO}(\mathbb R)$.
\end{proposition}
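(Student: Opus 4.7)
The plan is to treat the convexity and openness of $\mathrm{Re}\,\mathrm{BMO}^*(\mathbb R)$ inside the real Banach space $\mathrm{Re}\,\mathrm{BMO}(\mathbb R)$ separately, and then to deduce the complex version from the real case via the continuous real-linear projection $\mathrm{Re}\colon \mathrm{BMO}(\mathbb R) \to \mathrm{Re}\,\mathrm{BMO}(\mathbb R)$: by the very definition of $\mathrm{BMO}^*(\mathbb R)$, we have $\mathrm{BMO}^*(\mathbb R) = \mathrm{Re}^{-1}(\mathrm{Re}\,\mathrm{BMO}^*(\mathbb R))$, and both convexity and openness are preserved under preimage by a continuous real-linear map.

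For convexity, take $\phi_1,\phi_2 \in \mathrm{Re}\,\mathrm{BMO}^*(\mathbb R)$ and $t \in [0,1]$, and write $\omega_j = e^{\phi_j}$ with $A_\infty$-constants $C_1,C_2$ from the reverse Jensen inequality \eqref{iff}. Since $\omega := e^{t\phi_1+(1-t)\phi_2} = \omega_1^t\,\omega_2^{1-t}$, H\"older's inequality with exponents $1/t$ and $1/(1-t)$ followed by \eqref{iff} for each $\omega_j$ yields
\[
\tfrac{1}{|I|}\int_I \omega \leq \Bigl(\tfrac{1}{|I|}\int_I \omega_1\Bigr)^{t}\Bigl(\tfrac{1}{|I|}\int_I \omega_2\Bigr)^{1-t} \leq C_1^t C_2^{1-t}\,\exp\Bigl(\tfrac{1}{|I|}\int_I \log\omega\Bigr),
\]
so $\omega$ again satisfies the reverse Jensen inequality, and hence $t\phi_1+(1-t)\phi_2 \in \mathrm{Re}\,\mathrm{BMO}^*(\mathbb R)$.

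For openness, fix $\phi_0 \in \mathrm{Re}\,\mathrm{BMO}^*(\mathbb R)$, so that $\omega_0 = e^{\phi_0}$ is an $A_\infty$-weight. The crucial ingredient is the self-improvement property of $A_\infty$ weights (the reverse H\"older inequality; see \cite[Section IV.2]{GR} or \cite[Section VI.6]{Ga}): there exists $s>1$ such that $\omega_0^s$ is again an $A_\infty$-weight, equivalently $s\phi_0 \in \mathrm{Re}\,\mathrm{BMO}^*(\mathbb R)$. Then for any $\psi \in \mathrm{Re}\,\mathrm{BMO}(\mathbb R)$ with $\|\psi\|_{\mathrm{BMO}} < (s-1)C_{JN}/s$, decompose
\[
\phi_0+\psi \;=\; \tfrac{1}{s}(s\phi_0) \;+\; \tfrac{s-1}{s}\Bigl(\tfrac{s}{s-1}\,\psi\Bigr),
\]
a convex combination. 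Its first term lies in $\mathrm{Re}\,\mathrm{BMO}^*(\mathbb R)$ by the choice of $s$, while the second has BMO norm strictly less than $C_{JN}$ and hence lies in $\mathrm{Re}\,\mathrm{BMO}^*(\mathbb R)$ by Proposition \ref{C_0}. The convexity established in the preceding step then delivers $\phi_0+\psi \in \mathrm{Re}\,\mathrm{BMO}^*(\mathbb R)$, showing that the open BMO-ball of radius $(s-1)C_{JN}/s$ about $\phi_0$ is contained in $\mathrm{Re}\,\mathrm{BMO}^*(\mathbb R)$.

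The main obstacle is the openness step, whose subtlety lies in reaching a fixed base point $\phi_0$: the small-BMO-norm criterion of Proposition \ref{C_0} only directly controls a neighbourhood of $0$, and it is the $A_\infty$ self-improvement that supplies the sliver of extra room $s>1$ needed to absorb an arbitrary $\phi_0$ into the left endpoint of a convex combination whose other endpoint can then be made small. Convexity, by contrast, is a short consequence of H\"older's inequality together with the reverse Jensen definition of $A_\infty$.
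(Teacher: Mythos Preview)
Your proof is correct and follows essentially the same approach as the paper's. Both arguments establish convexity via H\"older's inequality and the reverse Jensen characterization of $A_\infty$, and both establish openness by combining the $A_\infty$ self-improvement property ($\omega_0^{1+\varepsilon}\in A_\infty$) with the small-norm criterion of Proposition~\ref{C_0} and the already-proved convexity; the paper phrases this last step geometrically as a union of open cones with vertices at points of $\mathrm{Re}\,\mathrm{BMO}^*(\mathbb R)$, while you write out the convex combination explicitly, but the content is the same.
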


\begin{proof}
For the convexity of ${\rm Re}\,\mathrm{BMO}^*(\mathbb R)$, 
we have to show that
if $\omega$ and $\tilde \omega$ are $A_\infty$-weights, then $\omega^s\tilde \omega^t$ is also
an $A_\infty$-weight for $s,t \geq 0$ with $s+t=1$. 
By the H\"older inequality and \eqref{iff}, we have
\begin{align*}
\frac{1}{|I|} \int_I \omega(x)^s\tilde\omega(x)^tdx  
&\leq \left(\frac{1}{|I|} \int_I \omega(x)dx\right)^{s}\left(\frac{1}{|I|} \int_I \tilde\omega(x)dx\right)^{t}\\
&\leq \left(C_\infty \exp \left(\frac{1}{|I|} \int_I \log \omega(x) dx \right) \right)^s
\left(\widetilde C_\infty \exp \left(\frac{1}{|I|} \int_I \log \tilde \omega(x) dx \right) \right)^t\\
&= (C_\infty)^s (\widetilde C_\infty)^t \exp \left(\frac{1}{|I|} \int_I \log (\omega(x)^s \tilde \omega(x)^t) dx \right)
\end{align*}
for any bounded interval $I \subset \mathbb R$. Hence, $\omega^{s} \tilde\omega^{t}$ is an $A_\infty$-weight.

As another property of $A_\infty$-weights, we know that if $\omega$ is an $A_p$-weight, then
there is some $\varepsilon >0$ such that $\omega^r$ is an $A_p$-weight for every $r \in [0,1+\varepsilon)$
(see \cite[Theorem IV.2.7]{GR}). 
Combining these properties with the fact in Proposition \ref{C_0} that the open ball centered at the origin of 
${\rm Re}\,{\rm BMO}(\mathbb R)$
with radius $C_{JN}$ is contained in ${\rm Re}\,\mathrm{BMO}^*(\mathbb R)$, we can 
prove that ${\rm Re}\,\mathrm{BMO}^*(\mathbb R)$ is open. Indeed, ${\rm Re}\,\mathrm{BMO}^*(\mathbb R)$ is the union of
open cones spanned by the $C_{JN}$-neighborhood of the origin
having any points of ${\rm Re}\,\mathrm{BMO}^*(\mathbb R)$ as their vertices. 

Because $\mathrm{BMO}^*(\mathbb R)={\rm Re}\,{\rm BMO}^*(\mathbb R) \oplus i\,{\rm Re}\,{\rm BMO}(\mathbb R)$,
we also see that $\mathrm{BMO}^*(\mathbb R)$ is convex and open.
\end{proof}

A strongly quasisymmetric homeomorphism $h:\mathbb R \to \mathbb R$ can be also characterized by the 
composition operator on the Banach space ${\rm BMO}(\mathbb R)$. 
The pre-composition of $h$ to $\phi \in {\rm BMO}(\mathbb R)$ gives a change of the variables, 
and we denote
this linear operator on ${\rm BMO}(\mathbb R)$ by $C_h$.
Its boundedness is proved by Jones \cite{Jo} as follows. 
Concerning the dependence of the constants, see Gotoh \cite[Example 2.3]{Got}.

\begin{theorem}\label{pullback}
An increasing homeomorphism $h$ of $\mathbb R$ onto itself belongs to $\rm SQS$ if and only if 
the composition operator $C_h: \phi \mapsto \phi \circ h$ gives an automorphism of ${\rm BMO}(\mathbb R)$,
that is, $C_h$ and $C_h^{-1}$ are bounded linear operators. 
Moreover, the operator norm satisfies an estimate
$$
\Vert C_h \Vert \asymp \Vert C_h^{-1} \Vert \lesssim \frac{K}{\alpha}
$$
in terms of the constants $K$ and $\alpha$
for the strong quasisymmetry of $h$ in \eqref{alphaK}.
\end{theorem}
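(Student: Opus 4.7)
The plan is to prove the two implications separately and then track constants; sufficiency reduces to John--Nirenberg plus a layer-cake decomposition, while necessity requires a test-function argument reducing to the $A_\infty$ characterization.

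\emph{Sufficiency.} Assume $h \in \mathrm{SQS}$ with constants $K,\alpha$ as in \eqref{alphaK}. By the Coifman--Fefferman symmetry cited after the definition, $h^{-1} \in \mathrm{SQS}$ with constants $(K', \alpha')$ quantitatively depending on $(K, \alpha)$. For $\phi \in \mathrm{BMO}(\mathbb R)$ and a bounded interval $I \subset \mathbb R$ with $J = h(I)$, I would bound the mean oscillation of $\phi \circ h$ on $I$ relative to the constant $\phi_J$ via the layer-cake formula. The superlevel sets $E_\lambda = \{t \in J : |\phi(t) - \phi_J| > \lambda\}$ satisfy John--Nirenberg \eqref{JN}, and the corresponding superlevel sets of $\phi \circ h - \phi_J$ on $I$ are precisely $h^{-1}(E_\lambda)$. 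Applying \eqref{alphaK} to $h^{-1}$ with $E_\lambda \subset J$ converts the exponential density decay from $J$ to $I$:
$$
\frac{|h^{-1}(E_\lambda)|}{|I|} \;\leq\; K' C_0^{\alpha'} \exp\!\left(\frac{-C_{JN}\alpha'\lambda}{\|\phi\|_{\mathrm{BMO}}}\right).
$$
Integrating in $\lambda$ and taking supremum over $I$ yields $\|C_h \phi\|_{\mathrm{BMO}} \lesssim (K'/\alpha')\,\|\phi\|_{\mathrm{BMO}}$.

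\emph{Necessity.} Conversely, assume $C_h$ is a bounded automorphism of $\mathrm{BMO}(\mathbb R)$. The plan is to verify the distortion inequality \eqref{alphaK} through the $A_\infty$ characterization of $\mathrm{SQS}$ given by Coifman--Fefferman: first show that $h$ is locally absolutely continuous with $\log h' \in \mathrm{Re}\,\mathrm{BMO}(\mathbb R)$ and BMO-norm controlled by $\|C_h\|$, then bootstrap via Proposition \ref{C_0} using the open-cone argument from the proof of Proposition \ref{convex2} to obtain the global $A_\infty$ condition \eqref{iff}. Concretely, for given $E \subset I$ I would test $C_h$ on a BMO function of the form $\phi_{F} = \log \omega_F$, where $\omega_F$ is a truncated $A_\infty$-weight concentrated on $F = h(E)$ within $h(I)$ with BMO norm controlled independently of $|F|/|h(I)|$; the identity $\phi_F \circ h = \log(\omega_F \circ h)$ combined with $\|\phi_F \circ h\|_{\mathrm{BMO}} \leq \|C_h\|\,\|\phi_F\|_{\mathrm{BMO}}$ translates, through John--Nirenberg applied on $I$, into the power-law bound $|E|/|I| \leq K(|h(E)|/|h(I)|)^{\alpha^{-1}}$ for $h^{-1}$ (and symmetrically for $h$).

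\emph{Quantitative bound and main obstacle.} To extract $\|C_h\| \asymp \|C_h^{-1}\| \lesssim K/\alpha$, I would run the sufficiency argument symmetrically for $h$ and $h^{-1}$ and invoke the explicit Coifman--Fefferman/Gotoh estimates relating $(K', \alpha')$ of $h^{-1}$ to $(K, \alpha)$ of $h$, collapsing $K'/\alpha'$ into a constant multiple of $K/\alpha$. The main obstacle is the necessity direction: the pivotal issue is the choice of test BMO functions whose norms are controlled \emph{uniformly} yet sharply encode $|E|/|I|$ so as to recover a genuine power law rather than a weaker linear bound. A secondary delicacy is keeping track of the universal constants $C_0$ and $C_{JN}$ from \eqref{JN} as they propagate through the exponential integral and the power $\alpha$, which is what ultimately drives the dependence $K/\alpha$ rather than some larger quantity.
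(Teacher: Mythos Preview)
The paper does not give its own proof of this theorem; it is stated with a citation to Jones \cite{Jo} for the characterization and to Gotoh \cite{Got} for the dependence of the operator norm on $(K,\alpha)$. Your sufficiency argument---John--Nirenberg on $J=h(I)$ followed by the $A_\infty$-distortion inequality for $h^{-1}$ on the superlevel sets, then the layer-cake integral---is precisely Jones's argument and is correct, yielding $\Vert C_h\Vert \lesssim K'/\alpha'$ in the constants of $h^{-1}$ (and symmetrically $\Vert C_{h^{-1}}\Vert \lesssim K/\alpha$).

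The necessity direction, however, has a genuine gap. Your proposed route ``show $h$ locally absolutely continuous with $\log h'\in\mathrm{Re}\,\mathrm{BMO}$, then bootstrap via Proposition~\ref{C_0} and the open-cone argument of Proposition~\ref{convex2}'' cannot succeed. First, nothing in the hypothesis $C_h\in\mathcal L(\mathrm{BMO})$ gives you absolute continuity of $h$ a priori, so $\log h'$ is not yet an object. Second, and more seriously, even granting $\log h'\in\mathrm{Re}\,\mathrm{BMO}$, Proposition~\ref{C_0} only yields $e^{\log h'}\in A_\infty$ when the BMO norm is below $C_{JN}$, and the open-cone argument in Proposition~\ref{convex2} \emph{presupposes} that the vertex already lies in $\mathrm{Re}\,\mathrm{BMO}^*$; it does not promote an arbitrary real BMO function to one whose exponential is an $A_\infty$-weight. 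The paper's own example $\phi(x)=\log(1/|x|)$ shows this implication fails in general.

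Your ``concretely'' sentence points in the right direction but leaves the key object unspecified. Jones's actual test function is $\phi_F=\log M\chi_F$, where $M$ is the Hardy--Littlewood maximal operator: by the Coifman--Rochberg lemma this has BMO norm bounded by a universal constant, it vanishes on $F$, and its average over $J\supset F$ is comparable to $\log(|F|/|J|)$. Applying John--Nirenberg to $\phi_F\circ h$ on $I$ with $\Vert\phi_F\circ h\Vert_{\mathrm{BMO}}\le c\Vert C_h\Vert$ then forces the power-law distortion \eqref{alphaK} for $h^{-1}$ directly, with no appeal to $h'$ or to $A_\infty$ at any intermediate stage. You should replace the vague ``truncated $A_\infty$-weight'' by this explicit construction.
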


\section{Chord-arc curves and conformal welding}\label{section4}

The universal Teich\-m\"ul\-ler space $T$ is also identified with
the set of all normalized quasicircles. As the corresponding characterization for
the BMO Teich\-m\"ul\-ler space $T_B$, a certain geometric condition is obtained by Bishop and Jones \cite[Theorem 4]{BJ},
which is preserved under a
bi-Lipschitz self-homeomorphism of $\mathbb C$. This is a sort of localization of chord-arc condition,
and a chord-arc curve defined below satisfies this condition.
The subset consisting of all chord-arc curves occupies a certain portion of $T_B$.

\begin{definition}
A Jordan curve $\Gamma$ in $\mathbb C$ 
passing through $\infty$ is called a {\it chord-arc curve} if $\Gamma$ is locally rectifiable and there exists
a constant $\kappa \geq 1$ such that the length of the arc between any two points $z_1, z_2 \in \Gamma$
is bounded by $\kappa|z_1-z_2|$. In other words,
the arc length parametrization of $\Gamma$ yields a bi-Lipschitz embedding of $\mathbb R$ into $\mathbb C$.
\end{definition}

A Jordan curve $\Gamma$ in $\mathbb C$ 
passing through $\infty$ is a 
quasicircle if $\Gamma$ is the image of $\mathbb R$ under a quasiconformal self-homeomorphism of $\mathbb C$.
This is known to be equivalent to
satisfying a similar condition to the above definition, but with the length of the arc between $z_1$ and $z_2$
replaced by its diameter, so that 
$\Gamma$ is not even required to be locally rectifiable (see Ahlfors \cite[Theorems IV.4, 5]{Ah}). 
In particular, a chord-arc curve is a quasicircle. The corresponding characterization of a chord-arc curve by
the image of $\mathbb R$ was shown in Jerison and Kenig \cite[Proposition 1.13]{JK}, 
Pommerenke \cite[Theorems 7.9, 7.10]{Pom}, and Tukia \cite[Theorem]{Tu} as follows.

\begin{proposition}\label{biLip}
A Jordan curve $\Gamma$ passing through $\infty$ is a chord-arc curve if and only if $\Gamma$ is 
the image of $\mathbb R$ under a bi-Lipschitz self-homeomorphism $G$ of $\mathbb C$ with respect to the Euclidean metric.
In fact, any bi-Lipschitz embedding $\gamma:\mathbb R \to \mathbb C$ extends to a bi-Lipschitz self-homeomorphism of $\mathbb C$.
\end{proposition}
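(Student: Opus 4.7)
The plan is to treat each direction of the equivalence separately and then address the extension assertion, which carries the substance of the result.

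The ``if'' direction is direct. Suppose $\Gamma = G(\mathbb{R})$ with $G:\mathbb{C}\to\mathbb{C}$ an $L$-bi-Lipschitz homeomorphism, so $L^{-1}|z-w|\leq |G(z)-G(w)|\leq L|z-w|$. Since $G|_{\mathbb{R}}$ is Lipschitz, it is absolutely continuous with $|(G|_{\mathbb{R}})'|\leq L$ a.e., so the sub-arc $G([x_1,x_2])\subset\Gamma$ has length at most $L|x_2-x_1|$, while the chord $|G(x_1)-G(x_2)|\geq L^{-1}|x_2-x_1|$. The arc-to-chord ratio is bounded by $L^2$, so $\Gamma$ is chord-arc.

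For the ``only if'' direction, let $\Gamma$ be chord-arc with constant $\kappa$, and let $\gamma:\mathbb{R}\to\Gamma$ be its arc-length parametrization. Then $|\gamma(s)-\gamma(t)|\leq |s-t|$ (from $|\gamma'|=1$) and $|s-t|\leq \kappa|\gamma(s)-\gamma(t)|$ (from the chord-arc condition), so $\gamma$ itself is a bi-Lipschitz embedding with image $\Gamma$. This reduces the whole proposition to the second sentence: any bi-Lipschitz embedding $\gamma:\mathbb{R}\to\mathbb{C}$ extends to a bi-Lipschitz self-homeomorphism of $\mathbb{C}$, which is Tukia's theorem.

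For the extension, the strategy I would take is an explicit geometric construction on each complementary Jordan domain $\Omega^\pm$ of $\Gamma=\gamma(\mathbb{R})$. Fix a Whitney decomposition $\{Q\}$ of $\Omega^\pm$ into essentially disjoint dyadic squares with $\operatorname{diam}(Q)\asymp\operatorname{dist}(Q,\Gamma)$. To each $Q$ associate its ``shadow'' parameter interval $I_Q\subset\mathbb{R}$ consisting of those $s$ with $\gamma(s)$ nearest to $Q$; the bi-Lipschitz hypothesis on $\gamma$ forces $|I_Q|\asymp \operatorname{diam}(Q)$. Define $G$ on each $Q$ by an affine model adapted to the pair $(Q,\gamma(I_Q))$, and glue these local pieces via a Lipschitz partition of unity subordinate to the Whitney cover. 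Match the two extensions to $\Omega^+$ and $\Omega^-$ continuously across $\Gamma$, where both agree with $\gamma$ by construction.

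The main obstacle is verifying a \emph{uniform global} bi-Lipschitz constant for the pasted map $G$. Local bi-Lipschitz behavior on each Whitney cube follows from the affine model, and the partition of unity controls interpolation on overlaps, but global control requires that the affine models on adjacent Whitney cubes differ by at most an error comparable to the cube size; this in turn uses the bi-Lipschitz property of $\gamma$ together with the bounded geometry of the Whitney decomposition. Once this bookkeeping is carried out, continuity of $G$ across $\Gamma$ is automatic from $\operatorname{diam}(Q)\to 0$ as $Q\to\Gamma$, and the lower bi-Lipschitz bound on $G$ follows by a symmetric argument on the image side.
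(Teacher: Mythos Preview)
The paper does not actually prove this proposition; it simply records the statement and cites Jerison--Kenig \cite[Proposition~1.13]{JK}, Pommerenke \cite[Theorems~7.9,~7.10]{Pom}, and Tukia \cite[Theorem]{Tu} for the proof. So there is no ``paper's own proof'' to compare against beyond those references.

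Your two easy directions are fine: the computation of the arc-to-chord ratio from the bi-Lipschitz constant is correct, and the reduction of the converse to the extension statement via the arc-length parametrization is exactly the right move. For the extension itself, the Whitney-type outline you sketch is indeed the spirit of Tukia's argument, but two points deserve care. First, your notation has the direction reversed: you place the Whitney squares $Q$ in $\Omega^\pm$ and then speak of defining $G$ \emph{on} $Q$, whereas $G$ is supposed to send $\mathbb{H}^\pm$ onto $\Omega^\pm$; you should either decompose the half-planes and map each cube to a piece of $\Omega^\pm$ near $\gamma(I_Q)$, or acknowledge that you are building $G^{-1}$. Second, the step ``affine models on adjacent cubes differ by an error comparable to the cube size'' hides the real content of Tukia's proof: one has to produce, for each Whitney cube, a target region in $\Omega^\pm$ of comparable size and comparable distance to $\Gamma$, and to do this coherently so that neighboring cubes go to neighboring regions. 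This uses the chord-arc geometry of $\Omega^\pm$ (they are NTA domains) and not merely the bi-Lipschitz property of $\gamma$ on $\mathbb{R}$. Your sketch is a reasonable outline but, as written, it stops short of the substantive geometric input that makes the global bi-Lipschitz bound work.
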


First, we note a basic property of the boundary extension  
of a conformal homeomorphism of $\mathbb H$ determined by $\mu \in M_B(\mathbb H)$ in general.
Let $F^\mu$ be the normalized quasiconformal self-homeo\-morphism of $\mathbb C$
that is conformal on $\mathbb H^-$ and has the complex dilatation $\mu$ on $\mathbb H^+$.
By Proposition \ref{L-holo}, the condition $\mu \in M_B(\mathbb H^+)$ implies $\log (F^\mu|_{\mathbb H^-})' \in 
\mathrm{BMOA}(\mathbb H^-)$.
Moreover, the converse is also true (see Theorem \ref{char}).

\begin{lemma}\label{app}
If $\Phi=\log (F^\mu|_{\mathbb H^-})'$ belongs to 
$\mathrm{BMOA}(\mathbb H^-)$,
then
$f=F^\mu|_{\mathbb R}$ has its derivative with $f'(x) \neq 0$ almost everywhere on $\mathbb R$, and
$\log f'$ coincides with $E(\Phi) \in \mathrm{BMO}(\mathbb R)$. 
\end{lemma}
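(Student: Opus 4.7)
The plan is to work with $F := F^\mu|_{\mathbb H^-}$, which is conformal with $F'(z) = e^{\Phi(z)}$. Since $F^\mu$ is a quasiconformal self-homeomorphism of $\mathbb C$, it is continuous on the whole plane, so $F$ extends continuously to $\overline{\mathbb H^-}$ with boundary trace equal to $f = F^\mu|_{\mathbb R}$. From the hypothesis $\Phi \in \mathrm{BMOA}(\mathbb H^-)$ together with Theorem \ref{131}, the non-tangential limits $E(\Phi) \in \mathrm{BMO}(\mathbb R)$ exist almost everywhere on $\mathbb R$, and consequently $F' = e^\Phi$ has non-tangential boundary values $e^{E(\Phi)}$, which never vanish because the complex exponential is nowhere zero.

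The core step is to upgrade the a.e.\ existence of a non-tangential limit of $F'$ into the classical a.e.\ differentiability of $f$ with derivative equal to this limit. For this I would exploit the identity
\begin{equation*}
F(x+h-iy)-F(x-iy) \;=\; \int_0^h F'(x+t-iy)\,dt,
\end{equation*}
which holds for each $y>0$ because $F$ is holomorphic on $\mathbb H^-$, and pass to the limit $y\to 0^+$. The left-hand side tends to $f(x+h)-f(x)$ by continuity of $F$ on $\overline{\mathbb H^-}$. For the right-hand side, I would establish a local $L^1$-bound on the non-tangential maximal function of $F'$ by extracting from the John--Nirenberg inequality \eqref{JN} applied to $\Phi$ that $e^{r|\Phi|}$ belongs to a Hardy-type space locally for small $r>0$, and bootstrapping via a multiplicative decomposition of $e^\Phi$ into finitely many small-norm factors. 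Dominated convergence then yields $f(x+h)-f(x)=\int_0^h e^{E(\Phi)(x+t)}\,dt$, and Lebesgue differentiation produces $f'(x)=e^{E(\Phi)(x)}\neq 0$ at a.e.\ $x\in\mathbb R$; taking the complex logarithm gives $\log f' = E(\Phi)$ in $\mathrm{BMO}(\mathbb R)$.

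The principal difficulty lies in justifying the passage to the limit $y\to 0^+$ on the right-hand side, because John--Nirenberg applied directly to $E(\Phi)$ yields integrability of $e^{r|E(\Phi)|}$ only for small $r>0$, not for $r=1$; the factorization/bootstrap sketched above is thus the technical heart of the argument, and it parallels the idea Semmes exploits in his construction of quasiconformal extensions mentioned after Theorem \ref{FKP}. A cleaner alternative route is available once one has the converse direction $\Phi\in\mathrm{BMOA}(\mathbb H^-)\Rightarrow \mu\in M_B(\mathbb H^+)$: that promotes the hypothesis to $h(\mu)\in\mathrm{SQS}$ by Theorem \ref{SQS}, after which the conformal-welding factorization $f = g\circ h(\mu)$, with $g$ the boundary value of a conformal map onto the complementary domain, reduces the problem to the classical regularity of conformal welding for $A_\infty$-weights.
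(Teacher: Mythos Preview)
Your main approach has a genuine structural flaw: the identity $f(x+h)-f(x)=\int_0^h e^{E(\Phi)(x+t)}\,dt$ that you aim for is equivalent to local absolute continuity of $f$, and this is \emph{false} under the lemma's hypothesis. Indeed, the paper notes (just after Proposition~\ref{open}) that there exist $\mu\in M_B(\mathbb H^+)$---equivalently $\Phi\in\mathrm{BMOA}(\mathbb H^-)$ by the remark following Theorem~\ref{char}---for which $F^\mu(\mathbb R)$ is not locally rectifiable; for such $\mu$ the boundary map $f$ cannot be locally absolutely continuous. Consequently the technical step you single out as ``the heart of the argument'' (a local $L^1$ bound on the non-tangential maximal function of $F'=e^\Phi$) must fail in general, and no factorization trick can rescue it: the additive splitting $\Phi=\sum \Phi/n$ gives $|e^{\Phi_j}|^n=|e^\Phi|$ and gains nothing, while the Semmes-type decomposition you allude to is compositional (factoring the \emph{map}, not the exponent) and presupposes exactly the $A_\infty$/chord-arc condition that is absent here.

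The paper's proof avoids this trap entirely. It observes that $f(\mathbb R)$ is automatically a quasicircle (since $F^\mu$ is a global quasiconformal homeomorphism) and then invokes Pommerenke \cite[Theorem~5.5]{Pom}: for a conformal map onto a quasidisk, the ordinary derivative of the boundary extension exists a.e.\ and equals the angular derivative. This gives $f'(x)=\lim_{\angle}F'(z)=e^{E(\Phi)(x)}$ a.e.\ directly, with no integral representation and no absolute continuity claimed. Your alternative route via $\mu\in M_B(\mathbb H^+)\Rightarrow h(\mu)\in\mathrm{SQS}$ and conformal welding does not sidestep the issue either: writing $f=f_2\circ h(\mu)$ still leaves you needing a.e.\ differentiability of the conformal boundary map $f_2$, which is the same question for the other side---and in the paper's internal logic this route is circular anyway, since Lemma~\ref{smalldil} and Theorem~\ref{char} rely on the present lemma.
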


\begin{proof}
By Theorem \ref{131},
the boundary extension $\phi=E(\Phi)$ is in $\mathrm{BMO}(\mathbb R)$, and in particular,
$\phi(x)$ is finite almost everywhere on $\mathbb R$. Since $f(\mathbb R)$ is a quasicircle,
it is known that $f'(x)$ exists and coincides with the angular
derivative of $F^\mu$ at $x$ almost everywhere on $\mathbb R$ (see \cite[Theorem 5.5]{Pom}). 
Hence, $\log f'=\phi$.
\end{proof}

In this setting of quasicircle,
the {\it Lavrentiev theorem} in particular gives a condition under which the image of $\mathbb R$ by $F^\mu$
is a chord-arc curve. See \cite[Theorem 4.2]{JK} and \cite[Theorem 7.11]{Pom}.
We note that every function in $\mathrm{BMOA}(\mathbb H^-)$ can be represented by the Poisson integral of
its boundary extension on $\mathbb R$ by Theorem \ref{decomposition}.

\begin{theorem}\label{Lavrentiev}
For a quasiconformal self-homeomorphism $F^\mu$ of $\mathbb C$ with $\mu \in M(\mathbb H^+)$,
$\Gamma=F^\mu(\mathbb R)$ is a chord-arc curve if and only if $\log (F^\mu|_{\mathbb H^-})'$ belongs to $\mathrm{BMOA}(\mathbb H^-)$,
$f=F^\mu|_{\mathbb R}$ is locally absolutely continuous, and $|f'|$ is an $A_{\infty}$-weight.
Namely, the equivalent condition is that $\log f' \in \mathrm{BMO}^*(\mathbb R)$.
\end{theorem}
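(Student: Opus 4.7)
The plan splits into two parts because the two analytic formulations in the theorem are equivalent. Lemma~\ref{app} gives that once $\log(F^\mu|_{\mathbb H^-})' \in \mathrm{BMOA}(\mathbb H^-)$, the nontangential trace equals $\log f'$, so requiring in addition that $f$ be locally absolutely continuous and $|f'|$ an $A_\infty$-weight is precisely what turns $\log f'\in\mathrm{BMO}(\mathbb R)$ into $\log f'\in\mathrm{BMO}^*(\mathbb R)$. In the opposite direction, $\log f'\in\mathrm{BMO}^*(\mathbb R)$ is in particular in $\mathrm{BMO}(\mathbb R)$, and $\log(F^\mu|_{\mathbb H^-})'$ is a holomorphic function on $\mathbb H^-$ whose boundary trace is $\log f'$; thus by Theorem~\ref{131} it lies in $\mathrm{BMOA}(\mathbb H^-)$. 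So the task reduces to showing $\Gamma$ chord-arc $\iff\log f'\in\mathrm{BMO}^*(\mathbb R)$.

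For the forward implication, I would use Proposition~\ref{biLip} to produce a bi-Lipschitz self-homeomorphism $G$ of $\mathbb C$ with $G(\mathbb R)=\Gamma$. Setting $\gamma_0=G|_{\mathbb R}$ and $h=\gamma_0^{-1}\circ f\colon\mathbb R\to\mathbb R$, the bi-Lipschitz property of $\gamma_0$ gives $|f'|\asymp h'$ almost everywhere, so the $A_\infty$-condition on $|f'|$ reduces to $h\in\mathrm{SQS}$. The latter is the classical statement that the welding homeomorphism of a chord-arc curve is strongly quasisymmetric, a consequence of the Jerison--Kenig theorem that harmonic measure on a chord-arc domain is an $A_\infty$-weight with respect to arc length. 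Local absolute continuity of $f$ then follows from the factorization $f=\gamma_0\circ h$ with both factors absolutely continuous. Finally, $\arg f'\in\mathrm{BMO}(\mathbb R)$ because the argument of the unit tangent of a chord-arc curve has bounded mean oscillation, yielding $\log f'\in\mathrm{BMO}^*(\mathbb R)$.

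For the reverse implication, the $A_\infty$-weight property of $|f'|$ together with local absolute continuity of $f$ makes $\Gamma$ locally rectifiable with length element $|f'(t)|\,dt$. The substantive content is the chord-arc inequality
\[
\int_a^b |f'(t)|\,dt \;\leq\; \kappa\,\Bigl|\int_a^b f'(t)\,dt\Bigr|.
\]
Writing $f'=e^{u+iv}$ with $u=\log|f'|$ and $v=\arg f'$, both in $\mathrm{BMO}(\mathbb R)$, and normalizing by the interval means $u_I,v_I$ on $I=[a,b]$, this becomes a cancellation estimate of the form $\int_I e^{u-u_I}\lesssim\bigl|\int_I e^{(u-u_I)+i(v-v_I)}\bigr|$. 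I would split $I$ into a John--Nirenberg good set $\{|u-u_I|+|v-v_I|\le\lambda\}$, on which the integrand is essentially the constant $e^{iv_I}$ times a bounded factor, and a bad set whose relative measure is controlled exponentially via~\eqref{JN}; exponential integrability of BMO functions then absorbs the bad-set contribution.

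The main obstacle is precisely this cancellation estimate in the reverse direction: the argument $v=\arg f'\in\mathrm{BMO}$ can oscillate substantially, and ruling out that $\int_I e^{iv}$ collapses far below $|I|$ in absolute value requires delicate joint control of $u$ and $v$. This is the heart of Lavrentiev's theorem and reflects the deep interplay between BMO, $A_\infty$-weights, and Hardy space theory; the detailed execution would follow the treatments in~\cite{JK} and~\cite{Pom}.
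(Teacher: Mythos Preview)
The paper does not supply a proof of this theorem; it is quoted as the classical Lavrentiev theorem with references to \cite[Theorem 4.2]{JK} and \cite[Theorem 7.11]{Pom}, so there is no in-paper argument to compare against. Your sketch is a reasonable outline of how those references proceed: the Jerison--Kenig $A_\infty$ property of harmonic measure on chord-arc domains drives the forward direction, and the John--Nirenberg cancellation estimate is indeed the substance of the reverse direction.

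Two points deserve tightening. First, in your equivalence-of-formulations paragraph you invoke Theorem~\ref{131} to pass from $\log f'\in\mathrm{BMO}(\mathbb R)$ back to $\log(F^\mu|_{\mathbb H^-})'\in\mathrm{BMOA}(\mathbb H^-)$; but Theorem~\ref{131} as stated only gives the trace direction. You need in addition that $\log(F^\mu|_{\mathbb H^-})'$ is the Poisson integral of its boundary values (true here because $F'\in H^1$ on a rectifiable domain, so $\log F'$ lies in the Smirnov--Nevanlinna class), after which the Poisson characterization of BMOA applies. Second, in the forward implication you write $|f'|\asymp h'$ before establishing that $f'$ exists; this follows from the F.~and M.~Riesz theorem once $\Gamma$ is rectifiable, but should be said explicitly, since local absolute continuity of $f$ is part of the conclusion.

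Your honest identification of the reverse-direction cancellation estimate as the hard core is accurate: the good-set/bad-set splitting is the right heuristic, but controlling the \emph{modulus} of $\int_I e^{(u-u_I)+i(v-v_I)}$ from below (not merely its size) is genuinely delicate, and the paper is content to cite \cite{JK} and \cite{Pom} for it. Deferring at that point is consistent with the level of the exposition.
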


In fact, any chord-arc curve falls in this situation as 
the following claim asserts. 
Arguments and expositions around these results in a more general setting can be found in MacManus \cite{Mac}.

\begin{corollary}\label{BJcondition}
Every chord-arc curve passing through $\infty$ is the image of $\mathbb R$ under some $F^\mu$
with $\log f' \in \mathrm{BMO}^*(\mathbb R)$ for $f=F^\mu|_{\mathbb R}$.
\end{corollary}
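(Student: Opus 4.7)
The plan is to write the given chord-arc curve as $F^\mu(\mathbb R)$ for some $\mu \in M(\mathbb H^+)$, after which Theorem \ref{Lavrentiev} supplies the BMO$^*$ conclusion for free. Thus the corollary is essentially a matter of confirming that every chord-arc curve fits the setup of that theorem; all of the analytic substance is already encoded in Theorem \ref{Lavrentiev}.

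First I would invoke Proposition \ref{biLip}: $\Gamma$ is a bi-Lipschitz image of $\mathbb R$ and in particular a quasicircle through $\infty$. After a preliminary affine self-map of $\mathbb C$ (which preserves both the chord-arc property and the form of $F^\mu$), we may assume $0, 1 \in \Gamma$. Label the two complementary Jordan domains of $\mathbb C \setminus \Gamma$ as $\Omega^\pm$ so that the boundary orientation of $\Omega^-$ matches that of $\mathbb H^-$, and apply the Riemann mapping theorem on $\hat{\mathbb C}$ (noting $\infty \in \partial \Omega^-$) to produce a conformal homeomorphism $g: \mathbb H^- \to \Omega^-$. Because $\Gamma$ is a Jordan curve, $g$ extends to a homeomorphism of the closed half-plane onto the closure of $\Omega^-$, and the normalization of $g$ can be chosen so that this extension fixes $0$, $1$, and $\infty$.

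Second, since $\Gamma$ is a quasicircle, the classical reflection theorem for quasicircles (see Ahlfors \cite{Ah}) allows $g$ to be extended across $\mathbb R$ to a quasiconformal self-homeomorphism of $\mathbb C$ that remains conformal on $\mathbb H^-$; this is precisely the normalized map $F^\mu$, where $\mu \in M(\mathbb H^+)$ is its complex dilatation on $\mathbb H^+$ (extended by zero on $\mathbb H^-$). By construction $F^\mu(\mathbb R) = \Gamma$. Applying Theorem \ref{Lavrentiev} to this $F^\mu$ then yields $\log f' \in \mathrm{BMO}^*(\mathbb R)$ for $f = F^\mu|_{\mathbb R}$, since $\Gamma$ is chord-arc. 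The only step that requires any care is choosing the affine pre-normalization so that $F^\mu$ honestly has its prescribed form with $0, 1, \infty$ fixed; this is a routine bookkeeping matter and presents no serious obstacle.
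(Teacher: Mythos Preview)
Your proof is correct and follows essentially the same strategy as the paper: produce a normalized quasiconformal self-homeomorphism $F^\mu$ of $\mathbb C$ that is conformal on $\mathbb H^-$ and sends $\mathbb R$ onto $\Gamma$, then invoke Theorem~\ref{Lavrentiev}. The only difference is in how $F^\mu$ is manufactured: you build it directly via the Riemann map $g:\mathbb H^- \to \Omega^-$ together with Ahlfors' quasiconformal reflection across the quasicircle $\Gamma$, whereas the paper starts from the bi-Lipschitz homeomorphism $G$ supplied by Proposition~\ref{biLip} and pre-composes with a quasiconformal self-homeomorphism $H$ of $\mathbb C$ preserving $\mathbb R$ so that $F^\mu = G \circ H$ becomes conformal on $\mathbb H^-$. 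Both constructions are standard and equivalent in effect; the paper's is marginally shorter since it avoids spelling out Carath\'eodory extension and the reflection theorem.
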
 

\begin{proof}
By Proposition \ref{biLip}, $\Gamma$ is the image of $\mathbb R$ under some quasiconformal self-homeo\-morphism $G$ of $\mathbb C$.
We choose $F^\mu=G \circ H$ so that it is conformal on $\mathbb H$ by pre-composing a quasiconformal self-homeomorphism $H$ of $\mathbb C$ preserving $\mathbb R$. Then, Theorem \ref{Lavrentiev} yields the assertion.
\end{proof}

\begin{definition}
A subset of $T_B$ consisting of all elements $[\mu]$ such that
$F^\mu(\mathbb R)$ is a chord-arc curve is denoted by $T_C$.
\end{definition}

There exists some $[\mu] \in T_B$ that is not contained in $T_C$.
In fact, there are examples of $F^\mu$ such that $F^\mu(\mathbb R)$
are not locally rectifiable (see Astala and Zinsmeister \cite[Theorem 6]{AZ}, Bishop \cite[Theorem 1.1]{Bi} and 
Semmes \cite[Theorem]{Se0}). 

\begin{proposition}\label{open}
$T_C$ is a proper open subset of $T_B$ containing the origin.
\end{proposition}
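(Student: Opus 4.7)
The plan is to derive openness directly from the pre-Bers embedding together with Theorem \ref{Lavrentiev} and Proposition \ref{convex2}. First, $T_C$ contains the origin $[0]$ because $F^0=\mathrm{id}_{\mathbb C}$, so $F^0(\mathbb R)=\mathbb R$ is trivially a chord-arc curve with chord-arc constant $\kappa=1$. The properness follows from the citations already collected in the text (Astala--Zinsmeister \cite{AZ}, Bishop \cite{Bi}, Semmes \cite{Se0}): there exist $\mu\in M_B(\mathbb H^+)$ for which $F^\mu(\mathbb R)$ is not locally rectifiable, hence not chord-arc, and such $[\mu]$ lies in $T_B\setminus T_C$.

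For openness, the key observation is that membership in $T_C$ is encoded by a single BMO condition on $\log f'$. By Proposition \ref{L-holo}, the pre-Schwarzian derivative $L(\mu)=\log(F^\mu|_{\mathbb H^-})'$ belongs to $\mathrm{BMOA}(\mathbb H^-)$ for every $\mu\in M_B(\mathbb H^+)$. By Theorem \ref{Bers}(2) the pre-Bers embedding $\beta:T_B\to\mathrm{BMOA}(\mathbb H^-)$ is a homeomorphism onto its image, and by Theorem \ref{131} the trace operator $E:\mathrm{BMOA}(\mathbb H^-)\to\mathrm{BMO}(\mathbb R)$ is a Banach isomorphism onto its image. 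The composition
\[
\Theta=E\circ\beta:T_B\longrightarrow\mathrm{BMO}(\mathbb R)
\]
is therefore continuous, and Lemma \ref{app} identifies $\Theta([\mu])$ with $\log f'$ for $f=F^\mu|_{\mathbb R}$.

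Now invoke Theorem \ref{Lavrentiev}: for $[\mu]\in T_B$, one has $[\mu]\in T_C$ if and only if $\log f'\in\mathrm{BMO}^*(\mathbb R)$, i.e.\ $\Theta([\mu])\in\mathrm{BMO}^*(\mathbb R)$. Hence
\[
T_C=\Theta^{-1}\bigl(\mathrm{BMO}^*(\mathbb R)\bigr).
\]
Since $\mathrm{BMO}^*(\mathbb R)$ is an open subset of $\mathrm{BMO}(\mathbb R)$ by Proposition \ref{convex2}, and $\Theta$ is continuous, $T_C$ is open in $T_B$.

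The substantive step is recognizing that the chord-arc condition reduces cleanly to the open condition $\log f'\in\mathrm{BMO}^*(\mathbb R)$; once this is in hand, openness is immediate from the continuity of $\Theta$ and Proposition \ref{convex2}. I do not anticipate a genuine obstacle, since all the analytic work has been packaged into Theorem \ref{Lavrentiev}, Theorem \ref{Bers}, and Proposition \ref{convex2}; the only thing to be careful about is to note that the automatic BMOA hypothesis in Theorem \ref{Lavrentiev} is supplied by Proposition \ref{L-holo} for every $[\mu]\in T_B$, so the chord-arc condition indeed reduces to the single requirement $\Theta([\mu])\in\mathrm{BMO}^*(\mathbb R)$.
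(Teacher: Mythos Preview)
Your proof is correct and follows essentially the same approach as the paper's own proof: both define the continuous map $E\circ\beta:T_B\to\mathrm{BMO}(\mathbb R)$, invoke Theorem \ref{Lavrentiev} to identify $T_C$ as the preimage of $\mathrm{BMO}^*(\mathbb R)$, and conclude openness from Proposition \ref{convex2}. Your version is slightly more detailed in that you explicitly justify the origin lying in $T_C$, spell out properness via the cited examples, and invoke Lemma \ref{app} to pin down the identification $\Theta([\mu])=\log f'$, whereas the paper's proof is terser on these points.
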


\begin{proof}
We consider
the composition of the pre-Bers embedding $\beta:T_B \to \mathrm{BMOA}(\mathbb H^-)$ and
the trace operator $E^-:\mathrm{BMOA}(\mathbb H^-) \to \mathrm{BMO}(\mathbb R)$.
Since $E^- \circ \beta:T_B \to \mathrm{BMO}(\mathbb R)$ is continuous, $\mathrm{BMO}^*(\mathbb R)$
is open in $\mathrm{BMO}(\mathbb R)$ by Proposition \ref{convex2}, and $T_C=(E^- \circ \beta)^{-1}(\mathrm{BMO}^*(\mathbb R))$
by Theorem \ref{Lavrentiev}, we see that $T_C$ is an open subset of $T_B$ containing the origin.
\end{proof}

\begin{remark}
Whether $T_C$ is connected or not is an open problem. Since the inverse image $E^{-1}(\mathrm{BMO}^*(\mathbb R))$ is
also a convex open subset of $\mathrm{BMOA}(\mathbb H)$, the shape of the image of $T_B$
under the pre-Bers embedding $\beta$ comes into question. 
\end{remark}

The following properties for $T_C$ are easily obtained. See MW \cite[Theorem 4]{WM-1}.

\begin{proposition}\label{Fenn}
$(1)$ Every element $[\mu] \in T_B$ can be obtained by a finite 
composition $[\mu]=[\mu_1] \ast \cdots \ast [\mu_n]$ of elements $[\mu_i] \in T_C$ $(i=1,\ldots ,n)$.
$(2)$ If $[\mu] \in T_C$ then $[\mu]^{-1} \in T_C$.
\end{proposition}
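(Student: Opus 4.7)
The plan is to prove the two parts separately: (1) by a path-splitting argument using the openness of $T_C$, and (2) by a geometric argument using the intrinsic nature of the chord-arc condition.

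For (1), given $[\mu] \in T_B$, I would consider the continuous path $t \mapsto [t\mu]$ in $T_B$ for $t \in [0,1]$, joining the origin $[0]$ to $[\mu]$. Continuity follows because $t \mapsto t\mu$ is continuous in $L_B(\mathbb{H})$ and the Teichm\"uller projection $\pi$ is continuous. The crucial point is that right translation $r_{[\nu]} : T_B \to T_B$ is continuous for any $[\nu] \in T_B$; this follows from the explicit composition formula for Beltrami coefficients combined with the stability of the combined $L^\infty$-plus-Carleson norm under such composition. Hence for each fixed $s \in [0,1]$ the map $t \mapsto [t\mu] \ast [s\mu]^{-1}$ is continuous in $t$ and equals $[0] \in T_C$ at $t = s$. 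By openness of $T_C$ (Proposition \ref{open}), there exists $\delta_s > 0$ such that $[t\mu] \ast [s\mu]^{-1} \in T_C$ whenever $|t - s| < \delta_s$. Compactness of $[0,1]$ yields a partition $0 = t_0 < t_1 < \cdots < t_n = 1$ with each $[\nu_i] := [t_i \mu] \ast [t_{i-1}\mu]^{-1} \in T_C$, and telescoping along the partition gives $[\mu] = [\nu_n] \ast [\nu_{n-1}] \ast \cdots \ast [\nu_1]$, as desired.

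For (2), let $[\mu] \in T_C$ so that $\Gamma = F^\mu(\mathbb{R})$ is chord-arc. By Proposition \ref{biLip}, $\Gamma$ admits a bi-Lipschitz parametrization $G : \mathbb{C} \to \mathbb{C}$ with $G(\mathbb{R}) = \Gamma$. The class $[\mu]^{-1}$ is represented by the welding $h(\mu)^{-1}$, which via conformal welding arises from the same quasicircle $\Gamma$ with the roles of its two complementary components $\Omega^\pm$ interchanged. After normalization, the curve $\Gamma_* = F^{\mu^*}(\mathbb{R})$ corresponding to $[\mu]^{-1}$ is thus a bi-Lipschitz transform of $\Gamma$. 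An equivalent analytic route proceeds via Theorem \ref{Lavrentiev}: translate the chord-arc condition $\log(F^\mu|_{\mathbb R})' \in \mathrm{BMO}^*(\mathbb R)$ into the corresponding condition for $\log(F^{\mu^*}|_{\mathbb R})'$ using the chain rule between the two parametrizations together with the strong quasisymmetry of $h(\mu)$ (Theorem \ref{pullback}) and the Coifman--Fefferman $A_\infty$-inversion property. Either way, $\Gamma_*$ is chord-arc and $[\mu]^{-1} \in T_C$.

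The main obstacle in (1) is verifying the continuity of right translation on $T_B$ in its own (strictly finer than $T$) topology; this does not follow directly from the corresponding fact on the universal Teichm\"uller space and requires a direct Carleson-norm estimate on the Beltrami-coefficient composition. The main obstacle in (2) is the bookkeeping with normalizations in the welding correspondence, which must be handled so that the relating map between $\Gamma$ and $\Gamma_*$ is genuinely bi-Lipschitz rather than merely quasiconformal; once this is done, the bi-Lipschitz invariance of the chord-arc class from Proposition \ref{biLip} closes the argument.
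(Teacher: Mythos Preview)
The paper does not include its own proof of this proposition; it simply refers to \cite[Theorem 4]{WM-1}. Your outline is sound, and both parts would go through.

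For (1), your path-and-covering argument is correct and is the standard way to prove such decomposition statements. You flag the continuity of right translation $r_{[\nu]}$ on $T_B$ as the main obstacle, but this is already available: the paper records (in Section~5, citing \cite[Remark 5.1]{SWei} and \cite[Lemma 4.2]{WM-0}) that each $r_{[\nu]}$ is a biholomorphic automorphism of $T_B$, so no fresh Carleson-norm computation is required. Once that is granted, the compactness argument on $[0,1]$ and the openness of $T_C$ (Proposition~\ref{open}) finish the job exactly as you describe.

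For (2), your geometric idea is correct but can be sharpened so that the ``bookkeeping with normalizations'' worry disappears. In the conformal-welding picture, passing from $h(\mu)$ to $h(\mu)^{-1}$ exchanges the two Riemann maps onto the complementary domains of $\Gamma=F^\mu(\mathbb R)$; carrying this out via the reflection $j(z)=\bar z$ shows that the (unnormalized) curve corresponding to $[\mu]^{-1}$ is $\bar\Gamma$, and after restoring the normalization one obtains $\Gamma_\ast=A(\bar\Gamma)$ for some complex-affine map $A$. Since reflections and affine maps are similarities, the chord-arc constant is preserved, and $\Gamma_\ast$ is chord-arc whenever $\Gamma$ is. This is rather cleaner than the alternative analytic route through Theorem~\ref{Lavrentiev} that you sketch, and it makes the relating map an isometry up to scale rather than merely bi-Lipschitz.
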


In general, a quasisymmetric homeomorphism $h:\mathbb R \to \mathbb R$ can be expressed as the discrepancy between the boundary values 
$f_1=F_1|_{\mathbb R}$ and $f_2=F_2|_{\mathbb R}$ of two conformal homeomorphisms $F_1:\mathbb H^- \to \Omega^-$ and 
$F_2:\mathbb H^+ \to \Omega^+$, where $\Omega^-$ and $\Omega^+$ are the complementary domains in $\mathbb C$ with
$\partial \Omega^-=\partial \Omega^+$. This expression $h=f_2^{-1} \circ f_1$ is called {\it conformal welding}.
This allows us to see that properties of $h$
are determined by those of $f_1$ and $f_2$.

We determine the class of $h \in {\rm QS}$
whose quasiconformal extension $H(\mu)$ to $\mathbb H$ is given by 
$\mu \in M_B(\mathbb H)$. The following theorem corresponds to the ``only if'' part of Theorem \ref{SQS}.
Representing a quasisymmetric homeomorphism $h(\mu)=H(\mu)|_{\mathbb R}$ with $\mu \in M_B(\mathbb H)$
by conformal welding, we will give an alternative proof for it.

\begin{theorem}\label{main1}
If $\mu \in M_B(\mathbb H)$,
then $h(\mu) \in \rm SQS$. In other words, 
$h=h(\mu)$ is locally absolutely continuous and 
$\phi=\log h'$ belongs to $\mathrm{BMO}^*(\mathbb R)$.
\end{theorem}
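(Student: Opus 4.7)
The plan is to use conformal welding to re-express $h(\mu)$, and then to reduce the general case to elements of $T_C$ via Proposition \ref{Fenn}(1). For the welding, let $F^\mu$ be the normalized quasiconformal self-map of $\mathbb{C}$ that is conformal on $\mathbb{H}^-$ and has dilatation $\mu$ on $\mathbb{H}^+$. Write $\Gamma=F^\mu(\mathbb{R})$, $\Omega^\pm=F^\mu(\mathbb{H}^\pm)$, $G^-:=F^\mu|_{\mathbb{H}^-}\colon\mathbb{H}^-\to\Omega^-$ and $G^+:=F^\mu\circ H(\mu)^{-1}\colon\mathbb{H}^+\to\Omega^+$. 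Both $G^\pm$ are conformal (for $G^+$ because the dilatations of $F^\mu$ and $H(\mu)^{-1}$ cancel on $\mathbb{H}^+$), and setting $g^\pm:=G^\pm|_{\mathbb{R}}\colon\mathbb{R}\to\Gamma$ a direct check yields the welding identity
\[
h(\mu)=(g^+)^{-1}\circ g^-\qquad\text{on }\mathbb{R}.
\]

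Next, assume $[\mu]\in T_C$, so that $\Gamma$ is a chord-arc curve. Theorem \ref{Lavrentiev} applied to $F^\mu$ yields that $|(g^-)'|$ is an $A_\infty$-weight on $\mathbb{R}$; by the intrinsic symmetry of the chord-arc condition with respect to the two complementary domains of $\Gamma$, the analogous statement for the other Riemann map produces that $|(g^+)'|$ is likewise an $A_\infty$-weight. Let $\gamma\colon\mathbb{R}\to\Gamma$ be an arc-length parametrization, which is bi-Lipschitz by Proposition \ref{biLip}. The compositions $\phi^\pm:=\gamma^{-1}\circ g^\pm\colon\mathbb{R}\to\mathbb{R}$ are then locally absolutely continuous homeomorphisms with $(\phi^\pm)'(x)=|(g^\pm)'(x)|$ almost everywhere (since $\gamma$ has unit speed), so $\phi^\pm\in\mathrm{SQS}$. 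The factorization $h(\mu)=(\phi^+)^{-1}\circ\phi^-$, together with the fact that $\mathrm{SQS}$ is a group, gives $h(\mu)\in\mathrm{SQS}$ in this case.

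For arbitrary $[\mu]\in T_B$, Proposition \ref{Fenn}(1) provides a finite decomposition $[\mu]=[\mu_1]\ast\cdots\ast[\mu_n]$ with every $[\mu_i]\in T_C$. Translating to quasisymmetric homeomorphisms, $h(\mu)=h(\mu_1)\circ\cdots\circ h(\mu_n)$, and each factor lies in $\mathrm{SQS}$ by the previous step. Since $\mathrm{SQS}$ is preserved under composition, $h(\mu)\in\mathrm{SQS}$; equivalently $h(\mu)$ is locally absolutely continuous and $\log h(\mu)'\in\mathrm{BMO}^*(\mathbb{R})$.

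The delicate point is in the chord-arc step: passing from $|(g^-)'|\in A_\infty$ to $|(g^+)'|\in A_\infty$ requires the symmetric version of Theorem \ref{Lavrentiev} applied to both Riemann maps onto the complementary domains of a chord-arc curve. One way to obtain this is to realize $G^+$, after reflecting the setup across $\mathbb{R}$, as the conformal part of a second quasiconformal self-map of $\mathbb{C}$ with a Carleson dilatation on the opposite half-plane, to which Theorem \ref{Lavrentiev} then applies directly. Everything else reduces to routine features of the welding construction and the group property of $\mathrm{SQS}$.
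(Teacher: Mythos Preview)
Your proof is correct and follows essentially the same route as the paper: conformal welding in the chord-arc case followed by Proposition~\ref{Fenn}(1) for the general case. The only cosmetic difference is that the paper introduces the second Riemann map directly as $F_{\bar\mu^{-1}}$ (conformal on $\mathbb H^+$, with the reflected dilatation on $\mathbb H^-$), so that Theorem~\ref{Lavrentiev} applies to it immediately---this is exactly the reflection device you sketch in your final paragraph, and your $g^+$ coincides with the paper's $f_2=F_{\bar\mu^{-1}}|_{\mathbb R}$.
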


The following method of conformal welding works only when $[\mu]$ belongs to $T_C$, that is to say, when the conformal welding is done along
a chord-arc curve $F^\mu(\mathbb R)$. For the general case, we decompose
$[\mu]$ into finitely many such elements, and apply this to each of them.

\begin{lemma}\label{smalldil}
If 
$F^\mu(\mathbb R)$ is a chord-arc curve,
then $h(\mu)$ is strongly quasisymmetric.
\end{lemma}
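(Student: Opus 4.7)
The strategy is to realize $h(\mu)$ via conformal welding along the chord-arc curve $\Gamma = F^\mu(\mathbb R)$ and then reduce its strong quasisymmetry to the fact that the class $\mathrm{SQS}$ is preserved under composition. Write $\Omega^\pm = F^\mu(\mathbb H^\pm)$. By construction $F_1 := F^\mu|_{\mathbb H^-}: \mathbb H^- \to \Omega^-$ is conformal. On the other side, I set $F_2 := F^\mu \circ H(\mu)^{-1}: \mathbb H^+ \to \Omega^+$; since $F^\mu|_{\mathbb H^+}$ and $H(\mu)$ carry the same Beltrami coefficient $\mu$, the Beltrami chain rule forces $F_2$ to be conformal. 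Writing $f_i := F_i|_{\mathbb R}$, the identity $f_2 \circ h(\mu) = f_1$ gives the conformal welding $h(\mu) = f_2^{-1} \circ f_1$ along $\Gamma$.

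Next, I apply Theorem \ref{Lavrentiev} together with its evident $\mathbb H^+$-counterpart (extending $F_2$ to a quasiconformal self-homeomorphism of $\mathbb C$ via an Ahlfors quasiconformal reflection across the quasicircle $\Gamma$) to conclude that both $\log f_1'$ and $\log f_2'$ lie in $\mathrm{BMO}^*(\mathbb R)$, so $|f_1'|$ and $|f_2'|$ are $A_\infty$-weights on $\mathbb R$. I then fix the arc length parametrization $s: \mathbb R \to \Gamma$ of the chord-arc curve, which is bi-Lipschitz by Proposition \ref{biLip}, and set $g_i := s^{-1} \circ f_i: \mathbb R \to \mathbb R$. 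Because $|s'| \equiv 1$ almost everywhere, I have $g_i' = |f_i'|$ a.e., so each $g_i'$ is an $A_\infty$-weight; by the Coifman--Fefferman characterization recalled in the text, this means $g_1$ and $g_2$ are strongly quasisymmetric.

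Finally, the composition $h(\mu) = f_2^{-1} \circ f_1 = g_2^{-1} \circ g_1$ is itself strongly quasisymmetric, since $\mathrm{SQS}$ is closed under composition and inversion as noted after its definition. The main obstacle in carrying out this plan is the rigorous $\mathbb H^+$-version of Theorem \ref{Lavrentiev} needed for $F_2$: namely, producing a quasiconformal extension of $F_2$ to all of $\mathbb C$ and verifying that the chord-arc hypothesis on $\Gamma$ still forces $\log f_2' \in \mathrm{BMO}^*(\mathbb R)$. A secondary technical point is the careful justification that $F_2$ is conformal, which reduces to the Beltrami chain-rule identity for two maps sharing a common complex dilatation.
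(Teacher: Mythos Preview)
Your approach is correct and essentially the same as the paper's: both realize $h(\mu)$ via conformal welding along $\Gamma$, apply the Lavrentiev theorem to obtain that $|f_1'|,|f_2'|$ are $A_\infty$-weights, and conclude by expressing $h(\mu)$ as a composition of two maps in $\mathrm{SQS}$ (your $g_i=s^{-1}\circ f_i$ coincides with the paper's $\tilde f_i(x)=\int_0^x|f_i'(t)|\,dt$ up to an additive constant). The paper sidesteps the obstacle you flag by constructing $F_2$ directly as the normalized global quasiconformal self-homeomorphism $F_{\bar\mu^{-1}}$ of $\mathbb C$, conformal on $\mathbb H^+$, so that no separate quasiconformal extension is required and the $\mathbb H^+$-version of Theorem~\ref{Lavrentiev} follows by the obvious reflection symmetry.
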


\begin{proof}
Let $F_1=F^\mu$ as before and let $F_2=F_{\bar \mu^{-1}}$ be the normalized quasiconformal self-homeomorphism of $\mathbb C$
that is conformal on $\mathbb H^+$ and has complex dilatation $\bar \mu^{-1}$ on $\mathbb H^-$. Here, $\bar \mu$ is
the reflection of $\mu$ with respect to $\mathbb R$ defined by $\bar \mu( \bar z)=\mu(z)$.
We note that the normalization of $F_1$ and $F_2$ requires $F_1(\mathbb R)=F_2(\mathbb R)$.
Let $f_1=F_1|_{\mathbb R}$ and $f_2=F_2|_{\mathbb R}$. 
Then, we have that $f_2 \circ h=f_1$. 

Theorem \ref{Lavrentiev} asserts that $f_1$ and $f_2$ are locally absolutely continuous 
and $\log f_1'$ and $\log f_2'$ belong to $\mathrm{BMO}^*(\mathbb R)$. Namely, $|f_1'|$ and $|f_2'|$ are 
$A_\infty$-weights. Moreover,
$f_2'(x) \neq 0\ {\rm (a.e.)}$ by Lemma \ref{app}, and this implies that $h$ is also locally absolutely continuous.
Then, taking the absolute value of the derivative for the conformal welding $f_2 \circ h=f_1$, we have
$|f_2'| \circ h \cdot h'=|f_1'|$.

Let $\tilde f_1(x)=\int_0^x |f_1'(t)|dt$ and $\tilde f_2(x)=\int_0^x |f_2'(t)|dt$, which belong to $\rm SQS$.
Moreover,
$$
\tilde f_1(x)=\int_0^x |f_2'| \circ h(t) \cdot h'(t)dt=\int_0^{h(x)} |f_2'(\tau)|d\tau=\tilde f_2 \circ h(x).
$$
Hence, $h=\tilde f_2^{-1} \circ \tilde f_1$, which is also a strongly quasisymmetric homeomorphism.
\end{proof}

\begin{proof}[Proof of Theorem \ref{main1}]
We represent $h=h(\mu)$ for $\mu \in M_B(\mathbb H^+)$
by conformal welding as $h=(F_{\bar \mu^{-1}})^{-1} \circ F^\mu|_{\mathbb R}$.
If $F^\mu(\mathbb R)$ is a chord-arc curve, then Lemma \ref{smalldil} implies that $h \in {\rm SQS}$.
In the general case, we decompose $[\mu] \in T_B$ into a finite number of elements in $T_C$ by Proposition \ref{Fenn};
$[\mu]=[\mu_1] \ast \cdots \ast [\mu_n]$ for $[\mu_i] \in T_C$ $(i=1,\ldots, n)$. 
By the above argument, each $h_i=h(\mu_i)$ is in ${\rm SQS}$.
Hence, we see that $h=h_1 \circ \cdots \circ h_n$ is strongly quasisymmetric.
\end{proof}

Finally, we mention the characterizations for a Beltrami coefficient $\mu$
to be in $M_B(\mathbb H)$ in terms of the Schwarzian and the pre-Schwarzian derivative maps
$S$ and $L$. 

\begin{theorem}\label{char}
Suppose that $F^\mu(\mathbb R)$ is a chord-arc curve for $\mu \in M(\mathbb H^+)$. 
Then, the following conditions are equivalent:
$(a)$ $\mu \in M_B(\mathbb H^+)$; $(b)$ $S(\mu) \in A_B(\mathbb H^-)$;
$(c)$ $L(\mu) \in {\rm BMOA}(\mathbb H^-)$. 
\end{theorem}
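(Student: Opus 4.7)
The plan is to close the cycle $(a) \Rightarrow (c) \Rightarrow (b) \Rightarrow (a)$, invoking the chord-arc hypothesis only in the final step. The first implication $(a) \Rightarrow (c)$ is immediate from Proposition \ref{L-holo}, which already asserts $L(M_B(\mathbb H^+)) \subset \mathrm{BMOA}(\mathbb H^-)$ without any assumption on $F^\mu(\mathbb R)$.

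For $(c) \Rightarrow (b)$, I will use the identity $S = D \circ L$, reducing matters to the claim that $D(\Phi) = \Phi'' - (\Phi')^2/2$ carries $\mathrm{BMOA}(\mathbb H^-)$ into $A_B(\mathbb H^-)$. Since every BMOA function is Bloch, the pointwise bound $|y||\Phi'(z)| \lesssim \|\Phi\|_{\mathrm{BMOA}}$ holds, and a Cauchy estimate on a Euclidean disk of radius $|y|/2$ upgrades this to $y^2 |\Phi''(z)| \lesssim \|\Phi\|_{\mathrm{BMOA}}$, placing both $\Phi''$ and $(\Phi')^2$ in $A(\mathbb H^-)$. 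For the Carleson requirement, I will invoke the standard higher-derivative characterization of BMOA, namely that $|\Phi''(z)|^2 y^3\, dxdy$ is Carleson, together with the domination $|\Phi'(z)|^4 y^3 \lesssim \|\Phi\|_B^{\,2}\, |\Phi'(z)|^2 y$, which gives that $|\Phi'(z)|^4 y^3\, dxdy$ is Carleson as well. Combining these bounds yields $D(\Phi) \in A_B(\mathbb H^-)$.

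The implication $(b) \Rightarrow (a)$ is where the chord-arc hypothesis is essential. Under this hypothesis, Theorem \ref{Lavrentiev} already forces (c) automatically, so the real substantive content is to show that chord-arc alone forces (a). Here I will apply Lemma \ref{smalldil}: the chord-arc property of $F^\mu(\mathbb R)$ implies that the boundary homeomorphism $h(\mu) = (F_{\bar \mu^{-1}})^{-1} \circ F^\mu|_{\mathbb R}$ obtained via conformal welding lies in $\mathrm{SQS}$. Then Theorem \ref{SQS} translates this into $[\mu] \in T_B = \pi(M_B(\mathbb H^+))$, which is precisely (a).

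The main obstacle I expect is the verification $D(\mathrm{BMOA}) \subset A_B$ in the middle step, which rests on the interplay between pointwise Bloch bounds and the Littlewood--Paley characterization of BMOA through higher derivatives, and requires some care with the weights $y$, $y^2$, $y^3$. The closing step $(b) \Rightarrow (a)$ is conceptually less direct because one cannot extract $\mu$ from $S(\mu)$ by a linear argument; instead, one must route through the welding map, and the heavy lifting is already packaged in Lemma \ref{smalldil} and its use of $A_\infty$-weight theory.
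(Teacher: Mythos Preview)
Your proof is correct and aligns with the paper's argument on the decisive step: both you and the paper close the cycle by invoking Lemma~\ref{smalldil} (chord-arc implies $h(\mu)\in\mathrm{SQS}$) followed by Theorem~\ref{SQS} (equivalently Theorem~\ref{converse}) to obtain $[\mu]\in T_B$. The only structural difference is the direction of the cycle: the paper runs $(a)\Rightarrow(b)\Rightarrow(c)\Rightarrow(a)$ and defers the intermediate implications to Propositions~\ref{S-holo} and~\ref{L-holo} with the phrase ``along the same line,'' whereas you run $(a)\Rightarrow(c)\Rightarrow(b)\Rightarrow(a)$ and supply an explicit self-contained argument that $D(\Phi)=\Phi''-(\Phi')^2/2$ maps $\mathrm{BMOA}(\mathbb H^-)$ into $A_B(\mathbb H^-)$ via Bloch bounds and the higher-derivative Carleson characterization. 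Your observation that neither $(b)$ nor $(c)$ is actually consumed in the closing step---the chord-arc hypothesis alone drives the conclusion---is a valid clarification that the paper's proof also exhibits implicitly. One caveat shared by both arguments: the conclusion is literally $[\mu]\in T_B$, and the identification of this with ``$\mu\in M_B(\mathbb H^+)$'' is made by the paper itself in the same way you do.
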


\begin{proof}
The implications $(a) \Rightarrow (b) \Rightarrow (c)$ are along the same line as
Propositions \ref{S-holo} and \ref{L-holo}. The crucial step is to prove $(c) \Rightarrow (a)$.
If $F^\mu(\mathbb R)$ is a chord-arc curve, Lemma \ref{smalldil} shows that
$h(\mu)$ is strongly quasisymmetric.
Then, by ``if'' part of Theorem \ref{SQS} (=Theorem \ref{converse}),
we conclude that $[\mu] \in T_B$, that is, $\mu$ belongs to $M_B(\mathbb H^+)$.
\end{proof}

\begin{remark}
The above theorem is still valid without the assumption that $F^\mu(\mathbb R)$ is a chord-arc curve.
This is proved in Astala and Zinsmeister \cite[Theorem 4]{AZ}. The step $(c) \Rightarrow (a)$
relies on the geometric characterization of a curve $F^\mu(\mathbb R)$
for $\log (F^\mu|_{\mathbb H^-})' \in {\rm BMOA}(\mathbb H^-)$ given by Bishop and Jones \cite[Theorem 4]{BJ}.
As a consequence, 
it follows that
$$
S(M_B(\mathbb H^+))=S(M(\mathbb H^+)) \cap A_B(\mathbb H^-),\quad
L(M_B(\mathbb H^+))=L(M(\mathbb H^+)) \cap {\rm BMOA}(\mathbb H^-).
$$
\end{remark}

\section{BMO Embeddings in Bers Coordinates}\label{section5}

We generalize strongly quasisymmetric homeomorphisms $h:\mathbb R \to \mathbb R$ to 
BMO embeddings $\gamma:\mathbb R \to \mathbb C$, and consider those whose images are chord-arc curves.
Then, we use the BMO Teich\-m\"ul\-ler space $T_B$ to coordinate these embeddings. 
We remark here that a BMO embedding is a mapping $\gamma$ of $\mathbb R$, that is, 
the image $\Gamma=\gamma(\mathbb R)$ together with its parametrization,
whereas a chord-arc curve refers to the image $\Gamma$ itself
of a certain special BMO embedding $\gamma$. 

\begin{definition}
A topological embedding $\gamma:\mathbb R \to \mathbb C$ passing through $\infty$ is called a
{\it BMO embedding} if there is a quasiconformal self-homeomorphism $G$ of $\mathbb C$ with $G|_{\mathbb R}=\gamma$
whose complex dilatation $\mu=\bar \partial G/\partial G$ satisfies
$\mu|_{\mathbb H^+} \in M_B(\mathbb H^+)$ and $\mu|_{\mathbb H^-} \in M_B(\mathbb H^-)$.
\end{definition}

We first consider the derivative of a BMO embedding $\gamma:\mathbb R \to \mathbb C$ though $\gamma$ is not necessarily
absolutely continuous.
The following claim is proved in a more general setting in MacManus \cite[Theorem 6.2]{Mac}.
See WM \cite[Proposition 3.3]{WM-0} for the proof along quasiconformal theory.
Later in Theorem \ref{main-b}, 
we show this in the special case where the image of $\gamma$ is a chord-arc curve.

\begin{proposition}\label{curve}
A BMO embedding 
$\gamma:\mathbb R \to \mathbb C$ has its derivative $\gamma'$ almost everywhere on $\mathbb R$
and $\log \gamma'$
belongs to ${\rm BMO}(\mathbb R)$. 
\end{proposition}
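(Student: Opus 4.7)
The plan is to use simultaneous uniformization, together with Theorems \ref{main1} and \ref{pullback}. Let $G$ be the quasiconformal extension of $\gamma$ to $\mathbb C$ afforded by the definition of BMO embedding, with complex dilatations $\mu^\pm \in M_B(\mathbb H^\pm)$, and let $\Omega^\pm := G(\mathbb H^\pm)$ be the two complementary Jordan domains of $\Gamma = \gamma(\mathbb R)$. Choose normalized Riemann maps $F_\pm : \mathbb H^\pm \to \Omega^\pm$. Since post-composition by the conformal map $F_\pm^{-1}$ preserves complex dilatation, the self-maps $K_\pm := F_\pm^{-1} \circ G|_{\mathbb H^\pm} : \mathbb H^\pm \to \mathbb H^\pm$ are quasiconformal with dilatations $\mu^\pm$. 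Theorem \ref{main1} then implies that the boundary extensions $k_\pm := K_\pm|_{\mathbb R}$ lie in ${\rm SQS}$, yielding the factorization $\gamma = f_\pm \circ k_\pm$ with $f_\pm := F_\pm|_{\mathbb R}$ and $\log k_\pm' \in {\rm Re}\,{\rm BMO}^*(\mathbb R)$.

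Next I would show $\log f_\pm' \in {\rm BMO}(\mathbb R)$ by extending $F_+$ to a quasiconformal self-homeomorphism $\tilde F_+$ of $\mathbb C$ that remains conformal on $\mathbb H^+$. On $\mathbb H^-$, set $\tilde F_+ := G \circ T$, where $T : \mathbb H^- \to \mathbb H^-$ is the Beurling--Ahlfors extension of $k_+^{-1} \in {\rm SQS}$ provided by Theorem \ref{FKP}. Continuity across $\mathbb R$ is automatic, since $G(k_+^{-1}(x)) = f_+(x)$. The complex dilatation of $\tilde F_+|_{\mathbb H^-}$ is obtained by composition from $\mu_T \in M_B(\mathbb H^-)$ and $\mu^- \in M_B(\mathbb H^-)$, and therefore lies in $M_B(\mathbb H^-)$ by the group structure of $T_B$. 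The symmetric version of Proposition \ref{L-holo} then gives $\log F_+' \in {\rm BMOA}(\mathbb H^+)$, so Lemma \ref{app} yields $\log f_+' \in {\rm BMO}(\mathbb R)$; a parallel construction handles $\log f_-'$.

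Finally, the chain rule applied to $\gamma = f_\pm \circ k_\pm$ gives $\gamma'(x) = f_\pm'(k_\pm(x)) \cdot k_\pm'(x)$ almost everywhere on $\mathbb R$, so
$$
\log \gamma' = (\log f_\pm') \circ k_\pm + \log k_\pm'.
$$
Theorem \ref{pullback} asserts that the composition operator $C_{k_\pm}$ is bounded on ${\rm BMO}(\mathbb R)$, so $(\log f_\pm') \circ k_\pm \in {\rm BMO}(\mathbb R)$. Combined with $\log k_\pm' \in {\rm Re}\,{\rm BMO}^*(\mathbb R)$, this shows $\log \gamma' \in {\rm BMO}(\mathbb R)$.

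The hard part is the second step, namely producing a quasiconformal extension of the Riemann map $F_\pm$ whose complex dilatation lies in $M_B$. This amounts to the statement that $\Gamma$ is a ``BMO quasicircle'' in the one-sided sense, and requires coupling the two BMO coefficients $\mu^\pm$ through the welding datum $k_\pm$ on $\mathbb R$ and invoking the closure of $M_B$ under the composition induced by the group structure on $T_B$.
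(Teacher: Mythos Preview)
Your argument is correct and in fact yields the full proposition, not just the chord-arc special case. The paper itself does not give a self-contained proof here: it cites MacManus and WM \cite{WM-0}, and only establishes the chord-arc case later in Theorem~\ref{main-b}. Your decomposition $\gamma = f_\pm \circ k_\pm$ with $k_\pm \in {\rm SQS}$ is exactly the decomposition used in the proof of Theorem~\ref{main-b} (written there as $\gamma = f \circ h$ with $h = h(\mu^+)$ and $f = F_\nu|_{\mathbb R}$ for $[\nu] = [\mu^-]\ast[\overline{\mu^+}]^{-1}$). The difference lies in how $\log f' \in {\rm BMO}(\mathbb R)$ is obtained: the paper invokes the Lavrentiev theorem (Theorem~\ref{Lavrentiev}), which requires the image to be chord-arc, whereas you build a quasiconformal extension of the Riemann map with complex dilatation whose Teich\-m\"ul\-ler class lies in $T_B^-$ and then apply Proposition~\ref{L-holo} and Lemma~\ref{app}. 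This bypasses the chord-arc hypothesis entirely and recovers the general statement from ingredients already assembled in Sections~2--4.

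One small imprecision: the group structure of $T_B$ only guarantees that the Teich\-m\"ul\-ler class of the dilatation of $G\circ T|_{\mathbb H^-}$ lies in $T_B^-$, not that the dilatation itself lies in $M_B(\mathbb H^-)$. This is harmless, since the pre-Schwarzian derivative map $L$ factors through the Teich\-m\"ul\-ler projection $\pi$ (via the pre-Bers embedding $\beta$), so $L(\tilde\mu) = \beta([\tilde\mu]) \in {\rm BMOA}(\mathbb H^+)$ follows once $[\tilde\mu] \in T_B^-$. With that adjustment, your second step goes through and the chain-rule conclusion is valid: $k_\pm^{-1} \in {\rm SQS}$ is absolutely continuous, so the null set where $f_\pm'$ fails to exist pulls back to a null set, and the pointwise identity $\log\gamma' = (\log f_\pm')\circ k_\pm + \log k_\pm'$ holds almost everywhere.
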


We consider parametrization of all BMO embeddings.
The simultaneous uniformization due to Bers works for it. In general,
a quasisymmetric embedding $\gamma:\mathbb R \to \Gamma \subset \mathbb C$ 
onto a quasicircle $\Gamma$ passing through $\infty$ is induced by a quasiconformal 
self-homeomorphism $G(\mu^+,\mu^-)$ of $\mathbb C$
for $\mu^+ \in M(\mathbb H^+)$ and $\mu^- \in M(\mathbb H^-)$. We assume that $G(\mu^+,\mu^-)$ is normalized
so that it fixes $0$, $1$, and $\infty$. 
We see in the following proposition that
such an embedding $\gamma=G(\mu^+,\mu^-)|_{\mathbb R}$ is determined 
by the Teich\-m\"ul\-ler equivalence class pair $([\mu^+],[\mu^-])$.
The proof is the same as that in \cite[Proposition 4.1]{WM-0}.

\begin{proposition}\label{well-def}
For $\mu^+,\,\nu^+ \in M(\mathbb H^+)$ and $\mu^-,\,\nu^- \in M(\mathbb H^-)$,
$G(\mu^+,\mu^-)|_{\mathbb R}=G(\nu^+,\nu^-)|_{\mathbb R}$ if and only if $[\mu^+]=[\nu^+]$ and
$[\mu^-]=[\nu^-]$ in $T$.
\end{proposition}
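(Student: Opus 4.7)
The plan is to prove both implications using the uniqueness in the measurable Riemann mapping theorem together with the intrinsic description of Teichm\"uller equivalence. Write $H(\mu^{\pm})$ for the normalized quasiconformal self-homeomorphism of $\mathbb{H}^{\pm}$ with dilatation $\mu^{\pm}$, so that by definition $[\mu^{\pm}] = [\nu^{\pm}]$ is equivalent to $H(\mu^{\pm})|_{\mathbb R} = H(\nu^{\pm})|_{\mathbb R}$.

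For the direction $([\mu^+],[\mu^-])=([\nu^+],[\nu^-]) \Rightarrow G(\mu^+,\mu^-)|_{\mathbb R}=G(\nu^+,\nu^-)|_{\mathbb R}$, set $G_1=G(\mu^+,\mu^-)$ and $\Omega^{\pm}=G_1(\mathbb H^{\pm})$. The hypothesis provides quasiconformal self-homeomorphisms $\phi^{\pm}:=H(\mu^{\pm})^{-1} \circ H(\nu^{\pm})$ of $\mathbb H^{\pm}$ that restrict to the identity on $\mathbb R$. Since $G_1|_{\mathbb H^+}$ and $H(\mu^+)$ share the dilatation $\mu^+$, the composition $G_1 \circ H(\mu^+)^{-1}:\mathbb H^+ \to \Omega^+$ is conformal; hence
$$
\tilde G^+ := G_1 \circ \phi^+ = \bigl(G_1 \circ H(\mu^+)^{-1}\bigr) \circ H(\nu^+)
$$
is quasiconformal with dilatation $\nu^+$. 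Analogously $\tilde G^- := G_1 \circ \phi^-$ has dilatation $\nu^-$ on $\mathbb H^-$. Gluing $\tilde G^{\pm}$ along $\mathbb R$ by $G_1|_{\mathbb R}$ produces a homeomorphism $\tilde G:\mathbb C \to \mathbb C$ which is quasiconformal on $\mathbb C \setminus \mathbb R$ and hence on $\mathbb C$ by the quasiconformal removability of $\mathbb R$; it inherits the fixation of $0, 1, \infty$ from $G_1$. Uniqueness in the measurable Riemann mapping theorem identifies $\tilde G$ with $G(\nu^+,\nu^-)$, so $G(\nu^+,\nu^-)|_{\mathbb R}=G_1|_{\mathbb R}$.

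For the converse, assume $G_1|_{\mathbb R}=G_2|_{\mathbb R}$ with $G_1=G(\mu^+,\mu^-)$ and $G_2=G(\nu^+,\nu^-)$, so that both carry $\mathbb H^+$ onto a common domain $\Omega^+$. Pick a Riemann mapping $\Psi^+:\Omega^+ \to \mathbb H^+$ and adjust it by a M\"obius self-map of $\mathbb H^+$ so that $\Psi^+ \circ G_1|_{\mathbb H^+}$ fixes $0,1,\infty$. Since $\Psi^+$ is conformal, $\Psi^+ \circ G_1|_{\mathbb H^+}$ is a normalized quasiconformal self-homeomorphism of $\mathbb H^+$ with dilatation $\mu^+$, and thus equals $H(\mu^+)$. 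The map $\Psi^+ \circ G_2|_{\mathbb H^+}$ has dilatation $\nu^+$ and, by hypothesis, the same boundary values on $\mathbb R$, so it is also normalized and must coincide with $H(\nu^+)$. Consequently $H(\mu^+)|_{\mathbb R}=H(\nu^+)|_{\mathbb R}$, i.e.\ $[\mu^+]=[\nu^+]$; the argument on $\mathbb H^-$ is symmetric.

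The main technical input beyond the measurable Riemann mapping theorem is the quasiconformal removability of $\mathbb R$ used in the gluing step, which is standard but essential in order to combine the two half-plane constructions into a global map. The algebraic core of the argument is the chain-rule observation that composing a quasiconformal map with another one sharing the same dilatation yields a conformal map; this is precisely what converts Teichm\"uller equivalence on each half-plane into the ability to modify the dilatation of a global quasiconformal map while keeping its boundary values on $\mathbb R$ fixed.
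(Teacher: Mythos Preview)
Your proof is correct. Both directions are handled cleanly: the ``if'' direction via gluing $G_1 \circ \phi^{\pm}$ along $\mathbb R$ and invoking removability plus uniqueness, and the ``only if'' direction via composing with a normalized Riemann map $\Psi^+$ to reduce to the defining boundary criterion for Teich\-m\"ul\-ler equivalence. The only points worth making explicit are that $\Psi^+$ extends homeomorphically to the quasicircle $G_1(\mathbb R)$ (so that the boundary-value comparison makes sense), and that $G_1(\mathbb H^+)=G_2(\mathbb H^+)$ follows from $G_1|_{\mathbb R}=G_2|_{\mathbb R}$ because both maps are orientation-preserving self-homeomorphisms of $\widehat{\mathbb C}$; you implicitly use both.

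The paper does not supply its own argument here: it simply states that the proof is the same as in \cite[Proposition 4.1]{WM-0}. Your approach---factoring through the normalized half-plane maps $H(\mu^{\pm})$ and using Stoilow-type uniqueness---is the standard route for this kind of simultaneous-uniformization statement and is almost certainly what the referenced proof does as well.
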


Thus, the space of all such normalized quasisymmetric embeddings can be 
identified with the product $T^+ \times T^-$ of the universal Teich\-m\"ul\-ler space
for $T^+=\pi(M(\mathbb H^+))$ and $T^-=\pi(M(\mathbb H^-))$.
We refer to this as {\it the Bers coordinates}. 
We can introduce these coordinates for the BMO embeddings as well. 
Let $T_B^+=\pi(M_B(\mathbb H^+))$ and $T_B^-=\pi(M_B(\mathbb H^-))$.
Any BMO embedding is represented by $\gamma([\mu^+],[\mu^-])=G(\mu^+,\mu^-)|_{\mathbb R}$
for $([\mu^+],[\mu^-]) \in T_B^+ \times T_B^-$, and thus
$T_B^+ \times T_B^-$ becomes the Bers coordinates of the space of normalized BMO embeddings.

Let $\bar \mu \in M_B(\mathbb H^-)$ denote the reflection $\overline{\mu(\bar z)}$ 
of a Beltrami coefficient $\mu(z)$ for $z \in \mathbb H^+$.
Then, $G(\mu, \bar \mu)$ is nothing but the normalized quasiconformal homeomorphism $H(\mu):\mathbb C \to \mathbb C$ preserving $\mathbb R$.
The {\it axis of symmetry} of the product space $T_B^+ \times T_B^-$ is defined as
$$
{\rm Sym}\,(T_B^+ \times T_B^-)=\{([\mu],[\bar \mu]) \mid [\mu] \in T_B^+\}.
$$
For $[\mu] \in T_B$, the corresponding $h=h(\mu) \in \rm SQS$ is expressed as 
$h=G(\mu, \bar \mu)|_{\mathbb R}=\gamma([\mu],[\bar \mu])$. 
The canonical map $\iota:T_B \to {\rm Sym}\,(T_B^+ \times T_B^-) \subset T_B^+ \times T_B^-$
defined by $[\mu] \mapsto ([\mu],[\bar \mu])$ is a real-analytic embedding, and hence
${\rm Sym}\,(T_B^+ \times T_B^-)$ is a real-analytic submanifold of $T_B^+ \times T_B^-$.

For every $[\nu] \in T_B$, the right translation $r_{[\nu]}:T_B \to T_B$ of the group structure of $T_B \cong \rm SQS$
is defined by $[\mu] \mapsto [\mu] \ast [\nu]$ for every $[\mu] \in T_B$.
It is known that $r_{[\nu]}$ is a biholomorphic automorphism of $T_B$ (see Shen and Wei \cite[Remark 5.1]{SWei} and \cite[Lemma 4.2]{WM-0}).
This can be extended to $T_B^+ \times T_B^-$ as the parallel translation
$$
R_{[\nu]}([\mu^+],[\mu^-]) = (r_{[\nu]}([\mu^+]), r_{[\bar \nu]}([\mu^-]))=([\mu^+] \ast [\nu], [\mu^-] \ast [\bar \nu]),
$$
which is a biholomorphic automorphism of $T_B^+ \times T_B^-$ that preserves  
${\rm Sym}\,(T_B^+ \times T_B^-)$.

We see that
any chord-arc curve $\Gamma$ passing through $\infty$ is the image of some BMO embedding by Corollary \ref{BJcondition}
and Theorem \ref{char}.
We have defined $T_C$ as the subset of $T_B$ consisting of all $[\mu] \in T_B$ such that $F^\mu(\mathbb R)$ is
a chord-arc curve, which is an open subset of $T_B$ by Proposition \ref{open}.
Let $\widetilde T_C$ be the subset of $T_B^+ \times T_B^-$ consisting of the Bers coordinates of BMO
embeddings $\gamma:\mathbb R \to \mathbb C$ whose images
are chord-arc curves. By definition, $T_C^+ \times \{[0]\}$, $\{[0]\} \times T_C^-$, and ${\rm Sym}\,(T_B^+ \times T_B^-)$ are
all contained in $\widetilde T_C$, where $T_C^+=T_C$ is the subset of $T_B^+=T_B$, and $T_C^-=\{[\bar \mu] \in T_B^- \mid
[\mu] \in T_C\}$.
We will see later that $\widetilde T_C$ is an open subset of $T_B^+ \times T_B^-$.

In Theorem \ref{main1}, by using the method of conformal welding, we have obtained that 
any element $h=\gamma([\mu], [\bar \mu])$ with $[\mu] \in T_B$ is
locally absolutely continuous and $\log h'$ belongs to ${\rm Re}\,\mathrm{BMO}^*(\mathbb R)$.
The characterization of a chord-arc curve as in
Theorem \ref{Lavrentiev} can be generalized as follows by using the inverse method of conformal welding. 

\begin{theorem}\label{main-b}
Let $\gamma=\gamma([\mu^+],[\mu^-])$ be a BMO embedding with  
$([\mu^+],[\mu^-]) \in T_B^+ \times T_B^-$. Then, the image of $\gamma:\mathbb R \to \mathbb C$ is a chord-arc curve,
that is $([\mu^+],[\mu^-]) \in \widetilde T_C$,
if and only if $\gamma$ is locally absolutely continuous and
$\log \gamma'$ belongs to $\mathrm{BMO}^*(\mathbb R)$.
\end{theorem}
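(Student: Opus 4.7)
The plan is to reduce the statement to Theorem \ref{Lavrentiev} via conformal welding of $\Gamma=\gamma(\mathbb R)$. Let $\Omega^-$ denote the complementary component of $\Gamma$ corresponding to the side $\mathbb H^-$ under $G(\mu^+,\mu^-)$, and let $F_1\colon \mathbb H^- \to \Omega^-$ be the Riemann map, normalized so that $F_1(\mathbb R)=\Gamma$ passes through $\infty$. Setting $f_1=F_1|_{\mathbb R}$ and $\sigma_1=f_1^{-1}\circ\gamma$ yields the basic decomposition $\gamma=f_1\circ\sigma_1$. Since $F_1^{-1}$ is conformal on $\Omega^-$, the composition $F_1^{-1}\circ G(\mu^+,\mu^-)|_{\mathbb H^-}$ is a quasiconformal self-homeomorphism of $\mathbb H^-$ whose complex dilatation equals $\mu^-\in M_B(\mathbb H^-)$. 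Hence by the analogue of Theorem \ref{main1} for $\mathbb H^-$, its boundary map $\sigma_1$ lies in ${\rm SQS}$ (after absorbing the real M\"obius correction arising from normalization), so $\log\sigma_1' \in {\rm Re}\,{\rm BMO}^*(\mathbb R)$ and $\sigma_1$ is locally absolutely continuous.

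For the forward direction, assume $\Gamma$ is a chord-arc curve. Since $\Gamma$ is a quasicircle, $F_1$ extends to a normalized quasiconformal self-homeomorphism $F^\eta$ of $\mathbb C$ with $F^\eta|_{\mathbb H^-}=F_1$, and Theorem \ref{Lavrentiev} applied to $F^\eta$ gives that $f_1$ is locally absolutely continuous with $\log f_1' \in {\rm BMO}^*(\mathbb R)$. The chain rule then yields $\gamma'=(f_1'\circ\sigma_1)\cdot\sigma_1'$ a.e., so $\gamma$ is locally absolutely continuous and $\log\gamma'=\log f_1'\circ\sigma_1+\log\sigma_1'$. The first summand lies in ${\rm BMO}(\mathbb R)$ by the boundedness of $C_{\sigma_1}$ (Theorem \ref{pullback}), and the second lies in ${\rm Re}\,{\rm BMO}^*(\mathbb R)$. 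Moreover, the arc-length function $x\mapsto\int_0^x|\gamma'(t)|\,dt$ equals $\bigl(\int_0^u|f_1'(s)|\,ds\bigr)\circ\sigma_1(x)$, a composition of two ${\rm SQS}$ maps (the inner is ${\rm SQS}$ by Coifman--Fefferman applied to the $A_\infty$-weight $|f_1'|$), hence itself ${\rm SQS}$; therefore $|\gamma'|$ is an $A_\infty$-weight and $\log\gamma'\in{\rm BMO}^*(\mathbb R)$.

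The reverse direction proceeds symmetrically. Assume $\gamma$ is locally absolutely continuous with $\log\gamma'\in{\rm BMO}^*(\mathbb R)$. Writing $f_1=\gamma\circ\sigma_1^{-1}$ with $\sigma_1^{-1}\in{\rm SQS}$, the chain rule gives $\log f_1'=(\log\gamma'-\log\sigma_1')\circ\sigma_1^{-1}$, which belongs to ${\rm BMO}(\mathbb R)$ by Theorem \ref{pullback}; and the analogous arc-length argument run in reverse shows that $|f_1'|$ is an $A_\infty$-weight, so $\log f_1'\in{\rm BMO}^*(\mathbb R)$. Extending $F_1$ to a normalized quasiconformal self-homeomorphism $F^\eta$ of $\mathbb C$ (possible since $\Gamma$ is a quasicircle) and applying Theorem \ref{Lavrentiev} to $F^\eta$ then concludes that $\Gamma$ is a chord-arc curve.

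The main obstacle is Step 1: identifying $\sigma_1$ as a strongly quasisymmetric reparametrization, since this is the mechanism that converts the two-sided BMO data $(\mu^+,\mu^-)$ into the one-sided situation handled by Theorem \ref{Lavrentiev}. The normalization subtlety---that $F_1^{-1}\circ G(\mu^+,\mu^-)|_{\mathbb H^-}$ need not fix $0,1,\infty$ and thus differs from the standard normalized self-map of $\mathbb H^-$ with dilatation $\mu^-$ by a real M\"obius transformation---must be handled carefully, but since such transformations preserve ${\rm SQS}$, it does not affect the conclusion. Once $\sigma_1\in{\rm SQS}$ is established, everything else is a routine application of the chain rule, Theorem \ref{pullback}, and Coifman--Fefferman's description of $A_\infty$-weights through strongly quasisymmetric homeomorphisms.
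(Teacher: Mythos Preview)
Your proof is correct and follows essentially the same route as the paper's own argument: decompose $\gamma$ as a Riemann-map boundary value composed with a strongly quasisymmetric reparametrization, then invoke Theorem~\ref{Lavrentiev} together with the arc-length computation to pass between $|\gamma'|\in A_\infty$ and $|f_1'|\in A_\infty$. The paper writes this decomposition algebraically in Bers coordinates as $\gamma([\mu^+],[\mu^-])=\gamma([0],[\mu^-]\ast[\overline{\mu^+}]^{-1})\circ\gamma([\mu^+],[\overline{\mu^+}])$, so that the reparametrization is exactly $h(\mu^+)$ and no M\"obius correction arises; you instead take the Riemann map on the $\mathbb H^-$ side and obtain $\sigma_1$ with dilatation $\mu^-$ up to a real M\"obius factor, which is an equivalent (mirror-image) choice.
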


\begin{proof}
We represent $\gamma$ as the following composition:
$$
\gamma([\mu^+],[\mu^-])=\gamma([0],[\mu^-]\ast[\overline{\mu^+}]^{-1}) \circ \gamma([\mu^+],[\overline{\mu^+}]).
$$
Here, $f=\gamma([0],[\mu^-]\ast[\overline{\mu^+}]^{-1})$ is given as $F_{\nu}|_{\mathbb R}$
for the normalized quasiconformal self-homeo\-morphism $F_{\nu}$ of $\mathbb C$ that is conformal 
on $\mathbb H^+$ and has complex dilatation $\nu$ on $\mathbb H^-$ with $[\nu]=[\mu^-]\ast[\overline{\mu^+}]^{-1}$, and
$h=\gamma([\mu^+],[\overline{\mu^+}])$ is given as $H(\mu^+)|_{\mathbb R}$ for the normalized quasiconformal self-homeomorphism $H(\mu^+)$
of $\mathbb C$ with the indicated complex dilatation. 
We have $[\nu] \in T_B$ since $T_B \cong {\rm SQS}$ is a group. Hence, $f$
is a BMO embedding and
the image of $f$ coincides with that of $\gamma$.

Suppose that the image of $\gamma$ is a chord-arc curve.
Then, $f$ is locally absolutely continuous and $\log f' \in \mathrm{BMO}^*(\mathbb R)$ by
Theorem \ref{Lavrentiev}. In addition, $h \in \rm SQS$
by Theorem \ref{main1}.
Therefore, $\gamma=f \circ h$ is locally absolutely continuous and satisfies
$|\gamma'|=|f' |\circ h \cdot h'$.
Let $\tilde f(x)=\int^x_0 |f'(t)|dt$, which is in $\rm SQS$. Then,
$$
\tilde \gamma(x)=\int^x_0 |\gamma'(t)|dt=\int^x_0 |f' |\circ h(t) \cdot h'(t)dt
=\int^{h(x)}_0 |f'(\tau) |d\tau=\tilde f \circ h(x)
$$  
is also in $\rm SQS$. This implies that $|\gamma'|$ is an $A_\infty$-weight, and
hence $\log \gamma' \in \mathrm{BMO}^*(\mathbb R)$.

Conversely, suppose that $\gamma$ is locally absolutely continuous and
$\log \gamma'$ is in $\mathrm{BMO}^*(\mathbb R)$. Then, $f$ is also locally absolutely continuous.
Under the above definition, we consider $\tilde \gamma$ and $\tilde f$. Since $|\gamma'|$ is an $A_\infty$-weight,
$\tilde \gamma$ is in $\rm SQS$, and hence so is $\tilde f$. Thus, $|f'|$ is an $A_\infty$-weight and
$\log f'$ is in $\mathrm{BMO}^*(\mathbb R)$. By Theorem \ref{Lavrentiev} again, the image of $f$ is a
chord-arc curve. This implies that the image of $\gamma$ is also a chord-arc curve.
\end{proof}

In particular, if a BMO embedding $\gamma$ satisfies $|\gamma'|=1$, which means that
$\gamma$ is parametrized by its arc-length, then $\gamma(\mathbb R)$ is a chord-arc curve.

\section{Holomorphy to the BMO space}\label{section6}

For a BMO embedding $\gamma:\mathbb R \to \mathbb C$ with
$\gamma=\gamma([\mu^+],[\mu^-])$ for $([\mu^+],[\mu^-]) \in T_B^+ \times T_B^-$,
we have $\log \gamma' \in \mathrm{BMO}(\mathbb R)$ by Proposition \ref{curve}.
By this correspondence, we define a map
$$
\Lambda:T_B^+ \times T_B^- \to \mathrm{BMO}(\mathbb R).
$$
We first prove that this correspondence is holomorphic in the Bers coordinates.
This is shown in WM \cite[Theorem 4.3]{WM-0}.

\begin{theorem}\label{holomorphic}
The map $\Lambda:T_B^+ \times T_B^- \to \mathrm{BMO}(\mathbb R)$ 
is holomorphic.
\end{theorem}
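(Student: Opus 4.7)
The plan is to lift the problem to the Beltrami coefficient level and apply the standard Gateaux-plus-boundedness criterion for Banach-valued holomorphy. Because the Teichm\"uller projections $\pi^\pm:M_B(\mathbb H^\pm) \to T_B^\pm$ admit local holomorphic right inverses (Proposition \ref{L-holo} and its analogue for $M_B(\mathbb H^-)$), it suffices to establish holomorphy of the pullback
$$\tilde\Lambda:(\mu^+,\mu^-) \longmapsto \log\bigl(G(\mu^+,\mu^-)|_{\mathbb R}\bigr)'$$
from $M_B(\mathbb H^+) \times M_B(\mathbb H^-)$ into $\mathrm{BMO}(\mathbb R)$. I would then verify two conditions: local boundedness in BMO norm, and Gateaux holomorphy along every complex line.

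For local boundedness, I would invoke the factorization $\gamma = f \circ h$ from the proof of Theorem \ref{main-b}, with $h = h(\mu^+)$ and $f = \gamma([0],[\nu])$ for $[\nu] = [\mu^-] \ast [\overline{\mu^+}]^{-1}$. The chain rule gives
$$\log\gamma' = C_{h(\mu^+)}(\log f') + \log h(\mu^+)',$$
and each summand has BMO norm locally bounded in $(\mu^+,\mu^-)$: $\Vert\log f'\Vert_{\mathrm{BMO}}$ via the pre-Bers embedding (Theorem \ref{Bers}) and the trace $E^+$, $\Vert\log h(\mu^+)'\Vert_{\mathrm{BMO}}$ via Theorem \ref{main1}, and the operator norm $\Vert C_{h(\mu^+)}\Vert$ via Theorem \ref{pullback}, controlled by the strong quasisymmetry constants of $h(\mu^+)$.

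For Gateaux holomorphy, I would restrict to a complex line $\mu(t) = (\mu^+_0 + t\dot\mu^+, \mu^-_0 + t\dot\mu^-)$ with $t$ in a small disk of $\mathbb C$. The Ahlfors--Bers theorem on the holomorphic dependence of solutions to the Beltrami equation yields that $t \mapsto G^{\mu(t)}(z)$ is holomorphic for every fixed $z \in \mathbb C$; passing to non-tangential boundary limits gives that $\log\gamma_t'(x)$ is holomorphic in $t$ for almost every $x$. To upgrade this pointwise-a.e.\ holomorphy to holomorphy as a map into $\mathrm{BMO}(\mathbb R)$, I plan to test against $H^1$-atoms $a$ supported on an interval $I$ with mean zero: combining the local BMO bound with the John--Nirenberg inequality \eqref{JN}, the family $(t,x) \mapsto a(x)\log\gamma_t'(x)$ is uniformly integrable on $\Delta \times I$ for any small closed triangle $\Delta \subset \mathbb C$. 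A Fubini--Morera argument then shows that $t \mapsto \int a\,\log\gamma_t'\,dx$ is holomorphic. Since atoms span a dense subspace of $H^1(\mathbb R) = \mathrm{BMO}(\mathbb R)^*$ by Fefferman duality, this weak holomorphy combined with the local BMO bound yields the desired strong Banach-valued holomorphy of $t \mapsto \log\gamma_t'$.

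The principal obstacle is the passage from pointwise-a.e.\ holomorphy to $\mathrm{BMO}$-valued holomorphy: unlike in $L^p$ settings, BMO is not pointwise-majorized by its norm, so one must substitute John--Nirenberg-type uniform integrability for pointwise domination in order to justify the Fubini--Morera step and thereby conclude weak holomorphy on a dense subfamily of the predual.
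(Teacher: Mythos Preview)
Your route diverges substantially from the paper's. The paper invokes the Hartogs theorem for Banach spaces to reduce to separate holomorphy, then fixes $[\mu^+_0]$ and writes $\Lambda([\mu^+_0],\,\cdot\,)=Q_{h_0}\circ E^+\circ\beta^-\circ r^{-1}_{[\bar\mu^+_0]}$, a composition of a fixed bounded affine map, a bounded linear operator, and two holomorphic maps. By freezing one coordinate, all anti-holomorphic dependence (through $\overline{\mu^+_0}$ and $h_0=h(\mu^+_0)$) becomes constant, and separate holomorphy falls out without any hard analysis.

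Your direct joint argument has a genuine gap at the Gateaux step. Along a complex line where $\mu^+$ varies, $G(\mu^+(t),\mu^-(t))$ is conformal on neither half-plane, so there is no holomorphic function on $\mathbb H^\pm$ whose non-tangential boundary values yield $\log\gamma_t'$; Ahlfors--Bers gives pointwise holomorphy of $t\mapsto\gamma_t(x)$, but passing to the derivative $\gamma_t'(x)$ requires interchanging a limit in $x$ with holomorphy in $t$, and the null set where $\gamma_t'$ fails to exist may depend on $t$. The factorization $\gamma_t=f_t\circ h_t$ does not help here, since $h_t=h(\mu^+(t))$ and $[\nu_t]=[\mu^-(t)]\ast[\overline{\mu^+(t)}]^{-1}$ both involve the reflection $\overline{\mu^+(t)}$, which is anti-holomorphic in $t$. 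Two smaller points: Fefferman duality reads $\mathrm{BMO}=(H^1)^*$, so $H^1$ is the predual, not the dual---your criterion (weak-$*$ holomorphy against a norming predual plus local boundedness implies norm holomorphy) is valid for dual spaces, but the statement should be corrected; and local boundedness of $\Vert\log h(\mu^+)'\Vert_{\mathrm{BMO}}$ as $\mu^+$ varies needs a quantitative estimate that Theorem~\ref{main1} alone does not supply.
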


\begin{proof}
By the Hartogs theorem for Banach spaces (see Chae \cite[\S 14.27]{Ch}), to see that $\Lambda$ is holomorphic,
it suffices to show that $\Lambda$ is separately holomorphic.
Namely, we fix, say $[\mu^+_0]$, and prove that $\Lambda([\mu^+_0],[\mu^-])$ is holomorphic on 
$[\mu^-]$. The other case is treated in the same way.

Let $C_{h_0}$ be
the composition operator acting on $\mathrm{BMO}(\mathbb R)$ induced by $h_0=h(\mu^+_0) \in \rm SQS$. We define
the affine translation $Q_{h_0}(\phi)$ of $\phi \in \mathrm{BMO}(\mathbb R)$ by $C_{h_0}(\phi)+\log h_0'$.
Then, $\Lambda \circ R_{[\mu^+_0]} = Q_{h_0} \circ \Lambda$ holds. 
This relation yields
a useful representation
\begin{equation}\label{translation}
\Lambda([\mu^+_0],\,\cdot \,)=Q_{h_0} \circ \Lambda([0],r_{[\,\bar{\mu}^+_0\,]}^{-1}(\,\cdot \,)) .
\end{equation}
Here, 
$\Lambda([0],\,\cdot \,)$ becomes the trace operator $E^+$ in Theorem \ref{131} by
composing the pre-Bers embedding $\beta^-:T_B^- \to \mathrm{BMOA}(\mathbb H^+)$. Namely,
$$
\Lambda([0],[\mu])=E^+(\beta^-([\mu])) \quad ([\mu] \in T_B^-).
$$
Since $E^+$ is a bounded linear operator and $r_{[\,\bar{\mu}^+_0\,]}^{-1}$ is holomorphic, we conclude that
$\Lambda([\mu^+_0],\,\cdot \,)$ is holomorphic.
\end{proof}

In the above proof, we see that the affine translation
$Q_{h_0}$ of $\mathrm{BMO}(\mathbb R)$ preserves the image $\Lambda(\widetilde T_C)$
because $R_{[\mu^+_0]}$ preserves $\widetilde T_C$.

\begin{proposition}
$\widetilde T_C=\Lambda^{-1}(\mathrm{BMO}^*(\mathbb R))$, and $\widetilde T_C$ is an open subset of $T_B^+ \times T_B^-$.
\end{proposition}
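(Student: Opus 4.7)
The plan is to match $\widetilde T_C$ with $\Lambda^{-1}(\mathrm{BMO}^*(\mathbb R))$ using Theorem \ref{main-b}, and then to deduce openness from the fact that $\Lambda$ is continuous (via Theorem \ref{holomorphic}) and that $\mathrm{BMO}^*(\mathbb R)$ is an open subset of $\mathrm{BMO}(\mathbb R)$ (via Proposition \ref{convex2}).

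First I would read off the inclusion $\widetilde T_C \subseteq \Lambda^{-1}(\mathrm{BMO}^*(\mathbb R))$ directly from the ``only if'' direction of Theorem \ref{main-b}: if $\gamma = \gamma([\mu^+],[\mu^-])$ has chord-arc image, then $\gamma$ is locally absolutely continuous and $\log \gamma' \in \mathrm{BMO}^*(\mathbb R)$, so $\Lambda([\mu^+],[\mu^-])$ lies in $\mathrm{BMO}^*(\mathbb R)$. For the reverse inclusion I want to apply the ``if'' direction of Theorem \ref{main-b}, and the missing piece is the local absolute continuity of $\gamma$, since Proposition \ref{curve} only guarantees that $\gamma'$ exists almost everywhere. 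This is the main obstacle. I plan to recover local absolute continuity by reusing the conformal-welding factorization from the proof of Theorem \ref{main-b}: write
$$
\gamma = f \circ h, \qquad h = H(\mu^+)|_{\mathbb R},\quad f = F_\nu|_{\mathbb R},\quad [\nu] = [\mu^-] \ast [\overline{\mu^+}]^{-1} \in T_B.
$$
Here $h \in \rm SQS$ is locally absolutely continuous with $h'$ an $A_\infty$-weight by Theorem \ref{main1}, and the chain-rule identity $|\gamma'| = |f'| \circ h \cdot h'$, combined with the invariance of the $A_\infty$ class under composition by the SQS maps $h$ and $h^{-1}$ (reflected in the boundedness of $C_h$ on $\mathrm{BMO}$ via Theorem \ref{pullback}), forces $\log f' \in \mathrm{BMO}^*(\mathbb R)$. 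Applying Theorem \ref{Lavrentiev} to $F_\nu$ then identifies $f(\mathbb R) = \gamma(\mathbb R)$ as a chord-arc curve and $f$ as locally absolutely continuous; hence $\gamma = f \circ h$ is locally absolutely continuous, and the ``if'' direction of Theorem \ref{main-b} yields $([\mu^+],[\mu^-]) \in \widetilde T_C$.

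Once the equality $\widetilde T_C = \Lambda^{-1}(\mathrm{BMO}^*(\mathbb R))$ is in place, openness is immediate: Theorem \ref{holomorphic} asserts that $\Lambda$ is holomorphic, in particular continuous; Proposition \ref{convex2} asserts that $\mathrm{BMO}^*(\mathbb R)$ is open in $\mathrm{BMO}(\mathbb R)$; therefore $\widetilde T_C$ is the preimage of an open set under a continuous map, hence open in $T_B^+ \times T_B^-$.
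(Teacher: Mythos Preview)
Your proof is correct and follows the paper's approach exactly: invoke Theorem~\ref{main-b} for the set equality, then combine the continuity of $\Lambda$ (Theorem~\ref{holomorphic}) with the openness of $\mathrm{BMO}^*(\mathbb R)$ (Proposition~\ref{convex2}) for the second assertion. Your extra care about local absolute continuity in the reverse inclusion simply unpacks the ``if'' half of the proof of Theorem~\ref{main-b} via the same factorization $\gamma=f\circ h$; note that once Theorem~\ref{Lavrentiev} gives that $f(\mathbb R)=\gamma(\mathbb R)$ is a chord-arc curve you already have $([\mu^+],[\mu^-])\in\widetilde T_C$ by definition, so the closing appeal to Theorem~\ref{main-b} is unnecessary.
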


\begin{proof}
The first assertion follows from Theorem \ref{main-b}. Since $\Lambda$ is continuous by Theorem \ref{holomorphic}
and $\mathrm{BMO}^*(\mathbb R)$ is open by Proposition \ref{convex2},
we obtain the second assertion.
\end{proof}

In the sequel, we restrict $\Lambda$ to $\widetilde T_C$ and define this map as
$$
\Lambda:\widetilde T_C \to \mathrm{BMO}^*(\mathbb R)
$$
using the same notation $\Lambda$.

\begin{proposition}\label{inj}
The map $\Lambda:\widetilde T_C \to \mathrm{BMO}^*(\mathbb R)$ 
is a holomorphic injection.
\end{proposition}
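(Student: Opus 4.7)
The plan is to combine the holomorphy already established in Theorem \ref{holomorphic} with a direct argument that recovers a BMO embedding $\gamma$ from its log-derivative $\log \gamma'$. Since $\Lambda$ restricted to $\widetilde{T}_C$ is the restriction of the ambient holomorphic map $\Lambda \colon T_B^+ \times T_B^- \to {\rm BMO}(\mathbb R)$ of Theorem \ref{holomorphic}, holomorphy of $\Lambda \colon \widetilde T_C \to {\rm BMO}^*(\mathbb R)$ is automatic once we know that $\widetilde T_C$ is open and $\Lambda(\widetilde T_C) \subset {\rm BMO}^*(\mathbb R)$, both of which are given by the preceding proposition. So the only real content is injectivity.

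For injectivity, I take two points $([\mu_1^+],[\mu_1^-])$ and $([\mu_2^+],[\mu_2^-])$ in $\widetilde T_C$ with the same image under $\Lambda$, and let $\gamma_1 = \gamma([\mu_1^+],[\mu_1^-])$ and $\gamma_2 = \gamma([\mu_2^+],[\mu_2^-])$ be the associated BMO embeddings. By Theorem \ref{main-b}, since both pairs lie in $\widetilde T_C$, each $\gamma_j$ is locally absolutely continuous and its log-derivative lies in ${\rm BMO}^*(\mathbb R)$. The assumption $\Lambda([\mu_1^+],[\mu_1^-]) = \Lambda([\mu_2^+],[\mu_2^-])$ means that $\log \gamma_1'$ and $\log \gamma_2'$ agree as elements of ${\rm BMO}(\mathbb R)$, that is, they differ only by an additive complex constant $c$. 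Exponentiating yields $\gamma_2'(x) = e^c\, \gamma_1'(x)$ for almost every $x \in \mathbb R$.

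Integrating this identity using local absolute continuity, and recalling that the normalization of $G(\mu^+,\mu^-)$ fixes $0$, $1$, and $\infty$, we obtain $\gamma_2(x) = e^c\, \gamma_1(x)$ from $\gamma_j(0) = 0$, and then $e^c = 1$ from $\gamma_j(1) = 1$. Therefore $\gamma_1 = \gamma_2$ as mappings of $\mathbb R$, and the injectivity of the Bers coordinates (Proposition \ref{well-def}) forces $([\mu_1^+],[\mu_1^-]) = ([\mu_2^+],[\mu_2^-])$ in $T_B^+ \times T_B^-$.

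I do not anticipate a serious obstacle in this argument, since all the heavy lifting has already been done: Theorem \ref{holomorphic} supplies holomorphy, Theorem \ref{main-b} supplies local absolute continuity of $\gamma$ on $\widetilde T_C$ (which is what legitimises integrating $\gamma_2' = e^c \gamma_1'$ to recover $\gamma_2$ from $\gamma_1$), and Proposition \ref{well-def} provides the passage from the parametrised embedding back to the Bers coordinates. The only mild subtlety is the quotient by constants in the definition of ${\rm BMO}(\mathbb R)$, which is handled cleanly by exploiting both normalizations $\gamma(0)=0$ and $\gamma(1)=1$ to pin down the multiplicative constant $e^c$.
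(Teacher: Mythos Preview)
Your proof is correct and follows essentially the same approach as the paper's own argument: holomorphy from Theorem \ref{holomorphic}, local absolute continuity from Theorem \ref{main-b}, integration and normalization to conclude $\gamma_1=\gamma_2$, and Proposition \ref{well-def} to recover the Bers coordinates. Your version is in fact slightly more explicit than the paper's in handling the additive constant arising from the $\mathrm{BMO}$ quotient, using both normalizations $\gamma(0)=0$ and $\gamma(1)=1$ to eliminate $e^c$.
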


\begin{proof}
Holomorphy follows from Theorem \ref{holomorphic}.
Let $\Lambda([\mu_1^+],[\mu_1^-])=\log \gamma_1'$, $\Lambda([\mu_2^+],[\mu_2^-])=\log \gamma_2'$, and
suppose that $\log \gamma_1'=\log \gamma_2'$. Since
$\gamma_1=\gamma([\mu_1^+],[\mu_1^-])$ and $\gamma_2=\gamma([\mu_2^+],[\mu_2^-])$ are locally absolutely continuous,
we have $\gamma_1=\gamma_2$ by the normalization.
This implies that $[\mu_1^+]=[\mu_2^+]$ and $[\mu_1^-]=[\mu_2^-]$ by Proposition \ref{well-def}, 
and hence $\Lambda$ is injective.
\end{proof}

We denote the tangent space of $T_B$ at $[\mu]$ by ${\mathscr T}_{[\mu]}T_B$.
The tangent space of $T_B^+ \times T_B^-$ at $([\mu^+],[\mu^-])$ is represented by the direct sum
\begin{equation}\label{tangentspace}
{\mathscr T}_{([\mu^+],[\mu^-])}(T_B^+ \times T_B^-)={\mathscr T}_{[\mu^+]}T_B^+ \oplus {\mathscr T}_{[\mu^-]}T_B^-.
\end{equation}
By the identification $T_B^+ \cong \beta(T_B^+) \subset \mathrm{BMOA}(\mathbb H^-)$
and $T_B^- \cong \beta(T_B^-) \subset \mathrm{BMOA}(\mathbb H^+)$ under the pre-Bers embedding by Theorem \ref{Bers},
we may assume that ${\mathscr T}_{[\mu^+]}T_B^+ \cong \mathrm{BMOA}(\mathbb H^-)$ and
${\mathscr T}_{[\mu^-]}T_B^- \cong \mathrm{BMOA}(\mathbb H^+)$.
Then, 
the derivative $d_{([\mu^+],[\mu^-])} \Lambda$ of $\Lambda$ at $([\mu^+],[\mu^-])$ is regarded as
the linear mapping
$$
d_{([\mu^+],[\mu^-])}\, \Lambda:\mathrm{BMOA}(\mathbb H^-) \oplus \mathrm{BMOA}(\mathbb H^+)
\to \mathrm{BMO}(\mathbb R)
=\mathrm{BMOA}(\mathbb H^-) \oplus \mathrm{BMOA}(\mathbb H^+)
$$
by taking the direct sum decomposition \eqref{omitE} into account.

The derivative $d_{([0],[0])}\,\Lambda$ at the origin can be easily understood.
By checking that the restriction of $\Lambda$ to $T_B^+$ and $T_B^-$ coincides with   
$$
\Lambda|_{T_B^+ \times \{[0]\}}=\beta^+:T_B^+ \to \mathrm{BMOA}(\mathbb H^-),\quad
\Lambda|_{\{[0]\} \times T_B^-}=\beta^-:T_B^- \to \mathrm{BMOA}(\mathbb H^+),
$$
we see that the derivative $d_{([0],[0])}\, \Lambda$ is the identity map of $\mathrm{BMOA}(\mathbb H^-) \oplus \mathrm{BMOA}(\mathbb H^+)$.
This implies the following claim by the inverse mapping theorem (see \cite[\S 7.18]{Ch}). 

\begin{proposition}\label{derivative0}
The inverse map $\Lambda^{-1}$ is holomorphic in
some neighborhood $U$ of $0$ in 
$\mathrm{BMO}^*(\mathbb R)$ with $U \subset \Lambda(\widetilde T_C)$.
\end{proposition}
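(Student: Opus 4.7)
The plan is to reduce the proposition to a direct application of the holomorphic inverse mapping theorem for Banach spaces (Chae \cite[\S 7.18]{Ch}), as hinted at in the text. Three ingredients must be lined up: the appropriate complex Banach model for the domain $\widetilde T_C$, holomorphy of $\Lambda$, and the invertibility of the derivative at $([0],[0])$. All three have already been established, so the proof amounts to assembling them cleanly.

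First, I would record the Banach setting. By Theorem \ref{Bers}(2), the pre-Bers embeddings $\beta^\pm$ identify $T_B^+ \times T_B^-$ with an open subset of the complex Banach space $\mathrm{BMOA}(\mathbb H^-) \oplus \mathrm{BMOA}(\mathbb H^+)$, and via this identification $\widetilde T_C$ is an open subset of the same Banach space. The target $\mathrm{BMO}^*(\mathbb R)$ is an open subset of $\mathrm{BMO}(\mathbb R)$, and by the topological direct sum decomposition \eqref{omitE} we have $\mathrm{BMO}(\mathbb R) = \mathrm{BMOA}(\mathbb H^+) \oplus \mathrm{BMOA}(\mathbb H^-)$. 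Under these identifications, $\Lambda$ becomes a holomorphic map between open subsets of one and the same Banach space by Theorem \ref{holomorphic}, and the calculation in the paragraph preceding the proposition shows that $d_{([0],[0])}\Lambda$ is the identity map of $\mathrm{BMOA}(\mathbb H^-) \oplus \mathrm{BMOA}(\mathbb H^+)$. In particular this derivative is a Banach isomorphism.

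The holomorphic inverse mapping theorem then yields open neighborhoods $V$ of $([0],[0])$ in $\widetilde T_C$ and $U$ of $\Lambda([0],[0]) = 0$ in $\mathrm{BMO}(\mathbb R)$ such that $\Lambda|_V : V \to U$ is a biholomorphism. Since $\mathrm{BMO}^*(\mathbb R)$ is open in $\mathrm{BMO}(\mathbb R)$ by Proposition \ref{convex2} and contains $0$, I would shrink $U$ so that $U \subset \mathrm{BMO}^*(\mathbb R)$ (and correspondingly shrink $V \subset \widetilde T_C$). The resulting $U$ is then contained in $\Lambda(\widetilde T_C)$ and carries a holomorphic inverse $\Lambda^{-1} = (\Lambda|_V)^{-1}$, which is the required conclusion.

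There is no genuine obstacle here: the real work was done in establishing Theorem \ref{holomorphic} and in computing $d_{([0],[0])}\Lambda$. The only point to handle with care is the bookkeeping of identifications, namely that the tangent model space $\mathrm{BMOA}(\mathbb H^-) \oplus \mathrm{BMOA}(\mathbb H^+)$ for the domain matches the model space $\mathrm{BMOA}(\mathbb H^+) \oplus \mathrm{BMOA}(\mathbb H^-)$ for the target in the order consistent with the restrictions $\Lambda|_{T_B^+ \times \{[0]\}} = \beta^+$ and $\Lambda|_{\{[0]\} \times T_B^-} = \beta^-$, so that the derivative at the origin is unambiguously the identity and not merely an isomorphism.
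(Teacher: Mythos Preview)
Your proposal is correct and follows essentially the same approach as the paper: the paper simply observes that $d_{([0],[0])}\Lambda$ is the identity on $\mathrm{BMOA}(\mathbb H^-)\oplus\mathrm{BMOA}(\mathbb H^+)$ and invokes the inverse mapping theorem from \cite[\S 7.18]{Ch}. Your write-up just makes the bookkeeping of Banach models and the shrinking of $U$ into $\mathrm{BMO}^*(\mathbb R)$ explicit, which the paper leaves implicit.
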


Theorem \ref{main1} implies that $\Lambda([\mu], [\bar \mu]) \in {\rm Re}\, \mathrm{BMO}^*(\mathbb R)$ for every
$([\mu], [\bar \mu]) \in {\rm Sym}\,(T_B^+ \times T_B^-)$. The converse of this claim also holds.

\begin{theorem}\label{converse}
For every $\phi \in {\rm Re}\, \mathrm{BMO}^*(\mathbb R)$, there exists $([\mu], [\bar \mu]) \in {\rm Sym}\,(T_B^+ \times T_B^-)$
such that $\Lambda([\mu], [\bar \mu])=\phi$. Hence, any strongly quasisymmetric homeomorphism
$h(x)=\int_0^x e^{\phi(t)}dt$ is given as $h=h(\mu)$ for some $\mu \in M_B(\mathbb H^+)$. 
\end{theorem}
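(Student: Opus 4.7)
The plan is to invoke the quasiconformal extension provided by the variant Beurling--Ahlfors extension by the heat kernel (Theorem \ref{FKP}) to produce the required Beltrami coefficient directly from $\phi$. This converts the $A_\infty$-information encoded in $\phi$ into a Carleson-controlled dilatation on $\mathbb H^+$, after which the identification with $\Lambda$ is just a matter of bookkeeping the Bers coordinates along the symmetric axis.

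Given $\phi \in {\rm Re}\,\mathrm{BMO}^*(\mathbb R)$, choose the representative of $\phi$ modulo constants so that $\int_0^1 e^{\phi(t)}\,dt = 1$, and set $h(x) = \int_0^x e^{\phi(t)}\,dt$. Then $h$ is an increasing locally absolutely continuous homeomorphism of $\mathbb R$ fixing $0$, $1$, and $\infty$. Because $e^\phi = |e^\phi|$ is an $A_\infty$-weight by the very definition of $\mathrm{BMO}^*(\mathbb R)$, the Coifman--Fefferman characterization recalled in Section \ref{section3} gives $h \in {\rm SQS}$. Applying Theorem \ref{FKP} to $h$ yields a real-analytic quasiconformal self-diffeomorphism $H$ of $\mathbb H^+$ with $H|_{\mathbb R} = h$ whose complex dilatation $\mu = \bar\partial H/\partial H$ lies in $M_B(\mathbb H^+)$; in particular $[\mu] \in T_B$.

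It remains to check that $\Lambda([\mu],[\bar\mu]) = \phi$. Reflecting $H$ across $\mathbb R$ by $z \mapsto \overline{H(\bar z)}$ for $z \in \mathbb H^-$ produces a normalized quasiconformal self-homeomorphism of $\mathbb C$ that preserves $\mathbb R$ and has complex dilatation $\mu$ on $\mathbb H^+$ and $\bar\mu$ on $\mathbb H^-$. By the uniqueness of the normalized solution of the Beltrami equation this reflected map coincides with $G(\mu,\bar\mu)$, and hence $\gamma([\mu],[\bar\mu]) = G(\mu,\bar\mu)|_{\mathbb R} = h$. Therefore
$$
\Lambda([\mu],[\bar\mu]) = \log h' = \phi,
$$
which gives the required point $([\mu],[\bar\mu]) \in {\rm Sym}\,(T_B^+ \times T_B^-)$. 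There is no serious obstacle in this argument: the entire non-trivial content is carried by Theorem \ref{FKP}, while the symmetry of the extension and the identification of the Bers coordinates are immediate from the reflective construction. The only care needed is in the normalization of the representative of $\phi$ so that $h$ fixes $0$, $1$, and $\infty$.
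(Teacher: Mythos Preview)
Your argument is correct, but it takes a genuinely different route from the paper. You invoke Theorem~\ref{FKP} directly: given $h\in{\rm SQS}$, the heat-kernel Beurling--Ahlfors extension supplies a $\mu\in M_B(\mathbb H^+)$ with $h(\mu)=h$, and the identification $\Lambda([\mu],[\bar\mu])=\log h'=\phi$ is then immediate. The paper instead avoids Theorem~\ref{FKP} entirely: it interpolates $h_s(x)=\int_0^x e^{s\phi(t)}\,dt$, uses uniform bounds on the composition operators $C_{h_s}^{-1}$ (Theorem~\ref{pullback}) to break $h$ into factors $h_{j/n}\circ h_{(j-1)/n}^{-1}$ whose log-derivatives lie in the small neighborhood $U$ of Proposition~\ref{derivative0}, and then applies the local holomorphic inverse of $\Lambda$ at the origin (coming from the inverse function theorem) to each factor before composing in the group $T_B$.

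The point of the paper's longer route is stated explicitly just after the theorem: together with Theorem~\ref{main1} it yields a proof of Theorem~\ref{SQS} \emph{independent of the results of Fefferman, Kenig and Pipher}. Since Theorem~\ref{FKP} is precisely one of those results, your argument---though perfectly valid given what the paper has already quoted---forfeits this independence and does not exhibit the role of the complex-analytic structure of $\Lambda$. What your approach buys is brevity: all the analytic content is packaged in the single citation of Theorem~\ref{FKP}. What the paper's approach buys is a demonstration that the local biholomorphy of $\Lambda$ near the origin, combined with the group structure of ${\rm SQS}$, already suffices to recover the full surjectivity onto ${\rm Re}\,\mathrm{BMO}^*(\mathbb R)$.
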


\begin{proof}
For each $s \in [0, 1]$, let 
\begin{equation*}
h_s(x) = \int_0^x e^{s\phi(t)} dt. 
\end{equation*}
Then, $h_s$ is an increasing and locally absolutely continuous homeomorphism of $\mathbb R$
with $h_0 = \rm id$ and $\log h_s' = s\phi \in {\rm Re}\, \mathrm{BMO}^*(\mathbb R)$. 
By the H\"older inequality, we see that the constant $C_\infty$ in \eqref{iff} for the $A_\infty$-weights
$h_s'$ for all $s$ are bounded by that for $h'$.
Hence, $h_s$ are uniformly strongly quasisymmetric homeomorphisms for all $s \in [0, 1]$
in the sense that we can take the constants $\alpha$ and $K$
in \eqref{alphaK} uniformly, and so are their inverses $h_s^{-1}$ by 
Coifman and Fefferman \cite[Lemma 5]{CF}. 

We have that
\begin{equation*}
\log (h_{\tilde s} \circ h_{s}^{-1})' = (\log h_{\tilde s}' - \log h_{s}')\circ h_s^{-1} = (\tilde s - s)C_{h_s}^{-1}\phi
\end{equation*}
for any $s, \tilde s \in [0, 1]$.
This belongs to ${\rm Re}\, \mathrm{BMO}(\mathbb R)$ and
the operator norms $\Vert C_{h_s}^{-1}\Vert$ are uniformly bounded by Theorem \ref{pullback} because the constants for strong quasisymmetry
of $h_s^{-1}$ are independent of $s$.
Hence, we can choose a positive integer $n \geq 1$ such that
$\log (h_{j/n}\circ h_{(j-1)/n}^{-1})'$ belong to the neighborhood $U$ of $0$ in $\mathrm{BMO}^*(\mathbb R)$
taken by Proposition \ref{derivative0} for all $j=1, \ldots, n$. 

By this proposition,
there exist $([\mu_j], [\bar \mu_j]) \in {\rm Sym}\,(T_B^+ \times T_B^-)$ such that
$\Lambda([\mu_j], [\bar \mu_j])=\log (h_{j/n}\circ h_{(j-1)/n}^{-1})'$ for all $j$.
Therefore,
\begin{align}
h=(h_1 \circ h_{(n-1)/n}^{-1}) \circ \cdots \circ  (h_{1/n} \circ h_0^{-1})
\end{align}
is given as $[\mu]=[\mu_n] \ast \cdots  \ast [\mu_1]$ in $T_B$.
Then, we have $\Lambda([\mu], [\bar \mu])=\log h'=\phi$, which is the desired assertion.
\end{proof}

Therefore, combined with Theorem \ref{converse},
Theorem \ref{main1} is improved to the complete characterization of $T_B$ 
in terms of strong quasisymmetry or ${\rm Re}\,\mathrm{BMO}^*(\mathbb R)$.
Thus, we obtain an alternate proof of Theorem \ref{SQS}, independently of the theorems in 
Fefferman, Kenig and Pipher \cite{FKP}.

\section{Biholomorphic correspondence}\label{section7}

We have seen that BMO embeddings $\gamma$ with chord-arc images are represented in the Bers coordinates, and the map $\Lambda$ to $\mathrm{BMO}^*(\mathbb{R})$ via $\log \gamma'$ is a holomorphic injection. In fact, this map is a biholomorphic homeomorphism onto its image, as shown in WM \cite[Theorem 6.1]{WM-0}. Here, we restructure the proof to emphasize the surjectivity of the derivative of $\Lambda$ as the main point.

\begin{theorem}\label{main2}
Let $\Lambda:\widetilde T_C \to \mathrm{BMO}^*(\mathbb R)$ be the holomorphic injection
given by
$$
\Lambda([\mu^+],[\mu^-])= \log \gamma' \quad (\gamma=\gamma([\mu^+],[\mu^-])).
$$
Then,
its image $\Lambda(\widetilde T_C)$ is an open subset of $\mathrm{BMO}^*(\mathbb R)$ containing
${\rm Re}\,\mathrm{BMO}^*(\mathbb R)$,
and $\Lambda$ is a biholomorphic homeomorphism onto $\Lambda(\widetilde T_C)$.
\end{theorem}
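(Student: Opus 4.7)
The plan is to show that at every $p_0 \in \widetilde T_C$, the derivative $d_{p_0}\Lambda$ is a topological isomorphism of Banach spaces. Granted this, the holomorphic inverse function theorem (cf.~\cite[\S 7.18]{Ch}) yields a local holomorphic inverse of $\Lambda$ near each $\Lambda(p_0)$; these patch together via the global injectivity from Proposition~\ref{inj} into a holomorphic inverse on the open image $\Lambda(\widetilde T_C)$, producing the biholomorphic homeomorphism. The inclusion $\Lambda(\widetilde T_C) \supset \mathrm{Re}\,\mathrm{BMO}^*(\mathbb R)$ is then precisely Theorem~\ref{converse}.

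At the origin, $d_{([0],[0])}\Lambda$ is the identity on $\mathrm{BMO}(\mathbb R) = \mathrm{BMOA}(\mathbb H^-) \oplus \mathrm{BMOA}(\mathbb H^+)$ under the pre-Bers identification, as recorded just before Proposition~\ref{derivative0}. Differentiating the intertwining identity $\Lambda \circ R_{[\nu]} = Q_{h(\nu)} \circ \Lambda$ gives
\begin{equation*}
d_{R_{[\nu]}(p)}\Lambda = C_{h(\nu)} \circ d_p\Lambda \circ (d_p R_{[\nu]})^{-1},
\end{equation*}
where $R_{[\nu]}$ is a biholomorphism of $T_B^+\times T_B^-$ preserving $\widetilde T_C$ (the image of $\gamma\circ h(\nu)$ equals that of $\gamma$) and $C_{h(\nu)}$ is a Banach automorphism of $\mathrm{BMO}(\mathbb R)$ by Theorem~\ref{pullback}. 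Hence the isomorphism locus $\Sigma=\{p\in\widetilde T_C:d_p\Lambda\text{ is a topological isomorphism}\}$ is invariant under every right translation. Applying $R_{[\mu^+_0]^{-1}}$ sends an arbitrary $p_0=([\mu^+_0],[\mu^-_0])\in\widetilde T_C$ to a point of the form $([0],[\nu])\in\widetilde T_C$, so the problem reduces to showing $d_{([0],[\nu])}\Lambda$ is a topological isomorphism for every such $([0],[\nu])$.

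At the base point $p_1=([0],[\nu])$ I would compute the two partial derivatives separately. The restriction of $\Lambda$ to $\{[0]\}\times T_B^-$ equals the pre-Bers embedding $\beta^-$, whose derivative at $[\nu]$ is a topological isomorphism $T_{[\nu]}T_B^-\to \mathrm{BMOA}(\mathbb H^+)$ by Theorem~\ref{Bers}. Varying $[\mu^+]$ with $[\mu^-]=[\nu]$ fixed, write $G(\mu^+_t,\nu)=\Phi_t\circ G(0,\nu)$ where $\Phi_t$ is a normalized self-homeomorphism of $\mathbb C$ conformal on $\Omega^-_1=G(0,\nu)(\mathbb H^-)$ (both $G(\mu^+_t,\nu)$ and $G(0,\nu)$ have dilatation $\nu$ on $\mathbb H^-$). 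Differentiating $\log\gamma_t'-\log\gamma_0'=(\log\Phi_t')\circ\gamma_0$ in $t$ gives a tangent vector which, pulled back by $\gamma_0=G(0,\nu)|_{\mathbb R}$, lies in the closed subspace $\mathrm{BMOA}(\mathbb H^-)\subset\mathrm{BMO}(\mathbb R)$, and its dependence on the tangent direction in $T_{[0]}T_B^+$ is again a topological isomorphism by the Bers-type analysis of the conformal variation (the pre-Bers embedding on the $\Omega^-_1$-side, transported by $\gamma_0$). The topological direct sum decomposition $\mathrm{BMO}(\mathbb R)=\mathrm{BMOA}(\mathbb H^+)\oplus\mathrm{BMOA}(\mathbb H^-)$ from Theorem~\ref{decomposition} assembles these two partial isomorphisms, mapping into complementary summands, into a single topological isomorphism $d_{p_1}\Lambda$.

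Assembling origin, right-translation reduction, and the partial derivative analysis at $([0],[\nu])$ yields $\Sigma=\widetilde T_C$, and the inverse function theorem finishes the proof. The main obstacle is the last step: showing that the $[\mu^+]$-partial at $([0],[\nu])$ is a topological isomorphism onto $\mathrm{BMOA}(\mathbb H^-)$. This requires carefully identifying the Bers-type variation of the chord-arc welding $\gamma_0$ on the $\Omega^-_1$-side with the standard pre-Bers embedding of $T_B^+$, using that $\Omega^-_1$ is a chord-arc domain and that the pullback by $\gamma_0$ preserves BMOA into $\mathrm{BMOA}(\mathbb H^-)$ (a boundedness statement available because $\log\gamma_0'\in\mathrm{BMO}^*(\mathbb R)$, so that $C_{\gamma_0}$-type composition operators between the two BMOA spaces are well behaved).
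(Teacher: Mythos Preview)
Your overall framework (show $d_p\Lambda$ is a Banach isomorphism at every $p$, then invoke the inverse function theorem together with Proposition~\ref{inj} and Theorem~\ref{converse}) is the same as the paper's, and the right-translation reduction is legitimate. The gap is in the computation of the $[\mu^+]$-partial at $p_1=([0],[\nu])$.

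You claim that the tangent vector $(\tfrac{d}{dt}\log\Phi_t')\circ\gamma_0$ lies in $\mathrm{BMOA}(\mathbb H^-)$. But $\gamma_0=G(0,\nu)|_{\mathbb R}$ extends \emph{conformally to $\mathbb H^+$}, not to $\mathbb H^-$; its extension to $\mathbb H^-$ is only quasiconformal. Since $\log\Phi_t'$ is holomorphic on $\Omega^-$, the composition with the quasiconformal map $G(0,\nu)|_{\mathbb H^-}$ is not holomorphic on $\mathbb H^-$. If instead you factor through the Riemann map $F^-:\mathbb H^-\to\Omega^-$ and the welding $\gamma_0=f^-\circ h^-$, you find
\[
(\log\Phi_t')\circ\gamma_0=C_{h^-}\bigl((\log\Phi_t')\circ f^-\bigr)\in C_{h^-}(\mathrm{BMOA}(\mathbb H^-)),
\]
which is exactly Lemma~\ref{factor}. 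So the two partial ranges at $p_1$ are $\mathrm{BMOA}(\mathbb H^+)$ and $C_{h^-}(\mathrm{BMOA}(\mathbb H^-))$, and these are \emph{not} the two summands of the standard Szeg\H{o} decomposition \eqref{omitE}. Your assembly step therefore does not apply.

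Showing that $\mathrm{BMOA}(\mathbb H^+)$ and $C_{h^-}(\mathrm{BMOA}(\mathbb H^-))$ nonetheless span $\mathrm{BMO}(\mathbb R)$ is the actual hard content of the theorem---it is essentially the Calder\'on theorem for the chord-arc curve $\Gamma=\gamma_0(\mathbb R)$ (cf.\ Corollary~\ref{Calderon}). The paper does not prove this directly. Instead it reduces by a further right translation to a point where $\Lambda$ takes a \emph{purely imaginary} value (arc-length parametrization), and there invokes Semmes' quasiconformal extension to produce a local holomorphic section (Lemma~\ref{new}); surjectivity then follows from the Hilbert-transform argument of Lemma~\ref{imaginary}. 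Your proposal identifies the obstacle correctly in its last paragraph but the proposed resolution---that ``pullback by $\gamma_0$ preserves BMOA into $\mathrm{BMOA}(\mathbb H^-)$''---is precisely the false step.
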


In the direct sum decomposition of the tangent space 
\begin{equation}\label{T-tangent}
{\mathscr T}_{([\mu^+],[\mu^-])}{\widetilde T}_C
= {\mathscr T}_{[\mu^+]}T_B^+ \oplus {\mathscr T}_{[\mu^-]}T_B^-,
\end{equation}
we denote the canonical projections by
$$
J^+:{\mathscr T}_{([\mu^+],[\mu^-])}(\widetilde T_C) \to {\mathscr T}_{[\mu^-]}(T_B^-),\quad
J^-:{\mathscr T}_{([\mu^+],[\mu^-])}(\widetilde T_C) \to {\mathscr T}_{[\mu^+]}(T_B^+).
$$
We first determine the image of each factor under the derivative $d_{([\mu^+],[\mu^-])}$.
In the sequel, $h^\pm \in \rm SQS$ always denote the quasisymmetric homeomorphisms given by $h^\pm=h(\mu^\pm)$.

\begin{lemma}\label{factor}
$d_{([\mu^+],[\mu^-])}\, \Lambda({\mathscr T}_{[\mu^\pm]}T_B^\pm)=C_{h^\mp}(\mathrm{BMOA}(\mathbb H^\mp))$.
\end{lemma}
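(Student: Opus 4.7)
The plan is to exploit the intertwining identity $\Lambda\circ R_{[\nu]}=Q_{h(\nu)}\circ\Lambda$ used in the proof of Theorem \ref{holomorphic}, which reduces each case to a derivative computation at the origin, where $\Lambda$ factors explicitly through the pre-Bers embedding.

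For the ``$-$'' case, fix $[\mu^+]$ and begin from the representation \eqref{translation},
$$
\Lambda([\mu^+],\,\cdot\,)=Q_{h^+}\circ E^+\circ\beta^-\circ r_{[\bar\mu^+]}^{-1},
$$
since $\Lambda([0],\,\cdot\,)=E^+\circ\beta^-$. Differentiating at $[\mu^-]$ and tracking the image of each factor gives the image of ${\mathscr T}_{[\mu^-]}T_B^-$: $r_{[\bar\mu^+]}^{-1}$ is a biholomorphic automorphism of $T_B^-$, so its differential is a tangent-space isomorphism; $d\beta^-$ is an isomorphism onto $\mathrm{BMOA}(\mathbb H^+)$ by Theorem \ref{Bers}; $E^+$ realizes $\mathrm{BMOA}(\mathbb H^+)$ as a closed subspace of $\mathrm{BMO}(\mathbb R)$ by Theorem \ref{131}; and the affine map $Q_{h^+}$ has linear part $C_{h^+}$, an automorphism of $\mathrm{BMO}(\mathbb R)$ by Theorem \ref{pullback}. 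Composing these isomorphisms produces exactly $C_{h^+}(\mathrm{BMOA}(\mathbb H^+))$.

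For the ``$+$'' case, one applies the same intertwining but with $[\nu_0]\in T_B$ chosen so that $[\bar\nu_0]=[\mu^-]^{-1}$; then $R_{[\nu_0]}([\mu^+],[\mu^-])=([\mu^+]\ast[\nu_0],[0])$, and rearrangement yields the symmetric factorization
$$
\Lambda(\,\cdot\,,[\mu^-])=Q_{h(\nu_0)}^{-1}\circ E^-\circ\beta^+\circ r_{[\nu_0]}.
$$
The same chain of isomorphisms identifies the image of ${\mathscr T}_{[\mu^+]}T_B^+$ as $C_{h(\nu_0)^{-1}}(\mathrm{BMOA}(\mathbb H^-))$, using that the linear part of $Q_{h(\nu_0)}^{-1}$ is $C_{h(\nu_0)^{-1}}$. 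The reflection identity $h(\bar\nu)=h(\nu)$ on $\mathbb R$ (both $H(\nu)$ on $\mathbb H^+$ and $H(\bar\nu)$ on $\mathbb H^-$ assemble into the same symmetric self-homeomorphism of $\mathbb C$) then gives $h(\nu_0)=h(\bar\nu_0)=h([\mu^-]^{-1})=(h^-)^{-1}$, so $h(\nu_0)^{-1}=h^-$ and the image simplifies to $C_{h^-}(\mathrm{BMOA}(\mathbb H^-))$.

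The only delicate point throughout is this reflection bookkeeping in the ``$+$'' case; once the identity $h(\bar\nu)=h(\nu)$ is recorded, the rest is a mechanical chain of known linear isomorphisms, and the claim follows in both cases.
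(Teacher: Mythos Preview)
Your proposal is correct and follows essentially the same approach as the paper: you use the intertwining $\Lambda\circ R_{[\nu]}=Q_{h(\nu)}\circ\Lambda$, differentiate, and read off that the linear part $C_{h(\nu)}$ carries $\mathrm{BMOA}(\mathbb H^\mp)$ onto the image. The paper treats the ``$-$'' case identically and dismisses the ``$+$'' case as ``similarly proved''; your explicit reflection bookkeeping $h(\nu_0)=h(\bar\nu_0)=(h^-)^{-1}$ for the ``$+$'' case is a correct and useful elaboration of what the paper leaves implicit.
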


\begin{proof}
Formula \eqref{translation} shows that
$$
\Lambda([\mu^+],[\mu^-])
=\Lambda \circ R_{[\mu^+]}([0],[\mu^-] \ast [\bar \mu^+]^{-1})
=Q_{h^+} \circ \Lambda([0],r_{[\bar \mu^+]}^{-1}([\mu^-])).
$$
By fixing $[\mu^+]$, we take the partial derivative of this formula along the direction of $T_B^-$. Then,
\begin{equation}
d_{([\mu^+],[\mu^-])}\, \Lambda|_{{\mathscr T}_{[\mu^-]}T_B^-}
=C_{h^+} \circ d_{([0],[\mu^-]\ast[\bar \mu^+]^{-1})}\Lambda \circ d_{[\mu^-]}r^{-1}_{[\bar \mu^+]},
\end{equation}
where $d_{([0],[\mu^-]\ast[\bar \mu^+]^{-1})}\Lambda$ restricted to
the tangent subspace
${\mathscr T}_{[\mu^-]\ast [\bar \mu^+]^{-1}}T_B^- \cong \mathrm{BMOA}(\mathbb H^+)$
can be regarded as the identity map. 
Hence, $d_{([\mu^+],[\mu^-])}\, \Lambda({\mathscr T}_{[\mu^-]}T_B^-)=C_{h^+}(\mathrm{BMOA}(\mathbb H^+))$.
The other equation is similarly proved.
\end{proof}

The surjectivity of the derivative of $\Lambda$ is reduced to the following condition.

\begin{lemma}\label{imaginary}
If the real subspace $i\,{\rm Re}\,\mathrm{BMO}(\mathbb R)$ is contained in
${\rm Ran}\,d_{([\mu^+],[\mu^-])}\, \Lambda$, the image of the derivative of $\Lambda$
at $([\mu^+],[\mu^-]) \in \widetilde T_C$, then $d_{([\mu^+],[\mu^-])}\, \Lambda$ is surjective. Similarly,
if the real subspace ${\rm Re}\,\mathrm{BMO}(\mathbb R)$ is contained in
${\rm Ran}\,d_{([\mu^+],[\mu^-])}\, \Lambda$, then $d_{([\mu^+],[\mu^-])}\, \Lambda$ is surjective.
\end{lemma}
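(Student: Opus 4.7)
The plan is to reduce the statement to an elementary algebraic fact about complex linear subspaces, by exploiting that $\Lambda$ is holomorphic (Theorem \ref{holomorphic}). Since $\widetilde T_C$ is an open subset of the complex Banach manifold $T_B^+ \times T_B^-$, the tangent space ${\mathscr T}_{([\mu^+],[\mu^-])}\widetilde T_C$ is a complex Banach space, and the derivative $d_{([\mu^+],[\mu^-])}\Lambda$ is a complex-linear map into $\mathrm{BMO}(\mathbb R)$. Consequently, its range $W := \mathrm{Ran}\,d_{([\mu^+],[\mu^-])}\Lambda$ is a complex linear subspace of $\mathrm{BMO}(\mathbb R)$, and in particular satisfies $iW = W$.

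With this in hand, I would first assume that $i\,\mathrm{Re}\,\mathrm{BMO}(\mathbb R) \subset W$. Multiplying by $-i \in \mathbb C$ and using $-iW = W$, one obtains $\mathrm{Re}\,\mathrm{BMO}(\mathbb R) = -i \cdot (i\,\mathrm{Re}\,\mathrm{BMO}(\mathbb R)) \subset W$ as well. Since $W$ is closed under vector addition, it follows that
\begin{equation*}
\mathrm{BMO}(\mathbb R) = \mathrm{Re}\,\mathrm{BMO}(\mathbb R) + i\,\mathrm{Re}\,\mathrm{BMO}(\mathbb R) \subset W,
\end{equation*}
which gives surjectivity. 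The symmetric case, starting from $\mathrm{Re}\,\mathrm{BMO}(\mathbb R) \subset W$, is handled identically, with multiplication by $i$ used in place of multiplication by $-i$ to pass to the imaginary part.

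The main obstacle, such as it is, is essentially recognizing that nothing more is required beyond complex linearity: the content is purely formal once $W$ is identified as a complex linear subspace of $\mathrm{BMO}(\mathbb R)$. The substantive work that this lemma prepares for lies in the subsequent steps of the proof of Theorem \ref{main2}, where one must actually produce enough tangent directions to verify one of these two hypotheses. That verification will draw on Lemma \ref{factor}, together with the generation of real-valued BMO directions from the symmetric slice ${\rm Sym}\,(T_B^+ \times T_B^-)$ by way of Theorem \ref{converse} and the holomorphic local inverse provided by Proposition \ref{derivative0}.
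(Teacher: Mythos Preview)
Your proof is correct and takes a genuinely simpler route than the paper. You exploit only that the derivative of the holomorphic map $\Lambda$ is complex-linear, so its range $W$ satisfies $iW=W$; from $i\,\mathrm{Re}\,\mathrm{BMO}(\mathbb R)\subset W$ one gets $\mathrm{Re}\,\mathrm{BMO}(\mathbb R)=(-i)\cdot i\,\mathrm{Re}\,\mathrm{BMO}(\mathbb R)\subset W$ and hence $W=\mathrm{BMO}(\mathbb R)$. The paper instead brings in Lemma~\ref{factor} to know that $C_{h^+}(\mathrm{BMOA}(\mathbb H^+))\subset W$, and then, for each real-valued $\phi$, uses the Szeg\"o projection to manufacture the explicit element $\phi+iC_{h^+}\circ\mathcal H\circ C_{h^+}^{-1}(\phi)\in C_{h^+}(\mathrm{BMOA}(\mathbb H^+))$; subtracting the purely imaginary correction (in $W$ by hypothesis) yields $\phi\in W$. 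Your argument bypasses the Hilbert-transform machinery entirely and shows that the lemma is purely formal. The paper's approach is more constructive, producing an explicit decomposition that foreshadows the operators $P^\pm_{([\mu^+],[\mu^-])}$ and the direct sum \eqref{decomp}, but none of that extra information is needed for the lemma as stated.
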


\begin{proof}
For the former statement,
it suffices to show that ${\rm Re}\,\mathrm{BMO}(\mathbb R) \subset {\rm Ran}\,d_{([\mu^+],[\mu^-])}\, \Lambda$.
We note that
Lemma \ref{factor} implies that 
$C_{h^+}(\mathrm{BMOA}(\mathbb H^+)) \subset {\rm Ran}\,d_{([\mu^+],[\mu^-])}\, \Lambda$.

We take any $\phi \in {\rm Re}\,\mathrm{BMO}(\mathbb R)$. Since $C_{h^+}$ maps ${\rm Re}\,\mathrm{BMO}(\mathbb R)$
isomorphically onto itself, $C_{h^+}^{-1}(\phi)$ also belongs to ${\rm Re}\,\mathrm{BMO}(\mathbb R)$.
Its Szeg\"o projection
\begin{equation}\label{PC}
P^+(C_{h^+}^{-1}(\phi))=\frac{1}{2}C_{h^+}^{-1}(\phi)+i \frac{1}{2}\mathcal H \circ C_{h^+}^{-1}(\phi)
\end{equation}
is in $\mathrm{BMOA}(\mathbb H^+)$. This implies that
$$
\phi +i C_{h^+} \circ \mathcal H \circ C_{h^+}^{-1}(\phi) \in 
{\rm Ran}\,d_{([\mu^+],[\mu^-])}\, \Lambda
$$
by the application of $2C_{h^+}$ to \eqref{PC}. Here, 
$i C_{h^+} \circ \mathcal H \circ C_{h^+}^{-1}(\phi) \in i\,{\rm Re}\,\mathrm{BMO}(\mathbb R)$ also belongs to
${\rm Ran}\,d_{([\mu^+],[\mu^-])}\, \Lambda$ by the assumption. Therefore, we have that
$\phi \in {\rm Ran}\,d_{([\mu^+],[\mu^-])}\, \Lambda$.

For the latter statement,
it suffices to show that $i\,{\rm Re}\,\mathrm{BMO}(\mathbb R) \subset {\rm Ran}\,d_{([\mu^+],[\mu^-])}\, \Lambda$.
We take any $i\phi \in i\,{\rm Re}\,\mathrm{BMO}(\mathbb R)$. By \eqref{PC}, we have that
$$
iP^+(C_{h^+}^{-1}(\phi))=\frac{i}{2}C_{h^+}^{-1}(\phi)- \frac{1}{2}\mathcal H \circ C_{h^+}^{-1}(\phi)
$$
is in $\mathrm{BMOA}(\mathbb H^+)$, and hence
$$
i\phi - C_{h^+} \circ \mathcal H \circ C_{h^+}^{-1}(\phi) \in 
{\rm Ran}\,d_{([\mu^+],[\mu^-])}\, \Lambda.
$$
Here, $C_{h^+} \circ \mathcal H \circ C_{h^+}^{-1}(\phi) \in {\rm Re}\,\mathrm{BMO}(\mathbb R)$ also belongs to
${\rm Ran}\,d_{([\mu^+],[\mu^-])}\, \Lambda$ by the assumption, and thus we have 
$i\phi \in {\rm Ran}\,d_{([\mu^+],[\mu^-])}\, \Lambda$.
\end{proof}

The following is the crucial claim for establishing our argument. If $\Lambda([\mu_0^+], [\mu_0^-]) = \log \gamma'$ lies in $i\,{\rm Re}\,\mathrm{BMO}(\mathbb{R})$, then $\gamma(x) = \int_0^x \gamma'(t)\, dt$ is the arc-length parametrization of a chord-arc curve, since $|\gamma'(t)| = 1$. In this context, there are several important studies in real analysis regarding the deformation of chord-arc curves, which are surveyed in Semmes \cite{SeB}. 
We extend these results to:

\begin{lemma}\label{new}
If $\Lambda([\mu_0^+],[\mu_0^-])$ is in the subspace $i\,{\rm Re}\,\mathrm{BMO}(\mathbb R)$, 
then $i\,{\rm Re}\,\mathrm{BMO}(\mathbb R)$ is in ${\rm Ran}\, d_{([\mu^+],[\mu^-])} \Lambda$,
the image of the derivative of $\Lambda$ at $([\mu^+],[\mu^-])$.
\end{lemma}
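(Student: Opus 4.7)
The plan is to construct, for each $\psi \in {\rm Re}\,\mathrm{BMO}(\mathbb{R})$, a holomorphic one-parameter family in $\widetilde{T}_C$ through $([\mu^+_0],[\mu^-_0])$ whose velocity at $t=0$ maps to $i\psi$ under $d\Lambda$. The hypothesis $\log\gamma_0'=i\phi_0$ with $\phi_0\in{\rm Re}\,\mathrm{BMO}(\mathbb{R})$ forces $|\gamma_0'|\equiv 1$, so $\gamma_0$ is the arc-length parametrization of the chord-arc curve $\Gamma_0=\gamma_0(\mathbb{R})$; the desired deformation rotates the unit tangent field pointwise by a real BMO increment.

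For $t$ in a small disc about $0\in\mathbb{C}$ I set
$$
\gamma_t(x)=\int_0^x \exp\bigl(i\phi_0(s)+it\psi(s)\bigr)\,ds,
$$
so that $\log\gamma_t'=i(\phi_0+t\psi)$. Since $\mathrm{BMO}^*(\mathbb{R})$ is open (Proposition \ref{convex2}), $\log\gamma_t'\in\mathrm{BMO}^*(\mathbb{R})$ for $|t|$ small, and Semmes's results on deformation of chord-arc curves (as surveyed in \cite{SeB}) guarantee that $\Gamma_t=\gamma_t(\mathbb{R})$ remains chord-arc for $|t|$ small. Provided $\gamma_t$ is realized as a BMO embedding $\gamma([\mu_t^+],[\mu_t^-])$, one then has $\Lambda([\mu_t^+],[\mu_t^-])=i(\phi_0+t\psi)$ by construction.

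The first step, which is the main technical point, is to realize $\gamma_t$ as a BMO embedding. By Corollary \ref{BJcondition}, choose $\nu_t\in M_B(\mathbb{H}^+)$ with $F^{\nu_t}(\mathbb{R})=\Gamma_t$, and set $f_t=F^{\nu_t}|_\mathbb{R}$. The reparametrization $\tau_t=f_t^{-1}\circ\gamma_t$ satisfies $|\gamma_t'|=|f_t'|\circ\tau_t\cdot\tau_t'$, from which $(\tau_t^{-1})'=|f_t'|$; this is an $A_\infty$-weight by Theorem \ref{Lavrentiev}, so $\tau_t^{-1}\in{\rm SQS}$ and hence $\tau_t\in{\rm SQS}$ by Coifman--Fefferman \cite[Lemma 5]{CF}. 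Apply the variant Beurling--Ahlfors extension of Theorem \ref{FKP} to $\tau_t$ and reflect across $\mathbb{R}$ to obtain a quasiconformal self-homeomorphism $H_t$ of $\mathbb{C}$ with $M_B$-Beltrami on both half-planes. The composition $G_t:=F^{\nu_t}\circ H_t$ satisfies $G_t|_\mathbb{R}=\gamma_t$; on $\mathbb{H}^-$ its Beltrami coefficient equals $\mu_{H_t}\in M_B(\mathbb{H}^-)$ since $F^{\nu_t}$ is conformal there, and on $\mathbb{H}^+$ it is the combination of $\mu_{H_t}$ with $\nu_t\circ H_t$, which lies in $M_B(\mathbb{H}^+)$ because $H_t$ is hyperbolically bi-Lipschitz (Theorem \ref{FKP}) so that pull-back preserves the Carleson condition. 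Setting $[\mu_t^\pm]=[\mu_{G_t}|_{\mathbb{H}^\pm}]$ then gives $\gamma_t=\gamma([\mu_t^+],[\mu_t^-])$, and $\log\gamma_t'\in\mathrm{BMO}^*(\mathbb{R})$ together with Theorem \ref{main-b} upgrades this to $([\mu_t^+],[\mu_t^-])\in\widetilde T_C$.

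The second step extracts the tangent vector. Since the integrand $\exp(i\phi_0+it\psi)$ is entire in $t$, the curve $t\mapsto\gamma_t$, and hence $\Gamma_t$, depends holomorphically on $t$ in appropriate function-space topologies. A parametric version of the measurable Riemann mapping theorem applied to the moving chord-arc curves $\Gamma_t$ yields holomorphic dependence of $\nu_t$, and hence of $\tau_t$, $H_t$, $G_t$, and finally of $([\mu_t^+],[\mu_t^-])$ as a path in $T_B^+\times T_B^-$. Differentiating at $t=0$ produces $X_\psi\in\mathscr{T}_{([\mu_0^+],[\mu_0^-])}\widetilde T_C$, and the chain rule applied to $\Lambda([\mu_t^+],[\mu_t^-])=i(\phi_0+t\psi)$ gives $d_{([\mu_0^+],[\mu_0^-])}\Lambda(X_\psi)=i\psi$. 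Since $\psi\in{\rm Re}\,\mathrm{BMO}(\mathbb{R})$ was arbitrary, $i\,{\rm Re}\,\mathrm{BMO}(\mathbb{R})\subset{\rm Ran}\,d_{([\mu_0^+],[\mu_0^-])}\Lambda$. The principal obstacle is the Carleson control on the Beltrami coefficient of the composition $G_t=F^{\nu_t}\circ H_t$ on $\mathbb{H}^+$, which hinges on the interaction between the composition law for Beltrami coefficients and the Carleson condition through the hyperbolically bi-Lipschitz map $H_t$; a secondary delicate point is the holomorphic dependence of $\nu_t$ on $t$, which must be established by a parametric form of the uniformization for the complementary domains $\Omega^\pm_t$.
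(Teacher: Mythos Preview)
Your overall strategy---deform the arc-length parametrization $\gamma_0$ in the direction $i\psi$ and realize the deformed curve as a BMO embedding depending holomorphically on the parameter---is the same idea the paper uses. The difference, and the gap in your argument, lies in how holomorphic dependence is obtained.

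Your construction of $([\mu_t^+],[\mu_t^-])$ proceeds through a chain of non-canonical choices: Corollary~\ref{BJcondition} only asserts existence of some $\nu_t$ with $F^{\nu_t}(\mathbb R)=\Gamma_t$, and there is no mechanism to select $\nu_t$ holomorphically in $t$. The ``parametric version of the measurable Riemann mapping theorem'' you invoke goes in the wrong direction: the Ahlfors--Bers theorem gives holomorphic dependence of the normalized solution $F^\mu$ on the Beltrami coefficient $\mu$, not holomorphic dependence of a Beltrami coefficient on a moving curve. Passing from $\Gamma_t$ (a set, not even equipped with a canonical parametrization other than $\gamma_t$ itself) to a Riemann map $F^{\nu_t}$ of its complement, and then to $\nu_t$, requires an independent argument, and holomorphy must moreover be established in the $L_B$ topology (Carleson-measure norm), not merely in $L^\infty$. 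The subsequent steps---Beurling--Ahlfors extension of $\tau_t$ and the Beltrami of the composition $F^{\nu_t}\circ H_t$---inherit these difficulties. You correctly flag both points as ``delicate,'' but neither is resolved.

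The paper avoids this chain by constructing the local section directly: Semmes's explicit quasiconformal extension \cite[Proposition~4.13]{Se} assigns to each $\phi$ in a full neighborhood $U_0\subset\mathrm{BMO}(\mathbb R)$ of $\phi_0$ a pair $\tau(\phi)\in M_B(\mathbb H^+)\times M_B(\mathbb H^-)$ with $\Lambda\circ(\pi^+\times\pi^-)\circ\tau=\mathrm{id}$ on $U_0$. Boundedness of $\tau$ comes from Semmes's estimates; pointwise G\^ateaux holomorphy of $\tau(\phi)(z)$ in $\phi$ is read off from the explicit formulas for the complex dilatation in Shen--Wu \cite[Formulas~(6.7),(6.27)]{SW}; and the passage from bounded plus G\^ateaux holomorphic to holomorphic is supplied by \cite[Lemma~6.1]{WM-6}. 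This gives a genuine holomorphic right inverse on $U_0$, from which the surjectivity of $d_{([\mu_0^+],[\mu_0^-])}\Lambda$ onto all of $\mathrm{BMO}(\mathbb R)$ (and in particular onto $i\,\mathrm{Re}\,\mathrm{BMO}(\mathbb R)$) is immediate. To repair your argument you would need to replace the indirect chain $\Gamma_t\to\nu_t\to\tau_t\to H_t\to G_t$ by such a direct and explicit extension.
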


\begin{proof}
This can be seen from the work on chord-arc curves
by Semmes \cite[p.254]{Se}. In fact, for every $\phi_0=\Lambda([\mu_0^+],[\mu_0^-]) \in i\,{\rm Re}\,\mathrm{BMO}(\mathbb R)$,
there is a neighborhood $U_0$ of $\phi_0$ in $\mathrm{BMO}(\mathbb R)$ and a holomorphic map 
$\tau:U_0 \to M_B(\mathbb H^+) \times M_B(\mathbb H^-)$ such that $\Lambda \circ (\pi^+ \times \pi^-) \circ \tau$
is the identity on $U_0$. To prove this claim, we first see that $\tau$ is defined to be bounded 
on some neighborhood $U_0$
by the quasiconformal extension given in \cite[Proposition 4.13]{Se}. Then, 
by Shen and Wei \cite[Formulas (6.7),(6.27)]{SW}, the complex dilatation $\tau(\phi)$
for $\phi \in U_0$ is explicitly represented, and $\tau(\phi)(z)$ is G\^ateaux holomorphic on $U_0$
for each fixed $z \in \mathbb H^+ \cup \mathbb H^-$. Under these conditions, we can conclude that
$\tau$ is holomorphic on $U_0$ by WM \cite[Lemma 6.1]{WM-6}.
\end{proof}

\begin{proof}[Proof of Theorem \ref{main2}]
Proposition \ref{inj} asserts that $\Lambda$ is a holomorphic injection. 
To show that $\Lambda$ is biholomorphic, we prove that the derivative $d_{([\mu^+],[\mu^-])}\, \Lambda$ 
at every $([\mu^+],[\mu^-]) \in \widetilde T_C$ is
surjective onto $\mathrm{BMO}(\mathbb R)$. Then, by the inverse mapping theorem (see Chae \cite[\S 7.18]{Ch}),
we obtain the required claim.

Let $\phi=\Lambda([\mu^+],[\mu^-]) \in \mathrm{BMO}^*(\mathbb R)$. We can find 
$\phi_0 \in i\,{\rm Re}\,\mathrm{BMO}(\mathbb R) \cap \Lambda(\widetilde T_C)$ and $[\nu] \in T_B$
such that $Q_{h}(\phi_0)=\phi$ with $h=h(\nu)$. Indeed, we take $[\nu] \in T_B$ such that 
$\Lambda([\nu],[\bar \nu]) =\log h'={\rm Re}\,\phi$
by Theorem \ref{converse}, and set $\phi_0=iC_h^{-1}({\rm Im}\,\phi)$. Then,
$$
Q_{h}(\phi_0)=C_h(\phi_0)+\log h'=i\,{\rm Im}\,\phi+{\rm Re}\,\phi=\phi.
$$

We consider the derivative of $\Lambda$ at $([\mu^+_0],[\mu^-_0])=R_{[\nu]}^{-1}([\mu^+],[\mu^-])$,
where $\Lambda([\mu^+_0],[\mu^-_0])=Q_{h}^{-1} \circ \Lambda([\mu^+],[\mu^-])=\phi_0$.
By Lemma \ref{new}, we have $i\,{\rm Re}\,\mathrm{BMO}(\mathbb R) \subset {\rm Ran}\,d_{([\mu_0^+],[\mu_0^-])}\, \Lambda$.
Under this condition,
Lemma \ref{imaginary} yields that $d_{([\mu_0^+],[\mu_0^-])}\, \Lambda$ is surjective.
Since 
$$
d_{([\mu^+],[\mu^-])}\,\Lambda=d_{\phi_0}Q_{h} \circ d_{([\mu_0^+],[\mu_0^-])}\,\Lambda 
\circ d_{([\mu^+],[\mu^-])}R_{[\nu]}^{-1},
$$
this is also surjective. 
\end{proof}

The proof of Theorem \ref{main2} shows that 
the derivative 
$$
d_{([\mu^+],[\mu^-])}\Lambda: {\mathscr T}_{([\mu^+],[\mu^-])}\,\widetilde T_C \to \mathrm{BMO}(\mathbb R)
$$
of $\Lambda$ at every point $([\mu^+],[\mu^-]) \in \widetilde T_C$ is a surjective isomorphism.
Therefore, in account of
Lemma \ref{factor}, we obtain the topological direct sum decomposition of $\mathrm{BMO}(\mathbb R)$ 
at $([\mu^+],[\mu^-])$ as
\begin{equation}\label{decomp}
\mathrm{BMO}(\mathbb R)=C_{h^-}(\mathrm{BMOA}(\mathbb H^-)) \oplus C_{h^+}(\mathrm{BMOA}(\mathbb H^+)).
\end{equation} 
According to this decomposition, we define
the bounded linear projections as
\begin{equation}\label{tangent2}
P^\pm_{([\mu^+],[\mu^-])}:\mathrm{BMO}(\mathbb R) \to C_{h^\pm}(\mathrm{BMOA}(\mathbb H^\pm)).
\end{equation}

\begin{lemma}\label{conjugate}
$P^\pm_{([\mu^+],[\mu^-])}=d_{([\mu^+],[\mu^-])}\,\Lambda\circ J^\pm\circ (d_{([\mu^+],[\mu^-])}\,\Lambda)^{-1}$.
\end{lemma}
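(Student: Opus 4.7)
The plan is to derive the identity directly from Lemma \ref{factor} together with the fact, established at the end of the proof of Theorem \ref{main2}, that $d_{([\mu^+],[\mu^-])}\Lambda$ is a linear isomorphism from ${\mathscr T}_{([\mu^+],[\mu^-])}\widetilde T_C$ onto $\mathrm{BMO}(\mathbb R)$. Lemma \ref{factor} tells us precisely that this isomorphism carries the direct sum decomposition \eqref{T-tangent} onto the decomposition \eqref{decomp}: the subspace ${\mathscr T}_{[\mu^+]}T_B^+$ (the target of $J^-$) is mapped onto $C_{h^-}(\mathrm{BMOA}(\mathbb H^-))$, and ${\mathscr T}_{[\mu^-]}T_B^-$ (the target of $J^+$) is mapped onto $C_{h^+}(\mathrm{BMOA}(\mathbb H^+))$.

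Given this, the lemma becomes a routine diagram chase. First I would take an arbitrary $\phi \in \mathrm{BMO}(\mathbb R)$, set $v = (d_{([\mu^+],[\mu^-])}\Lambda)^{-1}(\phi)$, and use \eqref{T-tangent} to write $v = J^-(v) + J^+(v)$ with summands in ${\mathscr T}_{[\mu^+]}T_B^+$ and ${\mathscr T}_{[\mu^-]}T_B^-$ respectively. Applying $d_{([\mu^+],[\mu^-])}\Lambda$ and invoking Lemma \ref{factor} yields
$$
\phi = (d_{([\mu^+],[\mu^-])}\Lambda)(J^-(v)) + (d_{([\mu^+],[\mu^-])}\Lambda)(J^+(v)),
$$
with the two summands lying in $C_{h^-}(\mathrm{BMOA}(\mathbb H^-))$ and $C_{h^+}(\mathrm{BMOA}(\mathbb H^+))$ respectively. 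By the uniqueness of the decomposition \eqref{decomp}, the first summand must equal $P^-_{([\mu^+],[\mu^-])}\phi$ and the second must equal $P^+_{([\mu^+],[\mu^-])}\phi$, which gives exactly the stated formulas.

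There is essentially no obstacle: the content of the lemma is that projections are natural under any linear isomorphism that respects a pair of direct sum decompositions, and all the analytic work—surjectivity of $d\Lambda$, the explicit identification of its restrictions to each tangent factor, and the closedness/complementarity of the factors $C_{h^\pm}(\mathrm{BMOA}(\mathbb H^\pm))$—has already been done in Lemma \ref{factor} and in the proof of Theorem \ref{main2}. The only point that merits a brief remark in the write-up is that these factors are genuinely topologically complementary in $\mathrm{BMO}(\mathbb R)$, which is why $P^\pm_{([\mu^+],[\mu^-])}$ are bounded linear operators and the conjugation formula is meaningful.
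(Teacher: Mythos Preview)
Your proposal is correct and follows essentially the same approach as the paper: both proofs invoke Lemma \ref{factor} to see that the isomorphism $d_{([\mu^+],[\mu^-])}\Lambda$ carries the factors of \eqref{T-tangent} onto the factors of \eqref{decomp}, whence the associated projections are conjugate. Your version simply makes the standard element-chase explicit, whereas the paper states the conclusion in one sentence.
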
 

\begin{proof}
By Lemma \ref{factor}, 
we see that the derivative $d_{([\mu^+],[\mu^-])}\, \Lambda$ preserves each factor of 
the direct sum decompositions \eqref{T-tangent} and \eqref{decomp}.
Then, the projections $J^+$ and $P^+_{([\mu^+],[\mu^-])}$ to the second factors are conjugated 
by $d_{([\mu^+],[\mu^-])}\, \Lambda$, and so are the projections
$J^-$ and $P^-_{([\mu^+],[\mu^-])}$ to the first factors.
\end{proof}

Finally, as an application of Theorem \ref{main2}, we obtain a result about
the real-analytic structure of the BMO Teich\-m\"ul\-ler space $T_B$.
We restrict the biholomorphic homeomorphism $\Lambda$ to the real-analytic submanifold ${\rm Sym}\,(T_B^+ \times T_B^-)$, 
as in the setting of Theorem \ref{main1}. 
By composing $\Lambda$ with the canonical real-analytical embedding 
$\iota:T_B \to {\rm Sym}\,(T_B^+ \times T_B^-)$, we have the following. 
This has appeared in WM \cite[Corollary 6.2]{WM-0}.

\begin{corollary}\label{structure}
The map $\Lambda \circ \iota:T_B \to {\rm Re}\,\mathrm{BMO}^*(\mathbb R)$
given by $h \mapsto \log h'$
is a real-analytic diffeomorphism. Hence, the BMO Teich\-m\"ul\-ler space $T_B$ is real-analytically equivalent to 
${\rm Re}\,\mathrm{BMO}^*(\mathbb R)$. 
\end{corollary}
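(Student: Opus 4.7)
The plan is to assemble the corollary from Theorem \ref{main2} and Theorem \ref{converse}, using the symmetric embedding $\iota$ as the bridge. First I would check that $\iota(T_B) \subset \widetilde T_C$: for $[\mu] \in T_B$, the BMO embedding $\gamma([\mu],[\bar\mu])$ equals $h(\mu):\mathbb R \to \mathbb R$, whose image $\mathbb R$ is trivially a chord-arc curve. Hence $\Lambda \circ \iota$ is well-defined on $T_B$, and as the composition of the real-analytic embedding $\iota$ with the holomorphic map $\Lambda$, it is real-analytic from $T_B$ into $\mathrm{BMO}(\mathbb R)$.

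Next I would verify that $\Lambda \circ \iota$ is a real-analytic bijection onto ${\rm Re}\,\mathrm{BMO}^*(\mathbb R)$. The image lies in ${\rm Re}\,\mathrm{BMO}^*(\mathbb R)$ by Theorem \ref{main1}, since $h = h(\mu)$ is real-valued with $A_\infty$-derivative, so $\log h' \in {\rm Re}\,\mathrm{BMO}^*(\mathbb R)$. Injectivity follows from injectivity of $\Lambda$ (Proposition \ref{inj}) combined with injectivity of $\iota$. Surjectivity is precisely the content of Theorem \ref{converse}.

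The main point requiring care is the real-analyticity of the inverse. By Theorem \ref{main2}, $\Lambda$ is a biholomorphic homeomorphism from $\widetilde T_C$ onto the open set $\Lambda(\widetilde T_C) \subset \mathrm{BMO}^*(\mathbb R)$, which contains ${\rm Re}\,\mathrm{BMO}^*(\mathbb R)$. Consequently, the global inverse $\Lambda^{-1}$ is holomorphic, hence real-analytic, on a neighborhood of ${\rm Re}\,\mathrm{BMO}^*(\mathbb R)$. By the surjectivity observation above, $\Lambda^{-1}({\rm Re}\,\mathrm{BMO}^*(\mathbb R)) = \iota(T_B) \subset {\rm Sym}\,(T_B^+ \times T_B^-)$, so the map $\iota^{-1} \circ \Lambda^{-1}$ is defined on ${\rm Re}\,\mathrm{BMO}^*(\mathbb R)$ and provides a real-analytic inverse to $\Lambda \circ \iota$.

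The essential analytic input is already available: Theorem \ref{main2} supplies the biholomorphism and Theorem \ref{converse} identifies the real slice as the image of the symmetric locus. Thus the only genuinely new observation in the corollary is that the ambient biholomorphism $\Lambda$ automatically restricts to a real-analytic diffeomorphism between the real-analytic submanifolds $\iota(T_B) \cong T_B$ and ${\rm Re}\,\mathrm{BMO}^*(\mathbb R)$; consequently I do not anticipate any serious obstacle beyond the careful identification of these two real structures.
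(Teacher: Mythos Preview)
Your proposal is correct and follows essentially the same approach as the paper, which presents the corollary as an immediate consequence of restricting the biholomorphic homeomorphism $\Lambda$ of Theorem \ref{main2} to the real-analytic submanifold ${\rm Sym}\,(T_B^+ \times T_B^-)$ and composing with the real-analytic embedding $\iota$. Your write-up simply makes explicit the ingredients (Theorem \ref{main1} for the image, Proposition \ref{inj} and Theorem \ref{converse} for bijectivity, and the holomorphy of $\Lambda^{-1}$ for the inverse) that the paper leaves implicit in the surrounding discussion.
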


\section{The Cauchy transform on chord-arc curves}\label{section8}

In the course of proving
the biholomorphic correspondence $\Lambda:\widetilde T_C \to \mathrm{BMO}^*(\mathbb R)$ 
from BMO embeddings with chord-arc image in Bers coordinates to the BMO space,
we investigated the derivative $d_{([\mu^+],[\mu^-])}\Lambda$ at $([\mu^+],[\mu^-]) \in \widetilde T_C$.
In this section, we show that the Cauchy transform and the Cauchy projection of BMO functions on 
the chord-arc curve $\Gamma=\gamma(\mathbb R)$ for $\gamma=\gamma([\mu^+],[\mu^-])$ 
can be represented by $d_{([\mu^+],[\mu^-])}\Lambda$.
In particular, this proves the Calder\'on theorem in real analysis
for BMO functions on chord-arc curves.

We first introduce the Cauchy transform and the Cauchy projection by
considering the Cauchy integral on a chord-arc curve $\Gamma = \gamma(\mathbb R)$.
We define the Banach space of BMO functions on $\Gamma$ by
the push-forward of $\mathrm{BMO}(\mathbb R)$ by $\gamma$ and identify this pair. Namely, 
$$
\mathrm{BMO}(\gamma(\mathbb R))=\{\gamma_*\phi=\phi \circ \gamma^{-1} \mid \phi \in \mathrm{BMO}(\mathbb R)\}
$$
with norm $\Vert \gamma_*\phi \Vert_{\mathrm{BMO}(\gamma)}=\Vert \phi \Vert_{\mathrm{BMO}}$. 

\begin{remark}
Usually, a function space on a locally rectifiable curve $\Gamma$ is defined
by using its arc-length parametrization.
In our case, for a BMO embedding $\gamma_0:\mathbb R \to \mathbb C$ 
such that $\gamma_0$ gives an arc-length parametrization of $\Gamma$,
we may set $\mathrm{BMO}(\gamma_0(\mathbb R))$ by the above definition. However, the difference between
$\gamma$ and $\gamma_0$ is given by the composition operator $C_{h}$
for $h \in \rm SQS$
(as explained in the next section in detail), and it can be controlled well. Hence, we adopt
the representation $\mathrm{BMO}(\gamma(\mathbb R))$ because it simplifies the arguments 
for considering the dependence of the operators acting on it
when the embedding $\gamma$ varies.
\end{remark}

\begin{definition}\label{integral} 
The {\it Cauchy transform} of  
$\psi \in \mathrm{BMO}(\gamma(\mathbb R))$ on a chord-arc curve $\Gamma = \gamma(\mathbb R)$ 
(oriented by $\mathbb R$ in the increasing direction) is defined by the singular integral
$$
({\mathcal H}_\Gamma \psi)(\xi)={\rm p.v.} \frac{1}{\pi} \int_{\Gamma} 
\left(\frac{\psi(z)}{\xi-z}-\frac{\psi(z)}{\zeta_0^\pm-z}\right)dz
\quad(\xi \in \Gamma),
$$
where $dz=\gamma'(t)dt$, and $\zeta_0^+$ and $\zeta_0^-$ are arbitrary points in the left and the right domains $\Omega^+$ and $\Omega^-$
bounded by $\Gamma$, respectively.
The Cauchy integrals of $\psi$ on $\Gamma$ are defined by
$$
({P}^\pm_\Gamma \psi)(\zeta)=\frac{-1}{2\pi i} \int_{\Gamma} 
\left(\frac{\psi(z)}{\zeta-z}-\frac{\psi(z)}{\zeta_0^\pm-z}\right)dz
\quad (\zeta \in \Omega^\pm),
$$
which are holomorphic functions on $\Omega^\pm$. Here, the integration over $\Gamma$ is taken in the inverse direction
when $\zeta \in \Omega^-$.
\end{definition}

The pointwise convergence of the Cauchy transform and the Cauchy integrals for $\psi \in \mathrm{BMO}(\gamma(\mathbb R))$ 
are guaranteed by this definition of using
the regularized kernel. By the Privalov theorem (see Goluzin \cite[p.431]{Go}), 
if the Cauchy transform $({\mathcal H}_\Gamma \psi)(\xi)$ exists
a.e. on $\Gamma$, then the Cauchy integrals $({P}^\pm_\Gamma \psi)(\zeta)$ have non-tangential limits a.e. on $\Gamma$,
and vice versa.
The boundary extensions of 
${P}^\pm_\Gamma \psi$ to $\Gamma$ 
are also denoted by the same symbol and called the {\it Cauchy projections} of $\psi$.

The {\it Plemelj} {\it formula} for the Riemann--Hilbert problem asserts
the following relation between the Cauchy transform and the Cauchy projections.
This is a generalization of the relation between the Hilbert transform and the Szeg\"o projections.
We remark that the sign of $P^-_\Gamma$ is opposite to the usual one due to the orientation of $\Gamma$.

\begin{proposition}\label{Plemelj}
For a function $\psi \in \mathrm{BMO}(\gamma(\mathbb R))$ 
on a chord-arc curve $\Gamma=\gamma(\mathbb R)$, the Cauchy transform ${\mathcal H}_\Gamma$ 
and the Cauchy projections $P^\pm_\Gamma$ satisfy
\begin{equation*}
P^+_\Gamma \psi = \frac{1}{2}(\psi + i{\mathcal H}_\Gamma \psi),\quad
P^-_\Gamma \psi = \frac{1}{2}(\psi - i{\mathcal H}_\Gamma \psi).
\end{equation*}
In other words,
\begin{equation*}
\psi = P^+_\Gamma \psi + P^-_\Gamma \psi,\quad
i{\mathcal H}_\Gamma \psi = P^+_\Gamma \psi - P^-_\Gamma \psi
\end{equation*}
holds.
\end{proposition}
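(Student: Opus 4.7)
The plan is to derive both identities from the classical jump formula for Cauchy integrals on a rectifiable Jordan curve, combined with the non-tangential boundary existence already quoted from Privalov's theorem. The two displayed assertions are algebraically equivalent to the single pair
\[
P^+_\Gamma\psi(\xi)=\tfrac12\bigl(\psi(\xi)+i\,\mathcal H_\Gamma\psi(\xi)\bigr),\qquad
P^-_\Gamma\psi(\xi)=\tfrac12\bigl(\psi(\xi)-i\,\mathcal H_\Gamma\psi(\xi)\bigr),
\]
so it suffices to prove these two boundary formulas at almost every $\xi\in\Gamma$.

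First I would rewrite the two Cauchy integrals in a common form. Because $P^-_\Gamma$ is defined by integrating $\Gamma$ in the reversed direction, pulling out the orientation sign turns $P^-_\Gamma\psi(\zeta)$ into the negative of a regularized Cauchy integral of $\psi$ with the same positive orientation that defines $P^+_\Gamma$; the remaining differences are the base points $\zeta_0^-$ versus $\zeta_0^+$, which only shift the results by additive constants compatible with the freedom in the definition of $\mathcal H_\Gamma$, and the side on which $\zeta$ lies. Thus both $P^+_\Gamma\psi$ and $P^-_\Gamma\psi$ reduce to non-tangential boundary values of one and the same regularized Cauchy integral, approached from the two complementary sides of $\Gamma$.

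Next I would invoke the classical Plemelj jump argument at a point $\xi$ where the principal value defining $\mathcal H_\Gamma\psi(\xi)$ exists. The argument splits $\Gamma$ into a short subarc $\Gamma_\varepsilon$ around $\xi$ and its complement: on the complement the integrand is uniformly bounded as $\zeta\to\xi$ non-tangentially and the limit reproduces the principal value, while on $\Gamma_\varepsilon$ one replaces $\psi(z)$ by its arc-average around $\xi$ and computes the resulting contour integral of $dz/(z-\zeta)$ explicitly. The chord-arc condition on $\Gamma$ ensures that this short piece is approximately straight on scale $\varepsilon$, so the explicit computation yields the half-residue $+\tfrac12\psi(\xi)$ when $\zeta\to\xi$ from $\Omega^+$ and $-\tfrac12\psi(\xi)$ when $\zeta\to\xi$ from $\Omega^-$. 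Matching the prefactor $\tfrac{-1}{2\pi i}$ against the $\tfrac{1}{\pi}$-normalization of $\mathcal H_\Gamma$ and reinserting the orientation sign for $P^-_\Gamma$ then produces the two boundary formulas displayed above.

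The main obstacle is justifying the substitution $\psi(z)\leadsto\psi(\xi)$ on $\Gamma_\varepsilon$ when $\psi$ is only of bounded mean oscillation, since pointwise values of $\psi$ on a short arc are not a priori meaningful and the usual modulus-of-continuity estimate is not available. My intended remedy is to verify the identities first on a dense subspace of smooth, compactly supported test functions on $\Gamma$, for which the classical jump argument is elementary, and then pass to the limit in the BMO topology using the boundedness of $\mathcal H_\Gamma$ on $\mathrm{BMO}(\gamma(\mathbb R))$ furnished by the Calder\'on--Coifman--McIntosh--Meyer theorem on chord-arc curves, together with the corresponding boundedness of the boundary trace operators $P^\pm_\Gamma$.
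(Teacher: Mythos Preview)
The paper does not actually prove this proposition; it is stated as the classical Plemelj formula and used as input (notably in the proof of Theorem~\ref{Cauchy}). So there is no ``paper's own proof'' to compare against, and your task is simply to supply a valid argument.

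Your steps 1--4 are the standard jump computation and are fine. The genuine gap is in step~6: smooth compactly supported functions are \emph{not} dense in $\mathrm{BMO}$. Their closure is $\mathrm{VcMO}$ (what the paper calls $\mathrm{CMO}$ in Section~\ref{section10}), a proper closed subspace. So the density-plus-continuity argument simply does not reach general $\psi\in\mathrm{BMO}(\gamma(\mathbb R))$. There is also a milder circularity issue: within this paper the BMO boundedness of $P^\pm_\Gamma$ is established in Theorem~\ref{Cauchy}, whose proof invokes Proposition~\ref{Plemelj}, so you cannot appeal to it here; and the external Calder\'on--Coifman--McIntosh--Meyer theorem you cite is an $L^2$/$L^p$ statement, not a BMO one.

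The fix is that the density detour is unnecessary. Since $\psi$ is locally integrable on $\Gamma$, almost every $\xi\in\Gamma$ is a Lebesgue point of $\psi$, and by Privalov's theorem (quoted just before the proposition) the principal value $\mathcal H_\Gamma\psi(\xi)$ exists a.e.\ as well. At any such $\xi$ the classical jump computation goes through verbatim: on the short arc $\Gamma_\varepsilon$ one replaces $\psi(z)$ by $\psi(\xi)$ and controls the error using $\frac{1}{|\Gamma_\varepsilon|}\int_{\Gamma_\varepsilon}|\psi(z)-\psi(\xi)|\,|dz|\to 0$, which is exactly the Lebesgue point condition; the chord-arc geometry then gives the half-residue $\pm\tfrac12\psi(\xi)$ as you describe. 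No BMO-specific machinery or density argument is needed.
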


Next, we consider the space of holomorphic functions to which the Cauchy integrals ${P}^\pm_\Gamma \psi$ belong.
Let $F^\pm:\mathbb H^\pm \to \Omega^\pm$ be the normalized Riemann mappings,
where $\Omega^\pm$ are the complementary domains of $\Gamma$ in $\mathbb C$.
Then, we define the Banach space of BMOA functions on $\Omega^\pm$ 
by the push-forward of $\mathrm{BMOA}(\mathbb H^\pm)$ by $F^\pm$ and 
identify these pairs. Namely,
$$
\mathrm{BMOA}(\Omega^\pm)=\{(F^\pm)_* \Phi^\pm=\Phi^\pm \circ (F^\pm)^{-1} \mid \Phi^\pm \in \mathrm{BMOA}(\mathbb H^\pm)\}
$$
with norm $\Vert (F^\pm)_* \Phi^\pm \Vert_{\mathrm{BMOA}(\Omega^\pm)}=\Vert \Phi^\pm \Vert_{\mathrm{BMOA}}$.

For $(F^\pm)_* \Phi^\pm \in \mathrm{BMOA}(\Omega^\pm)$, 
their boundary extensions to $\Gamma$ are $E(\Phi^\pm) \circ (F^\pm)^{-1}$,
where the Riemann mappings $F^\pm$ are assumed to extend to homeomorphisms of $\mathbb R$ onto $\Gamma$. 
We define
the boundary extension operators $E_\Gamma^\pm$ to $\Gamma$ by
$$
E_\Gamma^\pm((F^\pm)_* \Phi^\pm)=E(\Phi^\pm) \circ (F^\pm)^{-1}.
$$

\begin{proposition}\label{BMOtrace}
It holds that $E_\Gamma^\pm(\mathrm{BMOA}(\Omega^\pm)) \subset \mathrm{BMO}(\gamma(\mathbb R))$.  
Moreover, these trace operators $E_\Gamma^\pm$ are Banach isomorphisms onto their images, where their operator norms are
estimated in terms of the norms of the composition operators $C_{h^\pm}$
for $h^\pm = h(\mu^\pm) \in \rm SQS$, respectively.
\end{proposition}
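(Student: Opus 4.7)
The plan is to realize each trace operator $E_\Gamma^\pm$ as a composition of two well-understood Banach isomorphisms onto their images: the half-plane trace operator $E$ of Theorem \ref{131} and the composition operator $C_{h^\pm}$ of Theorem \ref{pullback}. The bridge between the chord-arc setting on $\Gamma$ and the half-plane setting on $\mathbb{R}$ is the identification of $(F^\pm)^{-1}\circ\gamma$ with $h^\pm$ up to a normalizing affine change of variable.

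To set this up, I would first factor the quasiconformal restriction $G(\mu^+,\mu^-)|_{\mathbb{H}^\pm} = F^\pm\circ H^\pm$, where $F^\pm:\mathbb{H}^\pm\to\Omega^\pm$ is the conformal Riemann map and $H^\pm$ is a quasiconformal self-homeomorphism of $\mathbb{H}^\pm$. Since $F^\pm$ is conformal, the chain rule for complex dilatations forces $H^\pm$ to carry the complex dilatation $\mu^\pm$. Hence the boundary extension $H^\pm|_{\mathbb{R}}$ represents the Teich\-m\"uller class $[\mu^\pm]\in T_B$, so it coincides with $h^\pm=h(\mu^\pm)$ up to an affine change of variable on $\mathbb{R}$. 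In particular, with the normalization of $F^\pm$ chosen so that $(F^\pm)^{-1}\circ\gamma=h^\pm$, the map $(F^\pm)^{-1}\circ\gamma$ is strongly quasisymmetric, and pre-composition by it on $\mathrm{BMO}(\mathbb{R})$ is precisely $C_{h^\pm}$.

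With this identification in place, for any $\Phi^\pm\in\mathrm{BMOA}(\mathbb{H}^\pm)$ the pullback of $E_\Gamma^\pm((F^\pm)_*\Phi^\pm)$ by $\gamma$ equals
\[
E(\Phi^\pm)\circ(F^\pm)^{-1}\circ\gamma = C_{h^\pm}\!\bigl(E(\Phi^\pm)\bigr),
\]
which lies in $\mathrm{BMO}(\mathbb{R})$ by Theorem \ref{131} combined with the boundedness of $C_{h^\pm}$ on $\mathrm{BMO}(\mathbb{R})$ from Theorem \ref{pullback} (applicable since $[\mu^\pm]\in T_B$ gives $h^\pm\in\mathrm{SQS}$). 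Thus $E_\Gamma^\pm((F^\pm)_*\Phi^\pm)\in\mathrm{BMO}(\gamma(\mathbb{R}))$, and under the defining identifications $\mathrm{BMOA}(\Omega^\pm)\cong\mathrm{BMOA}(\mathbb{H}^\pm)$ and $\mathrm{BMO}(\gamma(\mathbb{R}))\cong\mathrm{BMO}(\mathbb{R})$, the operator $E_\Gamma^\pm$ is conjugate to $C_{h^\pm}\circ E$. Since both factors are Banach isomorphisms onto their images, so is $E_\Gamma^\pm$, with operator norm estimated by $\Vert E\Vert\cdot\Vert C_{h^\pm}\Vert$. The only subtlety I anticipate is the normalization of the Riemann map $F^\pm$, which is not uniquely determined and pins down $(F^\pm)^{-1}\circ\gamma$ only up to an affine self-map of $\mathbb{R}$; since pre-composition with such an affine map acts isometrically on $\mathrm{BMO}(\mathbb{R})$, any mismatch is absorbed harmlessly into the final norm estimate.
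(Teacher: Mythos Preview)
Your proposal is correct and follows essentially the same approach as the paper: the paper's proof simply observes that since $\gamma = F^\pm \circ h^\pm$, the discrepancy between $E^\pm$ and $E_\Gamma^\pm$ (after unwinding the push-forward definitions of $\mathrm{BMOA}(\Omega^\pm)$ and $\mathrm{BMO}(\gamma(\mathbb R))$) is precisely the composition operator $C_{h^\pm}$. Your version is a more detailed unpacking of this same factorization $E_\Gamma^\pm \cong C_{h^\pm}\circ E$, and your concern about normalization is unnecessary here because the paper's conventions arrange $\gamma = F^\pm \circ h^\pm$ exactly.
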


\begin{proof}
The norm on $\mathrm{BMOA}(\Omega^\pm)$ is induced from $\mathrm{BMOA}(\mathbb H^\pm)$
by $F^\pm$ whereas the norm of $\mathrm{BMO}(\gamma(\mathbb R))$ is induced from $\mathrm{BMO}(\mathbb R)$ by $\gamma$.
Because $\gamma=F^\pm \circ h^\pm$, 
the differences between $E^\pm$ and $E_\Gamma^\pm$
are caused by the composition operators $C_{h^\pm}$, respectively.
\end{proof}

By this proposition, we see that $E_\Gamma^\pm(\mathrm{BMOA}(\Omega^\pm))$ 
are closed subspaces of $\mathrm{BMO}(\gamma(\mathbb R))$. Moreover, under the Banach isomorphism $E_\Gamma^\pm$,
we may identify $E_\Gamma^\pm(\mathrm{BMOA}(\Omega^\pm))$ with $\mathrm{BMOA}(\Omega^\pm)$. Hence,
we regard $\mathrm{BMOA}(\Omega^\pm)$ as closed subspaces of
$\mathrm{BMO}(\gamma(\mathbb R))$ without noticing $E_\Gamma^\pm$ hereafter.

For $([\mu^+],[\mu^-]) \in \widetilde T_C$, we have obtained 
the images of the tangent subspaces ${\mathscr T}_{[\mu^+]} T_B^+$ and
${\mathscr T}_{[\mu^-]} T_B^-$ under the surjective derivative $d_{([\mu^+],[\mu^-])}\Lambda$, 
which correspond to the topological direct sum decomposition
\eqref{decomp}.
Then, every $\phi \in \mathrm{BMO}(\mathbb R)$ is uniquely
represented by $\phi=\phi^++\phi^-$ for $\phi^+ \in C_{h^+}(\mathrm{BMOA}(\mathbb H^+))$ and
$\phi^- \in C_{h^-}(\mathrm{BMOA}(\mathbb H^-))$. These bounded linear projections are denoted by
$\phi^\pm=P^\pm_{([\mu^+],[\mu^-])}(\phi)$.

A novel observation in our arguments is that the Cauchy projection can be related to
the derivative of $\Lambda$. The following result is crucial in this sense and provide a new method
for the study of the Cauchy projection on a chord-arc curve.

\begin{theorem}\label{Cauchy}
In the function space $\mathrm{BMO}(\gamma(\mathbb R))$ on a chord-arc curve $\Gamma=\gamma(\mathbb R)$, 
the Cauchy projections $P^\pm_\Gamma$ satisfy
$$
P^\pm_\Gamma = \gamma_* \circ P^\pm_{([\mu^+],[\mu^-])} \circ \gamma_*^{-1}.
$$
In particular, $P^\pm_\Gamma$ maps $\mathrm{BMO}(\gamma(\mathbb R))$ onto $\mathrm{BMOA}(\Omega^\pm)$, 
they are bounded linear operators, 
and their operator norms are estimated in terms of $\Vert C_{h^\pm}\Vert$.
\end{theorem}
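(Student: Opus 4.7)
The plan is to realize both sides of the asserted identity as the pair of bounded projections associated with a single topological direct sum decomposition $\mathrm{BMO}(\gamma(\mathbb R)) = \mathrm{BMOA}(\Omega^+) \oplus \mathrm{BMOA}(\Omega^-)$, and then to invoke the uniqueness of such projections.

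First I would transport the decomposition \eqref{decomp} across the push-forward $\gamma_*$. Using the conformal welding factorization $\gamma = F^\pm \circ h^\pm$ on $\mathbb R$ that underlies Proposition \ref{BMOtrace} (obtained from the simultaneous uniformization $G(\mu^+,\mu^-)$ by post-composition with the inverse of the quasiconformal correction whose boundary value is $h^\pm$), one checks directly that $\gamma_* \circ C_{h^\pm} = (F^\pm)_*$ on $\mathrm{BMOA}(\mathbb H^\pm)$, and hence $\gamma_*\bigl(C_{h^\pm}(\mathrm{BMOA}(\mathbb H^\pm))\bigr) = \mathrm{BMOA}(\Omega^\pm)$ by the very definition of the latter. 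Applying $\gamma_*$ to \eqref{decomp} then yields the desired topological direct sum, in which the conjugated operators $Q^\pm_\Gamma := \gamma_* \circ P^\pm_{([\mu^+],[\mu^-])} \circ \gamma_*^{-1}$ are the associated bounded projections, with operator norms controlled by $\Vert C_{h^\pm}\Vert$.

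Next I would verify that the Cauchy projections $P^\pm_\Gamma$ realize the same projections. Given $\psi \in \mathrm{BMO}(\gamma(\mathbb R))$, write $\psi = A^+ + A^-$ with $A^\pm = Q^\pm_\Gamma\psi \in \mathrm{BMOA}(\Omega^\pm)$ and compute $P^+_\Gamma A^+$ and $P^+_\Gamma A^-$ separately. The Cauchy integral formula applied to the BMOA function $A^+$ on $\Omega^+$ with the regularized kernel gives $P^+_\Gamma A^+ = A^+$ modulo the additive constant $-A^+(\zeta_0^+)$. On the other hand, since the regularized kernel $\tfrac{1}{\zeta-z}-\tfrac{1}{\zeta_0^+-z}$ is holomorphic in $z$ on $\Omega^-$ for every $\zeta,\zeta_0^+ \in \Omega^+$, Cauchy's theorem on the unbounded domain $\Omega^-$ applied to $A^-$ gives $P^+_\Gamma A^- = 0$. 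The symmetric computation yields $P^-_\Gamma A^- = A^-$ and $P^-_\Gamma A^+ = 0$ modulo constants. Adding, $P^\pm_\Gamma \psi = A^\pm = Q^\pm_\Gamma \psi$ in $\mathrm{BMO}(\gamma(\mathbb R))$, which is the claim; the boundedness assertions and operator-norm estimate then follow from the corresponding properties of $Q^\pm_\Gamma$ established in the first step.

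The main technical obstacle will be justifying the Cauchy integral formula and Cauchy's theorem on the unbounded domains $\Omega^\pm$, whose common boundary $\Gamma$ passes through $\infty$, for the logarithmically growing BMOA functions $A^\pm$. The $|z|^{-2}$ decay at infinity of the regularized kernel, together with the bi-Lipschitz control of $|dz|$ along the chord-arc curve $\Gamma$, should provide absolute convergence and legitimize a contour deformation to large loops in $\Omega^\pm$; showing that the contribution from these loops vanishes in the limit, against the BMOA growth of $A^\pm$, is the delicate step that must be executed carefully.
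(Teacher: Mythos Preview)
Your first step, transporting the decomposition \eqref{decomp} by $\gamma_*$ to obtain $\mathrm{BMO}(\gamma(\mathbb R))=\mathrm{BMOA}(\Omega^+)\oplus\mathrm{BMOA}(\Omega^-)$ with projections $Q^\pm_\Gamma$, is carried out essentially the same way in the paper. The divergence is in the second step, and it is a genuinely different route. You propose to verify $P^\pm_\Gamma=Q^\pm_\Gamma$ by checking the Cauchy integral formula $P^+_\Gamma A^+=A^+$ and Cauchy's theorem $P^+_\Gamma A^-=0$ directly on the unbounded domains $\Omega^\pm$, via contour deformation to large loops. The paper instead forms $\Psi_1=Q^+_\Gamma\psi$ on $\Omega^+$ and $-Q^-_\Gamma\psi$ on $\Omega^-$, and $\Psi_2=P^+_\Gamma\psi$ on $\Omega^+$ and $-P^-_\Gamma\psi$ on $\Omega^-$, then shows that both satisfy $-2i\,\bar\partial\Psi_j=\psi\,dz_\Gamma$ in the sense of distributions (the first by Green's formula and Hardy-space control of non-tangential limits, the second by the Pompeiu formula and Fubini). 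Thus $\Psi_1-\Psi_2$ is entire by the Weyl lemma, and the growth bound $o(|z|)$ forces it to be constant.

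What each approach buys: your strategy is conceptually more elementary, but the step you flag as delicate (vanishing of the large-loop contribution against BMOA growth) is exactly where the work lies, and on a chord-arc domain passing through $\infty$ this requires controlling the interior growth of $A^\pm$ along arcs that are only quasi-circular, together with uniform non-tangential convergence near the pieces of $\Gamma$ being exhausted; this can be done but is not shorter than the paper's argument. The paper's $\bar\partial$ framework packages the same analytic content more efficiently: the contour-at-infinity issue is replaced by a single growth estimate $\Psi_j(z)=o(|z|)$ (for $\Psi_2$ this is cited from Semmes), and the Green-formula step localizes the boundary analysis to a compactly supported test function, so only local Hardy-space behavior is needed rather than a global contour argument. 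Your proposal is viable, but be aware that executing the ``large loop'' step rigorously will likely lead you to rediscover essentially the ingredients the distributional proof uses.
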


\begin{proof}
Let $\phi=\gamma_*^{-1}(\psi)=\psi \circ \gamma$ for 
$\psi \in \mathrm{BMO}(\gamma(\mathbb R))=\gamma_*(\mathrm{BMO}(\mathbb R))$. Then,
$\phi \in \mathrm{BMO}(\mathbb R)$ and $ P^\pm_{([\mu^+],[\mu^-])} \circ \gamma_*^{-1}(\psi)=\phi^\pm
\in C_{h^\pm}(\mathrm{BMOA}(\mathbb H^\pm))$. Therefore,
$$
\gamma_* \circ P^\pm_{([\mu^+],[\mu^-])} \circ \gamma_*^{-1}(\psi) \in
\mathrm{BMOA}(\Omega^\pm)
$$
since $\gamma=F^\pm \circ h^\pm$ and $\mathrm{BMOA}(\Omega^\pm)=F^\pm_* (\mathrm{BMOA}(\mathbb H^\pm))$
for the Riemann mappings $F^\pm:\mathbb H^\pm \to \Omega^\pm$.
By the definition of the projections $P^\pm_{([\mu^+],[\mu^-])}$, we have
\begin{equation}\label{decomp1}
\gamma_* \circ P^+_{([\mu^+],[\mu^-])} \circ \gamma_*^{-1}(\psi)+\gamma_* 
\circ P^-_{([\mu^+],[\mu^-])} \circ \gamma_*^{-1}(\psi)=\psi.
\end{equation}
Let $\Psi_1$ be a measurable function on $\mathbb C$ defined by BMOA functions 
$\gamma_* \circ P^+_{([\mu^+],[\mu^-])} \circ \gamma_*^{-1}(\psi)$ on $\Omega^+$ and
$-\gamma_* \circ P^-_{([\mu^+],[\mu^-])} \circ \gamma_*^{-1}(\psi)$ on $\Omega^-$. Then,
$\Psi_1$ is locally integrable and is of growth order $\Psi_1(z)=o(|z|)$ as $z \to \infty$.

We can verify that $-2i \bar \partial \Psi_1=\psi dz_\Gamma$ as a distribution
according to the argument in Semmes \cite[p.204]{Se1}. 
Here, $dz_\Gamma$ denotes a continuous linear functional 
defined by
$$
\langle X, dz_\Gamma \rangle=\int_\Gamma X(z)dz
$$
for every test function $X \in C_0^\infty(\mathbb C)$. Then, we have 
\begin{align}
\langle X, \psi dz_\Gamma \rangle=\int_\Gamma X(z)\psi(z)dz=2i \int_{\mathbb C} \bar \partial X(z)\Psi_1(z)dxdy
=-2i\langle X, \bar \partial \Psi_1 \rangle.
\end{align}
The middle equality in the above equations is derived from the Green formula. 

To see this more precisely, we choose a simply connected bounded domain 
$W \subset \mathbb C$ of smooth boundary intersecting $\Gamma$ and containing the support of $X$. Moreover, we choose a decreasing sequence of
neighborhoods $U_n$ of $\Gamma$ with $\bigcap_{n=1}^\infty U_n=\Gamma$ appropriately,
and let $W^\pm_n=(W \cap \Omega^\pm) \setminus U_n$. Then, the Green formula implies that
$$
\int_{\partial W_n^\pm}  X(z)\Psi_1(z)dz=\int_{W_n^\pm}  \bar \partial(X(z)\Psi_1(z))d\bar z \wedge dz
=2i \int_{W_n^\pm}  \bar \partial X(z) \Psi_1(z)dxdy.
$$
Here, the first integrals over $\partial W_n^\pm$ tend to the integrals involving the non-tangential limits of $\Psi_1$
over $\Gamma \cap W$ as $n \to \infty$, because $\Psi_1$ is made of ${\rm BMOA}(\Omega^\pm)$ 
whose restrictions to $W^\pm=W \cap \Omega^\pm$ belong to the Hardy space $H^p(W^\pm)$.
Then, summing up two equations for $\pm$ after passing to these limits, we obtain
the required equation.

The Cauchy projections 
$P^\pm_\Gamma(\psi)$ satisfy the same properties as $\gamma_* \circ P^\pm_{([\mu^+],[\mu^-])} \circ \gamma_*^{-1}(\psi)$.
They are
holomorphic functions on $\Omega^\pm$ such that 
their boundary extensions satisfy $P^+_\Gamma(\psi)+P^-_\Gamma(\psi)=\psi$ by Proposition \ref{Plemelj}. Let $\Psi_2$ be
a measurable function on $\mathbb C$ defined by $P^+_\Gamma(\psi)$ on $\Omega^+$ and
$-P^-_\Gamma(\psi)$ on $\Omega^-$.
Then, $\Psi_2$ is locally integrable and is of growth order $\Psi_2(z)=o(|z|)$ as $z \to \infty$.
These facts are verified in \cite[Lemma 3.2]{Se1}.

Moreover, we have also $-2i \bar \partial \Psi_2=\psi dz_\Gamma$ as a distribution.
Indeed, for every test function $X \in C_0^\infty(\mathbb C)$, the Pompeiu formula (applied at $\dot =$) implies that
\begin{align}
-2i\langle X, \bar \partial \Psi_2 \rangle 
&=2i \int_{\mathbb C} \bar \partial X(\zeta)\Psi_2(\zeta)d\xi d\eta
=\frac{-1}{\pi}\int_{\mathbb C} \bar \partial X(\zeta)\left(\int_{\Gamma} \left(\frac{\psi(z)}{\zeta-z}-\frac{\psi(z)}{\zeta_0^\pm-z}\right)dz\right) d\xi d\eta\\
&=\int_{\Gamma}\left(\frac{-1}{\pi}\int_{\mathbb C}\frac{\bar \partial X(\zeta)}{\zeta-z}d\xi d\eta\right) \psi(z) dz
\,\dot=\int_{\Gamma} X(z) \psi(z)dz=\langle X, \psi dz_\Gamma \rangle.
\end{align}
The exchange of order of the integrations is guaranteed simply by integrability of
$$
|\bar \partial X(\zeta)|\left|\frac{\psi(z)}{\zeta-z}-\frac{\psi(z)}{\zeta_0^\pm-z}\right|
$$
over $(\zeta,z) \in \mathbb C \times \Gamma$.

Since we have seen that $-2i \bar \partial \Psi_1=\psi dz_\Gamma=-2i \bar \partial \Psi_2$,
$\bar \partial$-derivative of the locally integrable function $\Psi_1-\Psi_2$ is $0$ on $\mathbb C$
in the distribution sense. Hence, it is holomorphic on $\mathbb C$ by the Weyl lemma. The growth order 
$\Psi_1(z)-\Psi_2(z)=o(|z|)$ $(z \to \infty)$ forces the entire function $\Psi_1-\Psi_2$ to be a constant.
Thus, we have $P^\pm_\Gamma(\psi)=\gamma_* \circ P^\pm_{([\mu^+],[\mu^-])} \circ \gamma_*^{-1}(\psi)$ on $\Omega^\pm$ up to constants.
\end{proof}

\begin{corollary}\label{BMOauto}
The Cauchy transform ${\mathcal H}_\Gamma$ on a chord-arc curve $\Gamma=\gamma(\mathbb R)$
maps $\mathrm{BMO}(\gamma(\mathbb R))$ to $\mathrm{BMO}(\gamma(\mathbb R))$, which is
a Banach automorphism of $\mathrm{BMO}(\gamma(\mathbb R))$.
\end{corollary}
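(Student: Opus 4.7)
The plan is to deduce the corollary as a direct algebraic consequence of Theorem~\ref{Cauchy} together with the Plemelj formula from Proposition~\ref{Plemelj}. By Proposition~\ref{Plemelj}, one has the identity
$$
\mathcal{H}_\Gamma = -i\bigl(P^+_\Gamma - P^-_\Gamma\bigr)
$$
as operators on $\mathrm{BMO}(\gamma(\mathbb{R}))$. Since Theorem~\ref{Cauchy} asserts that both Cauchy projections $P^\pm_\Gamma$ are bounded linear operators on $\mathrm{BMO}(\gamma(\mathbb{R}))$, realized as the conjugates under $\gamma_*$ of the projections $P^\pm_{([\mu^+],[\mu^-])}$ associated with the topological direct sum decomposition \eqref{decomp}, it follows immediately that $\mathcal{H}_\Gamma$ is a bounded linear operator on $\mathrm{BMO}(\gamma(\mathbb{R}))$.

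For the invertibility, the idea is to exploit that $P^+_\Gamma$ and $P^-_\Gamma$ are complementary projections. Indeed, Proposition~\ref{Plemelj} gives $P^+_\Gamma + P^-_\Gamma = I$, while the fact that they arise via $\gamma_*$ from projections onto the two factors of a topological direct sum yields $P^+_\Gamma P^-_\Gamma = P^-_\Gamma P^+_\Gamma = 0$ and $(P^\pm_\Gamma)^2 = P^\pm_\Gamma$. A short computation then shows
$$
\mathcal{H}_\Gamma^2 = -\bigl(P^+_\Gamma - P^-_\Gamma\bigr)^2 = -\bigl(P^+_\Gamma + P^-_\Gamma\bigr) = -I,
$$
so that $\mathcal{H}_\Gamma^{-1} = -\mathcal{H}_\Gamma$ is itself bounded on $\mathrm{BMO}(\gamma(\mathbb{R}))$. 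This establishes that $\mathcal{H}_\Gamma$ is a Banach automorphism.

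There is essentially no obstacle here: the entire content of the corollary is already encoded in Theorem~\ref{Cauchy} (which provides boundedness and the projection structure via the derivative of $\Lambda$) and in Proposition~\ref{Plemelj} (which expresses $\mathcal{H}_\Gamma$ in terms of $P^\pm_\Gamma$). The only point that warrants a careful check is that $P^+_\Gamma$ and $P^-_\Gamma$, viewed as operators on $\mathrm{BMO}(\gamma(\mathbb{R}))$, really are complementary idempotents; this follows from Theorem~\ref{Cauchy} because $\gamma_*$ is a Banach isomorphism that transports the direct sum decomposition \eqref{decomp} to the decomposition $\mathrm{BMO}(\gamma(\mathbb{R})) = \mathrm{BMOA}(\Omega^+) \oplus \mathrm{BMOA}(\Omega^-)$. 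Once this is noted, the argument reduces to the two-line computation above, in complete parallel with the relation $\mathcal{H} \circ \mathcal{H} = -I$ for the classical Hilbert transform recalled in Section~\ref{section2}.
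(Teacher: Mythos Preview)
Your proof is correct and follows essentially the same approach as the paper: both use the Plemelj formula $\mathcal{H}_\Gamma = -i(P^+_\Gamma - P^-_\Gamma)$ together with Theorem~\ref{Cauchy} to deduce boundedness. You go slightly further than the paper's one-line proof by explicitly computing $\mathcal{H}_\Gamma^2 = -I$ from the complementary projection identities to obtain the automorphism statement, which the paper leaves implicit in its citation of Proposition~\ref{BMOtrace} and Theorem~\ref{Cauchy}.
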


\begin{proof}
Since ${\mathcal H}_\Gamma=-i(P^+_\Gamma - P^-_\Gamma)$ by Proposition \ref{Plemelj},
the statement follows from Proposition \ref{BMOtrace} and Theorem \ref{Cauchy}.
\end{proof}

\begin{remark}
The boundedness of the Cauchy transform ${\mathcal H}_\Gamma$, as well as 
the Cauchy projections $P^\pm_\Gamma$,
is also verified in Liu and Shen \cite[Theorem 1.1]{LS1} as an application of  
the corresponding $L^p$ estimate in David \cite{Da}.
\end{remark}

Theorem \ref{Cauchy} also leads to {\it the Calder\'on theorem} for chord-arc curves.
See Coifman and Meyer \cite[Section 8]{CM0}, where this is proved in the small norm case.

\begin{corollary}\label{Calderon}
The Cauchy projections $P_\Gamma^\pm$ on a chord-arc curve $\Gamma=\gamma(\mathbb R)$ 
are associated with the topological direct sum decomposition
$$
\mathrm{BMO}(\gamma(\mathbb R)) = \mathrm{BMOA}(\Omega^+) \oplus \mathrm{BMOA}(\Omega^-).
$$
\end{corollary}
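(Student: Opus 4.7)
The plan is to transport the direct sum decomposition of $\mathrm{BMO}(\mathbb R)$ established in \eqref{decomp} through the isometric isomorphism $\gamma_*:\mathrm{BMO}(\mathbb R)\to\mathrm{BMO}(\gamma(\mathbb R))$, and then identify the resulting summands with $\mathrm{BMOA}(\Omega^\pm)$ via the conformal welding factorization $\gamma=F^\pm\circ h^\pm$.

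First I would recall that by construction the norm on $\mathrm{BMO}(\gamma(\mathbb R))$ is defined so that $\gamma_*$ is an isometric Banach-space isomorphism. Hence any topological direct sum decomposition of $\mathrm{BMO}(\mathbb R)$ pushes forward to a topological direct sum decomposition of $\mathrm{BMO}(\gamma(\mathbb R))$, and the associated bounded projections are conjugated by $\gamma_*$. Applying this to the decomposition \eqref{decomp},
\begin{equation*}
\mathrm{BMO}(\mathbb R)=C_{h^+}(\mathrm{BMOA}(\mathbb H^+))\oplus C_{h^-}(\mathrm{BMOA}(\mathbb H^-)),
\end{equation*}
with the bounded projections $P^\pm_{([\mu^+],[\mu^-])}$, I obtain
\begin{equation*}
\mathrm{BMO}(\gamma(\mathbb R))=\gamma_*(C_{h^+}(\mathrm{BMOA}(\mathbb H^+)))\oplus\gamma_*(C_{h^-}(\mathrm{BMOA}(\mathbb H^-))),
\end{equation*}
and the associated projections are precisely $\gamma_*\circ P^\pm_{([\mu^+],[\mu^-])}\circ\gamma_*^{-1}$, which by Theorem \ref{Cauchy} coincide with the Cauchy projections $P^\pm_\Gamma$.

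Next I would identify each summand with $\mathrm{BMOA}(\Omega^\pm)$. The key observation is the factorization $\gamma=F^\pm\circ h^\pm$ coming from conformal welding, so that for any $\Phi^\pm\in\mathrm{BMOA}(\mathbb H^\pm)$,
\begin{equation*}
\gamma_*(C_{h^\pm}\Phi^\pm)=(\Phi^\pm\circ h^\pm)\circ\gamma^{-1}=\Phi^\pm\circ(F^\pm)^{-1}=(F^\pm)_*\Phi^\pm.
\end{equation*}
By the definition of $\mathrm{BMOA}(\Omega^\pm)$ as the push-forward $(F^\pm)_*(\mathrm{BMOA}(\mathbb H^\pm))$, this yields the set equality $\gamma_*(C_{h^\pm}(\mathrm{BMOA}(\mathbb H^\pm)))=\mathrm{BMOA}(\Omega^\pm)$. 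Substituting these identifications into the previous display gives
\begin{equation*}
\mathrm{BMO}(\gamma(\mathbb R))=\mathrm{BMOA}(\Omega^+)\oplus\mathrm{BMOA}(\Omega^-),
\end{equation*}
with the Cauchy projections $P^\pm_\Gamma$ being the bounded linear projections onto the two factors, as asserted.

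The substantive work has already been carried out in Theorem \ref{Cauchy}, where $P^\pm_\Gamma$ were identified with the conjugates of $P^\pm_{([\mu^+],[\mu^-])}$; the only nontrivial bookkeeping remaining here is the compatibility of the welding factorization $\gamma=F^\pm\circ h^\pm$ with the composition-operator description $C_{h^\pm}(\mathrm{BMOA}(\mathbb H^\pm))$ of the summands, and this is a direct calculation rather than a genuine obstacle.
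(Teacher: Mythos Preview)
Your proof is correct and follows exactly the approach the paper intends: the paper states this corollary without proof, treating it as immediate from Theorem~\ref{Cauchy}, and your argument spells out precisely the two steps implicit there---pushing forward the decomposition \eqref{decomp} by the isometry $\gamma_*$, and identifying $\gamma_*\bigl(C_{h^\pm}(\mathrm{BMOA}(\mathbb H^\pm))\bigr)$ with $\mathrm{BMOA}(\Omega^\pm)$ via the welding factorization $\gamma=F^\pm\circ h^\pm$ (the same identification appears verbatim in the proof of Theorem~\ref{Cauchy}).
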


\begin{remark}
$(F^\pm_*)^{-1} \circ P^\pm_\Gamma \circ \gamma_*=C_{h^\pm}^{-1} \circ P^\pm_{([\mu^+],[\mu^-])}$ maps $\mathrm{BMO}(\mathbb R)$ onto 
$\mathrm{BMOA}(\mathbb H^\pm)$.
When $[\mu^+]=[0]$ or $[\mu^-]=[0]$, this coincides with what is called the {\it Faber operator}.
See Liu and Shen \cite{LS1, LS3} for related arguments. 
In our setting, we see not only the boundedness of this operator but
also its holomorphic dependence when the embeddings $\gamma$ vary in the Teich\-m\"ul\-ler space as is discussed in the next section.
\end{remark}

\section{Holomorphic dependence of the Cauchy transform}\label{section9}

In this section, we consider the variation of the Cauchy transform ${\mathcal H}_\Gamma$
when $\Gamma=\gamma(\mathbb R)$ moves according to $\gamma=\gamma([\mu^+],[\mu^-])$.
To formulate this problem, we take the conjugate of ${\mathcal H}_\Gamma$ so that 
it acts on $\mathrm{BMO}(\mathbb R)$. Namely, for $([\mu^+],[\mu^-]) \in \widetilde T_C$, we set
\begin{equation}\label{HP}
{\mathcal H}_{([\mu^+],[\mu^-])}
=\gamma_*^{-1} \circ {\mathcal H}_\Gamma \circ \gamma_*,
\end{equation}
which is a Banach automorphism of $\mathrm{BMO}(\mathbb R)$ by Corollary \ref{BMOauto}.
More explicitly,
$$
{\mathcal H}_{([\mu^+],[\mu^-])}(\phi)(x)=
{\rm p.v.} \frac{1}{\pi} \int_{-\infty}^{\infty} \frac{\phi(t)}{\gamma(x)-\gamma(t)}\gamma'(t)dt
\quad(x \in \mathbb R)
$$
for $\phi \in \mathrm{BMO}(\mathbb R)$.

By Proposition \ref{Plemelj}, the relationship between the Cauchy transform ${\mathcal H}_\Gamma$
and the Cauchy projections $P_\Gamma^\pm$ is given as
${\mathcal H}_\Gamma=-i(P_\Gamma^+-P_\Gamma^-)$.
Moreover, Theorem \ref{Cauchy} shows $P^\pm_\Gamma=\gamma_* \circ P^\pm_{([\mu^+],[\mu^-])} \circ \gamma_*^{-1}$.
Then, we have
\begin{equation}\label{HP}
{\mathcal H}_{([\mu^+],[\mu^-])}=-i( P^+_{([\mu^+],[\mu^-])} -P^-_{([\mu^+],[\mu^-])}).
\end{equation}

For $([0],[0]) \in \widetilde T_C$, 
${\mathcal H}_{([0],[0])}$ coincides with the Hilbert transform $\mathcal H$. 
For $([\mu],[\bar \mu]) \in {\rm Sym}\,(T_B^+ \times T_B^-)$, 
${\mathcal H}_{([\mu],[\bar \mu])}$
is the conjugate of $\mathcal H$ by the composition operator $C_h$ for $h=h(\mu) \in \rm SQS$. 
Indeed, \eqref{HP} applied to the case of $\Gamma=h(\mathbb R)=\mathbb R$ with 
$\gamma([\mu],[\bar \mu])=h$ yields ${\mathcal H}_{([\mu],[\bar \mu])}=C_h \circ {\mathcal H} \circ C_h^{-1}$.

Let ${\mathcal L}(\mathrm{BMO}(\mathbb R))$ be
the Banach space of all bounded linear operators $\mathrm{BMO}(\mathbb R) \to \mathrm{BMO}(\mathbb R)$
equipped with the operator norm. We consider the map
$$
\eta:\widetilde T_C \to {\mathcal L}(\mathrm{BMO}(\mathbb R))
$$
defined by $([\mu^+],[\mu^-]) \mapsto {\mathcal H}_{([\mu^+],[\mu^-])}$.
An advantage of our arguments developed in this paper is to be ready to obtain the
holomorphic dependence of $\eta$, which has not appeared explicitly in the literature as a statement.

\begin{theorem}\label{conjugateH}
$\eta:\widetilde T_C \to {\mathcal L}(\mathrm{BMO}(\mathbb R))$ is holomorphic.
\end{theorem}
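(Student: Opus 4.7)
The plan is to reduce the statement to the holomorphic dependence of the projections $P^\pm_{([\mu^+],[\mu^-])}$ on the parameter, and then to establish that dependence via Lemma \ref{conjugate}. Indeed, by the relation \eqref{HP}, namely ${\mathcal H}_{([\mu^+],[\mu^-])}=-i(P^+_{([\mu^+],[\mu^-])}-P^-_{([\mu^+],[\mu^-])})$, the map $\eta$ is a $\mathbb C$-linear combination of the maps $\eta^\pm:([\mu^+],[\mu^-]) \mapsto P^\pm_{([\mu^+],[\mu^-])}$, so it suffices to show that each $\eta^\pm:\widetilde T_C \to {\mathcal L}(\mathrm{BMO}(\mathbb R))$ is holomorphic.

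The key identity is Lemma \ref{conjugate}:
$$
P^\pm_{([\mu^+],[\mu^-])}= d_{([\mu^+],[\mu^-])}\Lambda \circ J^\pm \circ (d_{([\mu^+],[\mu^-])}\Lambda)^{-1}.
$$
Under the canonical identification of the tangent space via the pre-Bers embedding (Theorem \ref{Bers}) and the direct sum decomposition \eqref{omitE}, all three factors may be viewed as bounded linear operators on the fixed Banach space $\mathrm{BMO}(\mathbb R)$, with the projections $J^\pm$ being constant in the parameter. The heart of the argument is the assertion that the derivative map $D\Lambda:\widetilde T_C \to {\mathcal L}(\mathrm{BMO}(\mathbb R))$ sending $([\mu^+],[\mu^-])$ to $d_{([\mu^+],[\mu^-])}\Lambda$ is itself holomorphic. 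This is a general principle of infinite-dimensional complex analysis: if a map between open subsets of complex Banach spaces is holomorphic (Fr\'echet-differentiable in the complex sense), then the coefficient of the first-order term in its local Taylor expansion is also holomorphic with values in the space of bounded linear operators. We apply this to $\Lambda$, which is holomorphic by Theorem \ref{holomorphic}.

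To handle the inverse factor, I would invoke Theorem \ref{main2}, which asserts that $\Lambda$ is biholomorphic onto $\Lambda(\widetilde T_C)$; in particular $\Lambda^{-1}$ is holomorphic, so the same principle gives that $D\Lambda^{-1}$ is a holomorphic operator-valued map on $\Lambda(\widetilde T_C)$. By the chain rule for the identity $\Lambda^{-1}\circ\Lambda=\mathrm{id}$, one has $(d_{([\mu^+],[\mu^-])}\Lambda)^{-1}=d_{\Lambda([\mu^+],[\mu^-])}\Lambda^{-1}$, which is the composition of the holomorphic maps $\Lambda$ and $D\Lambda^{-1}$, hence holomorphic. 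Finally, operator composition
$
{\mathcal L}(\mathrm{BMO}(\mathbb R))\times{\mathcal L}(\mathrm{BMO}(\mathbb R))\to{\mathcal L}(\mathrm{BMO}(\mathbb R))
$
is a bounded bilinear map, thus holomorphic, so $\eta^\pm$ is holomorphic as a composition of holomorphic maps, yielding the theorem.

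The main technical obstacle is the canonical identification issue: to talk about $D\Lambda$ as a map into a fixed Banach space ${\mathcal L}(\mathrm{BMO}(\mathbb R))$, one must trivialize the tangent bundle of $\widetilde T_C$ consistently. This is available because $\widetilde T_C$ is an open subset of $T_B^+\times T_B^-$, which is itself an open subset of the fixed Banach space $\mathrm{BMOA}(\mathbb H^-)\oplus\mathrm{BMOA}(\mathbb H^+)$ via the pre-Bers embeddings $\beta^\pm$ (Theorem \ref{Bers}). Under this chart, the tangent spaces at all points are canonically the fixed space $\mathrm{BMO}(\mathbb R)$ (by \eqref{omitE}), and the formal manipulations above become rigorous; once this bookkeeping is in place, no further analytic input beyond Theorems \ref{holomorphic} and \ref{main2} and Lemma \ref{conjugate} is required.
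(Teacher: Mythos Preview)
Your proposal is correct and follows essentially the same route as the paper: reduce to the holomorphic dependence of $P^\pm_{([\mu^+],[\mu^-])}$ via \eqref{HP}, then invoke Lemma \ref{conjugate} together with the biholomorphy of $\Lambda$ (Theorem \ref{main2}) to conclude. Your version is more explicit about the supporting infrastructure (holomorphy of the derivative map, the chain-rule identity $(d\Lambda)^{-1}=d(\Lambda^{-1})\circ\Lambda$, and the tangent bundle trivialization), but the paper's proof is the same argument stated tersely.
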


\begin{proof}
Due to formula \eqref{HP}, it suffices to show that $P^\pm_{([\mu^+],[\mu^-])}$ 
depend holomorphically on $([\mu^+],[\mu^-]) \in \widetilde T_C$.
By Lemma \ref{conjugate}, we have
\begin{align}
P^\pm_{([\mu^+],[\mu^-])}&=d_{([\mu^+],[\mu^-])}\,\Lambda\circ J^\pm\circ (d_{([\mu^+],[\mu^-])}\,\Lambda)^{-1}
=d_{([\mu^+],[\mu^-])}\,\Lambda\circ J^\pm\circ d_{\Lambda([\mu^+],[\mu^-])}\,\Lambda^{-1}.
\end{align}
Since $\Lambda$ is biholomorphic, these operators depend holomorphically on $([\mu^+],[\mu^-]) \in \widetilde T_C$.
\end{proof}

A related result can be found in Coifman and Meyer \cite[Th\'eor\`eme 1]{CM0}, 
though without involving the biholomorphic map $\Lambda$, where it is demonstrated using real analytic methods  
that $\eta|_{{\rm Sym}\,(T_B^+ \times T_B^-)}$
is real-analytic. In other words, it is shown that ${\mathcal H}_{([\mu],[\bar \mu])}=C_h \circ {\mathcal H} \circ C_h^{-1}$
depends real-analytically on $h \in {\rm BMO}^*(\mathbb R)$.
This claim immediately follows from Theorem \ref{conjugateH}.

In the sequel, as an application of this claim,
we consider the theorem of Coifman and Meyer \cite[Theorem 1]{CM}. 
A survey of this theorem is in Semmes \cite[Theorem 5]{SeB}.

We define $Z = i\,\mathrm{BMO}(\mathbb R) \cap \Lambda(\widetilde T_C)$ as the real-analytic submanifold of 
$\Lambda(\widetilde T_C)$ consisting of purely imaginary-valued BMO functions which is an open subset of the real Banach subspace
$i\,\mathrm{BMO}(\mathbb R)$ as used in Lemmas \ref{imaginary} and \ref{new}. We do not know whether $Z$ is connected or not.
For $\psi \in Z$, 
$$
g_0(x) = \int_0^x \exp\psi(t)dt \quad (x \in \mathbb R)
$$ 
serves as the {\it arc-length parameterization} of the chord-arc curve $g_0(\mathbb R)$.

In general, for any BMO embedding with chord-arc image
$$
g(x) = \int_0^x \exp \varphi(t)dt \quad (x \in \mathbb R)
$$ 
determined by $\varphi \in \Lambda(\widetilde T_C) \subset {\rm BMO}^*(\mathbb R)$,
we take a strongly quasisymmetric homeomorphism 
$$
h(x) = \int_0^x \exp({\rm Re}\,\varphi(t))dt
$$ 
and set $\psi = i\, {\rm Im}\,\varphi \circ h^{-1} \in Z$. 
Then, 
the arc-length parameter $g_0$ of the chord-arc curve $g(\mathbb R)$
determined by $\psi$ allows $g$ to be expressed as the {\it reparametrization} of $g_0$ by $h$, that is,
$g=g_0 \circ h$.

We define $Y=\mathrm{BMOA}(\mathbb H^+) \cap \Lambda(\widetilde T_C)$ as the complex submanifold of $\Lambda(\widetilde T_C)$ consisting of BMOA functions on $\mathbb H^+$. We do not know whether $Y$ is connected or not.
For $\varphi \in Y$, the corresponding BMO embedding with chord-arc image
$$
f(x) = \int_0^x \exp \varphi(t)dt \quad (x \in \mathbb R),
$$ 
when applied as above, can be expressed as the reparametrization of the arc-length parameter $g_0$ 
by a strongly quasisymmetric homeomorphism $h$
so that
$f = g_0 \circ h$.
This correspondence between $f$ and $g_0$ is bijective, thus defining a mapping from $Z$ to $Y$.
Similarly, the correspondence from the pair $(f, g_0)$ to their reparametrization $h$ allows 
the definition of a mapping from $Z$ to ${\rm Re}\,\mathrm{BMO}^*(\mathbb R)$.

To investigate these mappings on $\widetilde T_C$ through the biholomorphic homeomorphism $\Lambda$, we define
\begin{align*}
\rho&: \widetilde T_C \to \{[0]\} \times T_C^-, \quad ([\mu^+],[\mu^-]) \mapsto ([0], [\mu^-] \ast [\overline{\mu^+}]^{-1});\\
\delta&: \widetilde T_C \to {\rm Sym}\,(T_B^+ \times T_B^-), \quad ([\mu^+],[\mu^-]) \mapsto ([\mu^+],[\overline{\mu^+}]).
\end{align*}
Here, $\delta$ is the projection to the symmetric axis, which is real-analytic. 
The unique decomposition 
$$
([\mu^+],[\mu^-]) = \rho([\mu^+],[\mu^-]) \ast \delta([\mu^+],[\mu^-])
$$
corresponds to the decomposition of a quasisymmetric embedding $g=\gamma([\mu^+],[\mu^-])$ into $g=f \circ h$ in general, where
$f=\gamma(\rho([\mu^+],[\mu^-]))$ is the boundary extension of the conformal homeo\-morphism of $\mathbb H^+$ to $\mathbb R$, and
$h=\gamma(\delta([\mu^+],[\mu^-]))$ is a quasisymmetric homeomorphism of $\mathbb R$.

We transform the two maps defined on the submanifold $Z \subset \Lambda(\widetilde T_C)$
into those on $\widetilde T_C$
via $\Lambda$.
The map $Z \to Y$ corresponds to
$$
\rho_0 = \rho|_{\Lambda^{-1}(Z)}: \Lambda^{-1}(Z) \to \{[0]\} \times T_C^-=\Lambda^{-1}(Y),
$$
and the map $Z \to {\rm Re}\,\mathrm{BMO}^*(\mathbb R)$ corresponds to
$$
\delta_0 = \delta|_{\Lambda^{-1}(Z)}: 
\Lambda^{-1}(Z) \to {\rm Sym}\,(T_B^+ \times T_B^-)=\Lambda^{-1}({\rm Re}\,\mathrm{BMO}^*(\mathbb R)).
$$
We set $\widetilde Z=\Lambda^{-1}(Z)$ and $\widetilde Y=\Lambda^{-1}(Y)$.

We formulate the result on the map $\delta_0$ shown in \cite[Theorem 1]{CM} as follows. 
The proof follows from what has been demonstrated earlier. 

\begin{theorem}
$\delta_0: \widetilde Z \to {\rm Sym}\,(T_B^+ \times T_B^-)$ is a real-analytic diffeomorphism onto its image.
\end{theorem}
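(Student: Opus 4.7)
The plan is to prove that $\delta_0$ is real-analytic, globally injective, and has an invertible derivative at every point, then apply the inverse function theorem in Banach spaces. Real-analyticity is automatic: the ambient map $\delta$ is real-analytic on $\widetilde T_C$ because it factors as the holomorphic projection $([\mu^+],[\mu^-]) \mapsto [\mu^+]$ followed by the canonical real-analytic embedding $\iota:T_B \to {\rm Sym}\,(T_B^+ \times T_B^-)$, and $\widetilde Z = \Lambda^{-1}(Z)$ is a real-analytic submanifold of $\widetilde T_C$ since $\Lambda$ is biholomorphic by Theorem \ref{main2}.

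For injectivity, suppose $\delta_0([\mu_1^+],[\mu_1^-]) = \delta_0([\mu_2^+],[\mu_2^-])$, so $[\mu_1^+] = [\mu_2^+] =: [\mu^+]$, and write $h^+ = h(\mu^+)$. The decomposition used in the proof of Theorem \ref{main-b} gives $\gamma_i = F_{\nu_i} \circ h^+$ with $[\nu_i] = [\mu_i^-] \ast [\overline{\mu^+}]^{-1}$. The hypothesis $([\mu_i^+],[\mu_i^-]) \in \widetilde Z$ means $|\gamma_i'| \equiv 1$, so $|F_{\nu_1}'| = |F_{\nu_2}'|$ on $\mathbb R$. Since $\log F_{\nu_i}' \in \mathrm{BMOA}(\mathbb H^+)$ is determined by its real part up to a purely imaginary constant, the normalized conformal maps $F_{\nu_i}$ agree up to $z \mapsto e^{ic}z + d$; the three-point normalization then forces $F_{\nu_1} = F_{\nu_2}$, hence $[\mu_1^-] = [\mu_2^-]$.

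For the derivative at $p = ([\mu^+],[\mu^-])$, identify $\mathscr{T}_{\delta_0(p)}{\rm Sym}\,(T_B^+ \times T_B^-)$ with $\mathscr{T}_{[\mu^+]}T_B^+$ via $(w,\overline{w}) \leftrightarrow w$; then $d_p\delta_0$ is the restriction of $(v^+,v^-) \mapsto v^+$ to the real subspace
$$
\mathscr{T}_p\widetilde Z = (d_p\Lambda)^{-1}\bigl(i\,{\rm Re}\,\mathrm{BMO}(\mathbb R)\bigr).
$$
If $v^+ = 0$, Lemma \ref{factor} writes $d_p\Lambda(0,v^-) = C_{h^+}(\eta)$ with $\eta \in \mathrm{BMOA}(\mathbb H^+)$, and the imaginary constraint forces $\eta = -\overline{\eta}$; since a purely imaginary BMOA function on $\mathbb H^+$ must be an imaginary constant, this yields $v^- = 0$. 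For surjectivity, given $v^+$ with $\xi^- := d_p\Lambda(v^+,0) \in C_{h^-}(\mathrm{BMOA}(\mathbb H^-))$, pick $v^-$ such that
$$
d_p\Lambda(0,v^-) = C_{h^+}\bigl(2P^+(C_{h^+}^{-1}(-{\rm Re}\,\xi^-))\bigr) \in C_{h^+}(\mathrm{BMOA}(\mathbb H^+));
$$
its real part equals $-{\rm Re}\,\xi^-$, so $\xi^- + d_p\Lambda(0,v^-) \in i\,{\rm Re}\,\mathrm{BMO}(\mathbb R)$. The open mapping theorem promotes this continuous real-linear bijection to a topological isomorphism.

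The inverse function theorem then makes $\delta_0$ a local real-analytic diffeomorphism, and the global injectivity established above upgrades this to a real-analytic diffeomorphism onto the open subset $\delta_0(\widetilde Z) \subset {\rm Sym}\,(T_B^+ \times T_B^-)$. The main obstacle, I expect, is the injectivity step: the reconstruction of $F_{\nu_i}$ from the arc-length identity $|F_{\nu_i}'| \circ h^+ \cdot (h^+)' \equiv 1$ relies on both the BMOA characterization of the pre-Schwarzian (Theorems \ref{Bers} and \ref{char}) and careful control of the conformal normalization, whereas the derivative computation and the concluding application of the inverse function theorem are essentially bookkeeping once the framework of Section \ref{section7} is in place.
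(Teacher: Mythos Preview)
Your proof is correct but takes a genuinely different route from the paper's. The paper does not use the inverse function theorem at all: instead, after writing $g_0 = f \circ h$ and separating the real and imaginary parts of $\log g_0' = \log f' \circ h + \log h'$, it uses the Hilbert-transform relation between ${\rm Re}\log f'$ and ${\rm Im}\log f'$ to obtain the explicit identity
\[
\Lambda \circ \delta_0^{-1} \circ \Lambda^{-1}(\phi) \;=\; -\,i\,\mathcal H_{\Lambda^{-1}(\phi)}\,\phi
\qquad (\phi = \log h'),
\]
which simultaneously proves injectivity (the arc-length data $\psi$ is determined by $\phi$) and, via Theorem~\ref{conjugateH}, real-analyticity of the inverse. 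This is the point of the whole section: the theorem is presented as an \emph{application} of the holomorphic dependence of the Cauchy transform. Your argument, by contrast, is more structural: the injectivity step (equal real parts of $\log F_{\nu_i}'$ force equality by uniqueness of the harmonic conjugate plus normalization) is essentially the infinitesimal version of the paper's explicit formula, and your derivative computation via Lemma~\ref{factor} and the Szeg\"o projection is a clean local-linearization substitute. What you gain is independence from the Cauchy-transform machinery of Sections~\ref{section8}--\ref{section9}; what you lose is the explicit closed-form inverse that the paper highlights.
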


\begin{proof}
Since $\delta_0$ is a real-analytic, 
it suffices to show that it is injective and its inverse $\delta_0^{-1}$ is also real-analytic. 
As before, we consider the conjugate
$$
\Lambda \circ \delta_0 \circ \Lambda^{-1}:Z \to {\rm Re}\,\mathrm{BMO}^*(\mathbb R).
$$

Let $g_0(x)=\int_0^x \psi(t)dt$ for
$\psi \in Z$ and 
$h(x)=\int_0^x \phi(t)dt$ for
$\phi=\Lambda \circ \delta_0 \circ \Lambda^{-1}(\psi) \in {\rm Re}\,\mathrm{BMO}^*(\mathbb R)$.
Then, $f=g_0 \circ h^{-1}$ is a Riemann mapping parametrization of the chord-arc curve and $\log f' \in Y$.
Taking the logarithm of the derivative of $g_0=f \circ h$, we have
\begin{equation}\label{logder}
\log g_0'= \log f' \circ h+\log h'.
\end{equation}
Since $\log g_0'=\psi$ is purely imaginary and $\log h'=\phi$ is real, the real and imaginary parts of this equation
become
\begin{equation}\label{ReIm}
0={\rm Re} \log f' \circ h+\log h' \quad {\rm and} \quad -i\psi={\rm Im} \log f' \circ h.
\end{equation}
Moreover, since $\log f'$ is the boundary extension of the holomorphic function $\log F'$ for the Riemann mapping $F$
on $\mathbb H^+$,
${\rm Re}\log f'$ and ${\rm Im}\log f'$ are related by the Hilbert transform $\mathcal H$ on $\mathbb R$:
\begin{equation}\label{Hilbert}
{\rm Im}\log f'={\mathcal H}({\rm Re}\log f').
\end{equation}
Then, the combination of \eqref{ReIm} and \eqref{Hilbert} yields that
\begin{equation}\label{injective}
-C_h\circ {\mathcal H} \circ C_h^{-1}(\log h')=-i\psi.
\end{equation}

This shows that $\psi=\log g_0'$ is determined by $\phi=\log h'$ and 
thus $\Lambda \circ \delta_0 \circ \Lambda^{-1}:\psi \mapsto \phi$ is injective.
Equation \eqref{injective} also gives
$$
\Lambda \circ \delta_0^{-1} \circ \Lambda^{-1}(\phi)=-iC_h\circ {\mathcal H} \circ C_h^{-1}(\phi)
=-i{\mathcal H}_{\Lambda^{-1}(\phi)} \phi.
$$
Then, because the Cauchy transform ${\mathcal H}_{\Lambda^{-1}(\phi)}$
depends real-analytically on $\phi \in {\rm Re}\,\mathrm{BMO}^*(\mathbb R)$
by Theorem \ref{conjugateH}, we see that $\Lambda \circ \delta^{-1}_0 \circ \Lambda^{-1}$ is real-analytic.
\end{proof}

Finally, we mention discontinuity of the map $\rho_0$ in brief.
We consider the problem of continuous dependence of parameters of chord-arc curves
given by the Riemann mappings. 
For any chord-arc curve $\Gamma$, 
the normalized Riemann mapping from $\mathbb H^+$ to the domain enclosed by $\Gamma$ 
defines a BMO embedding $\gamma:\mathbb R \to \Gamma$.
The set of all such BMO embeddings is identified with
$\widetilde Y =  \{[0]\} \times T_C^-$, 
which is a complex analytic submanifold of $\widetilde T_C$.
The map $Z \to Y$ and $\rho_0:\widetilde Z \to \widetilde Y$ give the correspondence of
these Riemann mapping parametrizations to the chord-arc curves with arc length parametrizations.
The problem of asking whether $\rho_0$ is continuous or not is essentially raised in 
Katznelson, Nag and Sullivan \cite[p.303]{KNS}.
It is answered in WM \cite[Theorem 8.3]{WM-0} as follows.

\begin{theorem}\label{discontinuous}
$\rho_0:\widetilde Z \to \widetilde Y$ is not continuous.
\end{theorem}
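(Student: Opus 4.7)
The plan is to prove discontinuity of $\rho_0$ by exhibiting an explicit sequence in $\widetilde Z$ along which it fails to be continuous. Via the biholomorphic homeomorphism $\Lambda$ of Theorem~\ref{main2}, this reduces to constructing a sequence $\psi_n = i\theta_n \in Z$ with $\theta_n \to 0$ in $\mathrm{Re}\,\mathrm{BMO}(\mathbb R)$ whose images $\varphi_n := \Lambda \circ \rho_0 \circ \Lambda^{-1}(\psi_n) \in Y$ do not tend to $0$ in $\mathrm{BMO}(\mathbb R)$. I would work at the origin for simplicity.

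First, I would make the implicit relation governing $\tilde\rho_0 := \Lambda \circ \rho_0 \circ \Lambda^{-1}$ explicit. The chord-arc curve with arc-length parametrization $g_n(x) = \int_0^x e^{i\theta_n(t)}\,dt$ admits a factorization $g_n = f_n \circ h_n$, where $f_n$ is its Riemann-mapping parametrization and $h_n \in \mathrm{SQS}$ is the reparametrization between them. Setting $\phi_n = \log h_n'$ and arguing exactly as for formula \eqref{injective} (separating real and imaginary parts of $\log f_n' = -\phi_n \circ h_n^{-1} + i\,\theta_n \circ h_n^{-1}$ and using that $\log f_n' \in \mathrm{BMOA}(\mathbb H^+)$), one obtains the welding equation
\begin{equation*}
\phi_n \circ h_n^{-1} = \mathcal H(\theta_n \circ h_n^{-1}),
\end{equation*}
so that $\varphi_n = -\phi_n \circ h_n^{-1} + i\,\theta_n \circ h_n^{-1}$. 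Hence continuity of $\rho_0$ at the origin would force $\|\phi_n\|_{\mathrm{BMO}} \to 0$ whenever $\|\theta_n\|_{\mathrm{BMO}} \to 0$, through this implicitly defined correspondence.

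Next, I would construct a sequence violating this implication. The idea is to choose $\theta_n$ supported on a sequence of intervals $I_n$ with $|I_n| \to 0$, whose oscillation on $I_n$ is arranged so that the implicitly determined $h_n$ stretches $I_n$ to an interval of fixed length. Then $\|\theta_n\|_{\mathrm{BMO}}$ can be driven to $0$ by shrinking $|I_n|$, while the magnification of oscillation under $C_{h_n^{-1}}$ keeps $\|\theta_n \circ h_n^{-1}\|_{\mathrm{BMO}}$, and hence $\|\phi_n \circ h_n^{-1}\|_{\mathrm{BMO}}$ (by the welding equation and boundedness of $\mathcal H$), bounded below away from $0$. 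This realizes in concrete form the pathology of conformal welding on chord-arc curves noted by Katznelson, Nag and Sullivan \cite{KNS}.

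The main obstacle is the simultaneous verification of three conditions on the candidate sequence: that each $g_n(\mathbb R)$ is genuinely a chord-arc curve (so that $\psi_n \in Z \subset \Lambda(\widetilde T_C)$); that $\|\theta_n\|_{\mathrm{BMO}} \to 0$; and that $\|\varphi_n\|_{\mathrm{BMO}}$ remains bounded below by a positive constant. Since $h_n$ is given only implicitly by the welding equation, the delicate point is to obtain sufficient geometric control over $h_n$ on the concentrating intervals $I_n$ and over the operator norm $\|C_{h_n^{-1}}\|$ acting on $\mathrm{BMO}(\mathbb R)$ to certify the lower bound on $\|\varphi_n\|_{\mathrm{BMO}}$. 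This quantitative interplay between the smallness of $\theta_n$ and the non-smallness of $\phi_n$ is the technical core of the argument, carried out in detail in WM \cite[Theorem 8.3]{WM-0}.
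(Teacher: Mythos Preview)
Your approach differs from the paper's outlined strategy. The paper does not construct $\theta_n \in Z$ directly and then try to control the implicitly defined $h_n$; it reverses the direction. It starts from the established fact that $T_B \cong \mathrm{SQS}$ is \emph{not} a topological group (\cite[Proposition~8.1]{WM-0}), which already furnishes a witness sequence in $\mathrm{Sym}(T_B^+ \times T_B^-)$ for the discontinuity of $\rho$. Since $\delta_0:\widetilde Z \to \mathrm{Sym}(T_B^+ \times T_B^-)$ is a real-analytic diffeomorphism onto its image (the Coifman--Meyer theorem proved just above), this sequence is pulled back to $\widetilde Z$ via $\delta_0^{-1}$, and the discontinuity of $\rho_0$ follows. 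In effect one chooses $h_n$ (equivalently $\phi_n=\log h_n'$) first and obtains $\psi_n$ from the explicit formula $\Lambda\circ\delta_0^{-1}\circ\Lambda^{-1}(\phi_n)=-i\,\mathcal H_{\Lambda^{-1}(\phi_n)}\phi_n$, entirely sidestepping the implicit-welding difficulty you flag as the technical core.

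Your direct route also rests on a heuristic that fails as stated: the BMO norm is dilation-invariant, so shrinking the support $|I_n|$ of $\theta_n$ does nothing by itself to drive $\|\theta_n\|_{\mathrm{BMO}}$ to $0$, and composition with $h_n^{-1}$ cannot ``magnify oscillation'' merely by linearly stretching an interval. To make this work one must send the \emph{values} of $\theta_n$ to $0$ while arranging that the welding-determined $h_n$ is so far from uniformly $A_\infty$ that $\|C_{h_n^{-1}}\|$ blows up faster than $\|\theta_n\|_{\mathrm{BMO}}$ decays --- a genuine tension, since small $\theta_n$ tends to make the curve nearly straight and the welding mild. The paper's indirect route packages exactly this tension into the single known pathology that $\mathrm{SQS}$ fails to be a topological group, and transports it to $\widetilde Z$ by the homeomorphism $\delta_0^{-1}$.
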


The proof of this result requires an application of the property that 
$T_B \cong {\rm SQS}$ does not form a topological group (see \cite[Proposition 8.1]{WM-0}). From this property,
it follows that $\rho$ is not continuous on $\widetilde T_C$. 
However, the discontinuity of $\rho_0=\rho|_{\widetilde Z}$ is a stronger claim than this
and is derived from observing 
the local behavior of $\delta_0: \widetilde Z \to {\rm Sym}\,(T_B^+ \times T_B^-)$ near the origin. 
The fact that $\delta_0$ is a homeomorphism onto the image
allows for some degree of freedom in choosing elements in 
${\widetilde Z}$ via ${\rm Sym}\,(T_B^+ \times T_B^-)$ 
and enables the construction of a specific sequence of elements 
demonstrating the discontinuity of $\rho_0$.

\section{VMO and asymptotically smooth embeddings}\label{section10}

The final section is devoted to an exposition of the results obtained analogously for the VMO Teichm\"uller space.

A Carleson measure $\lambda$ on $\mathbb{H}$ is said to be {\it vanishing} if
$$
\lim_{|I| \to 0} \frac{\lambda(I \times (0,|I|))}{|I|} = 0,
$$
where $I$ is a bounded interval in $\mathbb{R}$.  
Let $L_V(\mathbb H)$ denote the subspace of $L_B(\mathbb H)$
consisting of all elements $\mu$
such that $\lambda_\mu = |\mu(z)|^2 dx\,dy/y$ is a vanishing Carleson measure on $\mathbb H$.
We observe that $L_V(\mathbb H)$ is closed in $L_B(\mathbb H)$.
Moreover, we define the corresponding space of Beltrami coefficients
as $M_V(\mathbb H) =  M(\mathbb H) \cap L_V(\mathbb H)$.
For $M_V(\mathbb H) \subset M(\mathbb H)$, we define the {\it VMO Teichm\"uller space} 
$T_V$ as $\pi(M_V(\mathbb H))$. This is closed in $T_B$.

Let $A_V(\mathbb H)$ denote the subspace of $A_B(\mathbb H)$ consisting of all holomorphic functions $\Psi$
such that $\lambda^{(2)}_\Psi$ is a vanishing Carleson measure on $\mathbb H$.
This is a closed subspace of $A_B(\mathbb H)$.
Similarly, ${\rm VMOA}(\mathbb H)$ is defined as the closed subspace of ${\rm BMOA}(\mathbb H)$
consisting of all holomorphic functions $\Phi$ such that
$\lambda^{(1)}_\Phi$ is a vanishing Carleson measure on $\mathbb H$. 

We apply the Schwarzian and pre-Schwarzian derivative maps $S$ and $L$ investigated in Propositions \ref{S-holo} and \ref{L-holo} to $M_V(\mathbb H^+)$, where
$S:M_B(\mathbb H^+) \to A_B(\mathbb H^-)$ is a holomorphic map with a local holomorphic right inverse
$\sigma$ at every $\Psi \in S(M_B(\mathbb H^+))$, and 
$L:M_B(\mathbb H^+) \to {\rm BMOA}(\mathbb H^-)$ is a holomorphic map with a holomorphic bijection 
$D:L(M_B(\mathbb H^+)) \to S(M_B(\mathbb H^+))$ satisfying $D \circ L=S$.
We refer to the results from Shen \cite[Theorems 2.1, 2.2]{Sh}. In the unit disk case, the corresponding results are 
in Shen and Wei \cite[Sections 5, 6]{SWei}.

\begin{proposition}
$(1)$
$S$ maps $M_V(\mathbb H^+)$ into $A_V(\mathbb H^-)$, and $\sigma$ maps the local neighborhood in $S(M_V(\mathbb H^+))$
into $A_V(\mathbb H^-)$.
$(2)$
$L$ maps $M_V(\mathbb H^+)$ into $\mathrm{VMOA}(\mathbb H^-)$.
$(3)$
$D$ maps $L(M_V(\mathbb H^+))$ onto $S(M_V(\mathbb H^+))$.
\end{proposition}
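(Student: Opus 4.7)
The plan is to handle (1) and (2) by density arguments and then derive (3) as a formal consequence. For Part (1), I would first establish $S(M_V(\mathbb H^+))\subset A_V(\mathbb H^-)$ by approximation. Given $\mu\in M_V(\mathbb H^+)$, I would choose a sequence $\mu_n\in M(\mathbb H^+)$ of smooth, compactly supported Beltrami coefficients (vanishing near $\mathbb R$ and near $\infty$) with $\mu_n\to\mu$ in the $L_B$-norm; the vanishing Carleson hypothesis on $\mu$ is precisely what allows such truncations to converge in the stronger norm. For each $\mu_n$, the normalized map $F^{\mu_n}|_{\mathbb H^-}$ is holomorphic outside a compact set, so its Schwarzian decays at infinity and is smooth up to $\mathbb R$ away from the support; hence $S(\mu_n)\in A_V(\mathbb H^-)$ essentially by inspection of the definition of vanishing Carleson measure. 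By the continuity of $S\colon M_B(\mathbb H^+)\to A_B(\mathbb H^-)$ from Proposition \ref{S-holo}, $S(\mu_n)\to S(\mu)$ in $A_B(\mathbb H^-)$, and since $A_V(\mathbb H^-)$ is a closed subspace of $A_B(\mathbb H^-)$, the limit $S(\mu)$ lies in $A_V(\mathbb H^-)$.

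For the statement about $\sigma$ (interpreting the codomain as $M_V(\mathbb H^+)$, since $\sigma$ is a right inverse of $S$), the construction from Proposition \ref{S-holo} via the implicit function theorem represents $\sigma(\Psi')$ through an integral operator whose kernel preserves the Carleson-decay structure. The plan is to show that the same operator, applied to inputs in $A_V(\mathbb H^-)$, produces outputs in $M_V(\mathbb H^+)$: approximate $\Psi'\in V_\Psi\cap A_V(\mathbb H^-)$ by elements arising from smooth, compactly supported Beltrami coefficients (which, by continuity of $S$, densely approximate any point in $S(M_V(\mathbb H^+))$), and use closedness of $M_V$ in $L_B$ to pass to the limit.

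Part (2) is parallel. Since $L\colon M_B(\mathbb H^+)\to\mathrm{BMOA}(\mathbb H^-)$ is continuous (Proposition \ref{L-holo}), $\mathrm{VMOA}(\mathbb H^-)$ is closed in $\mathrm{BMOA}(\mathbb H^-)$, and for compactly supported smooth $\mu_n$ the function $L(\mu_n)=\log(F^{\mu_n}|_{\mathbb H^-})'$ decays at infinity and is smooth up to $\mathbb R$ away from the support, giving $L(\mu_n)\in\mathrm{VMOA}(\mathbb H^-)$; the approximation $\mu_n\to\mu$ in $L_B$-norm then yields $L(\mu)\in\mathrm{VMOA}(\mathbb H^-)$ for every $\mu\in M_V(\mathbb H^+)$. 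Part (3) is then almost immediate: from $D\circ L=S$ together with (1) and (2) one obtains
\[
D(L(M_V(\mathbb H^+)))=S(M_V(\mathbb H^+)),
\]
which is precisely the asserted surjectivity, no further work being needed because $D$ is already a holomorphic bijection $L(M_B(\mathbb H^+))\to S(M_B(\mathbb H^+))$.

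The main obstacle is the rigorous verification of the density step: given $\mu\in M_V(\mathbb H^+)$, one must produce smooth compactly supported $\mu_n$ with $\|\mu-\mu_n\|_{\infty}+\|\mu-\mu_n\|_c\to 0$, which uses the vanishing hypothesis quantitatively (through splitting small-scale and large-scale intervals in the supremum defining $\|\cdot\|_c$). The companion verification, that the Schwarzian (and pre-Schwarzian) of a compactly supported smooth $\mu_n$ actually induces a vanishing Carleson measure on $\mathbb H^-$, is a direct calculation once the decay of $F^{\mu_n}$ away from the support is made explicit, but it is the technical heart of the argument. Once this convergence and this explicit regularity are in hand, closedness of $A_V$ and of $\mathrm{VMOA}$ delivers the conclusion automatically.
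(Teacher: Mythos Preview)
The paper does not actually prove this proposition; it simply refers to Shen \cite{Sh} and Shen--Wei \cite{SWei}. In those references the argument proceeds by direct integral estimates: one writes down explicit kernel representations for $S$, $L$, and the Ahlfors--Weill--type local section $\sigma$, and checks by hand that the vanishing Carleson condition on the input yields the vanishing Carleson condition on the output.

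Your density strategy has a genuine gap at the approximation step. You claim that any $\mu\in M_V(\mathbb H^+)$ can be approximated in the $L_B$-norm $\|\cdot\|_\infty+\|\cdot\|_c$ by smooth, compactly supported Beltrami coefficients $\mu_n$. This fails because of the $L^\infty$ component: a general $\mu\in L^\infty(\mathbb H^+)$ cannot be approximated in $L^\infty$ by smooth (or even continuous) functions unless $\mu$ itself happens to be continuous, and the vanishing Carleson hypothesis on $\lambda_\mu$ says nothing about pointwise regularity of $\mu$. For a concrete obstruction, take $\mu(z)=c\,\mathbf 1_{\{\operatorname{Im} z>1\}}$ with $0<|c|<1$: this lies in $M_V(\mathbb H^+)$ since $\lambda_\mu(I\times(0,|I|))=0$ for all $|I|<1$, yet no smooth sequence can converge to it in $L^\infty$. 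One can indeed arrange $\|\mu-\mu_n\|_c\to 0$ by truncating away a strip near $\mathbb R$ (the vanishing condition controls the small-scale Carleson boxes), but $\|\mu-\mu_n\|_\infty$ will not tend to zero in general. Since the continuity of $S$ and $L$ you invoke from Propositions \ref{S-holo} and \ref{L-holo} is with respect to the full $L_B$-norm, the passage to the limit breaks down. Salvaging the idea would require a continuity statement for $S$ and $L$ under convergence in $\|\cdot\|_c$ alone with $\|\mu_n\|_\infty$ uniformly bounded below $1$; establishing that is essentially as hard as the direct kernel estimates in the cited references.

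Your derivation of (3) from $D\circ L=S$ and the bijectivity of $D$ is correct and matches how the result is organized in \cite{SWei}.
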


These results in particular imply that the Bers embedding $\alpha:T_B \to S(M_B(\mathbb H^+))$ maps $T_V$ onto 
the domain $S(M_V(\mathbb H^+))$ in $A_V(\mathbb H^-)$,
and that the pre-Bers embedding $\beta:T_B \to L(M_B(\mathbb H^+))$ maps $T_V$ onto the domain $L(M_V(\mathbb H^+))$ 
in $\mathrm{VMOA}(\mathbb H^-)$. Here, if we apply Proposition \ref{Vequivalence} below, we have
$$
S(M_V(\mathbb H^+))=S(M_B(\mathbb H^+)) \cap A_V(\mathbb H^-); \quad 
L(M_V(\mathbb H^+))=L(M_B(\mathbb H^+)) \cap \mathrm{VMOA}(\mathbb H^-).
$$
Since $\alpha$ and $\beta$ are biholomorphic homeomorphisms, 
$T_V$ has the complex structure as a closed submanifold of $T_B$, which is biholomorphically equivalent to 
$L(M_V(\mathbb H^+))$ and $S(M_V(\mathbb H^+))$.

Moreover, it is proved in WM \cite[Theorem 1.4]{WM-7} that if $\log F^\mu \in \mathrm{VMOA}(\mathbb H^-)$ 
for $\mu \in M(\mathbb H^+)$, then
$\mu \in M_V(\mathbb H^+)$. 
We note that the strategy of showing this, which is used for $(c) \Rightarrow (a)$ below,
is different from that in Theorem \ref{char}.
From this, we obtain characterizations for a Beltrami coefficient $\mu$
to be in $M_V(\mathbb H^+)$ in terms of $S$ and $L$.

\begin{proposition}\label{Vequivalence}
For a Beltrami coefficient $\mu \in M(\mathbb H^+)$,
the following conditions are equivalent:
$(a)$ $\mu \in M_V(\mathbb H^+)$; $(b)$ $S(\mu) \in A_V(\mathbb H^-)$;
$(c)$ $L(\mu) \in {\rm VMOA}(\mathbb H^-)$. Hence,
\begin{align}
S(M_V(\mathbb H^+))&=S(M(\mathbb H^+)) \cap A_V(\mathbb H^-);\\
L(M_V(\mathbb H^+))&=L(M(\mathbb H^+)) \cap \mathrm{VMOA}(\mathbb H^-).
\end{align}
\end{proposition}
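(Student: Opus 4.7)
The implications $(a) \Rightarrow (b)$ and $(a) \Rightarrow (c)$ are already contained in parts (1) and (2) of the preceding proposition, so the plan is to close the loop by establishing $(c) \Rightarrow (a)$ and $(b) \Rightarrow (c)$. For $(c) \Rightarrow (a)$, I would simply invoke WM \cite[Theorem 1.4]{WM-7}, exactly the result recalled just above the statement: if $L(\mu) = \log (F^\mu|_{\mathbb H^-})'$ lies in $\mathrm{VMOA}(\mathbb H^-)$, then $\mu$ can be taken in $M_V(\mathbb H^+)$ up to Teichm\"uller equivalence, which is how condition $(a)$ is to be read (since $(b)$ and $(c)$ are class conditions while $S$ and $L$ are not injective on $M(\mathbb H^+)$ but only on equivalence classes).

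For $(b) \Rightarrow (c)$, the starting observation is that $A_V(\mathbb H^-) \subset A_B(\mathbb H^-)$, so the Astala--Zinsmeister identity recorded in the Remark following Theorem \ref{char} gives $\mu \in M_B(\mathbb H^+)$; in particular $L(\mu) \in \mathrm{BMOA}(\mathbb H^-)$ and $S(\mu) = D(L(\mu))$. By part (3) of the preceding proposition, the biholomorphic bijection $D: L(M_B(\mathbb H^+)) \to S(M_B(\mathbb H^+))$ restricts to a bijection $L(M_V(\mathbb H^+)) \to S(M_V(\mathbb H^+))$; hence verifying $L(\mu) \in \mathrm{VMOA}(\mathbb H^-)$ reduces to verifying $S(\mu) \in S(M_V(\mathbb H^+))$. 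For this, I would apply the local holomorphic right inverse $\sigma$ of $S$ at the point $\Psi = S(\mu) \in A_V(\mathbb H^-)$ guaranteed by part (1) of the preceding proposition, where the content is that $\sigma$ restricted to the VMO portion of its neighborhood takes values in $M_V(\mathbb H^+)$. Then $\sigma(\Psi) \in M_V(\mathbb H^+)$ and $S(\sigma(\Psi)) = \Psi$ exhibit $\Psi \in S(M_V(\mathbb H^+))$, as required.

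The two displayed consequences then follow immediately: the inclusions $\subset$ encode $(a) \Rightarrow (b)$ and $(a) \Rightarrow (c)$, while the reverse inclusions encode $(b) \Rightarrow (a)$ and $(c) \Rightarrow (a)$, both established above. The step I expect to be the main obstacle is the local section argument: one must be confident that the right inverse $\sigma$ produced by Proposition \ref{S-holo} in the BMO setting refines to a local holomorphic section $A_V \to M_V$. This is the precise VMO content packaged into part (1) of the preceding proposition, and it ultimately rests on an explicit description of $\sigma$ (e.g.\ the Ahlfors--Weill-type section, or Shen's explicit formula cited in the paragraph before the preceding proposition) together with a direct Carleson-norm estimate transferring the vanishing property from the Schwarzian $\Psi$ to the resulting Beltrami coefficient. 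Once this technical point is secured, the chain $(a) \Rightarrow (b) \Rightarrow (c) \Rightarrow (a)$ closes as sketched.
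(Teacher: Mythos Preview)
Your proposal is correct and aligns with the paper's (largely implicit) proof: the paper states the proposition without a written proof, deriving it from the preceding proposition together with the result of WM \cite[Theorem 1.4]{WM-7} for $(c)\Rightarrow(a)$, exactly as you do. Your reading of part~(1) of the preceding proposition---that the local section $\sigma$ carries $A_V$-points into $M_V(\mathbb H^+)$---is the intended one (the printed statement contains an apparent typo), and your observation that $(a)$ must be interpreted at the level of Teichm\"uller classes is accurate and worth making explicit.

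One small streamlining: your argument for $(b)\Rightarrow(c)$ already yields $(b)\Rightarrow(a)$ directly. Once you produce $\sigma(\Psi)\in M_V(\mathbb H^+)$ with $S(\sigma(\Psi))=S(\mu)$, you have $[\sigma(\Psi)]=[\mu]$ (since $S$ separates Teichm\"uller classes), which is condition $(a)$ in the class sense. The detour through $D$ and part~(3) to reach $(c)$ is then unnecessary for closing the loop, since $(a)\Rightarrow(c)$ is immediate from part~(2). This does not affect correctness, only economy.
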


A BMO function $\phi$ on $\mathbb R$ is said to be of {\it vanishing mean oscillation} (VMO) if
$$
\lim_{|I| \to 0}\frac{1}{|I|} \int_I |\phi(x)-\phi_I| dx = 0,
$$
where $I$ is a bounded interval on $\mathbb R$.
The set of all VMO functions on $\mathbb R$ is denoted by ${\rm VMO}(\mathbb R)$.
This is a closed subspace of ${\rm BMO}(\mathbb R)$.
The restriction of the trace operator $E:{\rm BMOA}(\mathbb H) \to {\rm BMO}(\mathbb R)$
to ${\rm VMOA}(\mathbb H)$ is a Banach isomorphism onto its image in ${\rm VMO}(\mathbb R)$.
However, the composition operator $C_h$ for $h \in \rm SQS$ does not necessarily preserve
${\rm VMO}(\mathbb R)$. If $h$ is uniformly continuous in addition, then
$C_h$ preserves ${\rm VMO}(\mathbb R)$ (see WM \cite[Proposition 3.1]{WM-6}).

A strongly quasisymmetric homeomorphism $h$ is said to be {\it strongly symmetric} 
if $\log h' \in {\rm VMO}(\mathbb R)$.
Let ${\rm SS}$ denote the set of all normalized strongly symmetric homeomorphisms.
This set is identified with $T_V$, but it does not form a subgroup of
$\rm SQS$ (see WM \cite[Corollary 5.6]{WM-9}).
For $h \in \rm SS$, 
the map $H$ given by
the variant of the Beurling--Ahlfors extension by the heat kernel is a quasiconformal real-analytic
self-diffeomorphism of $\mathbb H$ whose complex dilatation belongs to $M_V(\mathbb H)$ (see 
WM \cite[Theorem 4.1]{WM-2}).

Because the VMO Teichm\"uller space defined on $\mathbb H$ and $\mathbb R$, as described above, has several defects alongside its desirable properties, we now shift our focus to a smaller VMO Teich\-m\"ul\-ler space 
defined on the unit disk $\mathbb D$ and the unit circle $\mathbb S$.  
Let $\Theta(z) = (z - i) / (z + i)$ be the Cayley transformation of the Riemann sphere, which maps $\mathbb R \cup \{\infty\}$ onto
$\mathbb S$, with $\Theta(\infty) = 1$. The space $\mathrm{VMO}(\mathbb S)$ of VMO functions on $\mathbb S$ 
is defined similarly, and it is a closed subspace of $\mathrm{BMO}(\mathbb S)$.
While BMO functions on $\mathbb S$ and their associated Teichm\"uller spaces are well studied and equivalent 
to those defined on $\mathbb R$—since the Cayley transformation $\Theta$ provides an appropriate correspondence between
all components of the involved spaces—this equivalence does not hold for VMO functions.

We define $\mathrm{VcMO}(\mathbb R)$ as the pull-back of $\mathrm{VMO}(\mathbb S)$ via $\Theta$.
This coincides with the closure of compactly supported VMO functions on $\mathbb R$ with respect to the BMO norm,
and is sometimes denoted by $\mathrm{CMO}(\mathbb R)$. 
In fact, as noted in Coifman and Weiss \cite[p.639]{CW}, this is adopted as the definition of VMO functions on $\mathbb R$.
By definition, $\mathrm{VcMO}(\mathbb R)$ is a closed subspace of $\mathrm{VMO}(\mathbb R) \subset \mathrm{BMO}(\mathbb R)$.
By considering the relationship between $\mathrm{VMO}(\mathbb R)$ and
other spaces involved in the Teichm\"uller space $T_V$, or by pulling back via the Cayley transformation, we can derive the spaces corresponding to 
$\mathrm{VcMO}(\mathbb R)$. These include the closed subspace $M_{Vc}(\mathbb H)$ of Beltrami coefficients, 
the Banach subspaces ${\rm VcMOA}(\mathbb H)$ 
and $A_{Vc}(\mathbb H)$ of holomorphic functions, the closed subset ${\rm SSc}$ of 
quasisymmetric homeomorphisms of $\mathbb R$, and the Teichm\"uller subspace $T_{Vc}$.

For these spaces, the equivalent conditions as in Proposition \ref{Vequivalence} also hold (see Shen and Wei \cite[Theorem 4.1]{SWei}).
Furthermore, the aforementioned defects no longer exist. The following claims are established in the setting of
$\mathbb S$ and $\mathbb D$. Claim (1) is found in Anderson, Becker and Lesley \cite[p.458]{ABL}, (2) is in Wei \cite[Theorem 4.1]{Wei}, 
and (3) is verified using a claim in Garc\'ia-Cuerve and Rubio de Francia \cite[p.474]{GR}.

\begin{proposition}\label{TVC}
$(1)$ The composition operator $C_h$ for $h \in {\rm SQS}$ preserves $\mathrm{VcMO}(\mathbb R)$.
$(2)$ $T_{Vc} \cong {\rm SSc}$ is a closed subgroup of $T_B \cong {\rm SQS}$. In fact, it is a topological subgroup.
$(3)$ $\mathrm{VcMO}(\mathbb R) \subset \mathrm{BMO}^*(\mathbb R)$.
Thus, $F^\mu(\mathbb R)$ is a chord-arc curve for $\mu \in M_{Vc}(\mathbb H)$, and 
$T_{Vc}$ is a proper closed subset of $T_C$.
\end{proposition}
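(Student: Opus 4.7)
The plan is to establish the three parts in order, each building on the preceding material.

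\textbf{For (1)}, I would pass to the unit circle via the Cayley transform. A normalized $h \in \mathrm{SQS}$ conjugates via $\Theta$ to an SQS homeomorphism $\tilde h$ of $\mathbb S$, and $\mathrm{VcMO}(\mathbb R) = \Theta^*\mathrm{VMO}(\mathbb S)$ by definition, so it suffices to show that $C_{\tilde h}$ preserves $\mathrm{VMO}(\mathbb S)$. The key advantage on the compact circle is that $\mathrm{VMO}(\mathbb S)$ coincides with the BMO-closure of $C(\mathbb S)$: since $\tilde h$ is a homeomorphism of $\mathbb S$, pre-composition carries $C(\mathbb S)$ into itself, while Theorem \ref{pullback} gives the BMO-boundedness of $C_{\tilde h}$. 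These two facts together force $C_{\tilde h}$ to preserve the BMO-closure, which is exactly $\mathrm{VMO}(\mathbb S)$. The use of $\mathbb S$ is essential here: the analogous statement on $\mathbb R$ fails without uniform continuity of $h$, which is precisely why the paper distinguishes $\mathrm{VMO}(\mathbb R)$ from $\mathrm{VcMO}(\mathbb R)$.

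\textbf{For (2)}, the subgroup property of $\mathrm{SSc}$ follows from (1) via the chain-rule identities $\log(h_1 \circ h_2)' = C_{h_2}(\log h_1') + \log h_2'$ and $\log(h^{-1})' = -C_{h^{-1}}(\log h')$: each right-hand side lies in $\mathrm{VcMO}(\mathbb R)$ as soon as (1) is in hand. Closedness of $T_{Vc}$ in $T_B$ is inherited from the closedness of $\mathrm{VcMO}(\mathbb R)$ in $\mathrm{BMO}(\mathbb R)$ via the real-analytic identification of Corollary \ref{structure}. The topological-group claim amounts to joint continuity of composition and inversion in the $\mathrm{VcMO}$ topology on log-derivatives; this is obtained from the same chain-rule identities combined with the observation that on $\mathrm{VcMO}$ the operator $C_h$ depends continuously on $h$ in the operator norm --- a property that does hold here, unlike on BMO, because on $\mathrm{VcMO}(\mathbb R) \cong \Theta^*\mathrm{VMO}(\mathbb S)$ the operator $C_{\tilde h}$ inherits the compactness-driven continuity of translation on $\mathbb S$.

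\textbf{For (3)}, I use the characterization that $\mathrm{VcMO}(\mathbb R)$ equals the BMO-closure of $C_c^\infty(\mathbb R)$, which is standard and obtained by mollifying compactly supported VMO functions. Given $\phi \in \mathrm{VcMO}(\mathbb R)$, pick $\psi \in C_c^\infty(\mathbb R)$ with $\Vert \phi - \psi \Vert_{\mathrm{BMO}} < C_{JN}$, so that Proposition \ref{C_0} makes $e^{{\rm Re}(\phi - \psi)}$ an $A_\infty$-weight. Since $\psi$ is bounded, $e^{{\rm Re}\,\psi}$ is trapped between two positive constants, and a short computation from the reverse Jensen inequality \eqref{iff} shows that multiplying an $A_\infty$-weight by such a factor preserves the $A_\infty$ condition. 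Hence $e^{{\rm Re}\,\phi}$ is an $A_\infty$-weight and $\phi \in \mathrm{BMO}^*(\mathbb R)$. The chord-arc consequence is then immediate: for $\mu \in M_{Vc}(\mathbb H^+)$, the VcMO analog of Proposition \ref{Vequivalence} cited in the preceding text gives $L(\mu) \in \mathrm{VcMOA}(\mathbb H^-)$ with boundary trace in $\mathrm{VcMO}(\mathbb R) \subset \mathrm{BMO}^*(\mathbb R)$, so Theorem \ref{Lavrentiev} yields that $F^\mu(\mathbb R)$ is chord-arc and $T_{Vc} \subset T_C$. Closedness of $T_{Vc}$ in $T_C$ follows from its closedness in $T_B$, and the inclusion is proper since elements of $\mathrm{BMO}^*(\mathbb R) \setminus \mathrm{VcMO}(\mathbb R)$ exist and can be realized by $F^\mu$ with chord-arc image. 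The main obstacle overall is part (1); once the compactness-based approximation on $\mathbb S$ is secured, parts (2) and (3) are relatively direct.
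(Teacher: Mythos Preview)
Your approach is essentially correct and reconstructs what the paper's cited references contain; the paper itself gives no proof beyond pointing to Anderson--Becker--Lesley for (1), Wei for (2), and Garc\'ia-Cuerva--Rubio de Francia for (3), and your arguments for (1) and (3) match those sources in spirit (passage to $\mathbb S$ and the $\mathrm{BMO}$-closure of continuous functions for (1); density of bounded functions plus Proposition~\ref{C_0} for (3)).

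One imprecision in (2): you assert that $C_h$ depends continuously on $h$ \emph{in the operator norm} on $\mathrm{VcMO}$, but this is stronger than what your compactness argument actually yields and stronger than what is needed. What the density of $C(\mathbb S)$ in $\mathrm{VMO}(\mathbb S)$ together with local uniform boundedness of the $C_{\tilde h}$ gives you is \emph{strong} continuity: for each fixed $\phi$, the map $h \mapsto C_h\phi$ is continuous. That is exactly what the chain-rule decomposition
\[
\log(h_1\circ h_2)' - \log(g_1\circ g_2)' = C_{h_2}(\log h_1' - \log g_1') + (C_{h_2}-C_{g_2})(\log g_1') + (\log h_2' - \log g_2')
\]
requires for joint continuity, and it is also what the paper later records as Lemma~\ref{strong}. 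So replace ``operator norm'' by ``strongly'' and your sketch goes through.
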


Furthermore, the chord-arc curve $F^\mu(\mathbb R)$ given by $\mu \in M_{Vc}(\mathbb H^+)$
satisfies the following condition on the chord-arc constant $\kappa \geq 1$. See 
Pommerenke \cite[Theorem 2]{Pom78}.

\begin{definition}
A chord-arc curve $\Gamma$ in $\mathbb C$ 
passing through $\infty$ is called {\it asymptotically smooth} 
if the chord-arc constant $\kappa \geq 1$ tends to $1$ uniformly as the spherical distance between 
two points on $\Gamma$ tends to $0$. Namely, setting $|\widetilde{z_1z_2}|$ as
the length of the arc between any two points $z_1, z_2 \in \Gamma$ and $d_{\widehat{\mathbb C}}(z_1,z_2)$ as the spherical distance
on $\widehat{\mathbb C}$,
$$
\lim_{t \to 0} \sup_{d_{\widehat{\mathbb C}}(z_1,z_2) <t} \frac{|\widetilde{z_1z_2}|}{|z_1 - z_2|} = 1.
$$
\end{definition}

\begin{remark}
We have an equivalent definition for chord-arc curves even if we replace the Euclidean distance with
the spherical distance. Moreover, the condition for a Jordan curve $\Gamma$ in the Riemann sphere 
$\widehat{\mathbb C}$ to be a chord-arc curve is invariant under M\"obius transformations. 
See MacManus \cite[p.877]{Mac} and Tukia \cite[Section 7]{Tu}. The above definition for asymptotically smoothness
is the translation of that for bounded Jordan curves in \cite{Pom78} to the unbounded case.
\end{remark}

\begin{proposition}\label{AS}
For $\mu \in M(\mathbb H^+)$, 
$F^\mu(\mathbb R)$ is asymptotically smooth if and only if $\log(F^\mu|_{\mathbb H^-})'$ belongs to
${\rm VcMOA}(\mathbb H^-)$. This is equivalent to the condition
$\mu \in M_{Vc}(\mathbb H^+)$.
\end{proposition}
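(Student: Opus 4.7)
The plan is to establish the three-way equivalence by combining two inputs: the VcMO analog of the Schwarzian/pre-Schwarzian characterization of the BMO Teich\-m\"ul\-ler space, and Pommerenke's geometric characterization of asymptotically smooth chord-arc curves in terms of the pre-Schwarzian derivative.

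For the equivalence of $(a)$ $\mu \in M_{Vc}(\mathbb H^+)$ and $(b)$ $\log(F^\mu|_{\mathbb H^-})' \in \mathrm{VcMOA}(\mathbb H^-)$, I would invoke the VcMO version of Proposition \ref{Vequivalence}, which is proved in Shen and Wei \cite[Theorem 4.1]{SWei} and already cited in the paragraph preceding Proposition \ref{TVC}. That result is stated on $\mathbb S$ and $\mathbb D$, but since $\mathrm{VcMOA}(\mathbb H^-)$ and $M_{Vc}(\mathbb H^+)$ are defined by pullback under the Cayley transformation $\Theta$, it transfers directly. This delivers the equivalence between the analytic condition on the Beltrami coefficient and the pre-Schwarzian condition.

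For the equivalence of $(b)$ and $(c)$, I would appeal to Pommerenke \cite[Theorem 2]{Pom78}, which, in the bounded disk setting, characterizes asymptotically smooth chord-arc curves bounding a simply connected domain as precisely those for which $\log F'$ of the associated Riemann mapping lies in $\mathrm{VMOA}(\mathbb D)$. The transfer to the half-plane setting is effected by the Cayley transformation $\Theta$: the target side $\Omega^- = F^\mu(\mathbb H^-)$ can be post-composed with a M\"obius transformation sending $F^\mu(\infty)$ to a finite point on the Riemann sphere, producing a bounded chord-arc domain. Both sides of the proposed equivalence behave naturally under this transfer, because asymptotic smoothness is formulated via the spherical metric (see the Remark preceding the proposition) and is therefore M\"obius-invariant, while $\mathrm{VcMOA}(\mathbb H^-)$ is by construction the $\Theta$-pullback of $\mathrm{VMOA}$ on the disk.

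The main obstacle is to carry out the transfer cleanly, with careful bookkeeping of normalizations and of the pre-Schwarzian transformation rule $\log(F \circ T)' = \log F' \circ T + \log T'$ under a M\"obius change $T$; one must check that the M\"obius factor $\log T'$ does not affect the VcMOA class (it is smooth away from a finite point and behaves well under $\Theta$). A secondary, but essentially routine, point is to verify that Pommerenke's Euclidean-metric definition of asymptotic smoothness for bounded curves matches, after $\Theta$, the spherical formulation used here for curves through $\infty$; this reduces to the local bi-Lipschitz equivalence of the Euclidean and spherical metrics away from $\infty$, together with the Remark preceding the proposition. Once these translations are in place, Pommerenke's theorem yields $(b) \Leftrightarrow (c)$, and the Shen--Wei result yields $(a) \Leftrightarrow (b)$, completing the proof.
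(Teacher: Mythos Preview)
Your proposal is correct and follows essentially the same approach as the paper: the paper does not supply a proof for this proposition but treats it as a consequence of the cited references, namely Pommerenke \cite[Theorem 2]{Pom78} for the equivalence between asymptotic smoothness and the VMOA condition on $\log(F^\mu|_{\mathbb H^-})'$, together with the Shen--Wei result \cite[Theorem 4.1]{SWei} (already invoked just before Proposition \ref{TVC}) for the equivalence with $\mu \in M_{Vc}(\mathbb H^+)$. Your additional care about the Cayley transfer and the M\"obius invariance of asymptotic smoothness is appropriate and indeed anticipated by the Remark preceding the proposition.
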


From this fact and the group structure of $T_{Vc} \cong {\rm SSc}$, we conclude that
for a BMO embedding $\gamma = \gamma([\mu^+],[\mu^-])$ with $([\mu^+],[\mu^-]) \in T_B^+ \times T_B^-$,
its image $\gamma(\mathbb R)$ is asymptotically smooth if and only if
$([\mu^+],[\mu^-]) \in T_{Vc}^+ \times T_{Vc}^-$. 
We refer to this $\gamma$ as an {\it asymptotically smooth embedding}.
In this case, we have $\log \gamma' \in \mathrm{VcMO}(\mathbb R)$. 
Thus, $T_{Vc}^+ \times T_{Vc}^- \subset \widetilde T_C$. 

We now consider the restriction of the biholomorphic map $\Lambda$ to $T_{Vc}^+ \times T_{Vc}^-$.
This yields a biholomorphic homeomorphism
$$
\Lambda:T_{Vc}^+ \times T_{Vc}^- \to \mathrm{VcMO}(\mathbb R)
$$
onto its image, which is a connected open subset of $\mathrm{VcMO}(\mathbb R)$. In addition,
$\Lambda$ maps ${\rm Sym}(T_{Vc}^+ \times T_{Vc}^-)$ onto ${\rm Re}\,{\rm VcMO}(\mathbb R)$
as a real-analytic diffeomorphism. 
Moreover,
we can construct a holomorphic right inverse of $\Lambda$ on some neighborhood of 
the real subspace ${\rm Re}\,{\rm VcMO}(\mathbb R)$, as in the following assertion.
This is a translation of the corresponding results by
WM \cite{WM-6} in the case of $\mathbb S$.

\begin{theorem}
There exists a neighborhood $W$ of ${\rm Re}\,{\rm VcMO}(\mathbb R)$ in ${\rm VcMO}(\mathbb R)$
and a holomorphic map $\Sigma:W \to M_{Vc}(\mathbb H^+) \times M_{Vc}(\mathbb H^-)$ such that
$(\pi^+ \times \pi^-) \circ W$ is a holomorphic right inverse of 
$\Lambda:T_{Vc}^+ \times T_{Vc}^- \to {\rm VcMO}(\mathbb R)$ on $W$.
\end{theorem}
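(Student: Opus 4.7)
The plan is to construct $\Sigma$ in two stages: first a local holomorphic section at the origin with $M_{Vc}$-valued image, then extension to a neighborhood of all of $\mathrm{Re}\,\mathrm{VcMO}(\mathbb R)$ using the group action of $T_{Vc}$ and the intertwining formula \eqref{translation}. Throughout, I exploit that Proposition \ref{TVC} gives the two properties of the VcMO theory that fail for general VMO on $\mathbb R$: the invariance of $\mathrm{VcMO}(\mathbb R)$ under composition operators $C_h$ for $h\in\mathrm{SSc}$, and the topological group structure of $T_{Vc}\cong\mathrm{SSc}$.

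For the first stage, I would invoke the construction underlying Lemma \ref{new}. The Shen--Wei explicit formulas \cite[(6.7),(6.27)]{SW} produce a G\^ateaux-holomorphic map $\phi\mapsto\tau(\phi)\in M_B(\mathbb H^+)\times M_B(\mathbb H^-)$ defined on some neighborhood $W_0$ of $0\in\mathrm{BMO}(\mathbb R)$ and satisfying $\Lambda\circ(\pi^+\times\pi^-)\circ\tau=\mathrm{id}_{W_0}$; Banach-space-valued holomorphy then follows from \cite[Lemma 6.1]{WM-6}. The essential VcMO refinement is to prove that when $\phi\in\mathrm{VcMO}(\mathbb R)\cap W_0$, the output actually lies in $M_{Vc}(\mathbb H^+)\times M_{Vc}(\mathbb H^-)$. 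This reduces to tracking the vanishing-Carleson estimates through the explicit integral formula for the dilatation, paralleling how the Carleson bound is obtained for generic $\phi\in\mathrm{BMO}$, and is supported by the VcMO-preserving refinement of the heat-kernel Beurling--Ahlfors extension established in \cite[Theorem 4.1]{WM-2}. This yields the desired local section $\Sigma_0:W_0\to M_{Vc}(\mathbb H^+)\times M_{Vc}(\mathbb H^-)$.

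For the second stage, fix $\phi_0\in\mathrm{Re}\,\mathrm{VcMO}(\mathbb R)$. The strongly symmetric homeomorphism $h_0(x)=\int_0^x e^{\phi_0(t)}\,dt$ belongs to $\mathrm{SSc}$ and, by Proposition \ref{TVC}(2), admits a representative $\mu_0\in M_{Vc}(\mathbb H)$. By Proposition \ref{TVC}(1), $C_{h_0}$ preserves $\mathrm{VcMO}(\mathbb R)$, so $Q_{h_0}(\phi)=C_{h_0}(\phi)+\phi_0$ is an affine biholomorphism of $\mathrm{VcMO}(\mathbb R)$. The intertwining identity $\Lambda\circ R_{[\mu_0]}=Q_{h_0}\circ\Lambda$ from \eqref{translation} then lets me define a local section on $W_{\phi_0}:=Q_{h_0}(W_0)$ by
$$
\Sigma_{\phi_0}(\phi)\;=\;\widetilde R_{\mu_0}\bigl(\Sigma_0(Q_{h_0}^{-1}(\phi))\bigr),
$$
where $\widetilde R_{\mu_0}$ denotes the lift of $R_{[\mu_0]}$ to Beltrami coefficients given by the classical composition formula, which stays in $M_{Vc}^+\times M_{Vc}^-$ precisely because $T_{Vc}\cong\mathrm{SSc}$ is a group. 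Setting $W=\bigcup_{\phi_0\in\mathrm{Re}\,\mathrm{VcMO}(\mathbb R)}W_{\phi_0}$ provides an open neighborhood of $\mathrm{Re}\,\mathrm{VcMO}(\mathbb R)$.

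\textbf{Main obstacle.} The delicate point is to promote the family $\{\Sigma_{\phi_0}\}$ to a single well-defined holomorphic map $\Sigma:W\to M_{Vc}(\mathbb H^+)\times M_{Vc}(\mathbb H^-)$. On an overlap $W_{\phi_0}\cap W_{\phi_1}$ the two local sections are both right inverses of $\Lambda\circ(\pi^+\times\pi^-)$ but they may choose different Beltrami representatives in the same Teich\-m\"ul\-ler fiber. I propose to handle this by distinguishing a canonical representative throughout $W$---concretely, by building $\Sigma$ from the heat-kernel Beurling--Ahlfors extension on the real slice (which lands in $M_{Vc}$ by \cite[Theorem 4.1]{WM-2}) and using the $\Sigma_{\phi_0}$ only to transport it holomorphically in the transverse imaginary directions. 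Single-valuedness is verified by checking that the two constructions coincide on $\mathrm{Re}\,\mathrm{VcMO}(\mathbb R)$, after which holomorphy of $\Sigma$ on $W$ reduces to the local holomorphy already established for each $\Sigma_{\phi_0}$.
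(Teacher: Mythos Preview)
Your two-stage local-to-global scheme is a genuinely different route from the paper's, and the obstacle you flag is real and not resolved by your sketch. The paper does \emph{not} patch translated local sections; it defines $\Sigma$ in one stroke by applying the heat-kernel Beurling--Ahlfors extension formula (Theorem \ref{FKP}) directly to complex-valued $\phi$. The convolution formulas $U=h\ast\phi_y$, $V=h\ast\psi_y$ with $h(x)=\int_0^x e^{\phi(t)}dt$ make sense for complex $\phi$, and the key analytic input is that $L^\infty(\mathbb R)$ is dense in $\mathrm{VcMO}(\mathbb R)$, which is what allows the estimates from \cite[Theorem 4.1]{WM-2} and \cite[Lemma 5.4]{WM-6} to propagate to a full neighborhood $W$ of $\mathrm{Re}\,\mathrm{VcMO}(\mathbb R)$ and show $\Sigma(W)\subset M_{Vc}(\mathbb H^+)\times M_{Vc}(\mathbb H^-)$. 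No group translation or patching is used, so the coherence problem never arises.

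By contrast, your patching argument has a concrete gap. The local sections $\Sigma_{\phi_0}=\widetilde R_{\mu_0}\circ\Sigma_0\circ Q_{h_0}^{-1}$ are built from the Semmes/Shen--Wu construction of Lemma \ref{new}, which produces specific Beltrami representatives; these have no reason to agree with the heat-kernel representative even on the real slice, so your proposed ``single-valuedness check on $\mathrm{Re}\,\mathrm{VcMO}(\mathbb R)$'' compares two genuinely different maps. Moreover, even if you replaced $\Sigma_0$ by a heat-kernel-based local section, you would still need each overlap $W_{\phi_0}\cap W_{\phi_1}$ to be connected and to meet the real slice in a maximal totally real set before the identity principle applies---conditions you have not verified. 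The cleanest fix is exactly what the paper does: abandon the translation machinery and observe that the heat-kernel formula itself is already a global holomorphic expression on all of $W$, with the density of $L^\infty$ in $\mathrm{VcMO}$ supplying the vanishing-Carleson control uniformly.
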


\begin{proof}
This follows from the arguments around \cite[Theorem 6.2]{WM-6}. Since $L^\infty(\mathbb{R})$ is dense in ${\rm VcMO}(\mathbb{R})$,
we can define a holomorphic map $\Sigma$ on some neighborhood $W$ of ${\rm Re}\,{\rm VcMO}(\mathbb{R})$ using the variant of
the Beurling--Ahlfors extension as in Theorem \ref{FKP}. By a proof similar to that of \cite[Proposition 6.3]{WM-6}, we see that 
$$
\Sigma(W) \subset M_{B}(\mathbb{H}^+) \times M_{B}(\mathbb{H}^-); \quad 
\Sigma({\rm Re}\,{\rm VcMO}(\mathbb{R})) \subset M_{Vc}(\mathbb{H}^+) \times M_{Vc}(\mathbb{H}^-),
$$
and in particular, $\Sigma|_{{\rm Re}\,{\rm VcMO}(\mathbb{R})}$ is real-analytic. However, by combining the proof of \cite[Theorem 4.1]{WM-2}
with that of \cite[Lemma 5.4]{WM-6}, we can actually show that $\Sigma(W)$ is contained in $M_{Vc}(\mathbb{H}^+) \times M_{Vc}(\mathbb{H}^-)$.
The results for ${\rm VMO}(\mathbb{R})$ are able to be applied to ${\rm VcMO}(\mathbb{R}) \cong {\rm VMO}(\mathbb{S})$ by lifting
functions on $\mathbb{S}$ via the universal cover $\mathbb{R} \to \mathbb{S}$.
\end{proof}

We investigate the correspondence from the arc-length parametrization to the Riemann mapping parametrization for asymptotically smooth curves. Analogous to the general case, we define
\begin{align}
Z_c &=i\,\mathrm{BMO}(\mathbb R) \cap \Lambda(T_{Vc}^+ \times T_{Vc}^-); \\
Y_c&=\mathrm{BMOA}(\mathbb H^+) \cap \Lambda(T_{Vc}^+ \times T_{Vc}^-) \subset \mathrm{VcMOA}(\mathbb H^+).
\end{align}
The former is a real analytic submanifold, and
the latter is a complex analytic submanifold of $\Lambda(T_{Vc}^+ \times T_{Vc}^-)$. 
Moreover, we set $\widetilde Z_c = \Lambda^{-1}(Z_c)$ and $\widetilde Y_c = \Lambda^{-1}(Y_c)$, 
and consider the map
$$
\rho: T_{Vc}^+ \times T_{Vc}^- \to \{[0]\} \times T_{Vc}^-, \quad ([\mu^+],[\mu^-]) \mapsto ([0], [\mu^-] \ast [\overline{\mu^+}]^{-1}).
$$
Let $\rho_0 = \rho|_{\widetilde Z_c}$. 

By the topological group property of $T_{Vc}$, as established in Proposition \ref{TVC}, we derive the following result in contrast to Theorem \ref{discontinuous}. For Weil--Petersson Teichm\"uller spaces,
the corresponding claim is in WM \cite[Proposition 6.5]{WM-4}.

\begin{theorem}
$\rho_0: \widetilde Z_c \to \widetilde Y_c$ is a homeomorphism.
\end{theorem}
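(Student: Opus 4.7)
The plan is to exploit the topological group structure of $T_{Vc}$ from Proposition \ref{TVC}(2), which is precisely the feature that $T_B$ lacks and that is responsible for the discontinuity in Theorem \ref{discontinuous}. I will verify in turn: continuity of $\rho_0$, bijectivity via an explicit inverse, and continuity of $\rho_0^{-1}$.

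Continuity of $\rho_0$ is immediate. The formula $([\mu^+],[\mu^-]) \mapsto ([0], [\mu^-] \ast [\overline{\mu^+}]^{-1})$ involves only the reflection $T_{Vc}^+ \to T_{Vc}^-$, $[\mu^+] \mapsto [\overline{\mu^+}]$, which is a continuous involution, and the group operations on $T_{Vc}$, which are continuous by Proposition \ref{TVC}(2). Restriction to $\widetilde{Z}_c$ preserves continuity.

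For bijectivity, given $([0],[\nu]) \in \widetilde{Y}_c$ I set $f = \gamma([0],[\nu])$, so that $\Gamma := f(\mathbb R)$ is asymptotically smooth and $\log f' \in {\rm VcMOA}(\mathbb H^+)$ by Proposition \ref{AS}. Taking real parts of the boundary extension, $\log|f'| \in {\rm Re}\,{\rm VcMO}(\mathbb R)$, so that $\tilde f(x) = \int_0^x |f'(t)|\,dt$ lies in ${\rm SSc}$. Then $g_0 := f \circ \tilde f^{-1}$ is the arc-length parametrization of $\Gamma$ with $|g_0'| = 1$, and expressing $g_0 = \gamma([\mu^+_0],[\mu^-_0])$ produces the preimage $\rho_0^{-1}([0],[\nu]) = ([\mu^+_0],[\mu^-_0]) \in \widetilde{Z}_c$ with $[\mu^+_0] = [\tilde f^{-1}]$ and $[\mu^-_0] = [\nu] \ast [\overline{\mu^+_0}]$. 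Uniqueness follows because the arc-length parametrization of $\Gamma$ is canonical, so $\rho_0$ is bijective.

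Continuity of $\rho_0^{-1}$ would then be proved by chaining continuous dependencies through the construction above: $[\nu] \mapsto \log f'$ is continuous (indeed holomorphic) via the pre-Bers embedding $\beta^-$; $\log f' \mapsto \log \tilde f' = {\rm Re}(\log f')$ is a bounded real-linear map into ${\rm Re}\,{\rm VcMO}(\mathbb R)$; $\log \tilde f' \mapsto [\tilde f] \in T_{Vc}^+$ is continuous by the VcMO analogue of Corollary \ref{structure}, namely that $\Lambda$ restricted to ${\rm Sym}(T_{Vc}^+ \times T_{Vc}^-)$ is a real-analytic diffeomorphism onto ${\rm Re}\,{\rm VcMO}(\mathbb R)$; $[\tilde f] \mapsto [\tilde f^{-1}] = [\mu^+_0]$ is continuous because inversion on $T_{Vc}^+$ is continuous; and finally $[\mu^-_0] = [\nu] \ast [\overline{\mu^+_0}]$ depends continuously on $([\nu],[\mu^+_0])$ via the group operation. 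The main obstacle is the third step, which rests on the VcMO analogue of Corollary \ref{structure}; together with the topological group property used in two further steps, these are precisely the structural features of the VMO setting that are absent in the general BMO case, explaining why the theorem succeeds here even though Theorem \ref{discontinuous} fails in that broader context.
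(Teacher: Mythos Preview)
Your proof is correct and rests on the same key ingredient as the paper's, namely the topological group property of $T_{Vc}$ from Proposition \ref{TVC}(2). The organization differs slightly: for the continuity of $\rho_0^{-1}$ the paper conjugates by $\Lambda$ to obtain the explicit formula $\Lambda \circ \rho_0^{-1} \circ \Lambda^{-1}(\varphi) = C_h^{-1}(i\,\mathrm{Im}\,\varphi)$ with $h(x)=\int_0^x e^{\mathrm{Re}\,\varphi(t)}\,dt$, and then proves a separate lemma (Lemma \ref{strong}) that $(\phi,h) \mapsto C_h(\phi)$ is jointly continuous on $\mathrm{VcMO}(\mathbb R) \times T_{Vc}$, itself deduced from the relation $C_h(\phi) = \Lambda \circ R_{[\mu]} \circ \Lambda^{-1}(\phi) - \log h'$ and the continuity of $R_{[\mu]}$ in $[\mu]$. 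You instead stay in Bers coordinates and chain the maps $[\nu] \mapsto \log f' \mapsto \mathrm{Re}\,\log f' \mapsto [\tilde f] \mapsto [\tilde f^{-1}] \mapsto ([\tilde f^{-1}],\,[\nu]\ast[\overline{\tilde f^{-1}}])$, using the VcMO analogue of Corollary \ref{structure} (stated in the paper just before the theorem) together with continuous inversion and multiplication. The two routes are formally equivalent, since conjugating your chain by $\Lambda$ produces exactly the paper's composition-operator formula; the paper's approach has the virtue of isolating Lemma \ref{strong} as a result of independent interest, while yours avoids that detour at the cost of invoking the diffeomorphism $\Lambda|_{\mathrm{Sym}(T_{Vc}^+ \times T_{Vc}^-)}$ directly.
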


\begin{proof}
It is straightforward to verify that $\rho_0$ is bijective. Since $\rho$ is continuous due to the topological group property of $T_{Vc}$, it follows that $\rho_0$ is also continuous. Hence, it remains to show that $\rho_0^{-1}$ is continuous. To do this, we consider the conjugation of $\rho_0^{-1}$ by $\Lambda$, that is,
$$
\Lambda \circ \rho^{-1}_0 \circ \Lambda^{-1}: Y_c \to Z_c.
$$
For any $\varphi \in Y_c$, we have
$$
\Lambda \circ \rho^{-1}_0 \circ \Lambda^{-1}(\varphi) = C_h^{-1}(i\, \mathrm{Im} \, \varphi),
$$
where $h(x) = \int_0^x \exp(\mathrm{Re} \, \varphi(t)) \, dt$. Since $h \mapsto h^{-1}$ is continuous in $T_{Vc} \cong \mathrm{SSc}$, we conclude that the map
$$
\mathrm{VcMO}(\mathbb R) \times T_{Vc} \to \mathrm{VcMO}(\mathbb R), \quad (\phi, h) \mapsto C_h^{-1}(\phi),
$$
is continuous by Lemma \ref{strong} below. Thus, $\Lambda \circ \rho^{-1}_0 \circ \Lambda^{-1}$ is continuous.
\end{proof}

\begin{lemma}\label{strong}
The map $\mathrm{VcMO}(\mathbb R) \times T_{Vc} \to \mathrm{VcMO}(\mathbb R)$ defined by
$(\phi, h) \mapsto C_h(\phi)$ is continuous. In particular, a sequence of bounded linear operators
$C_{h_n}$ on $\mathrm{VcMO}(\mathbb R)$ converges to $C_h$ strongly as $h_n \to h$ in $T_{Vc}$.
\end{lemma}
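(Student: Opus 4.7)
The plan is to derive joint continuity from strong operator continuity $C_{h_n}\to C_h$, and then reduce strong continuity to the special case of convergence at the identity by exploiting the topological group structure of $T_{Vc}$ furnished by Proposition \ref{TVC}(2). For the passage from strong to joint continuity I would use the splitting
\[
C_{h_n}\phi_n-C_h\phi = C_{h_n}(\phi_n-\phi) + (C_{h_n}-C_h)\phi,
\]
so that the first term is absorbed by $\sup_n\|C_{h_n}\|\cdot\|\phi_n-\phi\|_{\mathrm{VcMO}}$ and the second by the strong convergence. A uniform bound on $\|C_{h_n}\|$ along a $T_{Vc}$-convergent sequence is then extracted from Theorem \ref{pullback}: convergence $h_n\to h$ in $T_{Vc}$ translates, via the real-analytic diffeomorphism $\Lambda\circ\iota$ of Corollary \ref{structure}, into convergence of $\log h_n'$ in $\mathrm{Re}\,\mathrm{VcMO}(\mathbb R)\subset\mathrm{Re}\,\mathrm{BMO}^*(\mathbb R)$, which keeps the $A_\infty$-constants of $h_n'$ and hence the strong-quasisymmetry constants $K,\alpha$ of \eqref{alphaK} locally bounded.

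Next I would reduce strong continuity to the identity. Setting $g_n=h^{-1}\circ h_n$, the topological group axioms for $T_{Vc}$ yield $g_n\to\mathrm{id}$, while Proposition \ref{TVC}(1) ensures $C_h\phi\in\mathrm{VcMO}(\mathbb R)$; the identity $C_{h_n}\phi=\phi\circ h\circ g_n=C_{g_n}(C_h\phi)$ then reduces the problem to proving that $C_{g_n}\psi\to\psi$ in $\mathrm{VcMO}(\mathbb R)$ for every $\psi\in\mathrm{VcMO}(\mathbb R)$ whenever $g_n\to\mathrm{id}$ in $T_{Vc}$. An $\varepsilon/3$ argument, using the uniform bound on $\|C_{g_n}\|$ just obtained together with the density of the space $C_c^\infty(\mathbb R)$ of smooth compactly supported functions in $\mathrm{VcMO}(\mathbb R)$, further reduces this to the case $\psi\in C_c^\infty(\mathbb R)$.

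For such a $\psi$, the uniform continuity of $\psi$ combined with eventual confinement of the supports of $\psi\circ g_n$ to a fixed compact interval gives $\|\psi\circ g_n-\psi\|_\infty\to 0$, and this $L^\infty$-convergence of uniformly compactly supported functions upgrades to BMO (hence VcMO) convergence through the elementary estimate $\|u\|_{\mathrm{BMO}}\le 2\|u\|_\infty$. Feeding the resulting convergence back through the chain of reductions then delivers the lemma.

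The hard part will be verifying that $g_n\to\mathrm{id}$ in $T_{Vc}$ does indeed force $g_n\to\mathrm{id}$ uniformly on compact subsets of $\mathbb R$. This step passes through the continuous inclusions $M_{Vc}(\mathbb H)\hookrightarrow M_B(\mathbb H)\hookrightarrow M(\mathbb H)$ at the level of Beltrami coefficients together with the standard continuous dependence of the normalized quasiconformal self-homeomorphism of $\mathbb C$ on its $L^\infty$-Beltrami coefficient, after which the boundary extension to $\mathbb R$ inherits the locally uniform convergence of the corresponding normalized quasisymmetric representatives.
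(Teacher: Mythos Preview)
Your argument is correct, but it takes a genuinely different route from the paper's proof. The paper exploits the biholomorphic map $\Lambda$ directly: from the intertwining relation $\Lambda\circ R_{[\mu]}=Q_h\circ\Lambda$ (where $Q_h(\phi)=C_h(\phi)+\log h'$) one solves for
\[
C_h(\phi)=\Lambda\circ R_{[\mu]}\circ\Lambda^{-1}(\phi)-\log h',
\]
valid on the open image $\Lambda(T_{Vc}^+\times T_{Vc}^-)$ and extended to all of $\mathrm{VcMO}(\mathbb R)$ by linearity in $\phi$ via rescaling into a neighbourhood of the origin. Joint continuity then falls out at once from the continuity of $\Lambda$, $\Lambda^{-1}$, $[\mu]\mapsto\log h'$, and the continuous dependence $[\mu]\mapsto R_{[\mu]}$ furnished by the topological group structure of $T_{Vc}$.

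Your approach avoids $\Lambda$ altogether, instead combining uniform operator bounds (through the $A_\infty$ theory behind Theorem~\ref{pullback}), a density argument with $C_c^\infty(\mathbb R)\subset\mathrm{VcMO}(\mathbb R)$, and the locally uniform convergence of normalized quasisymmetric homeomorphisms inherited from the universal Teich\-m\"ul\-ler space. This is more elementary and self-contained, and it makes explicit what drives the convergence at the level of functions on $\mathbb R$; the cost is that several auxiliary facts must be invoked (local uniformity of $A_\infty$-constants along a $\mathrm{BMO}$-convergent sequence, the lifting of $T_{Vc}$-convergence to Beltrami coefficients via local sections, and confinement of supports), whereas the paper's formula packages all of this into the single map $\Lambda$. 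Both proofs rest essentially on Proposition~\ref{TVC}(2), used by the paper for the continuity of $R_{[\mu]}$ and by you for the reduction $g_n=h^{-1}\circ h_n\to\mathrm{id}$.
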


\begin{proof}
The affine translation $Q_h(\phi)$ of $\phi \in \mathrm{VcMO}(\mathbb R)$ is given by $C_h(\phi) + \log h'$. Then, we have
$$
\Lambda \circ R_{[\mu]} = Q_h \circ \Lambda
$$
for $h=h(\mu) \in {\rm SSc}$. From this, we deduce
$$
C_h(\phi) = \Lambda \circ R_{[\mu]} \circ \Lambda^{-1}(\phi) - \log h',
$$
which is a continuous function of $(\phi, h)$. The continuity with respect to $h$ follows from the topological group property 
of $T_{Vc}$.
\end{proof}

\end{document}